\documentclass[oneside,11pt]{amsart}

\makeatletter
\@namedef{subjclassname@2020}{
  \textup{2020} Mathematics Subject Classification}
\makeatother
\usepackage[a4paper,width=170mm,top=27mm,bottom=27mm]{geometry}
\usepackage{cases}
\usepackage{amsmath,amssymb}
\usepackage[hidelinks]{hyperref}
\usepackage{esint}

\newtheorem{theorem}{Theorem}[section]
\newtheorem{lemma}[theorem]{Lemma}

\newtheorem{corollary}[theorem]{Corollary}

\theoremstyle{definition}
\newtheorem{remx}[theorem]{Remark}
\newenvironment{remark}
  {\pushQED{\qed}\remx}
  {\popQED\endremx}

\newcommand{\la}{\langle}
\newcommand{\ra}{\rangle}

\newcommand{\N}{\mathbb{N}}
\newcommand{\R}{\mathbb{R}}
\newcommand{\C}{\mathbb{C}}
\newcommand{\T}{\mathbb{T}}

\newcommand{\ld}{\lambda}

\newcommand{\vare}{\varepsilon}

\newcommand{\pt}{\partial}
\newcommand{\bg}{\Big}

\newcommand{\Z}{{\mathbb{Z}}}

\numberwithin{equation}{section}

\begin{document}
\address{Yongming Luo
\newline \indent
Faculty of Computational Mathematics and Cybernetics
\newline \indent Shenzhen MSU-BIT University, China}
\email{luo.yongming@smbu.edu.cn}

\title[Almost sure well-posedness for periodic NLS]
{Almost sure local well-posedness for the nonlinear Schr\"odinger equations on $\T^d$ with non-algebraic nonlinearity}
\author{Yongming Luo}

\begin{abstract}
We study the Cauchy problem for the nonlinear Schr\"odinger equation
on $\mathbb T^d$ with random initial data and a general non-algebraic
power-type nonlinearity.  We establish almost sure local well-posedness
in every spatial dimension and for the whole mass-supercritical range
allowed by the natural condition $0<s_{\mathrm c}<1+a$.  The main new ingredient is a frequency-gaining probabilistic refinement
of the Galilean bilinear estimates recently developed by Kwak and Kwon \cite{KwakKwon}.
In the random setting, the gauge decomposition gives rise to three new
types of terms: a mean-free coefficient, an opposite-phase interaction,
and a scalar remainder.  We control them by new resonance counting and
large deviation arguments, and close the local theory through a
phase-adapted two-component contraction.  In the energy-critical case,
our result extends the low-dimensional algebraic theories of
Nahmod--Staffilani \cite{NahmodStaffilani15} and Yue \cite{Yue21} to every dimension $d\geq3$, including the
higher-dimensional non-algebraic models.
\end{abstract}

\keywords{Nonlinear Schr\"odinger equation, random initial data, non-algebraic nonlinearity, periodic dispersive equations, bilinear estimates, gauge transform}
\subjclass[2020]{35Q55, 35R60, 60H30}

\maketitle
\section{Introduction}
In this paper, we study the random initial value problem for the nonlinear
Schr\"odinger equation (NLS)
\begin{equation}\label{eq:nls}
 (i\partial_t+\Delta)u=\lambda F(u),
 \qquad
 F(z)=|z|^az,
 \qquad \lambda=\pm1,
\end{equation}
on the torus $\mathbb T^d=(\mathbb R/2\pi\mathbb Z)^d$.  We assume that
\begin{equation*}
 a>\frac4d,
 \qquad
 s_{\mathrm c}=\frac d2-\frac2a,
 \qquad
 0<s_{\mathrm c}<1+a.
\end{equation*}
Thus the equation is mass-supercritical.  The upper condition is a natural constraint
suggested by the regularity of the nonlinearity, since the map
$z\mapsto |z|^az$ is in general no smoother than $C^{1+a}$.

For $\delta>0$, we consider the Gaussian Fourier data
\begin{equation}\label{eq:random-data}
 \phi^\omega(x)
 =\sum_{n\in\mathbb Z^d}b_ng_n(\omega)e^{in\cdot x},
 \qquad
 b_n=\langle n\rangle^{-d/2-s_{\mathrm c}+\delta},
\end{equation}
where $\{g_n\}_{n\in\mathbb Z^d}$ is a family of independent standard
complex Gaussian random variables.  Equivalently, $\phi^\omega$ is the
randomization of
\[
 \phi(x)=\sum_{n\in\mathbb Z^d}b_ne^{in\cdot x}.
\]
A standard Gaussian-series argument shows that, almost surely,
$\phi^\omega\in H^s(\mathbb T^d)$ for every $s<s_{\mathrm c}-\delta$, while
$\phi^\omega\notin H^{s_{\mathrm c}-\delta}(\mathbb T^d)$.  In particular,
the data lie strictly below the deterministic critical regularity.  Our goal
is to construct a local solution for almost every realization of
\eqref{eq:random-data}, without assuming that the power $a$ is algebraic.

\subsection{Background and related works}
The NLS is one of the basic models for nonlinear dispersive waves.  It
appears, for instance, in nonlinear optics and in the mean-field description
of Bose--Einstein condensates; see \cite{Cazenave2003,KevrekidisEtAl2015} and
the references therein.  The periodic setting is natural for waves under
confinement or recurrent propagation.  From the mathematical point of view,
however, the torus behaves quite differently from $\mathbb R^d$: there is no
spatial decay at infinity, the spectrum is discrete, and the resonant
relations between lattice frequencies become an essential part of the
analysis.

For the equation \eqref{eq:nls}, the Euclidean scaling gives the critical
index $s_{\mathrm c}$.  Although scaling is not an exact symmetry on a fixed
torus, this number still indicates the expected threshold for local
well-posedness.  Below $H^{s_{\mathrm c}}$, the deterministic problem is in
general unstable (see e.g. \cite{ill_posed}).  Randomization provides a possible way around this
obstruction.  It does not improve Sobolev differentiability, but it improves
space-time integrability and the summability of Fourier interactions.  The
main question is whether these probabilistic gains are sufficient to recover
a meaningful flow below the deterministic threshold.

The probabilistic study of periodic NLS goes back to the fundamental works of
Bourgain \cite{BourgainProb1,BourgainProb2}.  In
\cite{BourgainProb1}, Bourgain combined finite-dimensional Hamiltonian
truncations, invariant Gibbs measures, and Fourier restriction spaces to
construct global dynamics for random data below the deterministic
regularity.  The argument introduced a scheme which has remained basic in
the subject: one first studies finite-dimensional equations, obtains bounds
which are uniform in the truncation parameter by using invariance of the
measure, and then passes to an infinite-dimensional limit.  In
\cite{BourgainProb2}, Bourgain considered the two-dimensional defocusing cubic NLS.
There the Gaussian free field is supported below $L^2(\mathbb T^2)$, and the
cubic term is not directly defined.  The equation has to be Wick ordered
before the limiting flow can be constructed.  These works showed, in
particular, that for very rough random data the correct nonlinear equation
may only become visible after a suitable approximation and renormalization.

This point of view has been developed much further in recent years.  Deng,
Nahmod and Yue \cite{DNY1} proved the invariance of Gibbs measures and the
existence of global strong solutions for two-dimensional defocusing NLS with
Wick-ordered power nonlinearities.  One of the main tools in their work is
the random averaging operator, which keeps track of the way randomness is
propagated through the nonlinear iteration.  In \cite{DNY2}, the same authors
developed the random tensor theory and obtained a general multilinear
framework for probabilistic dispersive equations.  These ideas have also
been used outside the NLS setting.  For example, Bringmann, Deng, Nahmod and
Yue \cite{BDNY24} combined random tensor estimates with a paracontrolled
expansion to construct the Gibbs dynamics for the three-dimensional cubic
nonlinear wave equation.

A different, though closely related, use of randomization was introduced by
Burq and Tzvetkov \cite{BurqTzvetkov1,BurqTzvetkov2}.  Instead of starting
from a distinguished Gibbs distribution, they randomized an arbitrary
deterministic datum in an eigenfunction basis on a compact manifold and
proved supercritical local and global results for nonlinear wave equations (NLW).
As in Bourgain's work, the solution is decomposed into a rough random linear
part and a smoother nonlinear correction.  The difference is that the gain
comes from the improved integrability of the randomized linear evolution,
and the construction is not tied to a particular invariant measure.  This
has become another standard approach in the probabilistic Cauchy theory nowadays.

The periodic works most closely related to the present paper are those of
Nahmod and Staffilani \cite{NahmodStaffilani15} and Yue \cite{Yue21}.  In \cite{NahmodStaffilani15}, Nahmod
and Staffilani studied the three-dimensional quintic NLS below the energy
space.  A key point in their paper is that Wick normalization alone does not
remove all the energy-critical resonant terms.  There remains a scalar
resonant piece which has to be eliminated by a gauge transform.  They
therefore solve a gauged equation and then return to the original NLS by
undoing the phase.  As we shall see, a gauge transform will also be forced
upon us, although it arises from a different counting obstruction.  Yue \cite{Yue21}
later extended this circle of ideas to the cubic NLS on $\mathbb T^d$,
$d\geq3$, using atomic and Fourier restriction spaces.  In particular, the
three-dimensional quintic and four-dimensional cubic energy-critical models
were covered in these two works.

It is worth pointing out that most random-data results on compact manifolds
are restricted to algebraic nonlinearities.  The reason is purely analytic. Indeed, on the Euclidean space $\mathbb R^d$, the dispersive decay of the linear flow gives
strong Strichartz and bilinear estimates.  On a compact manifold, the
available estimates are usually frequency localized and may lose
derivatives.  They are also closely tied to the arithmetic of the resonance
set or to the geometry of the eigenfunctions; see, for instance, the Fourier
restriction estimates of Bourgain \cite{Bourgain1,Bourgain2} and the
eigenfunction estimates of Burq, G\'erard and Tzvetkov
\cite{Burq1,Burq2,Burq3}.  For an algebraic nonlinearity, one may expand the
nonlinear term into finitely many multilinear frequency interactions and
then combine lattice counting with Wiener-chaos estimates.  A non-algebraic
power admits no such finite expansion.  Moreover, for small powers the
relevant derivatives are only H\"older continuous, and several standard
multilinear or contraction arguments are no longer available.

On the Euclidean space, a similar random initial value problem can be
formulated by means of Wiener randomization.  Thanks to the stronger
dispersion on $\mathbb R^d$, non-algebraic nonlinearities can still be
handled.  In particular, Oh, Okamoto, and Pocovnicu \cite{OhNonalg19} proved
probabilistic well-posedness results for the energy-critical NLS with
non-algebraic nonlinearities in dimensions five and six.  Roughly speaking,
they separate the nonlinear terms according to the position of the random
linear factor and use Euclidean bilinear estimates to recover the missing
derivative.  We shall see that in out case, the same strategy cannot be directly transferred to the torus,
since the corresponding bilinear gain is unavailable unless one makes a much
finer use of the resonance relation.

There is also a deterministic theory for periodic NLS with non-algebraic
nonlinearities.  To our knowledge, Lee \cite{LeeNonalg19} obtained the first
critical result of this type on $\mathbb T^3$, by combining a Bony
linearization with periodic bilinear estimates.  The argument requires enough
Lipschitz regularity of the nonlinear coefficients and therefore does not
cover the whole range in which these coefficients are merely H\"older
continuous.  This problem was recently solved by Kwak and Kwon
\cite{KwakKwon}.  Their idea is to keep the Bony coefficient as a whole,
rather than expanding it into many smaller pieces, and to design a bilinear
estimate adapted to that coefficient.  The Galilean transform, which is not
usually used in a local theory on the torus, plays a central role in their
argument.  Together with the space-time Besov space $Z^s$, this yields
critical local well-posedness in every dimension and for the whole
mass-supercritical range allowed by $s_{\mathrm c}<1+a$.  In the small-power
regime, where the flow map is not Lipschitz, their existence proof uses
approximation and compactness.

For some other recent developments related to rough periodic or stochastic
NLS, we refer to Colliander--Oh \cite{Oh_Co_Prob}, Oh--Sosoe--Tolomeo
\cite{Oh_Inv}, Gubinelli--Koch--Oh \cite{GKO24}, and Fan--Mendelson
\cite{Fan24}.

\subsection{Main result}
We now state the main result.  The nonlinear estimates naturally lead us to the
gauged NLS
\begin{equation}\label{eq:gauged}
 (i\partial_t+\Delta)y=\lambda G(y),
 \qquad y(0)=\phi^\omega,
\end{equation}
where
\begin{equation}\label{eq:ca-def}
 c_a=1+\frac a2,
 \qquad
 \mu(y)(t)=\fint_{\mathbb T^d}|y(t,x)|^a\,dx,
 \qquad
 G(y)=F(y)-c_a\mu(y)y.
\end{equation}
Since $\partial_zF(z)=c_a|z|^a$, the number $c_a\mu(y)$ is exactly the
spatial mean of the coefficient $\partial_zF(y)$.  By direct calculation, if
$y$ solves \eqref{eq:gauged}, then
\begin{equation}\label{eq:inverse-gauge}
 u(t,x)=\exp\left(-i\lambda c_a\int_0^t\mu(y)(s)\,ds\right)y(t,x)
\end{equation}
solves the original equation \eqref{eq:nls}.

\begin{theorem}\label{main_thm}
The gauged NLS \eqref{eq:gauged} is almost surely locally well-posed with
respect to the random initial data $\phi^\omega$.  More precisely, there
exist $C,c,\gamma,\delta_0,\vare_0>0$ such that for each
$0<\delta<\delta_0$ and $0<T\ll1$, there exists
$\Omega_T\subset\Omega$ with
\[
 \mathbb P(\Omega_T^c)
 \leq C\exp\left(-cT^{-\gamma}\right),
\]
such that for each $\omega\in\Omega_T$ there exists a unique solution $y$ to
\eqref{eq:gauged}, with $y|_{t=0}=\phi^\omega$, in the class
\[
 y=e^{it\Delta}\phi^\omega+w
 \in e^{it\Delta}\phi^\omega+Y^{s_{\mathrm c}+\vare_0}(I_T),
\]
where $I_T=[-T,T]$ and $Y^{s_{\mathrm c}+\vare_0}$ is defined in Section
\ref{sec_zs_space}.
\end{theorem}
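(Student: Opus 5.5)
We write $z=e^{it\Delta}\phi^\omega$ and solve for the nonlinear remainder $w=y-z$ in $Y^{s_{\mathrm c}+\vare_0}(I_T)$. The Duhamel formulation is
\[
 w(t)=-i\lambda\int_0^t e^{i(t-t')\Delta}G(z+w)(t')\,dt'.
\]
Following Kwak--Kwon, we linearize $G$ through a Bony-type decomposition:
\[
 F(z+w)=F(z)+\int_0^1\Big(\partial_zF(z+\theta w)\,w+\partial_{\bar z}F(z+\theta w)\,\bar w\Big)\,d\theta .
\]
Each coefficient $\partial_zF(\cdot)$ is kept as a whole rather than expanded. Subtracting $c_a\mu(z+w)(z+w)$ turns the $\partial_zF$ coefficient into a mean-free coefficient, up to the difference between $c_a\mu(z+w)$ and the $\theta$-average of the means. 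This difference is a scalar remainder depending only on time. The $\partial_{\bar z}F$ term produces the opposite-phase interaction, since it multiplies $\bar w$. The forcing $F(z)-c_a\mu(z)z$ is handled by Wiener-chaos and large-deviation bounds.

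\textbf{Step 1 (probabilistic input).} On a set $\Omega_T$ of probability at least $1-C\exp(-cT^{-\gamma})$, we establish the following.
\begin{enumerate}
\item[(i)] Strichartz-type bounds hold for $z$ and its frequency pieces, with a $T^{\theta}$ gain.
\item[(ii)] The gauged forcing $G(z)$ lies in the dual space $N^{s_{\mathrm c}+\vare_0}$ with norm $\lesssim T^{\theta}$.
\item[(iii)] Frequency-gaining bilinear estimates hold whenever one factor is the random linear flow.
\end{enumerate}
For (ii), the point is that subtracting $c_a\mu(z)z$ removes the resonant diagonal $n_1=n$ interaction of $|z|^a$ with $z$. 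That interaction is exactly what forbids the regularity gain $s_{\mathrm c}+\vare_0>s_{\mathrm c}-\delta$. To bound the rest, we expand $|z|^a$ in Hermite/Wiener chaos, or use hypercontractivity on dyadic blocks after smoothing the non-algebraic function by a Taylor expansion around $\mu$. We then count lattice resonances $|n|^2-|n_1|^2=\Omega$ for the off-diagonal part. The sub-exponential tail $\exp(-cT^{-\gamma})$ comes from choosing the large-deviation threshold $\sim T^{-\gamma'}$ and gaining $T^{\theta}$ from time localization.

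\textbf{Step 2 (deterministic multilinear estimates).} We prove a probabilistic refinement of the Kwak--Kwon Galilean bilinear estimate. When $P_N w$ is multiplied by a coefficient $a(t,x)=\partial_zF(z+\theta w)$ that is mean-free in $x$ at each time, the product is placed in $N^{s_{\mathrm c}+\vare_0}$ with a small power $N^{-\epsilon}$. For the high-frequency part of the coefficient, we apply the Galilean transform $x\mapsto x-2\xi t$ to center $P_Nw$ at frequency zero. We then use the space-time Besov/$Z^s$ structure together with the regularity $C^{a}$ (Hölder) of $\partial_zF$ to distribute derivatives. When the coefficient's dominant component comes from $z$, we instead use (iii).

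The opposite-phase term $\partial_{\bar z}F\cdot\bar w$ has nonresonant modulation $|n|^2+|n_1|^2$ between $\bar w$ and the output. After a high/low split, this gives the gain directly. The scalar remainder has the form $r(t)w$ with $|r(t)|\lesssim\|w\|$ times powers of $z$ and $w$ in $L^a_x$. Its Duhamel term is estimated in $Y$ by an $L^1_t$-in-time bound, gaining $T^{\theta}$.

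\textbf{Step 3 (closing).} For $a\ge1$ the coefficients are Lipschitz, and we run a standard contraction in a ball of $Y^{s_{\mathrm c}+\vare_0}(I_T)$ of radius $\sim T^{\theta}$. For small $a$ the coefficient map is only Hölder, so we use a phase-adapted two-component scheme. We iterate in the pair $(w,\Theta)$, where $\Theta(t)=c_a\int_0^t\mu$, and pass to the phase-rotated unknown $e^{i\Theta}w$ so that differences only see the mean-free coefficient. We contract in a weaker norm such as $Y^{0}$, using the Step 2 estimates, which only need the difference to lie in the low space. Boundedness in $Y^{s_{\mathrm c}+\vare_0}$ then upgrades this to existence and uniqueness in that class.

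\textbf{Main obstacle.} The hardest part will be Step 2 for a mean-free coefficient built from non-algebraic $|z+\theta w|^a$ with rough random $z$. It requires showing that the random high-frequency part of this coefficient interacting with $w$ still gains $N^{-\epsilon}$, despite $z$ lying below $H^{s_{\mathrm c}}$. I would first try to write $|z+\theta w|^a-\mu$ as a paraproduct in which the $z$-high-frequency factor is $a|z|^{a-2}\mathrm{Re}(\bar z P_{\mathrm{high}}z)$. This puts a single random linear factor in high frequency, and its bilinear interaction with $P_Nw$ can then be counted via Galilean-shifted resonance sets and Gaussian large deviations.
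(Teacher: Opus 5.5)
There is a genuine gap, and it lies in the decomposition itself. Linearizing around the whole random field $z$ produces, besides the forcing $F(z)-c_a\mu(z)z$ and the scalar term $r_0(t)w$, the term $-c_a\bigl(\mu(z+w)-\mu(z)\bigr)z$. Your proposal never records this term. Its Duhamel integral is $z(t)$ times a scalar function of $t$, hence exactly as rough as $z$, so it cannot lie in $Y^{s_{\mathrm c}+\vare_0}$.

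It is cancelled only by a resonant piece of your mean-free terms. Already for $a=2$, pair $\bar z$ with $w$ in $2\bigl(|z|^2-\mu(z)\bigr)w$, and one factor $z$ with $\bar w$ in $z^2\bar w$. Up to diagonal corrections this produces $4\,\mathrm{Re}\bigl(\fint\bar zw\,dx\bigr)z$, and $\fint\bar zw\,dx$ is generically nonzero. So the Step 2 claim fails precisely in the regime you flag as the main obstacle: a mean-free coefficient times $P_Nw$ does not land in $N^{s_{\mathrm c}+\vare_0}$ with a gain.

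Your paraproduct fix cannot repair this. It pairs $P_{\mathrm{high}}z$ with the coefficient $(\text{low part})\cdot w$, whose spatial mean is not removed by subtracting $c_a\mu(z+w)$. That zero mode is exactly the $m=0$ interaction on which the slab count gives nothing. Step 1(ii) is also unavailable by the route you sketch: for non-algebraic $a$, $|z|^a$ has an infinite chaos expansion. Moreover, a Taylor expansion of $|z|^a$ about its mean does not converge on the unbounded range of a Gaussian field.

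The missing idea is the frequency-telescoped Bony decomposition $G(y)=\sum_N\bigl[G(y_{\le N})-G(y_{\le N/2})\bigr]$ with $U=y_{\le N/2}$ and $h=y_N=z_N+w_N$. This has four consequences:
\begin{itemize}
\item the random factor needing the counting gain is always the top-frequency increment $h$;
\item its coefficient $A_0(U,h)$ is mean-free by the gauge and is bounded pathwise by H\"older--Besov estimates needing only regularity $2s_0+s_1<s_{\mathrm c}-\delta$;
\item only first-chaos bounds on Galilean blocks of $z_N$ enter;
\item no separate forcing $G(z)$ ever appears.
\end{itemize}
The scalar part becomes $\sum_N\ell_N(t)y_N$ with $\ell_N=r_{0,N}-c_a\sum_{L\ge2N}r_{1,L}$. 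It is controlled by the tails $\|y_L\|_{L^2_x}^{\alpha_a}$, $L\ge N$. These decay like $N^{-\alpha_a(s_{\mathrm c}-\delta-\vare)}$ and beat the loss $N^{\vare_0+\delta+\vare}$ of $z_N$ in $H^{s_{\mathrm c}+\vare_0}$.

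The same rough direction undermines Step 3. In the difference of two iterates the gauge produces $c_a\bigl(\mu(y_1)-\mu(y_2)\bigr)z$. Estimated on its own, this has the regularity of $z$ regardless of how Lipschitz the coefficients are. This is why the paper runs no direct contraction for $w$ in $Y^{s_{\mathrm c}+\vare_0}$ for any $a$, and your $a\ge1$ branch would have to explain how this term is handled. Your $(w,\Theta)$ scheme points the right way but leaves open the crucial point: how the difference of phases multiplying the rough part $z$ is measured.

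The paper writes $u=\beta z+v$ with a trial coefficient $\beta$, where $|\beta-1|\le\frac12$ and $\beta$ is not assumed unimodular. It sets $\beta_+=1-i\lambda c_a\int_0^t\mu(u)\beta\,ds$ and $v_+=-i\lambda K^+\bigl(\psi\chi_{I_T}[m_\beta G(z+\beta^{-1}v)+c_a\mu(u)v]\bigr)$, with $m_\beta=|\beta|^a\beta$. Distance is measured only through the reconstruction, $\|\beta_1z+v_1-\beta_2z-v_2\|_{Y^0}$. Since $\beta_+z+v_+=z-i\lambda K^+\bigl(\psi\chi_{I_T}F(u)\bigr)$, the contraction needs only the $(Z^0)'$ difference bound for the ungauged $F$, in which $\mu$ never appears. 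The metric separates points because $z(t)\notin H^{s_{\mathrm c}}$ while $v_1-v_2\in H^{s_{\mathrm c}+\vare_0}$, and $|\beta|=1$ is recovered at the fixed point.

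Finally, a contraction gives uniqueness only in a small ball. Uniqueness in all of $e^{it\Delta}\phi^\omega+Y^{s_{\mathrm c}+\vare_0}(I_T)$ requires a separate local absorption argument on short intervals, where the size of the coefficients tends to zero with the interval length.
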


Let us make a few comments on the statement.  First, the number
$\vare_0>0$ is fixed independently of $\delta\in(0,\delta_0)$.  Thus the
nonlinear part gains a uniform positive amount of regularity beyond the
scaling index.  Second, no algebraic assumption on $a$ is imposed: the result
holds in every dimension and for every mass-supercritical power satisfying
$0<s_{\mathrm c}<1+a$.  Finally, by undoing the gauge in
\eqref{eq:inverse-gauge}, we obtain the corresponding almost sure local theory
for \eqref{eq:nls}, with uniqueness understood in the associated
phase-adapted class.  In the energy-critical case $s_{\mathrm c}=1$, or
$a=4/(d-2)$, the result holds in every dimension $d\geq3$.  It hence recovers the
three-dimensional quintic and four-dimensional cubic models and also covers
all higher-dimensional energy-critical powers, which are non-algebraic.

\subsection{Some ideas behind the proof}
At the first glance, one may try to combine the deterministic bilinear estimate of
Kwak and Kwon established in \cite{KwakKwon} with a standard large deviation bound for the random linear
solution.  This is, however, not the case, since Kwak and Kwon's deterministic estimate does not retain
the positive power of the high frequency $N$ which is needed to sum the
random frequency pieces.  Recovering this gain is the main analytic issue of
the paper.  We briefly explain below how the new counting arguments enter and
why they lead to the gauge in \eqref{eq:gauged}.

\medskip
\noindent\emph{(i) The first counting and the mean-free coefficient.}
After a random block of frequency $N$ is decomposed into Galilean blocks of
width $R$, the relevant modulation contains the quantity $4Rk\cdot m$.
Here $k$ labels the Galilean block and $m$ is the Fourier frequency of the
low-frequency coefficient.  In the argument of Kwak and Kwon, one fixes $k$ and
counts the possible $m$.  For the random problem, however, the natural square
summation is over $k$, and this counting is too rough.  We instead fix
$m\neq0$ and count $k$.  This gives the slab estimate
\[
 \#\left\{k\in\mathbb Z^d:
 |k|_\infty\sim K,\ |4Rk\cdot m-\tau|\leq L\right\}
 \lesssim K^{d-1}\left(1+\frac{L}{R|m|}\right),
\]
where $K\sim N/R$.  The loss of one lattice dimension is precisely the extra
frequency gain needed in the random estimate.

There is one obvious exception.  When $m=0$, the modulation does not see
$k$, and the gain disappears.  Thus the zero Fourier mode of the coefficient
has to be removed.  This is the reason why the gauged nonlinearity $G$ appears
in our analysis.  In this sense, the gauge transform is forced by the
counting argument itself, in agreement with the mechanism found earlier by
Nahmod and Staffilani \cite{NahmodStaffilani15} in the algebraic energy-critical problem.

More precisely, the Bony increment of $G$ is written in the form
\[
 G(U+h)-G(U)
 =hA_0(U,h)+\overline h\,B(U,h)+R(U,h),
\]
where $A_0$ has spatial mean zero.  The term $hA_0$ is controlled by
combining the preceding slab count, the space-time Besov regularity of the
coefficient, and large deviation estimates for the random blocks.

\medskip
\noindent\emph{(ii) The opposite-phase term.}
The term containing $\overline h$ is different.  It contains two
Schr\"odinger factors with opposite phases, and the first Galilean shear no
longer applies.  We introduce another transform adapted to this interaction.
The corresponding modulation is
\[
 \eta+4Rk\cdot m-8R^2|k|^2.
\]
The quadratic term has size $N^2$ when $|k|\sim N/R$.  We separate the low-
and high-modulation regions relative to this scale.  In the low-modulation
region, the quadratic resonance gives a different lattice counting; in the
high-modulation region, temporal Besov regularity supplies the required
decay.  Together with large deviations, this yields the second randomized
bilinear estimate.  Thus the opposite-phase term is not a minor variation of
the first one: both the shear and the counting have to be changed.

\medskip
\noindent\emph{(iii) The scalar remainder.}
The final term $R(U,h)$ is scalar in space.  Spatial averaging separates
random variables which, in an algebraic expansion, would normally remain in
the same higher-order Wiener chaos.  Consequently, part of the usual chaos
cancellation is lost.  We deal with this term in a different way.  The
triangular frequency sum is reorganized as
\[
 \sum_NG_2^N=\sum_N\ell_N(t)y_N,
\]
and the coefficients $\ell_N$ are estimated by first-chaos large deviations,
space-time integrability, and a dyadic tail bound.  The small parameters
$\delta_0$ and $\vare_0$ are chosen so that the remaining frequency powers
are strictly summable.  This gives the positive smoothing in Theorem
\ref{main_thm}.  We point out that this scalar argument is independent of the
two Galilean countings above.

\medskip
\noindent\emph{(iv) The fixed point argument.}
Kwak and Kwon use an approximation and compactness argument in the regime
where the nonlinear coefficient is only H\"older continuous.  We shall
instead prove the theorem by a fixed point argument, following a strategy
similar to the one used in author's recent work \cite{LuoCriticalScattering}.  A direct fixed
point for $w=y-e^{it\Delta}\phi^\omega$, however, does not work.  Indeed, the
scalar part produces in the difference equation a term of the form
\[
 \bigl(\mu(y_1)-\mu(y_2)\bigr)e^{it\Delta}\phi^\omega.
\]
This term has the same rough regularity as the random linear solution and
therefore cannot be placed in the smoother space
$Y^{s_{\mathrm c}+\vare_0}$.

To isolate this rough one-dimensional direction, we introduce two unknowns:
a scalar function $\beta$ and a smoother remainder $v$.  We write
\[
 u=\beta z+v,
 \qquad
 y=z+\beta^{-1}v,
 \qquad z=e^{it\Delta}\phi^\omega
\]
and let $\beta$ absorb the scalar ODE contribution.  The contraction is then
measured through the reconstructed function
\[
 \mathcal J(\beta,v)=\beta z+v,
 \qquad
 d\big((\beta_1,v_1),(\beta_2,v_2)\big)
 =\|\mathcal J(\beta_1,v_1)-\mathcal J(\beta_2,v_2)\|_{Y^0}.
\]
The two components are not estimated separately.  This is important, since
the rough phase contribution and the smooth remainder may cancel after
reconstruction.  The resulting map intertwines exactly with the usual
Duhamel map for the original NLS.  At the fixed point, the scalar equation
gives $|\beta|=1$, and the gauged solution is recovered.  A localized version
of the same zero-order estimate then gives uniqueness in the class stated in
Theorem \ref{main_thm}.

\medskip
The rest of the paper is organized as follows.  In Section 2 we collect the
function spaces, the vector-valued Besov estimates, the Galilean transforms,
and the probabilistic tools used later.  In Section 3 we prove the
probabilistic space-time estimates and the two randomized bilinear estimates.
Section 4 is devoted to the Bony decomposition of the gauged nonlinearity and
the estimates for the mean-free, opposite-phase, and scalar terms.  Finally,
in Section 5 we construct the phase-adapted metric space and prove Theorem
\ref{main_thm} by the two-component contraction argument.

\section{Notation, definitions and auxiliary tools}
\subsection{Basic notation}
For $f:\T^d\to\C$ and $g:\R\times\T^d\to \C$, the spatial and global Fourier transforms are denoted by $\widehat f$ and $\widetilde g$ respectively. 

For a dyadic number $N\in 2^{\N_0}$, we use the sharp spatial Littlewood-Paley
projections
\[
 P_{\leq N}=P_{[-N,N]^d},
 \qquad
 P_N=P_{\leq N}-P_{\leq N/2},
\]
with $P_1=P_{\leq1}$. We denote by $P_C$ the spatial frequency cutoff projection for a given set $C\subset\Z^d$. Moreover, smooth
time-frequency projections are denoted by $P_L^t$. The operator $P_0$ defined by
\begin{align*}
P_0 f:=\fint_{\T^d}f(x)\,dx
\end{align*} 
is defined as the operator mapping a function to its spatial zero Fourier coefficient. 

For functions $u,v$ posed on $\R\times\T^d$, we define their paraproducts by
\begin{equation}\label{paraproducts}
\begin{aligned}
\pi_{\leq }(u,v)
:=
\sum_{N\ge M/16}
u_Mv_N,
\qquad
\pi_{>}(u,v)
:=
\sum_{M\ge 32N}
u_Mv_N.
\end{aligned}
\end{equation}

$K^+$ denotes the Schr\"odinger operator
$$ K^+ f(t,x):=\int_0^t e^{i(t-s)\Delta}f(s,x)\,ds.$$
Moreover, a pair $q,r\in[2,\infty]$ is said to be {$\Delta$-admissible} if $\frac{2}{q}+\frac{d}{r}=\frac{d}{2}-\Delta$.

For an interval $I\subset\R$, the function $\chi_I$ denotes the corresponding sharp characteristic function. The function $\psi$ denotes a smooth bump function supported on $[-1,1]$.

\subsection{Dictionary of constants and dependency order}\label{dictionary}
For the reader's convenience, we give here a collection of the parameters which will be used repeatedly in the paper. 

\begin{itemize}
\item Choose
\begin{equation*}
 0<\sigma\ll\sigma_1\ll\sigma_2\ll\sigma_3\ll\sigma_4
 \ll 1
\end{equation*}
and define the numbers $p,q_0,r_0,\theta$ by
\begin{equation}\label{eq:constant-dictionary-KK}
\begin{aligned}
 \frac{1}{p}=\frac{d/2-\sigma}{d+2},\quad
 \frac1{q_0}=\frac{2+\sigma_3}{d+2},\quad
 \frac1{r_0}=\frac{2+\sigma_3+\sigma_2}{d+2},
\quad
 \theta=\frac2{q_0}+\frac d{r_0}-2.
\end{aligned}
\end{equation}
\item
The numbers $\alpha,\beta,\tilde\alpha,\tilde\beta$ are defined by
\begin{equation}\label{eq:constant-dictionary-alpha}
\begin{aligned}
 \frac1{q_0}+\sigma_1+2\left(\frac1p-\alpha\right)&=1,\qquad
 \beta=\sigma+2\alpha,\\
 \frac1{q_0}+2\left(\frac1p-\tilde\alpha\right)&=1,\qquad
 \tilde\beta=\sigma+2\tilde\alpha.
\end{aligned}
\end{equation}
\item 
The numbers $\widehat{r},\zeta,\eta$ are defined by
\begin{equation*} 
\begin{aligned}
\frac1{\widehat r}=\frac{1+\sigma_4}{d+2},\quad
 \zeta=\frac1{\widehat r}-\frac1{2q_0},
 \quad \eta=\frac d{\widehat r}-\frac d{2r_0}+\frac\theta2.
\end{aligned}
\end{equation*} 
Define also the coefficient factorization
\begin{align*}
J=\min\{j\in\mathbb N:j>a/2\},\quad
 a_*=\frac{a}{2J}.
\end{align*}
The number $s_0,s_1,r$ are chosen such that $\sigma_4\ll s_0\ll 1$, $\min\{s_0 a_*-\zeta,s_1a_*-\eta\}>0$, 
\begin{align*}
\frac{a}{2}\left(\frac{1}{r}-s_0\right)
=
\frac{1}{\hat r}
-\zeta
\qquad\text{and}\qquad
\frac{a}{2}\left(\frac{1}{r}-\frac{s_1}{d}\right)
=
\frac{1}{\hat r}
-\frac{\eta}{d}.
\end{align*}

\item The numbers $0<\vartheta\ll 1$ and $\tilde\vartheta\in(0,1)$ are constants determined by Remark \ref{vartheta explain}.

\item The number $\vare>0$ denotes the order of the derivative loss $N^\vare$ coming from the random estimates.

\item The number $\alpha_{\mathrm op}$ is defined via
\begin{equation}\label{2.29}
\left(\frac{1}{p}-\frac{3\sigma_1}{2}\right)+\left(\frac{1}{p}-\alpha_{\mathrm op}\right)
+\left(\frac{1}{q_0}+\sigma_1\right)=1.
\end{equation}
 
\item The numbers $\nu$ and $\mu$ are defined by
\begin{equation*}
 \nu=1+a-s_c-\sigma_1>0,
 \qquad
 \mu=\left(1-\frac{2\sigma}{\sigma_1}\right)(\nu+\sigma)
 +\frac{2\sigma}{\sigma_1}(-s_c+\sigma)>0.
\end{equation*}
The numbers $q_h,r_h$ are defined by
\begin{equation}\label{eq:qh-rh}
 \frac{1+a}{q_h}=\frac1{p'},
 \qquad
 \frac2{q_h}+\frac d{r_h}=\frac d2-\sigma.
\end{equation}

\item The number $\alpha_a$ is defined by $\alpha_a=\min\{1,a\}$.

\item Choose $\delta_0,\vare_0>0$ sufficiently small so that
\begin{equation*}
\begin{gathered}
 2s_0+s_1<s_c-\delta_0,\qquad
 0<\vare_0<\min\{\mu,\nu,1+a-s_c\},\\
 \delta_0+\vare_0<\sigma,\qquad
 (1+\alpha_a)\delta_0+\vare_0<\alpha_a s_c.
\end{gathered}
\end{equation*}
After fixing $\delta_0$ and $\vare_0$, choose the derivative loss $\vare>0$ so small that
\begin{equation}\label{eq:delta-epsilon-varepsilon-margins}
 2\vare<\sigma-\delta_0-\vare_0,\qquad
 (1+\alpha_a)(\delta_0+\vare)+\vare_0<\alpha_a s_c.
\end{equation}
For $0<\delta<\delta_0$, set
\begin{equation}\label{eq:star-parameters}
 s^*=s_c+\vare_0,\qquad
 \nu^*=\nu-\vare_0,\qquad
 \mu^*=\mu-\vare_0,\qquad
 \sigma_\delta=\sigma-\delta-\vare_0-2\vare.
\end{equation}

\item For $a>0$, we define the number $r_{\text{rem}}$ by 
\begin{align}\label{p>2}
r_{\text{rem}}=2\max\{1,a-1\}.
\end{align}
After decreasing $\sigma$ if necessary, a number $q_{\text{rem}}$ can be chosen to satisfy
\begin{equation}\label{eq:qg-choice}
\begin{aligned}
 &\max\{p,a+1,2\}<q_{\text{rem}}<\frac1\sigma,
 \qquad
 \frac d{r_{\text{rem}}}+\frac2{q_{\text{rem}}}>\frac2a\\
&1-\frac{a}{q_{\mathrm{rem}}}
 >\max\left\{s_0+\frac{s_1}{2},\frac1p-\sigma\right\}. 
 \end{aligned}
\end{equation}
For later use, we also define the numbers
\begin{equation*}
 p_{\rm rem}:=\frac{q_{\mathrm{rem}}}{a}>1,
 \qquad
 \gamma_{\mathrm{rem}}
 :=1-\frac1{p_{\rm rem}}
 =1-\frac{a}{q_{\mathrm{rem}}}.
\end{equation*}
\end{itemize}

\subsection{Vector-valued Besov and Sobolev spaces}
Given a Banach space $E$, the numbers $k\in\N$, $s\in\R$, $p,q\in[1,\infty]$ and the argument $z\in\{\R^k,\T^k\}$, the vector-valued Banach space $B_{z,p,q}^s$ is defined via the norm
$$ \|u\|_{B_{z,p,q}^s E}:=\bg(\sum_{N\in 2^\N}N^{qs}\|P_N^z(u)\|_{L_z^p E}^q\bg)^{\frac{1}{q}}+\|P_1^z u\|_{L_z^p E}.$$
For our purpose, we will alternatively make use of the following well-known characterization of the vector-valued Besov spaces.

\begin{lemma}[Difference characterization of Besov spaces, \cite{chara_besov}]\label{lem_triebel}
Define
$$ \Delta_h f(x):=f(x+h)-f(x).$$
Let also $E$ be a Banach space. Then for $s\in(0,1)$, $k\in\N$ and $1\leq p,q\leq \infty$ we have
\[\|u\|_{B_{p,q}^s(\R^k;E)}\sim \|u\|_{L^p(\R^k;E)}
+\left(\int_0^1 \left(h^{-s}\|\Delta_h f\|_{L^p(\R;E)}\right)^q\frac{dh}{h^{k}}\right)^{\frac1q}.\]
\end{lemma}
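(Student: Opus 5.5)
The statement to prove is the vector-valued difference characterization of Besov spaces (Lemma \ref{lem_triebel}). I would follow the classical scalar argument, which uses only Minkowski's inequality and Young's convolution inequality and therefore carries over verbatim to $E$-valued functions; no UMD or other geometric property of $E$ is required, because only $B^s_{p,q}$ with $0<s<1$ is involved. I read the integral as $\int_{|h|\le1}$ over $h\in\R^k$, so that $dh/|h|^k$ is the scale-invariant measure. Fix a smooth Littlewood--Paley partition $\sum_N \widehat{\varphi_N}=1$ on $\R^k$ with $P_N u=\varphi_N * u$, where $\varphi_N(x)=N^k\varphi(Nx)$ for $N\ge2$ and $\varphi$ is a Schwartz function with $\int\varphi=0$. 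The identity $\Delta_h(\varphi_N*u)=(\Delta_h\varphi_N)*u$ will be used throughout, together with Young's inequality for Bochner integrals, $\|g*u\|_{L^pE}\le\|g\|_{L^1}\|u\|_{L^pE}$, and its mirror form with the $L^1$ norm on the vector-valued factor.

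\emph{Step 1 (Besov norm controls the difference norm).} Write $u=\sum_N P_N u$ and split the sum according to whether $N|h|\le1$ or $N|h|>1$. In the low-frequency part, the mean value theorem applied to the kernel gives $\|\Delta_h P_N u\|_{L^pE}\lesssim N|h|\,\|\tilde P_N u\|_{L^pE}$, where $\tilde P_N$ is a slightly fattened projection. In the high-frequency part I simply use $\|\Delta_h P_N u\|_{L^pE}\le 2\|P_N u\|_{L^pE}$. Set $a_N=N^s\|P_Nu\|_{L^pE}$ and restrict $h$ to the dyadic shell $|h|\sim 2^{-j}$. This yields
\[
2^{js}\sup_{|h|\sim2^{-j}}\|\Delta_h u\|_{L^pE}\lesssim \sum_{N}\min\{(N2^{-j})^{1-s},(N2^{-j})^{-s}\}\,a_N .
\]
The kernel $\min\{t^{1-s},t^{-s}\}$ is summable over dyadic $t$ exactly because $0<s<1$. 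Discrete Young's inequality in $\ell^q$ then bounds the $\ell^q_j$ norm of the left side by $\|a_N\|_{\ell^q}$. Integrating over each shell $|h|\sim2^{-j}$ against $dh/|h|^k$ produces a constant factor, and the $L^pE$ term is trivially bounded by the Besov norm.

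\emph{Step 2 (converse).} The key observation is that $\varphi$ has mean zero, so for $N\ge2$
\[
P_Nu(x)=\int\varphi_N(y)\,\bigl(u(x-y)-u(x)\bigr)\,dy=\int \varphi_N(y)\,\Delta_{-y}u(x)\,dy .
\]
Minkowski's integral inequality gives $\|P_Nu\|_{L^pE}\le\int|\varphi_N(y)|\,\|\Delta_{-y}u\|_{L^pE}\,dy$. Put $\omega(t)=\sup_{|h|\le t}\|\Delta_hu\|_{L^pE}$ and use the rapid decay of $\varphi$. This gives $\|P_Nu\|_{L^pE}\lesssim\sum_{j}\min\{1,(N2^{-j})^{-M}\}\,\omega(2^{-j})$, with the contributions from $|y|\ge1$ bounded by $2\|u\|_{L^pE}$. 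Multiplying by $N^s$ gives a convolution in the dyadic variable with an $\ell^1$ kernel. For $N=1$, the term $\|P_1u\|_{L^pE}\lesssim\|u\|_{L^pE}$ is handled by Young's inequality.

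\emph{Main obstacle.} The expected difficulty is that Step 2 produces the sup-modulus $\omega$, whereas the right-hand side of the lemma involves the averaged difference integral. For $k=1$ this is handled by the splitting $\Delta_{h}=\Delta_{h-h'}(\cdot+h')+\Delta_{h'}$ followed by averaging $h'$ over $[h/2,h]$. For general $k$, I would average $h'$ over a ball of radius $\sim|h|$ instead. This shows
\[
\omega(2^{-j})\lesssim 2^{jk}\int_{|h'|\lesssim2^{-j}}\|\Delta_{h'}u\|_{L^pE}\,dh' ,
\]
which is comparable to the dyadic pieces of the averaged integral and closes the estimate.
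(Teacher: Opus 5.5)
Your proof is correct, and it takes a different route from the paper. The paper does not prove this lemma at all; it imports it from \cite{chara_besov}, where such difference characterizations are established for metric-space-valued (nonlinear) Besov spaces, so without using any linear structure of $E$. You instead give the classical self-contained linear argument: smooth Littlewood--Paley pieces, Young's inequality for Bochner convolutions, Minkowski, and the mean-zero identity $P_Nu=\int\varphi_N(y)\,\Delta_{-y}u\,dy$. Your route needs no UMD-type property of $E$ and works for every $k$. It also connects the difference norm directly to the dyadic definition of $B^s_{z,p,q}E$ that the paper actually uses, whereas the citation buys brevity and generality.

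Your reading of the statement is the right one. As printed (scalar $h\in(0,1)$ with weight $dh/h^k$, $L^p(\R;E)$, and $f$ for $u$), it only makes sense for $k=1$. That is the case the paper uses, in the time variable. There, integrating over $h\in(0,1)$ rather than $|h|\le1$ is harmless, because $\|\Delta_{-h}u\|_{L^pE}=\|\Delta_hu\|_{L^pE}$.

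One simplification: the obstacle you single out in Step 2 is self-inflicted. On each shell $|y|\sim2^{-j}$, bound $|\varphi_N(y)|\lesssim N^k\min\{1,(N2^{-j})^{-M}\}$ by its supremum, instead of bounding $\|\Delta_{-y}u\|_{L^pE}$ by $\omega(2^{-j})$. Let $b_j$ be the average of $\|\Delta_yu\|_{L^pE}$ over the shell. This gives the kernel $(N2^{-j})^{k+s}\min\{1,(N2^{-j})^{-M}\}$ acting on $2^{js}b_j$, which is in $\ell^1$ once $M>k+s$, and H\"older on each shell finishes.

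If you keep the detour through $\omega$, note that $2^{jk}\int_{|h'|\lesssim2^{-j}}\|\Delta_{h'}u\|_{L^pE}\,dh'$ is not a single dyadic piece but $\sum_{i\ge j-1}2^{-(i-j)k}b_i$. So one more discrete Young step with kernel $2^{-(i-j)(k+s)}$ is needed, and the shell $1<|h'|\le2$ is absorbed by $2\|u\|_{L^pE}$. This is routine.
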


The following lemma gives the fundamental properties of the vector-valued Besov spaces.
\begin{lemma}[Embedding, duality and interpolation for Besov spaces, \cite{amannbesov,AmannEmbedding,NakamuraWada}]\label{lemallprop}
Let $E$, $E_1$, $E_2$ be Banach spaces and $B_{p,q}^s$ denote Besov-spaces defined on $\R^k$ with values in $E$. For given numbers $p_i,q_i,s_i$, $i=0,1$, define
$p_\theta^{-1}=(1-\theta)p_0^{-1}+\theta p_1^{-1}$, $q_\theta^{-1}=(1-\theta)q_0^{-1}+\theta q_1^{-1}$ and $s_\theta=(1-\theta)s_0+\theta s_1$. Then the following statements hold true:
\begin{itemize}
\item[(i)] For $m\in\Z$ and $p\in[1,\infty)$ we have $B_{p,1}^m E\hookrightarrow W^{m,p} E\hookrightarrow B_{p,\infty}^m E$.

\item[(ii)] For $1\leq p_1<p_2<\infty$ and $q\in[1,\infty]$ we have $B_{p_1,q}^{k(\frac{1}{p_1}-\frac{1}{p_2})} E\hookrightarrow L^{p_2,q} E$.

\item[(iii)] For $1\leq p_1<p_2<\infty$ and $M\in 2^{\N_0}$ we have $\|P_M f\|_{L^{p_2} E}\lesssim M^{k(\frac{1}{p_1}-\frac{1}{p_2})}\|P_M f\|_{L^{p_1} E}$.
\item[(iv)] For $1\leq p_1<p_2<\infty$, $q\in[1,\infty]$ and $s\in\R$ we have $B_{p_1,q}^{s+k(\frac{1}{p_1}-\frac{1}{p_2})} E\hookrightarrow B_{p_2,q}^{s} E$.

\item[(v)] For $p,q\in[1,\infty)$, $s\in\R$, and either $E$ being reflexive or $E'$ being separable\footnote{A Banach space satisfying such property is referred to as a Banach space satisfying the \textit{Radon-Nikodym property} in literature.}, we have $(B^s_{p,q}E)'=B^{-s}_{p',q'}E'$.

\item[(vi)] For $p\in[1,\infty)$, $q_0,q_1,\eta\in[1,\infty]$, $\theta\in(0,1)$ and $s_0,s_1\in\R$ with $s_0\neq s_1$ we have the real interpolation $(B_{p,q_0}^{s_0} E, B_{p,q_1}^{s_1} E)_{\theta,\eta}=B_{p,\eta}^{s_\theta}E$.
\item[(vii)] For $p_0,p_1\in(1,\infty)$, $q_0,q_1\in[1,\infty]$, $s_0,s_1\in\R$ and an interpolation couple $(E_0,E_1)$ we have the complex interpolation
$[B_{p,q_0}^{s_0} E_0, B_{p,q_1}^{s_1} E_1]_{\theta}=B_{p_\theta,q_\theta}^{s_\theta}[E_0,E_1]_\theta$.
\end{itemize}
\end{lemma}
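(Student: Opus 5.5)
The statement collects standard facts about vector-valued Besov spaces, and I would prove them by one unifying device: \emph{retraction onto weighted sequence spaces}. The map $u\mapsto (P_N u)_N$ sends $B^s_{p,q}E$ isometrically into the weighted space $\ell^q_s(L^p E)$, with norm $\bigl(\sum_N N^{qs}\|f_N\|_{L^pE}^q\bigr)^{1/q}$ plus the $N=1$ term. A left inverse is $(f_N)_N\mapsto \sum_N \widetilde P_N f_N$, where $\widetilde P_N=P_{N/2}+P_N+P_{2N}$ are fattened projections. Since $\widetilde P_N P_N=P_N$, the composition is the identity on $B^s_{p,q}E$. Hence $B^s_{p,q}E$ is a retract of $\ell^q_s(L^pE)$.

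The scalar kernel estimates carry over verbatim to $E$-valued functions. The reason is that Young's inequality $\|K*f\|_{L^rE}\le \|K\|_{L^a}\|f\|_{L^bE}$ holds for Bochner integrals. This gives boundedness of the retraction and coretraction whenever the projection kernels are uniformly $L^1$. For sharp cube projections with $p\neq 2$ this fails, so here I would work with smooth Littlewood--Paley projections, as the cited references do; the two definitions give equivalent Besov norms.

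\textbf{Bernstein and embeddings (iii), (iv), (i).} For (iii), write $P_Mf=\widetilde K_M * P_M f$, where $\widetilde K_M(x)=M^k\widetilde K(Mx)$. Young's inequality with $1+\frac1{p_2}=\frac1a+\frac1{p_1}$ gives the factor $\|\widetilde K_M\|_{L^a}\sim M^{k(1/p_1-1/p_2)}$. Item (iv) follows by multiplying (iii) by $N^s$ and taking the $\ell^q$ norm.

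For (i), the first inclusion $B^m_{p,1}E\hookrightarrow W^{m,p}E$ comes from the triangle inequality $\|\partial^\alpha u\|_{L^pE}\le\sum_N\|\partial^\alpha P_Nu\|_{L^pE}$ together with $\|\partial^\alpha P_N u\|_{L^pE}\lesssim N^{|\alpha|}\|P_Nu\|_{L^pE}$; the latter holds because $\partial^\alpha \widetilde K_N$ has $L^1$ norm $\sim N^{|\alpha|}$. For the second inclusion $W^{m,p}E\hookrightarrow B^m_{p,\infty}E$, I would write $P_N=N^{-m}\sum_{|\alpha|=m}Q_{N,\alpha}\partial^\alpha$ for $N\ge2$. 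Here the operators $Q_{N,\alpha}$ have multipliers $N^m\xi^\alpha|\xi|^{-2m}\varphi(\xi/N)$, hence kernels uniformly bounded in $L^1$. This gives $N^m\|P_Nu\|_{L^pE}\lesssim\|u\|_{W^{m,p}E}$.

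\textbf{Lorentz embedding (ii).} Pick exponents $p_a<p_2<p_b$, all larger than $p_1$. Summing (iii) with the triangle inequality gives $B^{s_j}_{p_1,1}E\hookrightarrow L^{p_j}E$ for $s_j=k(\frac1{p_1}-\frac1{p_j})$, $j\in\{a,b\}$. Real interpolation with parameters $(\theta,q)$ then gives $B^{s}_{p_1,q}E\hookrightarrow (L^{p_a}E,L^{p_b}E)_{\theta,q}$. By (vi) the left side is $B^{k(1/p_1-1/p_2)}_{p_1,q}E$, and the right side is $L^{p_2,q}E$ by the vector-valued Lorentz interpolation theorem, which is proved exactly as in the scalar case using $\|f(\cdot)\|_E$.

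\textbf{Duality and interpolation (v)--(vii).} By the retraction principle, these reduce to the corresponding statements for $\ell^q_s(L^pE)$. For (v), use $(\ell^q_s(X))'=\ell^{q'}_{-s}(X')$ together with $(L^pE)'=L^{p'}E'$. The latter is exactly where the Radon--Nikodym hypothesis enters; reflexivity, or separability of $E'$, suffices. For (vi), use the Lions--Peetre identity $(\ell^{q_0}_{s_0}X,\ell^{q_1}_{s_1}X)_{\theta,\eta}=\ell^\eta_{s_\theta}X$, valid for $s_0\neq s_1$. For (vii), use the complex interpolation identities $[\ell^{q_0}_{s_0}X_0,\ell^{q_1}_{s_1}X_1]_\theta=\ell^{q_\theta}_{s_\theta}[X_0,X_1]_\theta$ and $[L^{p_0}E_0,L^{p_1}E_1]_\theta=L^{p_\theta}[E_0,E_1]_\theta$. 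In both cases one must check that the retraction and coretraction act boundedly and compatibly on both endpoint spaces, which they do since they are defined by the same convolution operators.

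I expect the main obstacle to be (v) and (vii), where genuinely Banach-space-geometric inputs are needed: the RNP description of $(L^pE)'$, and the Calder\'on theorem for $L^p$ of an interpolation couple. Rather than reprove these, I would quote them from the cited works of Amann.
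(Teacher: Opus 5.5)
Your retraction of $B^s_{p,q}E$ onto $\ell^q_s(L^pE)$ is the standard device. It is essentially how the works the paper cites (Amann) obtain these facts; the paper itself gives no argument beyond the citation. Items (i)--(vi) go through as you sketch them, up to the remarks at the end.

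\textbf{The gap is in (vii).} The identity $[\ell^{q_0}_{s_0}X_0,\ell^{q_1}_{s_1}X_1]_\theta=\ell^{q_\theta}_{s_\theta}[X_0,X_1]_\theta$ needs $\min\{q_0,q_1\}<\infty$, and for $q_0=q_1=\infty$, $s_0<s_1$ it is false. Take $X_0=X_1=\C$:
\begin{itemize}
\item[(a)] The intersection $\ell^\infty_{s_0}\cap\ell^\infty_{s_1}=\ell^\infty_{s_1}$ is dense in $[\ell^\infty_{s_0},\ell^\infty_{s_1}]_\theta$.
\item[(b)] That interpolation space embeds contractively into $\ell^\infty_{s_\theta}$.
\item[(c)] The sequence $a_N=N^{-s_\theta}$ satisfies $N^{s_\theta}|a_N-b_N|\geq 1-N^{s_\theta-s_1}\|b\|_{\ell^\infty_{s_1}}\to1$ for every $b\in\ell^\infty_{s_1}$.
\end{itemize}
Hence $a\in\ell^\infty_{s_\theta}$ does not belong to the interpolation space.

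The same computation applies at the level of function spaces. Take $u$ with $\|P_Nu\|_{L^2}\sim N^{-s_\theta}$ for all $N$, and use the density of $B^{s_1}_{2,\infty}$, the general embedding $[A_0,A_1]_\theta\hookrightarrow(A_0,A_1)_{\theta,\infty}$, and (vi). This shows $[B^{s_0}_{2,\infty},B^{s_1}_{2,\infty}]_\theta\subsetneq B^{s_\theta}_{2,\infty}$. So (vii) as stated is false in this case, and no proof can cover it. You must add the hypothesis $\min\{q_0,q_1\}<\infty$, which is the hypothesis of the standard sequence-space identity you quote. You should also read $B^{s_j}_{p,q_j}$ in (vii) as $B^{s_j}_{p_j,q_j}$.

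\textbf{Two smaller points.}
\begin{itemize}
\item[(1)] In (i) you treat only $m\in\N_0$. That is all that makes sense until $W^{m,p}E$ is defined for $m<0$.
\item[(2)] Your aside that sharp and smooth Littlewood--Paley projections give equivalent Besov norms holds only for $1<p<\infty$ and UMD spaces $E$. For $p=1$, or for non-UMD $E$ even when $p=2$, the sharp cube multipliers are unbounded on $L^pE$, so the two norms are not equivalent. This costs you nothing, since you work with smooth projections throughout and (iii) uses only the Fourier support of $P_Mf$. But drop the claim rather than use it to transfer the lemma to sharply defined spaces.
\end{itemize}
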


We will also make of Sobolev spaces defined via the Bessel potential. Given $p\in(1,\infty)$ and $s\in\mathbb{R}$, we denote by
$H^{s,p}(\mathbb{T}^d)$ the (fractional regularity) Sobolev space
given by the norm
$\|f\|_{H^{s,p}}
=
\|\mathcal{F}_x^{-1}(\widehat{f}(\xi)\cdot\langle\xi\rangle^s)\|_{L^p(\mathbb{T}^d)}$,
where $\langle\xi\rangle=\sqrt{1+|\xi|^2}$.

\subsection{Fractional calculus in Besov spaces}\label{sec 3.2}
We record some useful auxiliary tools from \cite{KwakKwon} involving fractional calculus for periodic functions.

\begin{lemma}[Banach space-valued Schur-Besov inequality]\label{kwak_lem_lhh}
Let $E_i$, $i\in\{1,2,3\}$, be Banach spaces defined on a measure space $\mathcal{M}$ and assume the inequality
\begin{align*}
\bg|\int_{\mathcal{M}} f_1f_2f_3\,dm\bg|\lesssim \|f_1\|_{E_1}\|f_2\|_{E_2} \|f_3\|_{E_3}
\end{align*}
holds. Let $s_j\in\mathbb{R}$,
$p_j\in(1,\infty)$,
$q_j\in[1,\infty]$,
$j=1,2,3$
be parameters such that
\[
s_1+s_2+s_3>0,\qquad
\frac1{p_j}>s_j,\qquad
\frac1{p_1}+\frac1{p_2}+\frac1{p_3}
=
s_1+s_2+s_3+1,
\]
and
\[
\frac1{q_1}+\frac1{q_2}+\frac1{q_3}=1.
\]

\begin{enumerate}
\item
Assume
$s_1+s_2>0$,
$s_1+s_3>0$,
and
$s_2+s_3>0$.
Then, we have
\begin{equation*}
\|uv\|_{B^{-s_3}_{p_3',q_3'}E_3'}
\lesssim
\|u\|_{B^{s_1}_{p_1,q_1}E_1}
\|v\|_{B^{s_2}_{p_2,q_2}E_2}.
\end{equation*}

\item
Assume
$s_1+s_2>0$
and
$s_2+s_3>0$.
Then, we have
\begin{equation}\label{2.14}
\|\pi_{\le}(u,v)\|_{B^{-s_3}_{p_3',q_3'}E_3'}
\lesssim
\|u\|_{B^{s_1}_{p_1,q_1}E_1}
\|v\|_{B^{s_2}_{p_2,q_2}E_2}.
\end{equation}
\end{enumerate}
\end{lemma}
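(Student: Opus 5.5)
The plan is to reduce the statement to a scalar trilinear form and then sum Littlewood--Paley pieces as in the classical Besov product estimates. By duality (Lemma \ref{lemallprop}(v), under the reflexivity or separability hypothesis, or directly through the defining pairing otherwise), it suffices to bound
\[
\Big|\int_{\mathcal M}\int uv\,w\,dm\Big|
\]
for $w$ in the unit ball of $B^{s_3}_{p_3,q_3}E_3$. I will decompose $u=\sum_M u_M$, $v=\sum_N v_N$, $w=\sum_K w_K$ using the one-dimensional Littlewood--Paley projections $P^z$. By frequency support, only three configurations give nonzero pairings: $M\sim K\gtrsim N$, $N\sim K\gtrsim M$, and $M\sim N\gtrsim K$.

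For each dyadic triple, Bernstein's inequality (Lemma \ref{lemallprop}(iii)) is applied to the lowest-frequency factor. In the $z$-variable, H\"older's inequality for the given Banach trilinear bound is used with exponents chosen so that the lowest-frequency piece is raised from $L^{p_j}$ to the exponent needed to close Hölder. Since $\sum 1/p_j=1+\sum s_j>1$, exactly one factor must be measured in a weaker space, and that is the one Bernstein's inequality lifts.

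Consider the case where $N$ is lowest, with $M\sim K$. The bound is
\[
N^{\frac1{p_1}+\frac1{p_2}+\frac1{p_3}-1}\|u_M\|\|v_N\|\|w_K\|
=N^{s_1+s_2+s_3}\|u_M\|\|v_N\|\|w_K\|.
\]
Writing each norm with its Besov weight, this becomes
\[
(N/M)^{s_2+\cdots}\, a_M b_N c_K,
\]
where $a_M=M^{s_1}\|u_M\|$, $b_N=N^{s_2}\|v_N\|$, $c_K=K^{s_3}\|w_K\|$. Collecting the powers, the factor is $(N/M)^{s_1+s_2+s_3}\cdot M^{0}$ after adjusting: more precisely it is $N^{s_1+s_2+s_3}M^{-s_1}N^{-s_2}K^{-s_3}\approx (N/M)^{s_1+s_3}$. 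Because $s_1+s_3>0$ and $N\lesssim M$, this is a geometric decay.

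The other two cases work the same way. The case $M$ lowest gives $(M/N)^{s_2+s_3}$, and the case $K$ lowest gives $(K/N)^{s_1+s_2}$. The conditions $1/p_j>s_j$ ensure that the Bernstein step always goes from a lower to a higher integrability exponent, that is, the target exponent stays in $(1,\infty)$.

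The sums are then closed by Young's convolution inequality on the dyadic lattice, combined with H\"older's inequality in $\ell^{q_1}\times\ell^{q_2}\times\ell^{q_3}$ using $\sum 1/q_j=1$. The geometric kernel is summable, so the whole sum is bounded by $\|a\|_{\ell^{q_1}}\|b\|_{\ell^{q_2}}\|c\|_{\ell^{q_3}}$. This proves (1).

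For (2), the paraproduct $\pi_{\le}(u,v)$ restricts to $N\ge M/16$. This eliminates the configuration in which $N$ is much smaller than $M$, which is the only case that used $s_1+s_3>0$. Hence only $s_1+s_2>0$ and $s_2+s_3>0$ are needed.

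The main obstacle is the bookkeeping of exponents in the Bernstein/H\"older step: one must choose intermediate exponents so that Hölder closes exactly and no endpoint $p=1$ or $\infty$ arises. I would handle this by lifting only the lowest-frequency factor from $p_j$ to $\tilde p_j$, defined by $1/\tilde p_j=1/p_j-(\sum_i 1/p_i-1)$. This quantity lies in $(0,1)$ precisely because $1/p_j>s_j$ and the remaining $s_i$ sum positively.
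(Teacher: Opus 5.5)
The paper does not prove this lemma; it is imported from \cite{KwakKwon}. Your overall structure (duality, Littlewood--Paley trichotomy, Bernstein, H\"older in time through the trilinear bound, dyadic Young/Schur summation) is the right one. However, the exponent bookkeeping at its core fails. Let $j$ be the lowest-frequency index and $k,l$ the other two. Your lifted exponent is
\[
\frac1{\tilde p_j}=\frac1{p_j}-\Big(\sum_i\frac1{p_i}-1\Big)=1-\frac1{p_k}-\frac1{p_l},
\]
which is admissible only if $\frac1{p_k}+\frac1{p_l}\le1$. The hypotheses give only $\frac1{p_k}+\frac1{p_l}<1+s_k+s_l$.

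For example, $s_1=s_2=s_3=\frac14$, $\frac1{p_1}=\frac1{p_2}=\frac{17}{24}$, $\frac1{p_3}=\frac13$ satisfies every assumption of (1), yet $\frac1{\tilde p_j}<0$ for every $j$. In the configuration $M\sim N\gtrsim K$, even putting $w_K$ in $L^\infty$ leaves $\frac1{p_1}+\frac1{p_2}=\frac{17}{12}>1$, so lifting the low factor alone cannot close H\"older.

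Your justification is also incorrect. The hypothesis $\frac1{p_j}>s_j$ only gives $\frac1{\tilde p_j}>-(s_k+s_l)$. The fact that Bernstein raises integrability comes from $s_1+s_2+s_3>0$, not from $\frac1{p_j}>s_j$. So your argument never genuinely uses $\frac1{p_j}>s_j$, and that hypothesis cannot be dropped. Test (1) with scalar $E_i$, $u=e^{iHt}\phi(Ht)$, $w=e^{-iHt}\phi(Ht)$, $v=e^{iLt}\phi(Lt)$, $L\ll H$, where $\phi$ is real, $\phi(0)\neq0$, with Fourier support in $[-\frac1{10},\frac1{10}]$. Then $|\int uvw|\sim H^{-1}$, while the right-hand side is $\sim H^{-1}(H/L)^{\frac1{p_2}-s_2}$, which forces $\frac1{p_2}\ge s_2$.

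The missing idea is a distributed Bernstein step. Let $L$ be the low frequency and $H$ the common high frequency. If $\frac1{p_j}\ge s_1+s_2+s_3$, your step works and gives the kernel $(L/H)^{s_k+s_l}$. Otherwise:
\begin{itemize}
\item[(a)] Put the low factor in $L^\infty$ at cost $L^{1/p_j}$. This endpoint is not in Lemma \ref{lemallprop}(iii) as stated, but it holds for vector-valued functions by the same convolution argument. Alternatively, stop at a large finite exponent and spend part of the strict margin.
\item[(b)] Lift one high factor from $L^{p_k}$ to $L^{p_l'}$. This is legitimate because $\frac1{p_k}+\frac1{p_l}>1$ in this case, and it costs $H^{s_1+s_2+s_3-1/p_j}$.
\end{itemize}
After inserting the Besov weights, the kernel becomes
\[
L^{\frac1{p_j}-s_j}\,H^{s_1+s_2+s_3-\frac1{p_j}-s_k-s_l}=\Big(\frac LH\Big)^{\frac1{p_j}-s_j},
\]
and this is exactly where $\frac1{p_j}>s_j$ enters. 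With the kernel $(L/H)^{\min\{s_k+s_l,\,1/p_j-s_j\}}$, your dyadic Young/H\"older summation and your reduction for (2) go through. Part (2) then uses $\frac1{p_1}>s_1$ and $\frac1{p_3}>s_3$ in addition to $s_2+s_3>0$ and $s_1+s_2>0$.

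In the regimes where the paper actually invokes the lemma, all pairwise sums $\frac1{p_k}+\frac1{p_l}$ lie below $1$, so your simplified step would suffice there. The gap concerns the lemma as stated.
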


\begin{lemma}[First fractional chain rule periodic functions]\label{lem3.9-}
Let $\alpha \in (0,1)$ and $F \in C^{0,\alpha}(\mathbb{C})$. Let also
$s \in (0,\alpha)$, $\sigma\in(s/\alpha,\infty)$, and
$p,p_1,p_2 \in (1,\infty)$ be exponents satisfying
\(
\frac{1}{p}
=
\frac{1}{p_1}
+
\frac{1}{p_2}
\)
and
\(
\left(1-\frac{s}{\alpha\sigma}\right)p_1>1.
\) Then we have
\begin{equation*}
\|F(u)\|_{H^{s,p}}
\lesssim
\|u\|_{L^{(\alpha-\frac{s}{\sigma})p_1}}^{\,\alpha-\frac{s}{\sigma}}
\cdot
\|u\|_{H^{\sigma,\frac{s}{\sigma}p_2}}^{\,\frac{s}{\sigma}}.
\end{equation*}
\end{lemma}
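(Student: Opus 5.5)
The plan is to transfer the Euclidean Hölder chain rule (of Visan type) to $\T^d$ by working directly with a difference/square-function characterization of $H^{s,p}$. Throughout, set $\kappa=s/\sigma\in(0,\alpha)$; the hypothesis $\sigma>s/\alpha$ is exactly $\kappa<\alpha$. Since $\|F(u)\|_{H^{s,p}}\sim\|F(u)\|_{L^p}+\|D^sF(u)\|_{L^p}$ on the torus, and the $L^p$ part is handled by $|F(u)|\lesssim |F(0)|+|u|^\alpha$ together with Hölder's inequality, the task reduces to the homogeneous part. For $0<s<1$ I will use the Strichartz square function
\[
\mathcal S_sf(x)=\Big(\int_0^\infty\Big(r^{-s}\fint_{|y|\le1}|f(x+ry)-f(x)|\,dy\Big)^2\frac{dr}{r}\Big)^{1/2},
\]
which satisfies $\|D^sf\|_{L^p}\lesssim\|\mathcal S_sf\|_{L^p}$ for $1<p<\infty$. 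On $\T^d$ I justify this by applying the Euclidean statement to $\chi f$, where $f$ is extended periodically and $\chi$ is a cutoff equal to $1$ on a neighbourhood of the fundamental cell. The commutator and the large-$r$ contributions then only cost $\|f\|_{L^p}$-type terms.

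The key pointwise step uses the Hölder bound $|F(u(x+ry))-F(u(x))|\lesssim|u(x+ry)-u(x)|^\alpha$, which I split as
\[
|\Delta_{ry}u(x)|^{\alpha}=|\Delta_{ry}u(x)|^{\alpha-\kappa}\,|\Delta_{ry}u(x)|^{\kappa}.
\]
Next I bound the first factor by $(|u(x+ry)|+|u(x)|)^{\alpha-\kappa}$. Averaging over $|y|\le1$ and applying Hölder's inequality in $y$ with exponents chosen so that the first factor appears as an average of $|u|^{(\alpha-\kappa)\rho}$ for some $\rho>1$, I obtain a bound of the form
\[
r^{-s}\fint|\Delta_{ry}F(u)|\lesssim \big(\mathcal M(|u|^{(\alpha-\kappa)\rho})(x)\big)^{1/\rho}\Big(r^{-\sigma}\fint|\Delta_{ry}u|\Big)^{\kappa},
\]
where $\mathcal M$ is the Hardy–Littlewood maximal function. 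The powers of $r$ match because $r^{-s}=(r^{-\sigma})^{\kappa}$. In the worst case the second factor needs to be replaced by an average of $|\Delta_{ry}u|^{\kappa\rho'}$, which I handle with a maximal-type variant of the square function.

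Taking the $L^2(dr/r)$ norm gives
\[
\mathcal S_sF(u)\lesssim (\mathcal M|u|^{(\alpha-\kappa)\rho})^{1/\rho}\,\big(\mathcal S_\sigma^{(\kappa)}u\big)^{\kappa},
\]
where $\mathcal S_\sigma^{(\kappa)}$ is the square function of $u$ at regularity $\sigma$ with the $L^2(dr/r)$ norm replaced by $L^{2\kappa}(dr/r)$, raised to the power $1/\kappa$. I then apply Hölder's inequality with $1/p=1/p_1+1/p_2$. The first factor is bounded in $L^{p_1}$ by the maximal theorem, provided $p_1(\alpha-\kappa)/((\alpha-\kappa)\rho)>1$, i.e. $p_1/\rho>1$. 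Choosing $\rho$ close to $1$, this is exactly what the hypothesis $(1-\kappa/\alpha)p_1>1$ buys after the bookkeeping, and it yields $\|u\|_{L^{(\alpha-\kappa)p_1}}^{\alpha-\kappa}$. The second factor gives $\|\mathcal S_\sigma^{(\kappa)}u\|_{L^{\kappa p_2}}^{\kappa}$.

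The main obstacle is bounding $\|\mathcal S^{(\kappa)}_\sigma u\|_{L^{\kappa p_2}}\lesssim\|u\|_{H^{\sigma,\kappa p_2}}$ when $\sigma$ may exceed $1$ and $2\kappa<2$, so that the $\ell^{2\kappa}$ sum in $r$ is stronger than $\ell^2$. For this I will decompose $u$ into Littlewood–Paley pieces $u_N$. For $rN\le1$ I use the mean value bound $|\Delta_{ry}u_N|\lesssim rN\,\mathcal M u_N$ (via higher differences if $\sigma\ge1$), and for $rN>1$ I use the trivial bound $\mathcal Mu_N$. Summing the resulting geometric series in $N$ and $r$ uses $0<\sigma$ and the power $\kappa<1$: raising to the power $\kappa$ first and then using concavity removes the $\ell^{2\kappa}$ versus $\ell^2$ issue, up to Fefferman–Stein. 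This gives the bound by $\|(\sum_N N^{2\sigma}|u_N|^2)^{1/2}\|$, which is controlled by the $H^{\sigma,\kappa p_2}$ norm, assuming $\kappa p_2>1$; that too follows from the exponent assumptions. I expect this dyadic summation, together with checking that every exponent lies in $(1,\infty)$, to be the delicate part.
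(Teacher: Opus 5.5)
There is a genuine gap, and it sits at the step you yourself flag as the main obstacle.

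\textbf{The square function you are left with.} Factoring at each fixed $r$ leaves you with
\[
\mathcal S^{(\kappa)}_\sigma u=\Big\|r^{-\sigma}\fint_{|y|\le1}|\Delta_{ry}u|\,dy\Big\|_{L^{2\kappa}(dr/r)},\qquad 2\kappa<2.
\]
The bound you claim, $\|\mathcal S^{(\kappa)}_\sigma u\|_{L^{q}}\lesssim\|u\|_{H^{\sigma,q}}$ with $q=\kappa p_2$, is false. This quantity is a ball-means description of a Triebel--Lizorkin $F^{\sigma}_{q,2\kappa}$ norm, which is strictly stronger than $H^{\sigma,q}=F^{\sigma}_{q,2}$.

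A concrete counterexample: take a sufficiently sparse lacunary series $u=\sum_j 2^{-Lj\sigma}c_je^{i2^{Lj}x_1}$. Then $\|u\|_{H^{\sigma,q}}\sim\|c\|_{\ell^2}$. On the other hand, testing the $y$-average against a mean-zero bump gives $r^{-\sigma}\fint|\Delta_{ry}u(x)|\,dy\gtrsim c_k$ for $r$ in a dyadic range around $2^{-Lk}$ and every $x$. Hence $\mathcal S^{(\kappa)}_\sigma u\gtrsim\|c\|_{\ell^{2\kappa}}$, and any truncated $c\in\ell^2\setminus\ell^{2\kappa}$ breaks the claimed bound.

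\textbf{Further symptoms.}
\begin{enumerate}
\item Your dyadic computation shows the same loss. The geometric sums in $r$ and $N$, together with concavity of $t\mapsto t^\kappa$, produce $\big(\sum_N(N^\sigma\mathcal Mu_N)^{2\kappa}\big)^{1/(2\kappa)}$. This is an $\ell^{2\kappa}$ sum, and concavity runs the wrong way for upgrading it to the $\ell^2$ square function; Fefferman--Stein does not change the summation exponent.
\item For $\sigma\ge1$ the quantity is infinite. The difference is a first difference, forced by $|F(u(x+ry))-F(u(x))|\lesssim|\Delta_{ry}u|^\alpha$. As $r\to0$ one has $r^{-\sigma}\fint|\Delta_{ry}u|\sim r^{1-\sigma}|\nabla u(x)|$, so higher differences of $u_N$ cannot enter.
\item The hypothesis $(1-\frac{s}{\alpha\sigma})p_1>1$ is never actually used. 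Keeping a first-moment average in the second factor needs $\kappa\rho'\le1$, i.e.\ $\rho\ge(1-\kappa)^{-1}$ rather than $\rho$ close to $1$. With that choice the maximal theorem only requires $(1-\kappa)p_1>1$.
\end{enumerate}

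\textbf{The missing idea.} The lemma is the periodic version of Visan's H\"older chain rule, which the paper takes from \cite{KwakKwon}. Its standard proof interpolates in $r$ pointwise in $x$ instead of factoring at each $r$. For $0<\sigma<1$, the Riesz-potential bound $|u(x+h)-u(x)|\lesssim|h|^\sigma\big[\mathcal M(D^\sigma u)(x)+\mathcal M(D^\sigma u)(x+h)\big]$, combined with the trivial bound, gives
\[
\fint_{|y|\le1}|F(u(x+ry))-F(u(x))|\,dy\lesssim\min\big\{r^{\sigma\alpha}A(x)^\alpha,\,B(x)\big\},\qquad A=\big[\mathcal M\big((\mathcal M(D^\sigma u))^\alpha\big)\big]^{1/\alpha},\quad B=\mathcal M(|u|^\alpha).
\]
Since $\sigma\alpha>s$, integrating against $r^{-2s}\,dr/r$ (split at the $x$-dependent radius where the two bounds agree) yields
\[
\mathcal S_sF(u)\lesssim A^{s/\sigma}B^{1-\frac{s}{\alpha\sigma}}.
\]
To finish, apply H\"older with $\frac1p=\frac1{p_1}+\frac1{p_2}$. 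Then use the maximal theorem for $B$ in $L^{(1-\frac{s}{\alpha\sigma})p_1}$; this is exactly where the hypothesis enters. Also use it for $A$ in $L^{\kappa p_2}$; here $\kappa p_2>1$ is implicit in the statement rather than a consequence of the other assumptions.

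This argument only uses a uniform-in-$r$ bound on $r^{-\sigma}\Delta_{ry}u$, so the $\ell^{2\kappa}$ versus $\ell^2$ problem never arises. For $\sigma\ge1$, one can run it at some $\sigma_*\in(s/\alpha,1)$ and return to $H^{\sigma,\kappa p_2}$ by Gagliardo--Nirenberg interpolation with $L^{(\alpha-\kappa)p_1}$; the exponents close under the same hypothesis.

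\textbf{A remark on the $L^p$ part.} The inhomogeneous part $\|F(u)\|_{L^p}$ is controlled by the right-hand side only if $F(0)=0$; otherwise $u\equiv0$ is a counterexample. So your $|F(0)|$ term cannot be absorbed, and this normalization has to be assumed.
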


\begin{lemma}[Second fractional chain rule for periodic functions]\label{lem3.9}
Let $\alpha\in(1,\infty)$, $s\in[0,\alpha)$ and $k\in\Z$. Let also $p,p_1,p_2\in(1,\infty)$ satisfy $p^{-1}=(\alpha-1)p_1^{-1}+p_2^{-1}$. Then for $u:\T^n\to \C$ we have
\begin{align*}
\||u|^{\alpha-k} u^k\|_{H^{s,p}_y}\lesssim \|u\|_{L_y^{p_1}}^{\alpha-1}\|u\|_{H_y^{s,p_2}}.
\end{align*}
\end{lemma}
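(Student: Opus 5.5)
The plan is to reduce the estimate to the homogeneity of $G(z):=|z|^{\alpha-k}z^k$ and to induct on the integer part of $s$. The function $G$ is positively homogeneous of degree $\alpha$ and smooth away from the origin. Its Wirtinger derivatives $\partial_zG$ and $\partial_{\bar z}G$ are finite linear combinations of functions of the same type, $|z|^{\alpha-1-j}z^{j}$ or $|z|^{\alpha-1-j}\bar z^{\,j}$, which are homogeneous of degree $\alpha-1$. Consequently, for $z,w\in\C$,
\[
 |G(z)-G(w)|\lesssim(|z|^{\alpha-1}+|w|^{\alpha-1})|z-w|,
\]
and, when $\alpha-1\in(0,1)$, $\partial G\in C^{0,\alpha-1}(\C)$. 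The case $s=0$ is immediate: $|G(u)|=|u|^{\alpha}$ and Hölder's inequality with $p^{-1}=(\alpha-1)p_1^{-1}+p_2^{-1}$ give the claim.

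\emph{Case $0<s<1$.} I would use the square-function (Strichartz) characterization of $H^{s,p}(\T^n)$ by differences,
\[
 \mathcal D_sf(x)=\Big(\int_0^1\Big(r^{-s}\fint_{|h|\le r}|\Delta_hf(x)|\,dh\Big)^2\frac{dr}{r}\Big)^{1/2},
\]
which is the Triebel--Lizorkin analogue of Lemma \ref{lem_triebel}. It can be transferred from $\R^n$ by viewing periodic functions as tempered distributions, or equivalently by multiplying by a smooth cutoff equal to $1$ on a neighbourhood of a fundamental domain. By the pointwise bound above,
\[
 |\Delta_hG(u)(x)|\lesssim\big(|u(x)|^{\alpha-1}+|u(x+h)|^{\alpha-1}\big)|\Delta_hu(x)|
 \lesssim (Mu)(x)^{\alpha-1}\,|\Delta_hu(x)|,
\]
where $M$ is the Hardy--Littlewood maximal function. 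Hence $\mathcal D_s G(u)\lesssim (Mu)^{\alpha-1}\mathcal D_su$. Hölder's inequality and the boundedness of $M$ on $L^{p_1}$ (here $p_1>1$) then give the claim.

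\emph{Case $s\ge1$.} I would write $\|G(u)\|_{H^{s,p}}\sim\|G(u)\|_{L^p}+\|\nabla G(u)\|_{H^{s-1,p}}$ and expand
\[
 \nabla G(u)=\partial_zG(u)\nabla u+\partial_{\bar z}G(u)\nabla\bar u.
\]
The periodic fractional Leibniz (Kato--Ponce) rule gives
\[
 \|\partial G(u)\nabla u\|_{H^{s-1,p}}\lesssim\|\partial G(u)\|_{L^{\tilde p_1}}\|u\|_{H^{s,p_2}}+\|\partial G(u)\|_{H^{s-1,\tilde p_3}}\|\nabla u\|_{L^{\tilde p_4}}.
\]
The first term is fine with $\tilde p_1=p_1/(\alpha-1)$. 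For the second term I would distinguish two subcases.

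\emph{Subcase $\alpha-1>1$.} Since $s-1<\alpha-1$, the induction hypothesis applies to $\partial G$, which has the same structure with $\alpha$ replaced by $\alpha-1$. This gives $\|u\|_{L^{q_1}}^{\alpha-2}\|u\|_{H^{s-1,q_2}}$. I would then choose the exponents and use the Gagliardo--Nirenberg interpolation inequalities
\[
 \|u\|_{H^{s-1,q_2}}\lesssim\|u\|_{L^{p_1}}^{1/s}\|u\|_{H^{s,p_2}}^{1-1/s},\qquad
 \|\nabla u\|_{L^{\tilde p_4}}\lesssim\|u\|_{L^{p_1}}^{1-1/s}\|u\|_{H^{s,p_2}}^{1/s}.
\]
This recombines the product into $\|u\|_{L^{p_1}}^{\alpha-1}\|u\|_{H^{s,p_2}}$ and closes the induction.

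\emph{Subcase $\alpha-1\le1$.} Here $\partial G$ is only $C^{0,\alpha-1}$ (Lipschitz when $\alpha=2$). Instead of the induction hypothesis I would invoke the first chain rule, Lemma \ref{lem3.9-}, with Hölder exponent $\alpha-1$, smoothness $s-1\in(0,\alpha-1)$, and $\sigma=s$; this choice is admissible because $s>(s-1)/(\alpha-1)$ exactly when $s<\alpha$. This yields a product of a Lebesgue norm of $u$ and a power of $\|u\|_{H^{s,\cdot}}$. The same Gagliardo--Nirenberg bookkeeping then redistributes the exponents.

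I expect the main obstacle to be this exponent bookkeeping in the Hölder subcase. One must check that all intermediate Lebesgue exponents stay in $(1,\infty)$, and that the side condition $(1-\frac{s-1}{(\alpha-1)s})\,\cdot>1$ in Lemma \ref{lem3.9-} holds for every $s<\alpha$. If this fails near $s\to\alpha$, I would first try interpolating between the endpoint $s=1$ and $s$ close to $\alpha$ by complex interpolation (Lemma \ref{lemallprop}(vii)), although the map $u\mapsto G(u)$ is nonlinear. Failing that, I would use a paraproduct (Bony) decomposition of $G(u)$, treating the Hölder coefficient with the Schur--Besov inequality, Lemma \ref{kwak_lem_lhh}.
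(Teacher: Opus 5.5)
The paper gives no proof of this lemma (it is quoted from \cite{KwakKwon}), so your argument has to stand on its own. Its architecture is the standard one and is sound: induction on $\lfloor s\rfloor$, the Leibniz rule applied to $\partial_zG(u)\nabla u+\partial_{\bar z}G(u)\nabla\bar u$, the first chain rule in the H\"older regime, and Gagliardo--Nirenberg to redistribute exponents. However, the base case $0<s<1$, on which everything else rests, contains a false step. The bound $|u(x+h)|^{\alpha-1}\lesssim (Mu)(x)^{\alpha-1}$ does not hold, because $Mu(x)$ controls averages of $|u|$ over balls centred at $x$, not the point value at $x+h$. Averaging in $h$ does not rescue it either, since $\fint_{|h|\le r}|u(x+h)|^{\alpha-1}|\Delta_hu(x)|\,dh$ does not factor. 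In fact $\mathcal D_sG(u)\lesssim(Mu)^{\alpha-1}\mathcal D_su$ is false pointwise. Let $u$ vanish near $x$ and equal a bump of height $H$ and width $\epsilon$ at distance $\sim1$ from $x$. Then $\mathcal D_sG(u)(x)\sim H^\alpha\epsilon^n$, while $(Mu)(x)^{\alpha-1}\mathcal D_su(x)\sim (H\epsilon^n)^\alpha$, and the ratio blows up as $\epsilon\to0$.

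One repair that stays inside your difference framework is a case split. If $|u(x+h)|\le2|u(x)|$, the coefficient is $\lesssim|u(x)|^{\alpha-1}$. Otherwise $|u(x+h)|\le2|\Delta_hu(x)|$. Either way
\[
|\Delta_hG(u)(x)|\lesssim|u(x)|^{\alpha-1}|\Delta_hu(x)|+|\Delta_hu(x)|^{\alpha}.
\]
The first term gives $|u|^{\alpha-1}\mathcal D_su$, which H\"older handles with no maximal function. For the second term, the $L^p$ norm of its square function is the $\alpha$-th power of an $L^\alpha$-ball-mean expression for $F^{s/\alpha}_{\alpha p,2\alpha}$. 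It is therefore $\lesssim\|u\|_{F^{s/\alpha}_{\alpha p,2\alpha}}^\alpha\lesssim\|u\|_{L^{p_1}}^{\alpha-1}\|u\|_{H^{s,p_2}}$ by the Triebel--Lizorkin Gagliardo--Nirenberg inequality, whose exponent relation $\frac1{\alpha p}=\frac{1-1/\alpha}{p_1}+\frac{1/\alpha}{p_2}$ is exactly your hypothesis. Alternatively, you can run a Littlewood--Paley argument with Peetre maximal functions, or cite the Christ--Weinstein chain rule for $C^1$ functions.

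Your $s\ge1$ step closes, and the fallbacks (nonlinear interpolation, paraproducts) are unnecessary. Split $\frac1{\tilde p_3}=\frac{\alpha-2+1/s}{p_1}+\frac{1-1/s}{p_2}$. Then Lemma \ref{lem3.9-} with $\sigma=s$ outputs $\|u\|_{L^{p_1}}^{\alpha-2+1/s}\|u\|_{H^{s,p_2}}^{1-1/s}$ directly, and its side condition reduces to $p_1>\alpha-1$, which follows from $\frac{\alpha-1}{p_1}<\frac1p<1$.

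Two small corrections. First, $s>\frac{s-1}{\alpha-1}$ is equivalent to $s<\frac1{2-\alpha}$, which is implied by, not equivalent to, $s<\alpha$. If you prefer the classical range $\sigma<1$, choose $\sigma\in(\frac{s-1}{\alpha-1},1)$, which is nonempty exactly when $s<\alpha$. Second, $\alpha=2$ lies outside Lemma \ref{lem3.9-}. There $\partial G$ is Lipschitz with $\partial G(0)=0$, so the pointwise bound $|\Delta_h\partial G(u)|\lesssim|\Delta_hu|$ is legitimate and gives $\|\partial G(u)\|_{H^{s-1,q}}\lesssim\|u\|_{H^{s-1,q}}$ directly.
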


\begin{lemma}[First H\"older-Besov inequality]
Fix $s\in(0,1)$, $p\in(1,\infty)$, $\alpha\in(0,1)$, and
$F\in C^{0,\alpha}(\mathbb{C})$. For
$u\in B^{s}_{p,p}(\mathbb{T}^d)$, we have
\begin{equation*}
\|F(u)\|_{B^{s\alpha}_{p/\alpha,p/\alpha}(\mathbb{T}^d)}
\lesssim
\|u\|_{B^{s}_{p,p}(\mathbb{T}^d)}^{\alpha}.
\end{equation*}
\end{lemma}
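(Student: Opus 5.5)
The plan is to use the difference characterization of Besov spaces together with the H\"older continuity of $F$, working pointwise in the shift $h$.

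We write the norm of $B^s_{p,p}(\mathbb T^d)$ through Lemma \ref{lem_triebel}, which also holds on the torus with $h\in\mathbb T^d$ small:
\[
\|u\|_{B^s_{p,p}}\sim \|u\|_{L^p}+\Bigl(\int_{|h|\le1}|h|^{-sp}\|\Delta_h u\|_{L^p}^p\,\frac{dh}{|h|^d}\Bigr)^{1/p}.
\]
We apply the same characterization to $F(u)$ with regularity $s\alpha\in(0,1)$ and integrability index $p/\alpha$.

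The key pointwise bound is
\[
|\Delta_h F(u)(x)|=|F(u(x+h))-F(u(x))|\le [F]_{C^{0,\alpha}}|\Delta_h u(x)|^\alpha .
\]
Hence
\[
\|\Delta_h F(u)\|_{L^{p/\alpha}}^{p/\alpha}\lesssim \int|\Delta_hu|^{p}\,dx=\|\Delta_hu\|_{L^p}^p .
\]
Since $|h|^{-s\alpha\cdot p/\alpha}=|h|^{-sp}$, the seminorm part satisfies
\[
\int |h|^{-sp}\|\Delta_hF(u)\|_{L^{p/\alpha}}^{p/\alpha}\frac{dh}{|h|^d}\lesssim\int|h|^{-sp}\|\Delta_hu\|_{L^p}^p\frac{dh}{|h|^d}\le\|u\|_{B^s_{p,p}}^p .
\]
Taking the $\alpha/p$ power, this part is bounded by $\|u\|_{B^s_{p,p}}^\alpha$.

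For the $L^{p/\alpha}$ part, we note that the estimate can only be homogeneous if $F(0)=0$, or if $F$ is understood modulo constants; I would assume the standard convention $F(0)=0$ implicit in the statement. Under that convention $|F(u)|\lesssim|u|^\alpha$, so
\[
\|F(u)\|_{L^{p/\alpha}}\lesssim\|u\|_{L^p}^\alpha\le\|u\|_{B^s_{p,p}}^\alpha .
\]

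The only genuine obstacle is justifying the difference characterization on $\mathbb T^d$ rather than $\mathbb R^k$, and with the exponent $p/\alpha$, which may exceed the stated range but stays in $(1,\infty)$. For this I would periodically extend the functions and localize with a smooth cutoff, or simply cite the periodic version of Lemma \ref{lem_triebel}. Everything else is the elementary computation above.
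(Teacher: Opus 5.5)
Your proof is correct and takes the standard route. The paper gives no proof of its own (it simply records the lemma from Kwak--Kwon), but the difference characterization of Lemma \ref{lem_triebel}, combined with the pointwise bound $|\Delta_h F(u)|\le [F]_{C^{0,\alpha}}|\Delta_h u|^\alpha$ and the exact matching of weights $|h|^{-s\alpha\cdot p/\alpha}=|h|^{-sp}$, is precisely the intended argument. Your caveat that the $L^{p/\alpha}$ part needs $F(0)=0$ is right (for a constant $F\neq0$ and $u=0$ the stated inequality fails, and the power-type functions it is applied to satisfy this), and since Lemma \ref{lem_triebel} already allows all exponents in $[1,\infty]$, you only need to cite its periodic version.
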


\begin{lemma}[Second H\"older-Besov inequality]
Let $s_0,s_1>0$ be exponents satisfying
$2s_0+s_1<1$.
Fix $p\in(1,\infty)$, $\alpha\in(0,1)$, and a function
$F\in C^{0,\alpha}(\mathbb{C})$.
For $u:\mathbb{R}\times\mathbb{T}^d\to\mathbb{C}$, we have
\begin{equation*}
\|F(u)\|_{B^{s_0\alpha}_{p/\alpha,p/\alpha}
B^{s_1\alpha}_{p/\alpha,p/\alpha}}
\lesssim
\|u\|_{L^p
B^{2s_0+s_1}_{p,p}
\cap
B^{s_0+s_1/2}_{p,p}
L^p}^{\alpha}.
\end{equation*}
\end{lemma}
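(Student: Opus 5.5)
We prove the second H\"older-Besov inequality through the difference characterization of Lemma~\ref{lem_triebel}, applied first in time and then in space, with $E=B^{s_1\alpha}_{p/\alpha,p/\alpha}(\mathbb T^d)$ as the value space in time. Set $q=p/\alpha$. Since $s_0\alpha,s_1\alpha\in(0,1)$, we have
\[
\|F(u)\|_{B^{s_0\alpha}_{q,q}B^{s_1\alpha}_{q,q}}^q
\sim \int\|F(u(t))\|_{B^{s_1\alpha}_{q,q}}^q\,dt
+\int_0^1\!\!\int h^{-s_0\alpha q}\,\|\Delta^t_hF(u)(t)\|_{B^{s_1\alpha}_{q,q}}^q\,dt\,\frac{dh}{h}.
\]
We expand each inner spatial norm again by Lemma~\ref{lem_triebel} in $x$. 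This produces three families of terms:
\begin{itemize}
\item[(a)] pure $L^q$ terms;
\item[(b)] pure spatial differences $\Delta^x_kF(u)$ with weight $|k|^{-s_1\alpha q-d}$;
\item[(c)] mixed differences $\Delta^x_k\Delta^t_hF(u)$ with weight $h^{-s_0\alpha q-1}|k|^{-s_1\alpha q-d}$, together with terms containing only $\Delta^t_hF(u)$.
\end{itemize}

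Terms of types (a) and (b), and the terms with only $\Delta^t_h$, are handled directly by the H\"older bound $|F(z)-F(w)|\le C|z-w|^\alpha$. For instance, $|\Delta^x_kF(u)|^q\lesssim|\Delta^x_ku|^p$. So the spatial term is bounded by $\int\!\int |k|^{-s_1\alpha q-d}|\Delta^x_ku|^p$, and since $s_1\alpha q=s_1p$, this equals $\|u\|_{L^pB^{s_1}_{p,p}}^p$. That norm is controlled by $\|u\|_{L^pB^{2s_0+s_1}_{p,p}}^p$. The pure time term gives similarly $\|u\|_{B^{s_0}_{p,p}L^p}^p\lesssim\|u\|_{B^{s_0+s_1/2}_{p,p}L^p}^p$. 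The $L^q$ term is bounded by $\|u\|_{L^p}^p$ after subtracting the constant $F(0)$; we may normalize $F(0)=0$ since constants are harmless on bounded time intervals, or we work with the homogeneous seminorm.

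The main obstacle is the mixed second difference $\Delta^x_k\Delta^t_hF(u)$. H\"older continuity alone gives no product structure, but it does give two separate bounds:
\[
|\Delta^x_k\Delta^t_hF(u)|\lesssim \min\{A,B\},\qquad A=|\Delta^t_hu(t,x)|^\alpha+|\Delta^t_hu(t,x+k)|^\alpha,\quad B=|\Delta^x_ku(t,x)|^\alpha+|\Delta^x_ku(t+h,x)|^\alpha .
\]
We then split the $(h,k)$-region at the parabolic-type threshold $|k|=h^{\kappa}$ for a suitable $\kappa>0$:
\begin{itemize}
\item In the region $|k|\ge h^\kappa$ we use $A$. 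The $k$-integral $\int_{|k|\ge h^\kappa}|k|^{-s_1p-d}\,dk\sim h^{-\kappa s_1p}$ converts the loss into extra time regularity. The result is $\int h^{-(s_0+\kappa s_1)p-1}\|\Delta^t_hu\|_{L^p}^p\,dh$, which is $\|u\|_{B^{s_0+\kappa s_1}_{p,p}L^p}^p$.
\item In the region $|k|<h^\kappa$ we use $B$. Now $\int_{h>|k|^{1/\kappa}}h^{-s_0p-1}\,dh\sim|k|^{-s_0p/\kappa}$, giving $\|u\|_{L^pB^{s_1+s_0/\kappa}_{p,p}}^p$.
\end{itemize}
Choosing $\kappa=1/2$ yields exactly the exponents $s_0+s_1/2$ in time and $2s_0+s_1$ in space, which matches the statement. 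The hypothesis $2s_0+s_1<1$ (hence $s_0+s_1/2<1/2<1$) guarantees that both resulting Besov norms are still in the range $(0,1)$ where Lemma~\ref{lem_triebel} applies with first differences. This is the step we must check carefully: that the translation $t\mapsto t+h$ and $x\mapsto x+k$ in the second summands of $A$ and $B$ cause no issue, which follows by invariance of the $L^p$ norms. Summing the three contributions and taking $q$-th roots gives
\[
\|F(u)\|_{B^{s_0\alpha}_{q,q}B^{s_1\alpha}_{q,q}}\lesssim \|u\|_{L^pB^{2s_0+s_1}_{p,p}\cap B^{s_0+s_1/2}_{p,p}L^p}^{\alpha}.
\]
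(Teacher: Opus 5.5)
Your proof is correct. There is no in-paper argument to compare it with: the paper lists this lemma in Section~2.4 as an auxiliary tool taken from \cite{KwakKwon} and gives no proof, so your argument serves as a complete, self-contained replacement.

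The decisive mixed-difference step checks out. On the region $\{|k|\ge h^{1/2}\}$ you bound by first time differences and use $\int_{h^{1/2}\le|k|\le1}|k|^{-s_1p-d}\,dk\lesssim h^{-s_1p/2}$, which needs $s_1>0$; this produces $\|u\|_{B^{s_0+s_1/2}_{p,p}L^p}^p$. On the region $\{|k|<h^{1/2}\}$ you bound by first space differences and use $\int_{|k|^2}^1h^{-s_0p-1}\,dh\lesssim|k|^{-2s_0p}$, which needs $s_0>0$; this produces $\|u\|_{L^pB^{2s_0+s_1}_{p,p}}^p$. The shifted arguments disappear because you integrate out $x$ (resp.\ $t$) first by Tonelli. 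The hypothesis $2s_0+s_1<1$ is needed only so that the right-hand norms admit the first-difference characterization.

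Compared with simply citing \cite{KwakKwon}, your route shows that the two exponents come from the parabolic matching $|k|^2\sim h$. It also yields more: splitting at $|k|=h^{\kappa}$ gives the bound $\|u\|_{B^{s_0+\kappa s_1}_{p,p}L^p}+\|u\|_{L^pB^{s_1+s_0/\kappa}_{p,p}}$ whenever both indices lie in $(0,1)$. The stated lemma is the case $\kappa=1/2$, which fits the Schr\"odinger scaling of $Z^s$.

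Two small corrections.

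First, the inequality as written fails when $F(0)\neq0$ (take $u\equiv0$), so it must be read with $F(0)=0$, as in every application ($F(z)=|z|^{a_*}$ and similar). Your remark that constants are ``harmless on bounded time intervals'' is not the right justification. On $\mathbb{R}_t$ a nonzero constant has infinite $L^{p/\alpha}$ norm, and a homogeneous right-hand side cannot absorb it in any case. The correct normalization is to replace $F$ by $F-F(0)$, which leaves every difference term unchanged; then $|F(u)|^{p/\alpha}\lesssim|u|^p$ handles the pure $L^{p/\alpha}$ term.

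Second, Lemma~\ref{lem_triebel} is stated on $\mathbb{R}^k$. For the spatial variable you are using its standard $\mathbb{T}^d$ analogue (differences with $|k|\le1$), which is fine but worth saying.
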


\subsection{Atomic and $Z^s$-spaces}\label{sec_zs_space}
We introduce in this subsection the atomic spaces introduced in \cite{HerrTataruTz1,HerrTataruTz2,HaniPausader,RmT1} as well as the $Z^s$ space in \cite{KwakKwon} which will be used to study the Cauchy problem \eqref{eq:nls}.

\subsubsection{The atomic spaces}
Let $H$ be a separable Hilbert space. Let $\mathcal Z$ be the collection
of finite non-decreasing sequences $\{t_k\}_{k=0}^K$ in
$(-\infty,\infty]$. For $1\le p<\infty$, we call
$a:\mathbb R\to H$ a $U^p$-atom if $a$ can be expressed as
\[
a=\sum_{k=1}^K\chi_{[t_{k-1},t_k)}\phi_k,
\qquad
\sum_{k=1}^K\|\phi_k\|_H^p=1.
\]
We define $U^pH$ as the space of all functions
$u:\mathbb R\to H$ that can be represented as
\[
u=\sum_{j=1}^\infty\lambda_ja_j,
\]
where $a_j$ is a $U^p$-atom for each $j\in\mathbb N$ and
$\{\lambda_j\}\in\ell^1$ is a complex-valued sequence, equipped with the
norm
\[
\|u\|_{U^pH}
:=
\inf
\left\{
\sum_{j=1}^\infty|\lambda_j|
:
u=\sum_{j=1}^\infty\lambda_ja_j,\,
\lambda_j\in\mathbb C,\,
a_j:\text{$U^p$-atom}
\right\}.
\]

We define $V^pH$ as the space of all functions
$u:\mathbb R\to H$ with $\|u\|_{V^pH}<\infty$, where the norm is defined
as
\[
\|u\|_{V^pH}^p
:=
\sup_{\{t_k\}_{k=0}^K\in\mathcal Z}
\sum_{k=1}^K
\|u(t_k)-u(t_{k-1})\|_H^p,
\]
where the convention $u(\infty)=0$ is used. Then, we define
$V_{rc}^pH$ as the subspace of $V^pH$ of right-continuous function
$u:\mathbb{R}\to H$ satisfying
\(
\lim_{t\to-\infty}u(t)=0.
\)
For simplicity of notation, we omit $H$ in
$U^pH$, $V^pH$, $V_{rc}^pH$ when $H\simeq\mathbb{C}$.
Based on this, we define the spaces
\(
U_\Delta^pH,
V_\Delta^pH,
V_{\Delta,rc}^pH
\)
as the images by the map
\(
u\mapsto e^{it\Delta}u
\)
of
\(
U^pH,
V^pH,
V_{rc}^pH,
\)
respectively.

Denote by $C=(-1/2,1/2]^d\in \R^d$ the unit cube in $\R^d$. For $z\in\R^d$ the translated cube $C_z$ is defined by $C_z:=C+z$. Moreover, we define the projector $P_{C_z}$ by
$$\mathcal{F}(P_{C_z}u):=\chi_{C_z}\mathcal{F}(u),$$
where $\chi_{C_z}$ is the characteristic function of $C_z$. For $s\in\R$ we then define the spaces $X_0^s(\R)$ and $Y^s(\R)$ through the norms
\begin{align}\label{def XsYs}
\begin{aligned}
\|u\|_{X^s_0(\R)}^2:=\sum_{z\in\Z^d}\la z\ra^{2s}\|P_{C_z} u\|_{U_{\Delta}^2(\R;L_x^2)},\\
\|u\|_{Y^s(\R)}^2:=\sum_{z\in\Z^d}\la z\ra^{2s}\|P_{C_z} u\|_{V_{\Delta}^2(\R;L_x^2)}.
\end{aligned}
\end{align}
For any subinterval $I\subset\R$, the space $X^s(I)$ is defined through the norm
\begin{align*}
\|u\|_{X^s(I)}:=\inf\{\|v\|_{X^s_0(\R)}:v\in X_0^s(\R),\,v|_I=u|_I\}.
\end{align*}
The space $Y^s(I)$ is similarly defined. 

We record the following useful properties of the previously defined function spaces.
\begin{lemma}[Embeddings between function spaces, \cite{HadacHerrKoch2009,HerrTataruTz1}]
For any $s\in\R$ and $p\in(2,\infty)$ we have
\begin{align*}
U^2_{\Delta}(I;H^s_{x})\hookrightarrow X^s(I)\hookrightarrow Y^s(I)\hookrightarrow V_{\Delta}^2(I;H_{x}^s)
\hookrightarrow U^p_{\Delta}(I;H_{x}^s)\hookrightarrow L_t^\infty(I;H_{x}^s).
\end{align*}
\end{lemma}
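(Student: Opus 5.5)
The plan is to verify the chain of embeddings link by link, using only the atomic definitions and the frequency-localized structure of $X^s$, $Y^s$; the Schrödinger group plays no role, since every space involved is the image under $u\mapsto e^{it\Delta}u$ (or a cube-wise sum of such images). Thus it suffices to prove the corresponding statements for the untwisted spaces $U^p H$, $V^p H$ with $H=L^2_x$, and to sum over cubes with the weight $\langle z\rangle^{2s}$. For the restriction spaces on $I$, each inequality is first proved on $\mathbb R$ and then transferred by taking infima over extensions. The only care needed is that $U^2_\Delta(I;H^s)$ and $V^2_\Delta(I;H^s)$ are also restriction norms, and extensions in one class map to extensions in the other.

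\emph{Steps 1 and 2.} First, $U^2_\Delta H^s\hookrightarrow X^s$. For a $U^2$-atom $a=\sum_k\chi_{[t_{k-1},t_k)}\phi_k$ with values in $H^s$, the projection $P_{C_z}a$ is again a step function. Hence $\|P_{C_z}a\|_{U^2}\le(\sum_k\|P_{C_z}\phi_k\|_{L^2}^2)^{1/2}$. Squaring, weighting and summing over $z$ gives
\[
\sum_z\langle z\rangle^{2s}\|P_{C_z}a\|_{U^2_\Delta}^2\lesssim \sum_k\|\phi_k\|_{H^s}^2=1 .
\]
Extending to atomic decompositions uses the triangle inequality in the $\ell^2_z$-valued norm. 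Next, $X^s\hookrightarrow Y^s$ follows cube by cube from the standard inclusion $U^2\hookrightarrow V^2_{rc}$, with $\|u\|_{V^2}\le 2\|u\|_{U^2}$. This is checked on atoms: a partition sum of increments of a step function is bounded by $2\sum\|\phi_k\|^2$.

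\emph{Step 3.} To show $Y^s\hookrightarrow V^2_\Delta H^s$, fix a partition $\{t_k\}$. By orthogonality of the cubes,
\[
\sum_k\|u(t_k)-u(t_{k-1})\|_{H^s}^2\sim\sum_k\sum_z\langle z\rangle^{2s}\|P_{C_z}(u(t_k)-u(t_{k-1}))\|^2_{L^2}.
\]
Exchanging the sums and bounding each inner $k$-sum by $\|P_{C_z}u\|_{V^2}^2$ gives the claim. This step works precisely because $p=2$ matches the $\ell^2$ sum over cubes.

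\emph{Steps 4 and 5.} The embedding $V^2\hookrightarrow U^p$ for $p>2$ is the main obstacle, since it is the one nontrivial link. I would cite Hadac--Herr--Koch \cite{HadacHerrKoch2009}, Proposition 2.4 / Lemma 6.4. If a sketch is wanted, for $v\in V^2_{rc}$ one constructs, for each $j$, a partition with at most $\sim 2^{2j}$ points. The step-function approximations $v_j$ then satisfy $\|v-v_j\|_{L^\infty}\le 2^{-j}\|v\|_{V^2}$. The differences $v_{j+1}-v_j$ are step functions with $\lesssim 2^{2j}$ pieces of size $\lesssim 2^{-j}$, so their $U^p$-norms are $\lesssim 2^{2j/p-j}\|v\|_{V^2}$. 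This series sums exactly when $p>2$. Right-continuity and the normalization at $-\infty$ (respectively the convention $u(\infty)=0$) are handled as in that reference. Finally, $U^p\hookrightarrow L^\infty_t$ is immediate: each atom satisfies $\sup_t\|a(t)\|\le(\sum\|\phi_k\|^p)^{1/p}=1$, and $e^{it\Delta}$ is unitary on $H^s$.
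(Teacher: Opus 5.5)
The paper does not prove this lemma; it only cites \cite{HadacHerrKoch2009,HerrTataruTz1}. Your Steps 1, 2, 3 and 5 are correct and are the standard arguments behind that citation. Your greedy-partition sketch of $V^2\hookrightarrow U^p$ is also the standard proof: at most $\lesssim 2^{2j}$ partition points, and dyadic differences with $U^p$-norm $\lesssim 2^{j(2/p-1)}\|v\|_{V^2}$. There is a small slip in Step 2. The squared increments of a $U^2$-atom are bounded by $4\sum_k\|\phi_k\|^2$, not $2\sum_k\|\phi_k\|^2$; already the atom $2^{-1/2}(\chi_{[0,1)}-\chi_{[1,2)})$ has squared variation $3$. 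The constant $\|u\|_{V^2}\le 2\|u\|_{U^2}$ that you state is the correct one.

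\textbf{The genuine issue: right-continuity in Step 4.} You cannot defer this to the reference, because without it that link is false, and the paper's definitions do not impose it: $Y^s$ and $V^2_\Delta$ are built on the full space $V^2$, not on $V^2_{rc}$.
\begin{itemize}
\item \emph{Counterexample.} Take $0\neq\phi\in H^s$, let $0$ be an interior point of $I$, and set $u(t)=\chi_{(0,\infty)}(t)e^{it\Delta}\phi$. Each Fourier coefficient $c_z(t)$ of $e^{-it\Delta}u(t)$ has $V^2$-norm $\sqrt2|\widehat\phi(z)|$. Hence $u\in Y^s(I)$, and so $u\in V^2_\Delta(I;H^s)$. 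But $e^{-it\Delta}u$ is not right-continuous at $t=0$, whereas every element of $U^p$ is a uniform limit of right-continuous step functions. Every extension of $u$ has the same defect, so $u\notin U^p_\Delta(I;H^s)$.
\item \emph{What you should state.} The chain holds, and your argument proves it, only when $V^2_\Delta$ (inside $Y^s$ and as the third space) is read as $V^2_{\Delta,rc}$, as in the cited works. Say this explicitly rather than writing that it is ``handled as in that reference''.
\item \emph{Where your construction uses it.} Right-continuity guarantees that consecutive greedy partition points have increments $\ge 2^{-j}\|v\|_{V^2}$, which gives the count $\lesssim 2^{2j}$. It also identifies the $U^p$-limit with $v$.
\item \emph{Consequence for Step 3.} Under this reading, Step 3 must also pass right-continuity from the individual coefficients $c_z$ to the $H^s$-valued function $e^{-it\Delta}u$. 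This follows by dominated convergence, since $|c_z(t)-c_z(t_0)|\le\|c_z\|_{V^2}$ and $\sum_z\la z\ra^{2s}\|c_z\|_{V^2}^2<\infty$.
\item \emph{The normalization at $-\infty$ is harmless.} On a bounded interval, extending $u$ by $0$ to the left of $I$ costs only a jump of size $\sup_{t\in I}\|u(t)\|_{H^s}\le\|u\|_{V^2_\Delta(I;H^s)}$, by the convention $u(\infty)=0$.
\end{itemize}
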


\begin{lemma}[Duality, \cite{HerrTataruTz1}]\label{basis duhamel}
For $u\in L_t^1H_{x}^{s}(I)$ we have
\begin{align*}
\|K^+u\|_{X^{s}(I)}\lesssim \sup_{\|v\|_{Y^{-s}(I)}\leq 1}\int_{I\times(\T^d)}u(t,x)\bar{v}(t,x)\,dxdt.\lesssim \|u\|_{L_t^1 H_x^{s}}.
\end{align*}
\end{lemma}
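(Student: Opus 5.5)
The statement immediately above is Lemma~\ref{basis duhamel}: the duality bound $\|K^+u\|_{X^s(I)}\lesssim \sup_{\|v\|_{Y^{-s}(I)}\le1}\int u\bar v\lesssim\|u\|_{L^1_tH^s_x}$. I would prove it along the lines of \cite{HadacHerrKoch2009,HerrTataruTz1}. First reduce to the whole line. Extend $u$ by zero outside $I$. The restriction definition of $X^s(I)$ then gives $\|K^+u\|_{X^s(I)}\le\|\chi_{[t_0,\infty)}K^+u\|_{X_0^s(\R)}$, where $t_0$ is the left endpoint of $I$, up to harmless adjustments of the time origin. On the dual side, a test function $v\in Y^{-s}(\R)$ restricts to $Y^{-s}(I)$ with no larger norm, so it suffices to work on $\R$.

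The core is the $U^2$--$V^2$ duality for a single cube. Since $P_{C_z}$ commutes with $e^{it\Delta}$ and with $K^+$, the $X^s_0$ norm of $K^+u$ splits into the $\ell^2_z$ sum of $\la z\ra^{2s}\|P_{C_z}K^+u\|^2_{U^2_\Delta L^2}$. For each cube, pulling back by $e^{-it\Delta}$ turns $K^+u$ into $t\mapsto\int_{t_0}^t e^{-is\Delta}u(s)\,ds$. Now invoke the Hadac--Herr--Koch duality theorem: $(U^2)^*=V^2$ via the Stieltjes pairing. For absolutely continuous $w$ with $w'\in L^1H$, it gives
\[
\|w\|_{U^2H}\sim\sup_{\|\tilde v\|_{V^2H}\le1}\Bigl|\int\la w'(t),\tilde v(t)\ra\,dt\Bigr|.
\]
With $\tilde v=e^{-it\Delta}v$, the integrand becomes $\la P_{C_z}u,v\ra_{L^2_x}$.

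To sum over cubes, choose the near-extremal $v_z$ for each $z$ and glue them as $v=\sum_z c_z\la z\ra^{s}P_{C_z}v_z$, with $\{c_z\}$ normalized in $\ell^2$. This $v$ lies in $Y^{-s}$ with norm $\lesssim1$. By Cauchy--Schwarz in $z$, this yields the first inequality.

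The second inequality is immediate. By the embedding $Y^{-s}\hookrightarrow V^2_\Delta H^{-s}\hookrightarrow L^\infty_tH^{-s}_x$, we get
\[
\Bigl|\int u\bar v\Bigr|\le\|u\|_{L^1_tH^s_x}\|v\|_{L^\infty_tH^{-s}_x}\lesssim\|u\|_{L^1_tH^s}\|v\|_{Y^{-s}}.
\]
This also justifies that the pairing is well defined for $u\in L^1H^s$.

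The main obstacle is the precise form of the $U^2$--$V^2$ duality pairing, including right-continuity and the convention $v(\infty)=0$. One must check that the pairing reduces to the Lebesgue integral $\int\la w',\tilde v\ra$ when $w$ is absolutely continuous. I would first verify this on step atoms, then pass to general $u$ by density of smooth $u$ in $L^1H^s$.
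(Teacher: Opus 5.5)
Your argument is correct, and it is the standard proof behind the citation. The paper gives no proof of its own; it simply invokes Herr--Tataru--Tzvetkov, whose argument is exactly your per-frequency $U^2$--$V^2$ duality (Hadac--Herr--Koch, with $B(w,\tilde v)=-\int\la w',\tilde v\ra\,dt$ for absolutely continuous $w$), followed by $\ell^2$-gluing of near-extremal test functions and the trivial $L^1_tH^s_x$--$L^\infty_tH^{-s}_x$ pairing for the second inequality. The one step worth writing out is your ``harmless adjustment of the time origin'': since $K^+$ is based at $0$ while $I$ (e.g.\ $I_T=[-T,T]$) may start at $t_0<0$, one has $K^+u=\int_{t_0}^t e^{i(t-s)\Delta}u(s)\,ds-e^{it\Delta}\phi_0$ with $\phi_0=\int_{t_0}^0e^{-is\Delta}u(s)\,ds$, and $\|\phi_0\|_{H^s}$ must be bounded by the dual supremum rather than by $\|u\|_{L^1_tH^s_x}$ (otherwise the first inequality is not obtained as stated), which you get by testing against $v=\chi_{[t_0,0)}e^{it\Delta}g$, whose $Y^{-s}$ norm is $\lesssim\|g\|_{H^{-s}}$.
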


\subsubsection{The $Z^s$-space}
We next give the definition of the $Z^s$-space and record its useful properties established in \cite{KwakKwon}. Fix the regularity number $s\in\R$. For
\begin{equation*}
 0<\sigma\ll1,
 \qquad
 p=\frac{d+2}{d/2-\sigma},
\end{equation*}
the space $Z^s$ is defined via the following norm:
\begin{equation}\label{def Zs}
\begin{aligned}
\|u\|_{Z^s}
={}&
\max_{q\in\left[p,\frac{1}{\sigma}\right]}
\left\|
\left\|
\psi' u_N
\right\|_{L^qL^r}
\right\|_{\ell^{2,s-\sigma}_N}
\\
&\quad+
\max_{\alpha\in\left[\sigma,\frac1p-\sigma\right]}
\left\|
\max_{\substack{R\in2^{\mathbb N}\\ R\le 8N}}
R^{-(2\alpha+\sigma)}
\left\|
\left\|
\psi P_{\le 8R}I_{Rk}u_N
\right\|_{B^{\alpha}_{p,1}L^p}
\right\|_{\ell^2(k\in\mathbb Z^d)}
\right\|_{\ell^{2,s}_N}.
\end{aligned}
\end{equation}

\begin{lemma}\label{lem:3.6}
We have the following properties:
\begin{itemize}
\item
For a finite interval $I\subset\mathbb{R}$, we have the embedding
\begin{equation}\label{3.11}
\ell_s^2(Z^0)'
=
(Z^{-s})'
\hookrightarrow
(Y^{-s})'
\xrightarrow{K^+}
Y^s
\hookrightarrow
Z^s
=
\ell_s^2 Z^0.
\end{equation}

\item
For a finite interval $I\subset\mathbb{R}$, we have
\begin{equation*}
\|u\cdot\chi_I\|_{Z^s}
\lesssim
\|u\|_{Z^s}.
\end{equation*}
This estimate is uniform in the choice of $I$.

\item
For $u\in Z^s$, we have
\begin{equation}\label{3.13}
\lim_{T\to0^+}
\|u\cdot\chi_{[0,T]}\|_{Z^s}
=
0.
\end{equation}

\item
Let
$q\in\left[p,\frac1\sigma\right]$
and $r$ be parameters such that
\[
\frac2q+\frac dr=\frac d2-\sigma.
\]
We have
\begin{equation}\label{3.14}
\|\psi u\|_{L^qH^{s-\sigma,r}}
\lesssim
\|\psi u\|_{L^qB^{\,s-\sigma}_{r,2}}
\lesssim
\|\psi u\|_{\ell^2_{\,s-\sigma}L^qL^r}
\lesssim
\|u\|_{Z^s}.
\end{equation}

\item
Let
$\alpha\in\left[\sigma,\frac1p-\sigma\right]$
and
$\beta=\sigma+2\alpha$.
We have
\begin{equation}\label{3.15}
\|\psi u\|_{B^{\alpha}_{p,2}B^{\,s-\beta}_{p,2}}
\lesssim
\|\psi u\|_{\ell^2_{\,s-\beta}B^{\alpha}_{p,1}L^p}
\lesssim
\|u\|_{Z^s}.
\end{equation}
\end{itemize}
Similar properties hold with $s$ replaced by $0$.
\end{lemma}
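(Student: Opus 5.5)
The plan is to verify the five items of Lemma \ref{lem:3.6} one at a time. Most of the work goes into the central embedding $Y^s\hookrightarrow Z^s$. The other items are soft consequences of the structure of the $Z^s$ norm, namely an $\ell^2_N$ sum of time-localized Lebesgue and Besov norms with $q<\infty$ and $\alpha<1/p$.

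For $Y^s\hookrightarrow Z^s$, I estimate the two components of \eqref{def Zs} separately.

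\emph{Strichartz component.} For fixed $N$ and $q\in[p,1/\sigma]$, I use the periodic Strichartz estimate of Bourgain--Demeter. It bounds $\|\psi e^{it\Delta}P_Nf\|_{L^qL^r}$ by $N^{\sigma'}\|f\|_{L^2}$ with an arbitrarily small loss $\sigma'<\sigma$, since the pair lies strictly inside the admissible range. Applying it to $U^q_\Delta$-atoms gives the bound for $U^q_\Delta$. The $V^2_\Delta\hookrightarrow U^q_\Delta$ embedding, with $q>2$ and summed over unit cubes by orthogonality, then gives $\|\psi' u_N\|_{L^qL^r}\lesssim N^{\sigma}\|u_N\|_{Y^0}$. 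Square-summing with weight $N^{2(s-\sigma)}$ finishes this part.

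\emph{Galilean component.} Fix $R\le 8N$ and $k$. The Galilean transform $I_{Rk}$ maps a free solution with frequencies in the cube of side $R$ centred at $Rk$ to a free solution at frequencies $\lesssim R$, up to a harmless modulation and translation. On such data the time derivative costs $R^2$. Interpolating between the $L^p_tL^p_x$ Strichartz bound, which loses $R^{\sigma}$, and one time derivative gives $\|\psi P_{\le 8R}I_{Rk}u\|_{B^\alpha_{p,1}L^p}\lesssim R^{2\alpha+\sigma}\|P_{Rk+[-R,R]^d}u\|_{V^2_\Delta L^2}$. Here I again pass first through $U^p$-atoms and then through $V^2\hookrightarrow U^p$. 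Square-summing in $k$ uses the almost orthogonality of the cubes, and uniformity in $R$ gives the maximum over $R$.

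\emph{Duality and Duhamel.} The chain \eqref{3.11} then follows. The identity $(Z^{-s})'\hookrightarrow(Y^{-s})'$ is the dual of $Y^{-s}\hookrightarrow Z^{-s}$. The bound $K^+:(Y^{-s})'\to Y^s$ is Lemma \ref{basis duhamel} together with $X^s\hookrightarrow Y^s$.

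For the cutoff bound $\|u\chi_I\|_{Z^s}\lesssim\|u\|_{Z^s}$, the Lebesgue part is trivial. In the Besov part, $\chi_I$ commutes with the spatial operators $P_{\le 8R}$ and $I_{Rk}$, since the latter acts as a spatial shift depending on $t$ composed with a phase. So it suffices that characteristic functions of intervals are pointwise multipliers on $B^\alpha_{p,1}(\R)$, uniformly in $I$, for $0<\alpha<1/p$. I would check this with Lemma \ref{lem_triebel}. The difference $\Delta_h(\chi_I f)$ splits into $\chi_I\Delta_h f$ plus a boundary term supported in a set of measure $2h$. The boundary term is controlled by Hardy's inequality, which needs $\alpha<1/p$, and this is exactly the range of $\alpha$ in \eqref{def Zs}. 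I expect this to be the main technical obstacle, together with the dependence of $I_{Rk}$ on $t$.

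For \eqref{3.13}, dominated convergence in the $\ell^2_N$ and $\ell^2_k$ sums, with the dominating sequence supplied by the uniform bound just proved, reduces the claim to a single block. There, $q<\infty$ handles the Lebesgue norm. For the Besov norm, I approximate by smooth functions in $B^\alpha_{p,1}$ and use that $\|\chi_{[0,T]}g\|_{B^\alpha_{p,1}}\to0$ for smooth $g$ when $\alpha<1/p$.

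Finally, \eqref{3.14} follows from Littlewood--Paley theory in $x$ with $r\ge 2$, Minkowski's inequality ($q\ge2$) and Bessel--Besov embeddings. For \eqref{3.15} I take $R\sim N$ in the Galilean term, where only $k$ with $|k|\lesssim1$ survive. This gives $\|\psi u_N\|_{B^\alpha_{p,1}L^p}\lesssim N^{\beta}\cdot(\text{block})$, and the $\ell^2$-in-$N$ Besov embedding then follows, using $p\ge2$. The same arguments apply verbatim with $s$ replaced by $0$.
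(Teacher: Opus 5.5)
The paper gives no proof of this lemma; it records it from Kwak--Kwon, so your outline has to stand on its own. Most of it does.

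\begin{itemize}
\item The duality chain is fine.
\item The cutoff bound is fine. Hardy gives the $B^\alpha_{p,p}$ multiplier bound for $\alpha<1/p$, and the $q=1$ case follows by real interpolation between two such exponents.
\item \eqref{3.14} and \eqref{3.15} are fine.
\item \eqref{3.13} works once you note that the maxima over the continuous parameters $q$ and $\alpha$ are controlled by their endpoint values via H\"older (log-convexity). Dominated convergence is then applied to finitely many quantities.
\end{itemize}

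One misstatement: for $\frac2q+\frac dr=\frac d2-\sigma$, the loss $N^\sigma$ in the periodic Strichartz estimate is sharp by scaling. To see this, take data concentrated at a point and $|t|\lesssim N^{-2}$. So no $\sigma'<\sigma$ is available. This is harmless, because $N^\sigma$ is exactly what you then use.

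The genuine gap is in the Galilean component of $Y^s\hookrightarrow Z^s$, which is the only non-routine part of the lemma.

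\emph{What fails.} Interpolating between the $L^p_{t,x}$ Strichartz bound and one time derivative works for free waves. It does not work for $U^p_\Delta$-atoms $a=\sum_j\chi_{[t_{j-1},t_j)}e^{it\Delta}\phi_j$: these jump, $\partial_t a$ contains Dirac masses, and the interpolation gives nothing. Nor can you treat the pieces separately with your single-interval multiplier bound. The triangle inequality then produces $\sum_j\|\phi_j\|_{L^2}$, a $U^1$-type quantity that $V^2$ does not control.

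\emph{What is missing.} You need an $\ell^p$-localization across the jumps; this is where $\alpha<1/p$ is really used in the embedding. After the Galilean reduction to frequencies $\lesssim R$, put $g_j=\psi e^{it\Delta}\phi_j$. For $0<\alpha<1/p$, the difference characterization gives
\[
\|\psi a\|^p_{B^\alpha_{p,p}L^p}\lesssim\sum_j\Big(\|g_j\|^p_{B^\alpha_{p,p}L^p}+\big\||t-t_{j-1}|^{-\alpha}g_j\big\|^p_{L^p_{t,x}}+\big\||t-t_j|^{-\alpha}g_j\big\|^p_{L^p_{t,x}}\Big).
\]
Hardy's inequality and the free-wave bound $\|g_j\|_{B^{\alpha}_{p,p}L^p}\lesssim R^{2\alpha+\sigma}\|\phi_j\|_{L^2}$ then yield $\|\psi a\|_{B^\alpha_{p,p}L^p}\lesssim R^{2\alpha+\sigma}$.

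\emph{Recovering the exact power.} Apply this at $\alpha_0=\alpha-\sigma/2$ and $\alpha_1=\alpha+\sigma/2$, and use
\[
\|f\|_{B^{\alpha}_{p,1}}\lesssim\|f\|_{B^{\alpha_0}_{p,\infty}}^{1/2}\|f\|_{B^{\alpha_1}_{p,\infty}}^{1/2}.
\]
This gives the $B^\alpha_{p,1}$ bound with the exact power $R^{2\alpha+\sigma}$, uniformly for $\alpha\in[\sigma,\frac1p-\sigma]$.

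\emph{Finishing.} With this step in place, the rest goes as you describe:
\begin{itemize}
\item the atomic decomposition of $V^2$ applies, and $I_{Rk}$ maps $U^p_\Delta$-atoms to $U^p_\Delta$-atoms, so its $t$-dependence is no obstacle;
\item the cubes $-Rk+[-8R,8R]^d$ have bounded overlap, and summing over them uses the unit-cube $\ell^2$ structure of $Y^0$, not merely $V^2_\Delta L^2$.
\end{itemize}
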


We also record a useful multi-product variant of Lemma \ref{kwak_lem_lhh} which was originally shown and applied in the proof of \cite[Prop. 3.11.]{KwakKwon}. 

\begin{lemma}\label{lem:mixed-coeff}
For $m\in \Z$ we have
\begin{equation*}
 \|\psi|u|^{\frac{a}{2}-m}u^m\|_{
 B^{\zeta}_{t,\widehat r,\widehat r}
 B^{\eta}_{x,\widehat r,\widehat r}}
 \lesssim\|\psi u\|_{L_t^rB^{2s_0+s_1}_{x,r,2}
\cap
B^{s_0+s_1/2}_{t,r,2}L_x^r}^{a/2}
 \lesssim \|u\|_{Z^{s_c}}^{a/2}
\end{equation*}
and
\begin{equation*}
 \|\psi|u|^{a-m}u^m\|_{
 B^{\frac{1}{r_0}-\frac{1}{q_0}}_{t,r_0,r_0}
 B^{\theta}_{x,r_0,r_0}}
  \lesssim\|\psi u\|_{L_t^rB^{2s_0+s_1}_{x,r,2}
\cap
B^{s_0+s_1/2}_{t,r,2}L_x^r}^{a}
 \lesssim \|u\|_{Z^{s_c}}^{a},
\end{equation*}
where the appearing parameters are defined in Section \ref{dictionary}.
\end{lemma}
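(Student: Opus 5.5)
The statement to be proved is Lemma \ref{lem:mixed-coeff}: two Besov bounds on the coefficients $\psi|u|^{a/2-m}u^m$ and $\psi|u|^{a-m}u^m$. Each bound is a chain $\text{LHS}\lesssim\text{(middle norm)}^{\kappa}\lesssim\|u\|_{Z^{s_c}}^{\kappa}$, with $\kappa=a/2$ or $\kappa=a$. I will prove the two inequalities of each chain separately.

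\textbf{Second inequality.} This is an embedding of $Z^{s_c}$ into the space $L_t^rB^{2s_0+s_1}_{x,r,2}\cap B^{s_0+s_1/2}_{t,r,2}L_x^r$.

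For the spatial part I use \eqref{3.14} with an admissible pair $(q,\tilde r)$, $q\in[p,1/\sigma]$, which gives $\psi u\in L^qB^{s_c-\sigma}_{\tilde r,2}$. I then apply Bernstein (Lemma \ref{lemallprop}(iii)) in $x$, trading the regularity surplus $s_c-\sigma-(2s_0+s_1)>0$ for integrability $\tilde r\to r$, and use H\"older in $t$ on the compact support of $\psi$. The constraint $2s_0+s_1<s_c-\delta_0$ from Section \ref{dictionary} provides exactly this surplus.

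For the time regularity I use \eqref{3.15} with $\alpha=s_0+s_1/2$, which lies in $[\sigma,1/p-\sigma]$ because $\sigma_4\ll s_0\ll1$. This gives $\psi u\in B^{\alpha}_{p,2}B^{s_c-\beta}_{p,2}$ with $\beta=\sigma+2\alpha$. Since $s_c-\beta\ge$ the spatial Sobolev gap, I embed via Lemma \ref{lemallprop}(iv) into $B^{\alpha}_{t,p,2}L^r_x$. If $r>p$ in time, I interpolate (Lemma \ref{lemallprop}(vi)/(vii)) with the $L^qL^{\tilde r}$ bound at large $q$ to raise the time integrability, accepting an arbitrarily small loss absorbed into the margin.

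The defining relation for $r$ in the dictionary, $\frac a2(\frac1r-s_0)=\frac1{\hat r}-\zeta$, fixes $r$ so that the scaling counts match. Checking that $r$ falls in the range reachable from $Z^{s_c}$ is a routine exponent computation.

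\textbf{First inequality.} This is a H\"older-type chain rule in mixed Besov spaces. Write $G_m(z)=|z|^{\kappa-m}z^m$, which is homogeneous of degree $\kappa$.

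\emph{Case $\kappa<1$.} Here $G_m\in C^{0,\kappa}$ and I apply the Second H\"older--Besov inequality directly with $\alpha=\kappa$, $p=r$. The $\ell^2$ versus $\ell^r$ summability is handled by Besov embeddings ($r\ge2$). This yields $G_m(\psi u)\in B^{s_0\kappa}_{r/\kappa,r/\kappa}B^{s_1\kappa}_{r/\kappa,r/\kappa}$, and I note that $\psi^\kappa$ versus $\psi$ is harmless by choosing $\psi$ appropriately.

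\emph{Case $\kappa\ge1$.} I use the factorization hinted at by $J$ and $a_*=a/(2J)$: write $G_m(u)$ as a product of $2J$ (respectively $4J$) factors of the form $|u|^{a_*-m_i}u^{m_i}$, each of which is $C^{0,a_*}$ with $a_*<1$. Each factor is controlled by the Second H\"older--Besov inequality in $B^{s_0a_*}_{r/a_*}B^{s_1a_*}_{r/a_*}$. The product is then estimated by iterating the Schur--Besov inequality (Lemma \ref{kwak_lem_lhh}) in $t$ and in $x$, with the factor exponents adding to $1/\hat r$ (respectively $1/r_0$).

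\textbf{Final embedding and main obstacle.} Multiplying out, the $\kappa$-power space embeds into the target $B^{\zeta}_{\hat r,\hat r}B^{\eta}_{\hat r,\hat r}$ by Lemma \ref{lemallprop}(iv) in time and in space. This works because the defining identities for $r$ are precisely the Sobolev-embedding balances: $\frac a2(\frac1r-s_0)=\frac1{\hat r}-\zeta$ in $t$ and $\frac a2(\frac1r-\frac{s_1}d)=\frac1{\hat r}-\frac\eta d$ in $x$. The positivity conditions $s_0a_*>\zeta$ and $s_1a_*>\eta$ guarantee that the regularity does not go down.

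The second estimate of the lemma (exponents $r_0$, $\theta$, power $a$) is identical with $\kappa=a$, using the analogous balance. Both balances follow from the definitions of $\hat r,\zeta,\eta,\theta$.

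The main obstacle is the product step for $\kappa\ge1$. The Schur--Besov hypotheses $s_i+s_j>0$ and $1/p_j>s_j$ must hold for every pair of factors, in both the time and space variables simultaneously. I expect this to require a careful choice of $s_0,s_1$ relative to $\sigma_4$.
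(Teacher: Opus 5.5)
Your route is the one the paper relies on. The paper gives no separate proof; it attributes the lemma to the argument of \cite[Prop. 3.11]{KwakKwon} and calls it a multi-product variant of the Schur--Besov Lemma \ref{kwak_lem_lhh}. The dictionary's factorization $J$, $a_*$ and the conditions $s_0a_*>\zeta$, $s_1a_*>\eta$ point to exactly your plan: split into homogeneous $C^{0,a_*}$ factors, apply the second H\"older--Besov inequality to each, recombine by Schur--Besov (those two conditions supply the room for its strict positivity hypotheses), and match exponents via the identities defining $r$ and $s_1$. Two points in your write-up need fixing.

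\emph{Factor count.} Since $a_*=a/(2J)$, the coefficient $|u|^{a/2-m}u^m$ is a product of $J$ factors of degree $a_*$, and $|u|^{a-m}u^m$ a product of $2J$ such factors, not $2J$ and $4J$.

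\emph{No margin in the embedding.} The dictionary forces $\frac1r=s_0+\frac1{aq_0}$ and $s_1=ds_0+\frac{\sigma_3}{a}$. Hence $\frac{d+2}{r}-(2s_0+s_1)=\frac2a$, so both components of $L_t^rB^{2s_0+s_1}_{x,r,2}\cap B^{s_0+s_1/2}_{t,r,2}L_x^r$ are exactly scaling-critical. An interpolation ``with an arbitrarily small loss'' therefore would not return the stated norm. The step must be done losslessly, as follows.
\begin{itemize}
\item Spatial component: take $q=r$ in \eqref{3.14}, with $\frac2r+\frac d{\tilde r}=\frac d2-\sigma$. This is admissible: $r\le1/\sigma$ is trivial, and $r\ge p$ is equivalent to $2s_0+s_1\le s_c-\sigma$, which holds for $s_0$ small. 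Then use Sobolev in $x$, whose cost $d(\frac1{\tilde r}-\frac1r)$ equals $s_c-\sigma-(2s_0+s_1)$ exactly.
\item Temporal component: apply \eqref{3.15} with the larger exponent $\alpha'=s_0+\frac{s_1}{2}+\frac1p-\frac1r$, which is $\le\frac1p-\sigma$ for $s_0$ small. Then use Sobolev in $t$ from $L^p$ to $L^r$, and in $x$ from $B^{s_c-\sigma-2\alpha'}_{x,p,2}$ to $B^0_{x,r,2}\hookrightarrow L^r_x$. The exponents again cancel exactly.
\end{itemize}
Relatedly, the hypothesis $2s_0+s_1<s_c-\delta_0$ is not what drives this deterministic step. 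It does not even imply $2s_0+s_1\le s_c-\sigma$, since $\delta_0<\sigma$. That hypothesis is needed for the random linear part in Lemma \ref{random zs norm_lem}.
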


\subsection{The Galilean transform}
We shall establish suitable bilinear estimates within the space $Z^s$, along with the Galilean transform
\begin{align}\label{Galilean trans}
I_{\xi}u(t,x)=e^{ix\cdot\xi-it|\xi|^2}u(t,x-2t\xi).
\end{align}
Here we also record some properties of the Galilean transform which will be repeatedly used throughout the paper. 
\begin{lemma}\label{lem 3.8}
For any $\xi\in \R^d$ the following properties of the Galilean transformation hold.
\begin{itemize}
\item $(i\pt_t +\Delta)I_\xi u=I_\xi (i\pt_t +\Delta) u$.
\item For any set $C\subset \Z^d$ it holds $P_{C+\xi}I_\xi u= I_\xi P_C u$.
\item $\|I_\xi u\|_{Y^0}=\|u\|_{Y^0}$.
\end{itemize}
\end{lemma}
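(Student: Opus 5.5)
The three properties follow from direct computation, and I would take $\xi\in\Z^d$ throughout. This is the case used later, with $\xi=Rk$. It is needed so that $I_\xi$ maps $2\pi$-periodic functions to $2\pi$-periodic functions and so that $z\mapsto z+\xi$ permutes the lattice cubes $C_z$. The organizing idea is to compute $I_\xi$ on the Fourier side: writing $u(t,x)=\sum_{n}\widehat u(t,n)e^{in\cdot x}$, one gets
\[
 I_\xi u(t,x)=\sum_{n\in\Z^d}\widehat u(t,n)\,e^{-it(|\xi|^2+2n\cdot\xi)}e^{i(n+\xi)\cdot x},
\]
so that $\widehat{I_\xi u}(t,n+\xi)=e^{-it(|\xi|^2+2n\cdot\xi)}\widehat u(t,n)$.

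For the first property, set $\varphi=x\cdot\xi-t|\xi|^2$ and differentiate $e^{i\varphi}u(t,x-2t\xi)$ by the chain rule. The time derivative produces
\[
 e^{i\varphi}\bigl(|\xi|^2u-2i\xi\cdot\nabla u+i\partial_tu\bigr),
\]
evaluated at $(t,x-2t\xi)$. The Laplacian produces
\[
 e^{i\varphi}\bigl(\Delta u+2i\xi\cdot\nabla u-|\xi|^2u\bigr)
\]
at the same point. The cross terms and the $|\xi|^2$ terms cancel, leaving $I_\xi\bigl((i\partial_t+\Delta)u\bigr)$. Equivalently, this can be read off from the Fourier formula, using the identity $|n+\xi|^2=|n|^2+2n\cdot\xi+|\xi|^2$.

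For the second property, the Fourier formula shows that $I_\xi$ moves the mode $n$ to $n+\xi$, up to a unimodular time-dependent factor. Hence restricting the output to frequencies in $C+\xi$ is the same as restricting the input to frequencies in $C$, which gives $P_{C+\xi}I_\xi u=I_\xi P_Cu$.

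For the third property, I would compute the interaction picture. The same identity for $|n+\xi|^2$ gives
\[
 e^{-it\Delta}I_\xi u=M_\xi\, e^{-it\Delta}u,
 \qquad
 M_\xi f:=e^{i\xi\cdot x}f,
\]
and $M_\xi$ is a time-independent unitary operator on $L^2_x$. It therefore preserves every increment $\|v(t_k)-v(t_{k-1})\|_{L^2}$, so it is an isometry of $V^2(\R;L^2_x)$. Combining this with the second property applied to $C=C_{z-\xi}$ gives
\[
 \|P_{C_z}I_\xi u\|_{V^2_\Delta L^2}
 =\|I_\xi P_{C_{z-\xi}}u\|_{V^2_\Delta L^2}
 =\|P_{C_{z-\xi}}u\|_{V^2_\Delta L^2}.
\]
At regularity $0$ the weights are identically one, so summing over $z\in\Z^d$ and reindexing $z\mapsto z+\xi$ yields $\|I_\xi u\|_{Y^0(\R)}=\|u\|_{Y^0(\R)}$.

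For a subinterval $I$, note that $I_\xi$ commutes with restriction in time and is invertible with inverse $I_{-\xi}$. So $I_\xi$ maps the set of extensions of $u|_I$ bijectively onto the set of extensions of $(I_\xi u)|_I$, and the infima defining the two norms coincide.

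The only point requiring care is the last one: the bookkeeping of cube projections and the passage to restriction norms. This is routine once the modulation identity $e^{-it\Delta}I_\xi=M_\xi e^{-it\Delta}$ is in hand.
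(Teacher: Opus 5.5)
Your proposal is correct and complete. The paper only records this lemma (it is taken over from the Galilean framework of Kwak--Kwon) and gives no proof, so there is nothing to compare against. Your argument supplies a full verification: the Fourier-side formula $\widehat{I_\xi u}(t,n+\xi)=e^{-it(|\xi|^2+2n\cdot\xi)}\widehat u(t,n)$, the chain-rule cancellation, the mode shift for the projections, the interaction-picture identity $e^{-it\Delta}I_\xi=M_\xi e^{-it\Delta}$, reindexing of the cubes, and the inverse $I_{-\xi}$ for the restriction norms.

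Restricting to $\xi\in\Z^d$ is the right reading of the statement. For $\xi\notin\Z^d$ the factor $e^{ix\cdot\xi}$ is not $2\pi$-periodic, so the second and third items are not meaningful on $\T^d$. Every application in the paper has $\xi=Rk$ or $2Rk$ with $k\in\Z^d$, and your chain-rule computation for the first item does not use integrality anyway.

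There is one cosmetic slip. The expression you attribute to ``the time derivative'' is $i\partial_t(I_\xi u)$, not $\partial_t(I_\xi u)$; read that way, the cancellation is exactly as you state.

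You can also shorten the last step. Each cube $C_z$ contains the single lattice point $z$, so the $Y^0$ norm is, up to normalization, just $\ell^2_n V^2$ of the interaction coefficients $e^{it|n|^2}\widehat u(t,n)$. The third item is then literally the identity $e^{it|n+\xi|^2}\widehat{I_\xi u}(t,n+\xi)=e^{it|n|^2}\widehat u(t,n)$ followed by reindexing $n\mapsto n+\xi$.
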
 

\subsection{Probabilistic tools}
In this final subsection we record some useful auxiliary large deviation estimates.
\begin{lemma}[Almost sure endpoint roughness, \cite{BurqTzvetkov1}]
\label{lem:endpoint-roughness}
For the random Fourier series \eqref{eq:random-data}, it holds $\phi^\omega\notin H^{s_c}$ almost surely.
\end{lemma}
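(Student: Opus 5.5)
The statement to prove is Lemma~\ref{lem:endpoint-roughness}: for the random series \eqref{eq:random-data}, $\phi^\omega\notin H^{s_c}$ almost surely. I will prove this directly from the independence and Gaussianity of the $g_n$; no deterministic estimate is needed. Write
\[
\|\phi^\omega\|_{H^{s_c}}^2=\sum_{n\in\Z^d}\la n\ra^{2s_c}b_n^2|g_n(\omega)|^2=\sum_{n}\la n\ra^{-d+2\delta}|g_n(\omega)|^2 .
\]
This is a series of independent nonnegative random variables $X_n=c_n|g_n|^2$ with $c_n=\la n\ra^{-d+2\delta}$. The expectation is $\mathbb E X_n=c_n$ (normalizing $\mathbb E|g_n|^2=1$), so the plan is to show $\sum_n X_n=\infty$ almost surely. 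A useful first observation is that $\sum_n c_n=\infty$. Indeed, grouping dyadically, $\sum_{|n|\sim N}c_n\sim N^{d}\cdot N^{-d+2\delta}=N^{2\delta}$, and even for $\delta=0$ each block has size $\sim 1$.

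The key step is to upgrade divergence of expectations to almost sure divergence. I will do this via dyadic blocks $S_N=\sum_{|n|\sim N}X_n$. Within a block, $\mathbb E S_N=\sum_{|n|\sim N}c_n\gtrsim N^{2\delta}$. The variance is $\operatorname{Var}S_N=\sum_{|n|\sim N} c_n^2\operatorname{Var}|g_n|^2\lesssim N^{d}N^{-2d+4\delta}$, which is much smaller than $(\mathbb E S_N)^2\sim N^{4\delta}$ because $d\ge1$. Chebyshev's inequality then gives
\[
\mathbb P\bigl(S_N<\tfrac12\mathbb E S_N\bigr)\lesssim N^{-d}.
\]
This is summable over dyadic $N$, so by Borel--Cantelli, almost surely $S_N\ge\frac12\mathbb E S_N\gtrsim 1$ for all large $N$. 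Summing over $N$ shows the full series diverges almost surely.

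As an alternative route, one can argue as in \cite{BurqTzvetkov1}: the event $\{\phi^\omega\in H^{s_c}\}$ is a tail event, so by Kolmogorov's zero-one law it has probability $0$ or $1$. It then suffices to rule out probability one, for instance via $\mathbb E e^{-\|\phi^\omega\|_{H^{s_c}}^2}=\prod_n(1+c_n)^{-1}=0$, which holds since $\sum_n c_n=\infty$. This second argument is even cleaner, so I would present it and keep the block argument as a remark.

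There is no real obstacle here. The only points to check are the normalization of the complex Gaussians, namely $|g_n|^2$ being exponential with mean $1$ so that $\mathbb E e^{-c|g_n|^2}=(1+c)^{-1}$, and the lattice-point count $\#\{|n|\sim N\}\sim N^d$.
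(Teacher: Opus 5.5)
Your proof is correct. The paper gives no argument of its own for this lemma and simply cites Burq--Tzvetkov. The route you say you would present is the standard argument for this fact, and essentially the one in the cited reference: a tail event, followed by $\mathbb E\, e^{-\|\phi^\omega\|_{H^{s_c}}^2}=\prod_n(1+c_n)^{-1}=0$. In the Gaussian case it becomes completely explicit via $\mathbb E\, e^{-c|g_n|^2}=(1+c)^{-1}$. One simplification: the zero-one law is superfluous. For $X=\sum_n c_n|g_n|^2\in[0,\infty]$, the identity $\mathbb E\, e^{-X}=0$ already forces $X=\infty$ almost surely; the identity itself follows from independence on finite partial sums and dominated convergence.

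Your dyadic-block argument is a genuinely different, second-moment route. It uses $\operatorname{Var}|g_n|^2<\infty$ and the within-shell concentration $\sum_{|n|\sim N}c_n^2\ll\bigl(\sum_{|n|\sim N}c_n\bigr)^2$, which holds because $c_n$ is essentially constant on dyadic shells. In return it is quantitative: almost surely every sufficiently high shell contributes $\gtrsim N^{2\delta}$, with exceptional probabilities $\lesssim N^{-d}$. The Laplace-transform route needs nothing beyond $c_n\geq0$ and $\sum_n c_n=\infty$, with no structure on the $c_n$. Both arguments work verbatim with $\delta$ replaced by $0$. They therefore also give the stronger fact $\phi^\omega\notin H^{s_c-\delta}$ stated in the introduction.
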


\begin{lemma}[Large deviation, \cite{BurqTzvetkov1}]\label{lem_large_dev}
Let $(l_n(\omega))_{n=1}^\infty$ be a sequence of real, independent random
variables with associated sequence of distributions
$(\mu_n)_{n=1}^\infty$. Assume that $\mu_n$ satisfy the property
\begin{equation*}
\exists\, c>0:\quad
\forall \gamma\in\mathbb{R},\ \forall n\ge1,\quad
\left|
\int_{-\infty}^{\infty} e^{\gamma x}\,d\mu_n(x)
\right|
\le e^{c\gamma^2}.
\end{equation*}
Then there exists $\alpha>0$ such that for every $\lambda>0$, every sequence
$(c_n)_{n=1}^\infty\in\ell^2$ of real numbers,
\begin{equation*}
p\left(
\omega:
\left|
\sum_{n=1}^{\infty} c_nl_n(\omega)
\right|
>\lambda
\right)
\le
2e^{
-\frac{\alpha\lambda^2}
{\sum_n c_n^2}
}.
\end{equation*}
As a consequence there exists $C>0$ such that for every $p\ge2$, every
$(c_n)_{n=1}^\infty\in\ell^2$,
\begin{equation*}
\left\|
\sum_{n=1}^{\infty} c_nl_n(\omega)
\right\|_{L^p(\Omega)}
\le
C\sqrt{p}
\left(
\sum_{n=1}^{\infty}c_n^2
\right)^{1/2}.
\end{equation*}
\end{lemma}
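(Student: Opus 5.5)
The last statement is Lemma \ref{lem_large_dev}, the large deviation estimate for sums of independent random variables with sub-Gaussian moment generating functions. I would prove it by the standard Chernoff argument. Set $S=\sum_n c_nl_n(\omega)$, first for finitely many nonzero $c_n$, and fix $\gamma>0$. Independence and the hypothesis on $\mu_n$ give
\[
\mathbb E\bigl[e^{\gamma S}\bigr]=\prod_n\int_{\mathbb R}e^{\gamma c_nx}\,d\mu_n(x)\le\prod_n e^{c\gamma^2c_n^2}=e^{c\gamma^2\sum_nc_n^2}.
\]
Markov's inequality then yields $\mathbb P(S>\lambda)\le e^{-\gamma\lambda+c\gamma^2\sum_n c_n^2}$. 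Optimizing with $\gamma=\lambda/(2c\sum_nc_n^2)$ gives $\mathbb P(S>\lambda)\le e^{-\lambda^2/(4c\sum_nc_n^2)}$.

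The same bound holds for $-S$, since the hypothesis is symmetric in $\gamma\in\mathbb R$. Adding the two one-sided bounds gives the tail estimate with $\alpha=1/(4c)$.

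To pass to a general $(c_n)\in\ell^2$, I would note that the partial sums form a Cauchy sequence in $L^2(\Omega)$. Indeed, the hypothesis forces $\int x\,d\mu_n=0$ (expand $e^{\gamma x}$ to first order as $\gamma\to0$) and $\int x^2\,d\mu_n\le 2c$ (second order). So the partial sums converge in probability, and along a subsequence almost surely. The uniform tail bound then passes to the limit by Fatou's lemma. Both steps are routine.

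For the moment bound, I would write $\|S\|_{L^p}^p=\int_0^\infty p\lambda^{p-1}\mathbb P(|S|>\lambda)\,d\lambda$ and insert the tail bound. The substitution $\lambda=(\sum c_n^2/\alpha)^{1/2}t^{1/2}$ gives
\[
\|S\|_{L^p}^p\le p\,\alpha^{-p/2}\Bigl(\sum_nc_n^2\Bigr)^{p/2}\Gamma(p/2).
\]
Stirling's formula gives $(p\,\Gamma(p/2))^{1/p}\lesssim\sqrt p$, which is the claimed estimate.

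There is no genuine obstacle here. The only point requiring care is the limiting argument for infinite sums, which is handled by the $L^2$ convergence together with Fatou's lemma.
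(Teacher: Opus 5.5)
Your proposal is correct and follows essentially the standard route. The paper gives no proof of its own and simply cites Burq--Tzvetkov, whose argument is this one: the exponential-moment bound $\mathbb E e^{\gamma S}\le e^{c\gamma^2\sum_n c_n^2}$ from independence, Markov's inequality with the optimal $\gamma$, the same bound for $-S$, and integrating the tail via $\|S\|_{L^p}^p=p\int_0^\infty\lambda^{p-1}\mathbb P(|S|>\lambda)\,d\lambda$. Your extra step from finite to infinite sums is a sound addition. You extract mean zero and variance at most $2c$ from the hypothesis, get $L^2$ convergence of the partial sums, and pass the tail bound to the limit with Fatou.
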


\begin{lemma}[Almost sure finiteness, \cite{Tzvetkov10}]\label{2.4 lem}
Let $F$ be a measurable function and suppose that there exist $C_0,K > 0$ and $p_0 \geq 1$ such that for any $p \geq p_0$ we have
\[\|F\|_{L_\omega^p}\leq C_0\sqrt{p}K.\]
Then there exist $c,C_1>0$, depending on $C_0$ and $p_0$ but not on $K$, such that for any $\ld>0$ we have
\[
\mathbb{P}\bg(\{\omega\in\Omega:|F(\omega)|>\ld\}\bg)\leq C_1\exp\{-c\ld^2 K^{-2}\}.
\]
\end{lemma}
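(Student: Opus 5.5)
The plan is the standard Chebyshev argument with an optimized choice of the moment exponent $p$, split according to whether that optimal $p$ is admissible, i.e.\ $p\geq p_0$. Throughout, $\lambda>0$ and $K>0$ are fixed, and the constants $c,C_1$ will be chosen explicitly in terms of $C_0$ and $p_0$ only.

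\emph{Step 1 (Chebyshev).} For any $p\geq p_0$, Markov's inequality applied to $|F|^p$ and the hypothesis give
\[
\mathbb P\bigl(|F|>\lambda\bigr)\leq \lambda^{-p}\|F\|_{L^p_\omega}^p\leq \Bigl(\frac{C_0\sqrt p\,K}{\lambda}\Bigr)^p .
\]

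\emph{Step 2 (choice of $p$).} Define
\[
p_\ast:=\frac{\lambda^2}{e^2C_0^2K^2}.
\]
This choice makes $C_0\sqrt{p_\ast}\,K/\lambda=e^{-1}$.

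\emph{Step 3 (large $\lambda$: $p_\ast\geq p_0$).} Inserting $p=p_\ast$ into Step 1 yields
\[
\mathbb P\bigl(|F|>\lambda\bigr)\leq e^{-p_\ast}=\exp\Bigl(-\frac{\lambda^2}{e^2C_0^2K^2}\Bigr).
\]
This is the claimed bound with $c=(eC_0)^{-2}$ and any $C_1\geq1$.

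\emph{Step 4 (small $\lambda$: $p_\ast<p_0$).} Here we use only the trivial bound $\mathbb P\leq 1$. Set $C_1:=e^{p_0}$. Since $c\lambda^2K^{-2}=p_\ast<p_0$, we get
\[
C_1\exp\bigl(-c\lambda^2K^{-2}\bigr)=e^{p_0-p_\ast}\geq 1,
\]
so the desired inequality holds trivially.

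Combining Steps 3 and 4 gives the lemma with $c=(eC_0)^{-2}$ and $C_1=e^{p_0}$, both independent of $K$ and $\lambda$. There is no genuine obstacle. The only point requiring care is that the hypothesis is available only for $p\geq p_0$; the case split on $p_\ast$ handles exactly this, and it is absorbed into the constant $C_1$.
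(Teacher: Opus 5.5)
Your proof is correct: Markov's inequality with the optimized exponent $p_\ast=\lambda^2/(eC_0K)^2$ handles the regime $p_\ast\ge p_0$, and the trivial bound $\mathbb P\le 1$, absorbed into $C_1=e^{p_0}$, handles $p_\ast<p_0$, giving the lemma with $c=(eC_0)^{-2}$. The paper gives no proof of its own and only cites \cite{Tzvetkov10}, whose argument is this same Chebyshev-plus-optimized-moment scheme, possibly with a different numerical choice of the ratio $C_0\sqrt{p}K/\lambda$ than your $e^{-1}$, which changes nothing.
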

\section{Randomized bilinear estimates}\label{sec 333}
The purpose of this section is to convert the probabilistic gain of the
random linear solution into a frequency gain compatible with the
Kwak and Kwon's Galilean framework.  Two different interaction geometries arise.
The first estimate uses the zero-mean property of the coefficient and a
codimension-one lattice count.  The second treats the conjugated, or
opposite-phase, interaction by means of a renormalized shear.  Both estimates
combine pathwise Gaussian bounds with space-time Besov regularity.

\subsection{Probabilistic space-time Besov estimates}
\begin{lemma}\label{lem:time-cutoff}
Let $2\leq q<\infty$, $0<\alpha<1/q$, $0<T<\frac12$, and
$I_T=[-T,T]$.  Let $\psi$ be smooth function satisfying $\|\psi\|_{C^1(\R)}\lesssim 1$. Then for any $\kappa\in\mathbb R$ we have 
\begin{equation}\label{eq:time-cutoff-Besov}
 \|\psi\chi_{I_T}e^{-it\kappa}\|_{B^\alpha_{q,2}(\mathbb R)}
 \lesssim_{\alpha,\psi} T^{1/q-\alpha}\langle\kappa\rangle^\alpha.
\end{equation}
\end{lemma}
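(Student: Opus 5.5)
We estimate the Besov norm directly through the difference characterization of Lemma \ref{lem_triebel} with $k=1$, $p=q$ and second index $2$. Write $f(t)=\psi(t)\chi_{I_T}(t)e^{-it\kappa}$. The $L^q$ part is immediate: $\|f\|_{L^q}\lesssim\|\psi\|_{L^\infty}(2T)^{1/q}\lesssim T^{1/q}$, which is bounded by the right-hand side of \eqref{eq:time-cutoff-Besov} since $T<1/2$ and $\alpha>0$.

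For the seminorm we must bound
\[
\Big(\int_0^1\big(h^{-\alpha}\|\Delta_h f\|_{L^q(\R)}\big)^2\frac{dh}{h}\Big)^{1/2}.
\]
We split the increment as
\[
\Delta_h f(t)=\big(\chi_{I_T}(t+h)-\chi_{I_T}(t)\big)(\psi e^{-i\cdot\kappa})(t+h)+\chi_{I_T}(t)\,\Delta_h(\psi e^{-i\cdot\kappa})(t).
\]
The first (jump) term is supported on a set of measure $\le 2\min\{h,2T\}$ and is bounded by $\|\psi\|_\infty$, so its $L^q$ norm is $\lesssim \min\{h,T\}^{1/q}$. The second (smooth) term satisfies $|\Delta_h(\psi e^{-i\cdot\kappa})|\lesssim\min\{1,h\langle\kappa\rangle\}$, using $\|\psi\|_{C^1}\lesssim1$. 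Since it is restricted to $I_T$, its $L^q$ norm is $\lesssim T^{1/q}\min\{1,h\langle\kappa\rangle\}$.

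We then integrate each piece against $h^{-2\alpha-1}dh$.

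\emph{Jump term.} For $h\le T$ we get $\int_0^T h^{2/q-2\alpha-1}\,dh\sim T^{2(1/q-\alpha)}$, which converges precisely because $\alpha<1/q$. For $h\ge T$ we get $T^{2/q}\int_T^1h^{-2\alpha-1}\,dh\lesssim T^{2/q-2\alpha}$, since $\alpha>0$. Together these give $T^{1/q-\alpha}$.

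\emph{Smooth term.} We obtain
\[
T^{2/q}\Big(\int_0^{\langle\kappa\rangle^{-1}}h^{2-2\alpha-1}\langle\kappa\rangle^2\,dh+\int_{\langle\kappa\rangle^{-1}}^1h^{-2\alpha-1}\,dh\Big)\lesssim T^{2/q}\langle\kappa\rangle^{2\alpha},
\]
using $0<\alpha<1$. Since $T^{1/q}\le T^{1/q-\alpha}$ for $T<1$, this piece is also controlled by $T^{1/q-\alpha}\langle\kappa\rangle^\alpha$, and summing the two pieces yields \eqref{eq:time-cutoff-Besov}.

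The only delicate point is the jump term. Its sharp cutoff is exactly what forces $\alpha<1/q$, and one must track the two regimes $h\lessgtr T$ to extract the full power $T^{1/q-\alpha}$ rather than merely $O(1)$. Everything else is routine.

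If one prefers to avoid Lemma \ref{lem_triebel} with second index $2$ as stated, there is an alternative route. One computes via Littlewood–Paley pieces that $\|P_N\chi_{I_T}\|_{L^q}\lesssim\min\{T^{1/q},N^{-1/q}\}$, and that modulation by $e^{-it\kappa}$ shifts frequencies by $\kappa$, costing at most $\langle\kappa\rangle^\alpha$ through an interpolation between $B^0$ and $B^1$ bounds. The difference approach above is cleaner, and it is the one we would follow.
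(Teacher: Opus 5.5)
Your proof is correct and follows the same route as the paper. You use the difference characterization of Lemma \ref{lem_triebel}; the boundary-layer (jump) contribution produces $T^{1/q-\alpha}$, and the interior telescoped difference $\chi_{I_T}\Delta_h(\psi e^{-i\cdot\kappa})$ produces $T^{1/q}\langle\kappa\rangle^{\alpha}$. The only cosmetic difference is that the paper first splits $h$ at $2T$ and uses disjoint supports for large $h$, whereas your bounds $\min\{h,T\}^{1/q}$ and $T^{1/q}\min\{1,h\langle\kappa\rangle\}$ treat all $h\in(0,1)$ at once.
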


\begin{proof}
By Lemma \ref{lem_triebel} we have the equivalent Besov norm via the difference characterization:
\begin{equation*}
 \|f\|_{B_{q,2}^\alpha}\sim\|f\|_{L^q}
 +\left(\int_0^1h^{-2\alpha}
 \|f(\cdot+h)-f\|_{L^q}^2\frac{dh}{h}\right)^{1/2}.
\end{equation*}
It is straightforward to verify that $\|\psi\chi_{I_T}e^{-it\kappa}\|_{L^q}\lesssim T^{\frac1q}$. For the second part, we separate the sum to
\[\int_0^1=\int_0^{2T}+\int_{2T}^1.\]
Setting $f=\psi\chi_{I_T}e^{-it\kappa}$, we see that the supports of $f(\cdot+h)$ and $f$ are disjoint for $h\in(2T,1)$, hence
\[\left(\int_{2T}^1h^{-2\alpha}
\|f(\cdot+h)-f\|_{L^q}^2\frac{dh}{h}\right)^{\frac12}
 \lesssim T^{\frac1q}\cdot \left(h^{-2\alpha}|_{2T}^1\right)^{\frac12}\lesssim T^{\frac1q-\alpha}.\]
Next, set 
$$I_1=(-T-h,-T)\cup (T-h,T),\qquad I_2=(-T,T-h).$$
For $h\in(0,2T)$ and $t\in I_1$, only one of the summand in the difference $f(\cdot+h)-f$ is non-zero. Hence
\[\left(\int_0^{2T}h^{-2\alpha}
\|f(\cdot+h)-f\|_{L^q(I_1)}^2\frac{dh}{h}\right)^{\frac12}
 \lesssim 
\left(\int_0^{2T}h^{2(\frac1q-\alpha)-1}\,dh\right)^{\frac12}
\lesssim T^{\frac1q-\alpha}.\]
For $h\in(0,2T)$ and $t\in I_2$, telescoping yields
\begin{align*}
 f(t+h)-f(t)
 =e^{-i\kappa(t+h)}\big(\psi(t+h)-\psi(t)\big)
 +e^{-i\kappa t}\big(e^{-i\kappa h}-1\big)\psi(t).
\end{align*}
The first part contributes 
$$ \lesssim (2T-h)^{\frac1q}\left(\int_0^{2T}h^{-2\alpha}h^2\,\frac{dh}{h}\right)^{\frac12}\lesssim T^{1+\frac1q-\alpha}\lesssim T^{\frac1q-\alpha}.$$
The second part contributes
\[
(2T-h)^{\frac1q}\left(\int_0^{2T}h^{-2\alpha}
 \min\{1,|\kappa|h\}^2\frac{dh}{h}\right)^{\frac12}
 \lesssim T^{1/q}|\kappa|^{\alpha}.
\]
\eqref{eq:time-cutoff-Besov} follows from collecting the resulting estimates.
\end{proof} 

\begin{lemma}[Gaussian Besov--Strichartz estimate]\label{lem:prob-Besov}
Let $q,r\in[2,\infty)$, $\alpha_t\in(0,1/q)$,
$\alpha_x\in\R$, and $I_T=[-T,T]$, $0<T\leq\frac12$.  Then for every
$p_\omega\geq2$ it holds
\begin{equation}\label{eq:prob-Besov}
 \|\psi\chi_{I_T}e^{it\Delta}\phi^\omega\|_{L_\omega^{p_\omega}
 B^{\alpha_t}_{t,q,2}B^{\alpha_x}_{x,r,2}}
 \lesssim
 \sqrt{p_\omega}\,T^{1/q-\alpha_t}\|\phi\|_{H^{2\alpha_t+\alpha_x}}.
\end{equation}
Moreover, for $\vartheta\in(0,\frac{1}{q}-\alpha_t)$ there exist $C,c>0$ and $\Omega_T\subset \Omega$ satisfying $\mathbb P(\Omega_T^c)<\exp(-cT^{
2(\vartheta-(\frac{1}{q}-\alpha_t))})$ such that for any $\omega\in\Omega_T$ it holds
\begin{equation}\label{eq:prob-pathwise-time}
 \|\psi\chi_{I_T}e^{it\Delta}\phi^\omega\|_{
 B^{\alpha_{t}}_{q,2}B^{\alpha_{x}}_{r,2}}
 \leq C T^\vartheta \|\phi\|_{H_x^{2\alpha_t+\alpha_x}}.
\end{equation}
\end{lemma}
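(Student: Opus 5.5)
The approach is to expand the random linear solution in its Fourier modes, reduce the estimate to one Gaussian series per space-time point, and then combine Minkowski's inequality with Lemma \ref{lem:time-cutoff} for the time cutoff. Write
\[
\psi\chi_{I_T}e^{it\Delta}\phi^\omega=\sum_{n}b_ng_n(\omega)e^{in\cdot x}\,\psi(t)\chi_{I_T}(t)e^{-it|n|^2}.
\]
For $p_\omega\ge\max\{q,r\}$, Minkowski's integral inequality lets us move the $L^{p_\omega}_\omega$ norm inside the $\ell^2$ sums and the $L^q_t$, $L^r_x$ integrals that define the Besov norms. Here the Besov norm is the dyadic one (or, for time, the difference characterization of Lemma \ref{lem_triebel}). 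For smaller $p_\omega$ we first pass to $p_\omega=\max\{q,r\}$ by Hölder on $\Omega$. After this, the quantity inside is, at each fixed $(t,x)$ and each dyadic piece, a Gaussian series. By Lemma \ref{lem_large_dev}, applied to real and imaginary parts, its $L^{p_\omega}_\omega$ norm is at most $\sqrt{p_\omega}$ times its $\ell^2$ norm in the coefficients.

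The difference in time is the awkward part, because the time dependence $e^{-it|n|^2}$ varies with $n$. I plan to handle it mode by mode. Fix a spatial dyadic block $P_N$ and a temporal dyadic block $P^t_L$ (or a time increment $h$). Then
\[
\Bigl\|P^t_LP_N\sum_n b_ng_ne^{in\cdot x}f_n(t)\Bigr\|_{L^{p_\omega}_\omega}\lesssim\sqrt{p_\omega}\Bigl(\sum_{|n|\sim N}b_n^2|P^t_Lf_n(t)|^2\Bigr)^{1/2},
\]
where $f_n=\psi\chi_{I_T}e^{-it|n|^2}$. This bound holds pointwise in $(t,x)$, since $|e^{in\cdot x}|=1$. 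Taking $L^r_x$ costs nothing, because $\mathbb T^d$ is compact. For the $L^q_t$ norm, Minkowski gives $\|(\sum_n|\cdot|^2)^{1/2}\|_{L^q_t}\le(\sum_n\|\cdot\|_{L^q_t}^2)^{1/2}$ since $q\ge2$. We are then left with
\[
\Bigl(\sum_L L^{2\alpha_t}\sum_N N^{2\alpha_x}\sum_{|n|\sim N}b_n^2\|P^t_Lf_n\|_{L^q_t}^2\Bigr)^{1/2}.
\]
Exchanging the $L$ and $n$ sums, this is bounded by $\bigl(\sum_n\langle n\rangle^{2\alpha_x}b_n^2\|f_n\|^2_{B^{\alpha_t}_{q,2}}\bigr)^{1/2}$.

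Lemma \ref{lem:time-cutoff} with $\kappa=|n|^2$ gives $\|f_n\|_{B^{\alpha_t}_{q,2}}\lesssim T^{1/q-\alpha_t}\langle n\rangle^{2\alpha_t}$. Substituting yields $\sqrt{p_\omega}\,T^{1/q-\alpha_t}\|\phi\|_{H^{2\alpha_t+\alpha_x}}$, which is \eqref{eq:prob-Besov}. The main point to verify carefully is the ordering of operations: the outer $\ell^2_L$ and $\ell^2_N$ sums have exponent $2\le p_\omega$, so Minkowski applies to them, and the inner $L^q_tL^r_x$ norms are also dominated because $p_\omega\ge q,r$. This is the same mechanism as in the Burq--Tzvetkov estimates.

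For \eqref{eq:prob-pathwise-time}, set $F(\omega)$ equal to the Besov norm. Then $\|F\|_{L^{p_\omega}_\omega}\le C_0\sqrt{p_\omega}K$ with $K=T^{1/q-\alpha_t}\|\phi\|_{H^{2\alpha_t+\alpha_x}}$. Lemma \ref{2.4 lem} with $\lambda=CT^\vartheta\|\phi\|_{H^{2\alpha_t+\alpha_x}}$ gives
\[
\mathbb P(F>\lambda)\le C_1\exp\bigl(-cT^{2\vartheta-2(1/q-\alpha_t)}\bigr).
\]
Defining $\Omega_T=\{F\le\lambda\}$ gives the claim; the constant $C_1$ can be absorbed by shrinking $c$ for $T\le\frac12$, or by adjusting $C$.
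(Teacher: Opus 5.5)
Your proposal is correct and is essentially the paper's argument. For $p_\omega\ge\max\{q,r\}$ you use Minkowski, the large deviation bound of Lemma~\ref{lem_large_dev} on each block $P^t_LP_N$, Minkowski in $L^q_t$ (using $q\ge2$) to pull out the sum over modes, and Lemma~\ref{lem:time-cutoff} with $\kappa=|n|^2$ mode by mode; you then reduce smaller $p_\omega$ by the nesting of $L^p(\Omega)$ and get the pathwise bound from Lemma~\ref{2.4 lem}, exactly as in the paper (your observation that the constant $C_1$ there is absorbed by enlarging $C$, not merely by shrinking $c$, is a detail the paper leaves implicit).
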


\begin{proof}
We only prove the claim in the case $B^{\alpha_x}_{x,r,2}$, the other cases can be similarly shown. For $p_\omega\geq \max\{q,r\}$, Lemma \ref{lem_large_dev}, Minkowski and the compactness of $\T^d$ yield
\begin{equation*}
\begin{aligned}
 &\|P_L^tP_N(\psi\chi_{I_T}e^{it\Delta}\phi^\omega)\|_{
 L_\omega^{p_\omega}L_t^qL_x^r}
 =\|P_L^tP_N(\psi\chi_{I_T}b_ng_n(\omega)e^{i(n\cdot x-|n|^2t)})\|_{
 L_\omega^{p_\omega}L_t^qL_x^r}
 \\
 &\quad
 \lesssim \left\|\left\|\sum_{|n|_\infty\sim N}g_n(\omega)b_n e^{in\cdot x}P_L^t (\psi\chi_{I_T}e^{-i|n|^2t})\right\|_{L_\omega^{p_\omega}}\right\|_{
L_t^qL_x^r }
 \\
 &\quad\lesssim\sqrt{p_\omega}
 \left\|
 \left(\sum_{|n|_\infty\sim N}|b_n|^2
 |P_L^t(\psi\chi_{I_T}e^{-it|n|^2})|^2\right)^{1/2}
 \right\|_{L_t^q}.
\end{aligned}
\end{equation*}
Taking the $\ell_L^2$ norm
with weight $L^{\alpha_t}$ and using Lemma \ref{lem:time-cutoff} and Minkowski give
\begin{align*}
 &\|P_N(\psi\chi_{I_T}e^{it\Delta}\phi^\omega)\|_{L_\omega^{p_\omega}
 B^{\alpha_t}_{t,q,2}L_x^r(I_T)}\lesssim
 \|L^{\alpha_t}P_L^tP_N(\psi\chi_{I_T}e^{it\Delta}\phi^\omega)\|_{
 \ell_L^2 L_\omega^{p_\omega}L_t^qL_x^r}\notag\\
 &\qquad\lesssim
 \sqrt{p_\omega}
 \left(\sum_{|n|_\infty\sim N}|b_n|^2
 \|\psi\chi_{I_T}e^{-it|n|^2}\|_{B_{t,q,2}^{\alpha_t}}^2\right)^{1/2}
\lesssim\sqrt{p_\omega}\,T^{1/q-\alpha_t}
 \left(\sum_{|n|_\infty\sim N}|n|^{4\alpha_t}|b_n|^2\right)^{1/2}.
\end{align*}
Multiplication by $N^{\alpha_x}$ and followed by an $\ell_N^2$ summation prove
\eqref{eq:prob-Besov} in the case $p_\omega\geq \max\{q,r\}$. The case $p_\omega\in[2,\max\{q,r\}]$ can now be proved by combining the fact that $L^a(\Omega)\hookrightarrow L^b(\Omega)$ for $1\leq b\leq a\leq \infty$ if $\Omega$ is a probability space. Finally, \eqref{eq:prob-pathwise-time} follows by combining Lemma \ref{2.4 lem}.
\end{proof}

\begin{remark}\label{can lebesgue}
From the proof of Lemma \ref{lem:prob-Besov} it is easy to verify that the estimates \eqref{eq:prob-Besov} and \eqref{eq:prob-pathwise-time} will continue to hold, when the space-time Besov space is replaced by either Lebesque or Soblev spaces (such as $L_{t}^q L_x^r$, $L_t^q H_x^{\alpha_x, r}$ etc.)
\end{remark}

\begin{remark}\label{vartheta explain}
For applications of Lemma \ref{lem:time-cutoff}, \ref{lem:prob-Besov} and also other random pathwise regularization results, the temporal exponent $\alpha$ will be chosen from line to line. By intersecting the good events, we may simply assume that there exist $0<\vartheta\ll 1$ and $\tilde{\vartheta}\in(0,1)$ such that the corresponding random estimates hold with a temporal prefactor $T^{\vartheta}$ on a good event with complement measure smaller than $\exp(-cT^{-\tilde \vartheta})$.
\end{remark}

\begin{remark}
For a given dyadic $N$, \eqref{eq:prob-Besov} also implies that
\[\|\psi\chi_{I_T}e^{it\Delta}P_N\phi^\omega\|_{L_\omega^{p_\omega}
 B^{\alpha_t}_{t,q,2}B^{\alpha_x}_{x,r,2}}
 \lesssim
 N^{\alpha_x}\sqrt{p_\omega}\,T^{1/q-\alpha_t}\|\phi\|_{H^{2\alpha_t}}\]
holds outside a set with measure $<\exp(-cT^{-\tilde\vartheta})$. For any $\vare>0$, by paying a cost of derivative loss $N^\vare$,  we may argue as in the proof of Lemma \ref{lem:block-event} below that
\[\|\psi\chi_{I_T}e^{it\Delta}P_N\phi^\omega\|_{L_\omega^{p_\omega}
 B^{\alpha_t}_{t,q,2}B^{\alpha_x}_{x,r,2}}
 \lesssim
 N^{\alpha_x+\vare}\sqrt{p_\omega}\,T^{1/q-\alpha_t}\|\phi\|_{H^{2\alpha_t}}\]  
holds uniformly in $N$ outside a set with measure $<\exp(-cT^{-\tilde\vartheta})$.
\end{remark}

\subsection{First randomized bilinear estimate}
The next counting lemma is the point at which the probabilistic argument
departs from the deterministic one.  We keep the coefficient frequency
$m\neq0$ fixed and count the Galilean block index $k$; this reverses the
order of counting used in \cite{KwakKwon} and produces the required
high-frequency gain.
\begin{lemma}\label{lem:k-slab-count}
Let $L,R,K>0$, $m\in\mathbb Z^d\setminus\{0\}$ and $\tau\in\mathbb R$.  Then
\begin{equation}\label{eq:k-slab-count}
 \#\left\{k\in\mathbb Z^d:|k|_\infty\sim K,
 |4Rk\cdot m-\tau|\leq L\right\}
 \lesssim K^{d-1}\left(1+\frac{L}{R|m|}\right).
\end{equation}
\end{lemma}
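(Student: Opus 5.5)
The plan is to reduce the count to a one-dimensional count along a coordinate direction in which $m$ is large. Since $m\neq0$, choose an index $j\in\{1,\dots,d\}$ with $|m_j|=|m|_\infty\geq |m|/\sqrt d$. Write $k=(k_j,k')$ with $k'\in\mathbb Z^{d-1}$ collecting the remaining coordinates. The constraint $|k|_\infty\sim K$ forces $|k'|_\infty\lesssim K$, so there are at most $O(K^{d-1})$ admissible choices of $k'$ (when $K\lesssim1$ the whole set is $O(1)$ and the bound is trivial).

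Fix $k'$. The condition $|4Rk\cdot m-\tau|\leq L$ becomes
\[
\bigl|4Rm_jk_j-(\tau-4Rk'\cdot m')\bigr|\leq L,
\]
so $k_j$ is confined to an interval of length $2L/(4R|m_j|)$ centred at $(\tau-4Rk'\cdot m')/(4Rm_j)$. The number of integers in an interval of length $\ell$ is at most $1+\ell$. Hence, for each fixed $k'$, the number of admissible $k_j$ is at most
\[
1+\frac{L}{2R|m_j|}\lesssim 1+\frac{L}{R|m|},
\]
using $|m_j|\gtrsim_d|m|$.

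Multiplying by the number of admissible $k'$ gives \eqref{eq:k-slab-count}. The implicit constant depends only on $d$ and on the constant in $|k|_\infty\sim K$.

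There is no genuine obstacle here. The only point to take care of is choosing the coordinate $j$ so that $|m_j|$ is comparable to $|m|$, since a bad coordinate with small $m_j$ would lose the factor $1/|m|$. One must also discard the $k_j$-range restriction rather than use it; this only weakens the bound, which is harmless.
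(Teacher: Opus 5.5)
Your proposal is correct and follows essentially the same route as the paper: pick a coordinate $j$ with $|m_j|\gtrsim|m|$, count the other $d-1$ coordinates of $k$ as $O(K^{d-1})$, and note that for each such choice $k_j$ lies in an interval of length $O(L/(R|m_j|))$, which contains at most $1+O(L/(R|m|))$ integers. Your remark about the case $K\lesssim1$ is a small extra detail that the paper leaves implicit.
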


\begin{proof}
Choose $j$ such that $|m_j|\gtrsim |m|$ and Fix the remaining $d-1$ coordinates of $k$, which counts to $O(K^{d-1})$.  The condition in \eqref{eq:k-slab-count} confines $k_j$ to an interval of length at most $O(L/(R|m_j|))$.  Hence there are at most $C(1+L/(R|m|))$ possible values of $k_j$. Multiplication of the number of both choices yields the claim. 
\end{proof}

Next, for dyadic $N\geq32R$, let
\begin{equation*}
 C_{N,R,k}=\big(-2Rk+[-8R,8R]^d\big)
 \cap\{n:|n|_\infty\sim N\}.
\end{equation*}
By orthogonality, $C_{N,R,k}$ is not empty only if $k\sim N/R$. Set
\begin{equation*}
a_{N,R,k}^2=\sum_{n\in C_{N,R,k}}|b_n|^2.
\end{equation*}
Now recall the Galilean transform $I_\xi$ defined by \eqref{Galilean trans}. For $0<T<\frac12$, define
\begin{equation*}
 X_{N,R,k}
 =R^{-2\alpha}
 \|\psi\chi_{I_T}P_{\leq8R}I_{2Rk}e^{it\Delta}\phi_N^\omega
 \|_{B^\alpha_{t,p,2}L_x^p},
\end{equation*}
where $\alpha$ is the number defined in \eqref{eq:constant-dictionary-alpha}. We have the following random-type estimate for $X_{N,R,k}$.
\begin{lemma}\label{lem:block-event}
For every $p_\omega\geq2$ it holds
\begin{equation*}
 \|X_{N,R,k}\|_{L_\omega^{q_\omega}}
 \lesssim\sqrt{p_\omega}T^{1/p-\alpha}(R/N)^{\frac{d}{2}}N^{-s_c+\delta}.
\end{equation*}
Moreover, for any $\vare>0$, there exist $0<\vartheta\ll1$, $\tilde\vartheta\in(0,1)$, $C,c>0$ and $\Omega_T\subset \Omega$ satisfying $\mathbb P(\Omega_T^c)<\exp(-cT^{-\tilde\vartheta})$ such that for any $\omega\in\Omega_T$ it holds
\begin{equation}\label{eq:block-global}
X_{N,R,k}
 \leq C_\vare T^{\vartheta}N^\vare (R/N)^{\frac{d}{2}}N^{-s_c+\delta}
\end{equation}
uniformly in $N,R,k$ for which the set $C_{N,R,k}$ is not empty.
\end{lemma}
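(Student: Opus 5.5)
We follow the proof of Lemma \ref{lem:prob-Besov}, now applied to the Galilean-transformed and frequency-localized block, and then pass from moments to a pathwise bound. The first step is an explicit formula for the block. Write $\phi_N^\omega=\sum_{|n|_\infty\sim N}b_ng_ne^{in\cdot x}$. By \eqref{Galilean trans}, $I_\xi(e^{i(n\cdot x-|n|^2t)})=e^{i((n+\xi)\cdot x-|n+\xi|^2t)}$. Hence, with $\xi=2Rk$,
\[
P_{\le8R}I_{2Rk}e^{it\Delta}\phi_N^\omega=\sum_{n\in C_{N,R,k}}b_ng_n(\omega)e^{i((n+2Rk)\cdot x-|n+2Rk|^2t)} .
\]
This uses Lemma \ref{lem 3.8} and the definition of $C_{N,R,k}$: the projection $P_{\le8R}$ keeps exactly the $n$ with $n+2Rk\in[-8R,8R]^d$. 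The $g_n$ are independent. So for $p_\omega\ge p$ we argue as in Lemma \ref{lem:prob-Besov}: we apply $P_L^t$, use Minkowski to move $L^{p_\omega}_\omega$ inside $L_t^pL_x^p$, and apply Lemma \ref{lem_large_dev} pointwise in $(t,x)$. Taking the weighted $\ell^2_L$ norm and then Lemma \ref{lem:time-cutoff} with $\kappa=|n+2Rk|^2\lesssim R^2$ gives
\[
\|X_{N,R,k}\|_{L^{p_\omega}_\omega}\lesssim \sqrt{p_\omega}\,R^{-2\alpha}\Big(\sum_{n\in C_{N,R,k}}|b_n|^2\,T^{2(1/p-\alpha)}\langle R^2\rangle^{2\alpha}\Big)^{1/2}\lesssim\sqrt{p_\omega}\,T^{1/p-\alpha}a_{N,R,k}.
\]
Here the factor $R^{-2\alpha}$ exactly cancels $\langle\kappa\rangle^\alpha\lesssim R^{2\alpha}$; this cancellation is the reason for shifting to the Galilean frame first. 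Since $\#C_{N,R,k}\lesssim R^d$ and $b_n\sim N^{-d/2-s_c+\delta}$ on $|n|_\infty\sim N$, we get $a_{N,R,k}\lesssim R^{d/2}N^{-d/2-s_c+\delta}=(R/N)^{d/2}N^{-s_c+\delta}$. The small range $p_\omega\in[2,p]$ follows from the nesting of $L^p(\Omega)$ spaces.

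For the pathwise bound \eqref{eq:block-global}, fix a triple $(N,R,k)$ and set $K_{N,R,k}=T^{1/p-\alpha}(R/N)^{d/2}N^{-s_c+\delta}$. Lemma \ref{2.4 lem} with threshold $\lambda=T^{\vartheta}N^{\vare}(R/N)^{d/2}N^{-s_c+\delta}$ gives
\[
\mathbb P\big(X_{N,R,k}>\lambda\big)\le C\exp\big(-cT^{2(\vartheta-(1/p-\alpha))}N^{2\vare}\big).
\]
We need $\vartheta<1/p-\alpha$, which is possible since $\alpha<1/p$. We then take a union bound over all admissible triples. For a given $N$, $R$ ranges over $O(\log N)$ dyadic values, and $k$ over $O((N/R)^d)\le O(N^d)$ values. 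The total failure probability is therefore at most
\[
\sum_N N^{d}\log N\,\exp\big(-cT^{-2(1/p-\alpha-\vartheta)}N^{2\vare}\big)\lesssim \exp\big(-c'T^{-\tilde\vartheta}\big),
\qquad \tilde\vartheta=2(1/p-\alpha-\vartheta).
\]
We shrink $\vartheta$ if needed so that $\tilde\vartheta\in(0,1)$ is admissible; otherwise we simply accept the stated larger $\tilde\vartheta$. $\Omega_T$ is the complement of the union.

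We expect the main obstacle to be this union-bound summation. The polynomial count $N^d\log N$ of events must be absorbed by the super-exponential factor $\exp(-cN^{2\vare}T^{-\tilde\vartheta})$. This works since $e^{-cxN^{2\vare}}\le e^{-cx/2}e^{-cN^{2\vare}/2}$ for $x\ge1$, and $\sum_N N^{d+1}e^{-cN^{2\vare}/2}<\infty$. This is exactly where the loss $N^\vare$ is spent, and the constant $C_\vare$ comes from this sum. A secondary point is to make sure the temporal Besov norm of a sum of modulated cutoffs is estimated through $P_L^t$ as above, and not termwise.
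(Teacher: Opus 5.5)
Your proposal is correct and follows the paper's proof essentially step by step. You use the explicit Galilean block formula, then the Gaussian/large-deviation argument of Lemma \ref{lem:prob-Besov} combined with Lemma \ref{lem:time-cutoff} at $\kappa=|n+2Rk|^2\lesssim R^2$, so that $R^{-2\alpha}$ cancels the modulation loss. You then use the bound $a_{N,R,k}\lesssim (R/N)^{d/2}N^{-s_c+\delta}$, and finally Lemma \ref{2.4 lem} with the $N^{\vare}$ loss and a union bound over the $O(N^d\log N)$ admissible pairs $(R,k)$ at each scale $N$.

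One small slip: shrinking $\vartheta$ increases $\tilde\vartheta=2(1/p-\alpha-\vartheta)$, so it is not how one ensures $\tilde\vartheta<1$. This is harmless, since $2(1/p-\alpha)=1-1/q_0-\sigma_1<1$ already, and for $T<1$ one may always decrease $\tilde\vartheta$.
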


\begin{proof}
By direct calculation,
\[
 P_{\leq8R}I_{2Rk}e^{it\Delta}\phi_N^\omega
 =\sum_{n\in C_{N,R,k}}b_ng_n(\omega)
 e^{i(n+2Rk)\cdot x-it|n+2Rk|^2}.
\]
For $n\in C_{N,R,k}$, $|n+2Rk|\lesssim R$.  The Gaussian argument in
Lemma \ref{lem:prob-Besov} then yield
\begin{equation*}
 \|X_{N,R,k}\|_{L_\omega^{q_\omega}}
 \lesssim\sqrt{p_\omega}R^{-2\alpha}|n+2Rk|^{2\alpha}\,T^{1/p-\alpha}a_{N,R,k}
 \lesssim\sqrt{p_\omega}T^{1/p-\alpha}a_{N,R,k}.
\end{equation*}
Using Lemma \ref{lem:prob-Besov}, there exist $C,c>0$ such that for any $N,R,k$ such that $C_{N,R,k}\neq\varnothing$, there exists some $\Omega_{N,R,k}\subset\Omega$ satisfying $\mathbb P(\Omega_{N,R,k}^c)<\exp(-cN^{2\vare}T^{
2(\vartheta-(\frac{1}{p}-\alpha))})$ such that for any $\omega\in\Omega_{N,R,k}$ it holds
\begin{align*}
X_{N,R,k}\leq C T^{\vartheta}N^\vare a_{N,R,k}\lesssim C T^{\vartheta}N^\vare (R/N)^{\frac{d}{2}}N^{-s_c+\delta}
\end{align*}
by combining the fact that $a_{N,R,k}\lesssim (R/N)^{\frac{d}{2}}N^{-s_c+\delta}$. 

Finally, we upgrade the probabilistic estimate to one which holds uniformly in $N,R,k$. Set $\Omega_T:=\cap_{N,R,k:\,C_{N,R,k}\neq\varnothing}\Omega_{N,R,k}$. Then
\begin{equation*}
\begin{aligned}
\mathbb P(\Omega_T^c)\leq\sum_{N}\sum_{R\lesssim N}\sum_k P(\Omega_{N,R,k}^c)&\lesssim \sum_N \log N\cdot N^d 
\exp(-cN^{2\vare}T^{
2(\vartheta-(\frac{1}{p}-\alpha))})\\
&\lesssim \exp(-c'T^{
2(\vartheta-(\frac{1}{p}-\alpha))})
\end{aligned}
\end{equation*}
with some smaller $c'>0$, as desired. 
\end{proof}

Let us now recall the shear operator 
$$J_\xi A(t,x)=A(t,x-2t\xi)$$ 
defined in \cite{KwakKwon}.
\begin{lemma}\label{lem:weighted-shear}
Let $0<T<\frac12$, $N\geq32R$, $X_k:=X_{N,R,k}$ and $A_R:=P_RA$ satisfying $P_0 A=0$.  Then for any given $\vare>0$ there exist $0<\vartheta\ll 1$, $\tilde{\vartheta}\in(0,1)$ and $C,c>0$, such that for any $0<T\ll 1$ there exists some $\Omega_T\subset \Omega$ satisfying $\mathbb P(\Omega_T^c)<\exp(-cT^{-\tilde\vartheta})$, such that for any $\omega\in\Omega_T$ it holds
\begin{equation}\label{eq:weighted-shear}
\begin{aligned}
 \|X_k J_{2Rk}A_R\|_{\ell_k^2 B^{-1/8}_{t,2,2}L_x^2}&\leq C T^{\vartheta}N^{-s_c+\delta+\varepsilon}
 R^{d/2-1/4}\\
 &\times \left[
 \left(\frac RN\right)^{1/8}
 +R^{-1/4}\left(\frac{R^2}{N}\right)^{1/2}
 \right]
 \|A_R\|_{B^{1/2}_{t,1,1}L_x^1}
\end{aligned}
\end{equation}
uniformly in $N,R$.
\end{lemma}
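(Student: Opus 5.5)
The plan is to decouple the random weights $X_k$ from the deterministic sheared coefficients $J_{2Rk}A_R$. Then the $\ell^2_k$ summation becomes a lattice count to which Lemma \ref{lem:k-slab-count} applies.

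\textbf{Step 1: pathwise bound on the weights.} We work on the event $\Omega_T$ of Lemma \ref{lem:block-event}, intersected with the events from Remark \ref{vartheta explain} if necessary. On this event, uniformly in $N,R,k$,
\[
X_k\le C_\vare T^{\vartheta}N^{\vare}(R/N)^{d/2}N^{-s_c+\delta}.
\]
Moreover $X_k=0$ unless $|k|_\infty\sim K:=N/R$. It therefore suffices to prove the deterministic bound
\[
\|J_{2Rk}A_R\|_{\ell^2_{|k|_\infty\sim K}B^{-1/8}_{t,2,2}L^2_x}
\lesssim N^{\vare}\,(N/R)^{d/2}R^{d/2-1/4}\Big[(R/N)^{1/8}+R^{-1/4}(R^2/N)^{1/2}\Big]\|A_R\|_{B^{1/2}_{t,1,1}L^1_x}.
\]
Any logarithmic losses from dyadic summations are absorbed into $N^\vare$.

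\textbf{Step 2: Fourier side.} Since $J_\xi$ only shears the argument, we have
\[
\widetilde{J_{2Rk}A}(\tau,m)=\widetilde A(\tau+4Rk\cdot m,m).
\]
By Plancherel in $x$ and the Fourier description of $B^{-1/8}_{2,2}$ in $t$,
\[
\sum_k\|J_{2Rk}A_R\|^2_{B^{-1/8}_{t,2,2}L^2_x}\sim\sum_{|m|\sim R,\,m\neq0}\int|\widetilde A(\tau',m)|^2\,W_m(\tau')\,d\tau',
\qquad W_m(\tau')=\sum_{|k|_\infty\sim K}\langle\tau'-4Rk\cdot m\rangle^{-1/4}.
\]
The mean-zero assumption $P_0A=0$ is exactly what guarantees $m\neq0$, so Lemma \ref{lem:k-slab-count} applies. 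To estimate $W_m$, I decompose dyadically in the modulation $L=\langle\tau'-4Rk\cdot m\rangle$. Since $|4Rk\cdot m|\lesssim NR$, only $L\lesssim NR+|\tau'|$ occurs. The count then gives
\[
W_m(\tau')\lesssim K^{d-1}\sum_{L\lesssim NR}L^{-1/4}\Big(1+\frac{L}{R|m|}\Big)\lesssim K^{d-1}\Big(1+\frac{(NR)^{3/4}}{R}\Big)\sim K^{d-1}\big(1+N^{3/4}R^{-1/4}\big).
\]
This is the codimension-one saving: a naive count would give $K^d$. Multiplying the square root of this by $(R/N)^{d/2}$ from Step 1 yields, for the dominant high-modulation part,
\[
(R/N)^{1/2}N^{3/8}R^{-1/8}=R^{1/4}(R/N)^{1/8}.
\]

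\textbf{Step 3: transferring to $B^{1/2}_{t,1,1}L^1_x$.} I split $\tau'$ according to whether $|\tau'|\lesssim NR$ or $|\tau'|\gg NR$.
\begin{itemize}
\item For the $\tau'$-integral I use the pointwise bound $\langle\tau'\rangle^{1/2}|\widetilde A(\tau',m)|\lesssim\|A_R\|_{B^{1/2}_{t,1,1}L^1_x}$. This holds since $B^{1/2}_{1,1}$ controls $\sup_L L^{1/2}\|P^t_LA\|_{L^1_t}$, and $\|\widehat{f}\|_{\ell^\infty}\le\|f\|_{L^1}$.
\item For the $m$-sum I use $\#\{|m|\sim R\}\sim R^d$, which produces $R^{d/2}$.
\item The weight $\langle\tau'\rangle^{-1}$ gained from the pointwise bound is integrated against $W_m$.
\end{itemize}
This extra temporal decay is what upgrades $R^{1/4}$ to the claimed $R^{-1/4}$ in front of $(R/N)^{1/8}$. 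The reason is that in the relevant range, $\langle\tau'\rangle^{-1}$ restricted to the part of $W_m$ where $L\gtrsim R|m|$ yields a factor of $R^{-1/2}$ after square roots.

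The remaining low-modulation contribution comes from the ``$1$'' term in the slab count, i.e. $L\lesssim R|m|$. That contribution gives $K^{(d-1)/2}$ without gain. It should be paired with the bound $\int\langle\tau'\rangle^{-1}d\tau'$ over $|\tau'|\lesssim NR$, which is logarithmic, together with the constraint $L\le R|m|\lesssim R^2$. This should produce the second term $R^{-1/4}(R^2/N)^{1/2}$: the factor $(R^2/N)^{1/2}$ reflects that only modulations up to $R^2$, rather than $NR$, are used there.

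\textbf{Main obstacle.} I expect the main difficulty to be Step 3, namely bookkeeping the exponents so that the split between the $\langle\tau'\rangle^{1/2}$ decay from $B^{1/2}_{1,1}$ and the slab count produces exactly the two terms in \eqref{eq:weighted-shear}. If the pointwise-in-$\tau$ bound proves too lossy, my first alternative is to keep $\widetilde A$ dyadically localized in $\tau'$ at scale $M$. I would then bound
\[
\|P^t_MA\|_{L^2_tL^2_x}\lesssim M^{1/2}R^{d/2}\|P^t_MA\|_{L^1_tL^1_x}
\]
and optimize over $M$ relative to the thresholds $R^2$ and $NR$. The probabilistic part should be routine once Lemma \ref{lem:block-event} is invoked, since the bound there is uniform in $k$.
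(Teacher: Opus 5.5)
\textbf{Gap in the main term.} Your architecture matches the paper's: the uniform bound \eqref{eq:block-global} on $X_k$ (with $X_k=0$ unless $|k|_\infty\sim K=N/R$), the shear identity on the Fourier side, $m\neq0$ from $P_0A=0$, counting $k$ for fixed $m$ via Lemma \ref{lem:k-slab-count}, $R^d$ from the $m$-sum, and $M$ from the time-frequency localization. But as written the argument does not reach the claimed exponent, and the Step 3 mechanism you propose to close the gap cannot work.

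In Step 2 you apply Lemma \ref{lem:k-slab-count} with $R|m|$ replaced by $R$. This gives $W_m\lesssim K^{d-1}(1+N^{3/4}R^{-1/4})$, and hence $R^{1/4}(R/N)^{1/8}$ instead of $R^{-1/4}(R/N)^{1/8}$. You then hope to recover $R^{-1/2}$ from the weight $\langle\tau'\rangle^{-1}$ supplied by $B^{1/2}_{t,1,1}$. That cannot happen, for two reasons:
\begin{itemize}
\item Since $|4Rk\cdot m|$ ranges up to $NR$, the high-modulation set $\{\langle\tau'-4Rk\cdot m\rangle\gtrsim R^2\}$ already contains most $k$ when $|\tau'|=O(1)$, and there $\langle\tau'\rangle^{-1}\sim1$.
\item $\int_{|\tau'|\lesssim NR}\langle\tau'\rangle^{-1}d\tau'\sim\log(NR)$ produces no power of $R$ at all.
\end{itemize}

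\textbf{The fix.} The missing $R^{-1/2}$ is simply the size of $m$. Since $A_R=P_RA$ (and $m\neq0$ when $R=1$), the Fourier support has $|m|_\infty\sim R$, so $R|m|\sim R^2$ in Lemma \ref{lem:k-slab-count}. For $|\tau'|\gg NR$ one has $L\sim|\tau'|$, and the trivial count $K^d$ suffices. Together this gives, uniformly in $\tau'$ and $m$,
\[
W_m(\tau')\lesssim K^{d-1}\Big(1+\frac{(NR)^{3/4}}{R^{2}}\Big)+K^{d}(NR)^{-1/4}\lesssim\Big(\frac NR\Big)^d\Big[\frac RN+(NR)^{-1/4}\Big].
\]
This is exactly the paper's bound.

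No pointwise-in-$\tau'$ decay is then needed. Your ``first alternative'' is essentially the paper's proof: for each time-dyadic piece,
\[
\sum_k\|J_{2Rk}P^t_MA_R\|^2_{B^{-1/8}_{t,2,2}L^2_x}\le \sup_{m,\tau'}W_m(\tau')\,\|P^t_MA_R\|^2_{L^2_{t,x}}\lesssim \sup_{m,\tau'}W_m(\tau')\,MR^d\|P^t_MA_R\|^2_{L^1_{t,x}}.
\]
The triangle inequality in $M$ then produces $\|A_R\|_{B^{1/2}_{t,1,1}L^1_x}$. Multiplying by $\sup_kX_k\lesssim T^\vartheta N^{\vare}(R/N)^{d/2}N^{-s_c+\delta}$ yields
\[
T^\vartheta N^{-s_c+\delta+\vare}R^{d/2}\big[(R/N)^{1/2}+(NR)^{-1/8}\big],
\]
which is precisely the right-hand side of \eqref{eq:weighted-shear}, with no logarithmic loss. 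Your pointwise route via $\langle\tau'\rangle^{1/2}|\widetilde A|$ also works once $|m|\sim R$ is used, at the cost of a $\log(NR)$ that can be absorbed into $N^\vare$.

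Your identification of the second term, $R^{-1/4}(R^2/N)^{1/2}=(R/N)^{1/2}$, with the ``$1$'' part of the slab count is correct.
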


\begin{proof}
Decompose $A_R=\sum_M P_M^tA_{R}$.  Taking the space-time Fourier transform we obtain
\[
 \mathcal{F}_{t,x}(J_{2Rk}P_M^t A_{R})(\eta,m)
 =\mathcal{F}_{t,x}(P_MA_{R})(\eta+4Rk\cdot m,m).
\]
Deducing similarly as in the proof of \cite[Lem. 3.7]{KwakKwon}, we obtain
\begin{equation*}
\begin{aligned}
 \|J_{2Rk}P_M^t A_{R}\|_{B^{-1/8}_{t,2,2}L_x^2}^2
 &\lesssim \|P_M^t A_R\|^2_{L_{t,x}^1}\sum_{|m|_\infty\sim R}\int_{|\eta|\lesssim M}
 \langle\eta+4Rk\cdot m\rangle^{-1/4}
d\eta.
\end{aligned}
\end{equation*}
The $m=0$ term is absent by assumption when $R=1$.  This in turn implies
\begin{equation*} 
\begin{aligned}
 &\sum_k \|J_{2Rk}P_M^t A_{R}\|_{B^{-1/8}_{t,2,2}L_x^2}^2\notag\\
 &\quad\lesssim\|P_M^t A_{R}\|_{L_{t,x}^1}^2
 \sum_{|m|_\infty\sim R}\int_{|\eta|\lesssim M}
 \sum_k \langle\eta-4Rk\cdot m\rangle^{-1/4}d\eta\\
 &\quad\lesssim \|P_M^t A_{R}\|_{L_{t,x}^1}^2
 \sum_{|m|_\infty\sim R}\int_{|\eta|\lesssim M}
\sum_{L\text{\,dyadic}}L^{-\frac14}\sum_{k:\,\langle\eta-4Rk\cdot m\rangle\sim L}d\eta.
\end{aligned}
\end{equation*}
By Lemma \ref{lem:k-slab-count}, for $L\leq NR$ we have
\[
 \sum_{k:|\eta-4Rk\cdot m|\sim L}1
 \lesssim (N/R)^d
 \left(\frac RN+\frac{L}{NR}\right).
\]
In the case $L>NR$, we simply use the global bound $\sum_{k:|\eta-4Rk\cdot m|\sim L}\lesssim (N/R)^{d}$. Combining the bound \eqref{eq:block-global}, and the fact that there are $O(R^d)$ spatial frequencies $m$ and the $\eta$ integral has length $O(M)$, we obtain
\begin{equation*}
\begin{aligned}
 &\sum_k X_k^2\|J_{2Rk}P_M^t A_{R}\|_{B^{-1/8}_{t,2,2}L_x^2}^2
 \\
 &\quad\lesssim T^{2\vartheta}N^{2(-s_c+\delta+\varepsilon)}R^d M\|P_M^t A_{R}\|_{L_{t,x}^1}^2\\
&\qquad\times \left[
 \frac RN\sum_{L\leq NR}L^{-1/4}
 +\frac1{NR}\sum_{L\leq NR}L^{3/4}
 +\sum_{L>NR}L^{-1/4}
 \right]\\
 &\quad\lesssim T^{2\vartheta}N^{2(-s_c+\delta+\varepsilon)}R^d M
 \left[\frac RN+(NR)^{-1/4}\right]\|P_M^t A_{R}\|_{L_{t,x}^1}^2.
\end{aligned}
\end{equation*}
The desired claim now follows from an $\ell_M^2$ summation.
\end{proof}

\begin{lemma}\label{lem:weighted-shear-interp}
Let the conditions in Lemma \ref{lem:weighted-shear} be retained. With $q_0,r_0,\sigma_1$ as in Section \ref{dictionary}, we have
\begin{equation}\label{eq:weighted-interp}
\begin{aligned}
 \|X_kJ_{2Rk}A_R\|_{\ell_k^2 B^{-\sigma_1}_{t,q_0,\infty}L_x^{(p/2)'}}
 &\leq C T^{\vartheta}N^{-s_c+\delta+\varepsilon}
 R^{d(1/r_0+2/p-1)-2\sigma_1}\\
&\times \left[
 \left(\frac RN\right)^{\sigma_1}
 +R^{-2\sigma_1}\left(\frac{R^2}{N}\right)^{4\sigma_1}
 \right]
\|A_R\|_{B^{\frac{1}{r_0}-\frac{1}{q_0}}_{t,r_0,r_0}L_x^{r_0}}
\end{aligned}
\end{equation}
uniformly in $N,R$.
\end{lemma}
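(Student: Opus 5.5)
This estimate will follow by interpolating Lemma \ref{lem:weighted-shear} with a trivial endpoint bound. The weights $X_k$ do not depend on $A_R$, and on $\Omega_T$ they obey \eqref{eq:block-global} uniformly. Hence, for fixed $\omega\in\Omega_T$, the map $A_R\mapsto (X_kJ_{2Rk}A_R)_k$ is linear. I will bound it between two pairs of vector-valued Besov spaces and apply complex interpolation via Lemma \ref{lemallprop}(vii). Sequence-valued (weighted $\ell^2_k$) spaces interpolate as expected.

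\emph{First endpoint.} This is \eqref{eq:weighted-shear} itself, mapping $B^{1/2}_{t,1,1}L^1_x\to \ell^2_kB^{-1/8}_{t,2,2}L^2_x$ with constant
\[
T^{\vartheta}N^{-s_c+\delta+\varepsilon}R^{d/2-1/4}\Big[(R/N)^{1/8}+R^{-1/4}(R^2/N)^{1/2}\Big].
\]

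\emph{Second endpoint.} This is a lossless bound without frequency gain, for example $B^{0}_{t,q,q}L^{r}_x\to \ell^2_k B^0_{t,q,\cdot}L^{r'}$-type norms. It uses only the following facts.
\begin{itemize}
\item $J_{2Rk}$ is an isometry on $L^r_x$ for each fixed $t$.
\item $A_R$ is frequency localized to $|m|_\infty\sim R$, so Bernstein (Lemma \ref{lemallprop}(iii)) trades $L^{r}_x$ for $L^{r'}_x$ at a cost $R^{d(\cdot)}$.
\item The number of $k$ with $C_{N,R,k}\ne\varnothing$ is $\lesssim (N/R)^d$. Combined with $X_k\lesssim T^\vartheta N^{\varepsilon}(R/N)^{d/2}N^{-s_c+\delta}$, this gives
\[
\Big(\sum_k X_k^2\Big)^{1/2}\lesssim T^\vartheta N^{-s_c+\delta+\varepsilon}.
\]
\end{itemize}
Negative temporal regularity $B^{-\cdot}_{t}$ is handled by $B^0\hookrightarrow B^{-s}$ together with the temporal Bernstein inequality in Lemma \ref{lemallprop}(iv), to move between temporal Lebesgue exponents.

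\emph{Interpolation.} Choose the interpolation parameter $\theta\approx 8\sigma_1$ so that the temporal index $-\sigma_1$ and the exponents $q_0$, $(p/2)'$, $r_0$, $\frac1{r_0}-\frac1{q_0}$ on the target side match the definitions in Section \ref{dictionary}. The factors then transform as follows.
\begin{itemize}
\item The bracket raised to the power $\theta$ becomes $(R/N)^{\sigma_1}+R^{-2\sigma_1}(R^2/N)^{4\sigma_1}$, since $(x+y)^\theta\le x^\theta+y^\theta$.
\item $R^{(d/2-1/4)\theta}$ combined with the Bernstein loss of the second endpoint yields $R^{d(1/r_0+2/p-1)-2\sigma_1}$.
\item $T^\vartheta N^{-s_c+\delta+\varepsilon}$ is common to both endpoints.
\end{itemize}
Any mismatch in the summability indices, such as the third index $\infty$ on the left versus $r_0$ on the right, is absorbed using Lemma \ref{lemallprop}(i),(iv) and the monotonicity $B^s_{p,q_1}\hookrightarrow B^s_{p,q_2}$ for $q_1\le q_2$.

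\emph{Main obstacle.} The hard part is the exponent bookkeeping: finding endpoint spaces whose complex interpolation produces exactly $B^{-\sigma_1}_{t,q_0,\infty}L^{(p/2)'}_x$ and $B^{1/r_0-1/q_0}_{t,r_0,r_0}L^{r_0}_x$. Complex interpolation with $\theta$ tied to $\sigma_1$ will not give these indices directly. I expect to need an auxiliary Sobolev/Bernstein step in $t$ and $x$, using the localization of $P_M^t$ blocks, applied to the first endpoint before interpolating, with the remaining small discrepancies absorbed into $R^{-2\sigma_1}$. If interpolation of the $\ell^2_k$-valued spaces is awkward, I would instead interpolate at the level of each dyadic temporal block $P^t_MA_R$, as in the proof of Lemma \ref{lem:weighted-shear}, and then resum in $M$.
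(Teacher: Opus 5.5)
Your proposal follows the paper's proof: interpolate \eqref{eq:weighted-shear} with $\theta=8\sigma_1$ against the trivial endpoint $\|X_kJ_{2Rk}A_R\|_{\ell^2_kB^0_{t,q,\infty}L^q_x}\lesssim(\sum_kX_k^2)^{1/2}R^{d(1/r-1/q)}\|A_R\|_{B^{1/r-1/q}_{t,r,r}L^r_x}$. That endpoint uses exactly your ingredients: the isometry of $J_{2Rk}$, spatial Bernstein, the temporal Sobolev embedding of Lemma \ref{lemallprop}(iv), and $\sum_kX_k^2\lesssim T^{2\vartheta}N^{2(-s_c+\delta+\varepsilon)}$ from the count of $k$.

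The bookkeeping you flag as the main obstacle closes exactly. Choose $r,q$ so that the interpolated indices are $r_0$ on the source side and $q_0$ on the target side; this gives $R^{d(1/r_0-1/q_0)-2\sigma_1}$. The one remaining step is spatial Bernstein $L^{q_0}_x\to L^{(p/2)'}_x$ (note $q_0<(p/2)'$; the paper leaves this step implicit). Its cost is precisely $R^{d(1/q_0-1/(p/2)')}$, which yields $R^{d(1/r_0+2/p-1)-2\sigma_1}$, so nothing has to be absorbed into the $R^{-2\sigma_1}$ gain.
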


\begin{proof}
First recall the trivial bound 
\begin{align}\label{interpolation}
\|J_{2Rk}A_R\|_{B^{0}_{t,q,\infty}L_x^q}
\lesssim
\|J_{2Rk}A_R\|_{L^q_{t,x}}
=
\|A_R\|_{L^q_{t,x}}
\lesssim
R^{d\left(\frac1r-\frac1q\right)}
\|A\|_{B^{\frac1r-\frac1q}_{t,r,r}L_x^r}.
\end{align}
This implies 
\begin{equation}\label{inter2}
\begin{aligned}
 \|X_kJ_{2Rk}P_RA\|_{\ell_k^2B^0_{t,q,\infty}L_x^q}
 &\lesssim \left(\sum_kX_k^2\right)^{1/2}
 R^{d(1/r-1/q)}
 \|A_R\|_{B^{1/r-1/q}_{t,r,r}L_x^r}\\
 &\lesssim T^{\vartheta}N^{-s_c+\delta+\vare}R^{d(1/r-1/q)}
 \|A_R\|_{B^{1/r-1/q}_{t,r,r}L_x^r}.
\end{aligned}
\end{equation}
A complex interpolation between \eqref{eq:weighted-shear} and \eqref{inter2} yields \eqref{eq:weighted-interp}.
\end{proof}

\begin{lemma}\label{thm:random-KK}
Let the conditions in Lemma \ref{lem:weighted-shear} be retained and let $\theta$ be the number defined in \eqref{eq:constant-dictionary-KK}. Then for function $v$ defined on $\R\times\T^d$ we have
\begin{equation}\label{eq:random-KK}
\begin{aligned}
 &\left|\int\psi^2\chi_{I_T}e^{it\Delta}\phi_N^\omega \overline{v_M}A_R\,dxdt\right|&\\
 &\quad\lesssim T^{\vartheta}N^{-s_c+\delta+\varepsilon}\|v_M\|_{Z^0}
 R^{-\sigma}\left(
 \left(\frac RN\right)^{\sigma_1}
 +R^{-2\sigma_1}\min\{1,(R^2/N)^{4\sigma_1}\}
 \right)
 R^\theta\|A_R\|_{B^{\frac{1}{r_0}-\frac{1}{q_0}}_{t,r_0,r_0}L_x^{r_0}}
\end{aligned}
\end{equation}
uniformly in $N,M,R$.
\end{lemma}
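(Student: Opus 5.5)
We follow the scheme of \cite[Prop. 3.11]{KwakKwon}, feeding in the random block bound \eqref{eq:block-global} and the interpolated shear estimate \eqref{eq:weighted-interp} in place of the deterministic ones.

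\emph{Step 1: Galilean decomposition.} Since $A_R$ has frequencies $\lesssim R$ and $N\geq 32R$, only $v_M$ with $M\sim N$ contribute. We cover the frequency shell $|n|_\infty\sim N$ by the cubes $-2Rk+[-R,R]^d$ and write $e^{it\Delta}\phi^\omega_N=\sum_k P_{C_{N,R,k}}e^{it\Delta}\phi^\omega_N$. The product against $A_R$ moves frequencies by at most $R$, so each block pairs only with $P_{-2Rk+[-8R,8R]^d}v_M$; the overlap is bounded. We then apply $I_{2Rk}$ to every factor inside the integral. By Lemma \ref{lem 3.8} this is unitary and preserves the pairing, and it turns the coefficient into $J_{2Rk}A_R$ up to a unimodular phase that cancels between $e^{it\Delta}\phi^\omega_N$ and $\overline{v_M}$. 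The integral is thus bounded by
\[
\sum_k\Big|\int \psi\chi_{I_T}P_{\le 8R}I_{2Rk}e^{it\Delta}\phi_N^\omega\cdot\overline{\psi P_{\le8R}I_{2Rk}v_M}\cdot J_{2Rk}A_R\,dxdt\Big|.
\]

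\emph{Step 2: trilinear H\"older--Besov bound on each block.} We apply the Schur--Besov inequality Lemma \ref{kwak_lem_lhh}(1) in time, with $L^p_x\times L^p_x\times L^{(p/2)'}_x$ H\"older in space. The regularities are $\alpha$ on the random factor, $\alpha$ on $v$, and $-\sigma_1$ on the coefficient. Integrabilities are $p,p,q_0$, and the exponent relation is exactly \eqref{eq:constant-dictionary-alpha}, i.e.\ $\frac1{q_0}+\sigma_1+2(\frac1p-\alpha)=1$. This bounds each summand by
\[
R^{2\alpha}X_k\,R^{-2\alpha}\|\psi P_{\le8R}I_{2Rk}v_M\|_{B^\alpha_{p,1}L^p}\,\|J_{2Rk}A_R\|_{B^{-\sigma_1}_{q_0,\infty}L^{(p/2)'}}.
\]
The two factors $R^{\pm2\alpha}$ cancel.

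\emph{Step 3: summation in $k$.} By Cauchy--Schwarz in $k$, the $v$ factor gives $R^{\sigma}\|v_M\|_{Z^0}$ by the second part of \eqref{def Zs}, using $\beta=\sigma+2\alpha$. The product $X_kJ_{2Rk}A_R$ is estimated by \eqref{eq:weighted-interp}. It remains to check the power of $R$: we need
\[
d(1/r_0+2/p-1)-2\sigma_1+\sigma\le\theta-\sigma.
\]
Recall $\theta=\frac2{q_0}+\frac d{r_0}-2$ and $\frac2p=\frac{d-2\sigma}{d+2}$. Then
\[
d\Big(\frac2p-1\Big)=-\frac{2d+2d\sigma}{d+2}\quad\text{and}\quad \theta-\frac d{r_0}=\frac{2(2+\sigma_3)}{d+2}-2,
\]
and the difference between the two sides is governed by $\sigma_1\gg\sigma$. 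So the $R$-exponent works out with the room $-2\sigma_1+2\sigma\le0$; any leftover small positive power can be absorbed since $\sigma\ll\sigma_1$.

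\emph{Step 4: the minimum.} The factor $\min\{1,(R^2/N)^{4\sigma_1}\}$ comes from using, when $R^2>N$, the trivial bound \eqref{inter2} interpolated against itself, i.e.\ the $L^q$ bound without shear gain. In that case the bracket is simply replaced by $1$.

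\emph{Main obstacle.} We expect the hardest part to be the exponent bookkeeping in Step 3. The $R$-powers from \eqref{eq:weighted-interp}, the $Z^0$ normalization $R^{-(2\alpha+\sigma)}$ and $\theta$ must combine to give exactly $R^{-\sigma}R^{\theta}$. If there is a mismatch, we would adjust the Besov regularity index in Step 2 (using $\tilde\alpha$ instead of $\alpha$), at the cost of some $\sigma_1$ room.
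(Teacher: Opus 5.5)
Your argument for $R\le N^{1/2}$ follows the paper's route: Galilean blocks, the trilinear Schur--Besov bound with the exponents \eqref{eq:constant-dictionary-alpha}, Cauchy--Schwarz in $k$, and \eqref{eq:weighted-interp}. However, Step 2 drops a power of $R$, and this hides a genuine gap in Step 4.

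\textbf{Step 2.} The Schur--Besov bound controls each block by the \emph{unnormalized} norms. With $Y_k:=R^{-2\alpha}\|\psi P_{\le8R}I_{2Rk}v_M\|_{B^\alpha_{p,1}L^p}$, the summand is therefore $R^{4\alpha}X_kY_k\|J_{2Rk}A_R\|_{B^{-\sigma_1}_{q_0,\infty}L^{(p/2)'}}$. The two factors $R^{2\alpha}$ multiply; there is no $R^{-2\alpha}$ to cancel against. Using $\|Y_k\|_{\ell^2_k}\lesssim R^{\sigma}\|v_M\|_{Z^0}$, the $R$-exponent is
\[
4\alpha+\sigma+d\Big(\frac1{r_0}+\frac2p-1\Big)-2\sigma_1=4\alpha+\sigma+\theta-2\beta=\theta-\sigma
\]
exactly. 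The slack you computed, $\frac{2\sigma_3+2d\sigma}{d+2}+2\sigma_1-2\sigma$, is precisely $4\alpha$, since $2\alpha=\frac{\sigma_3-2\sigma}{d+2}+\sigma_1$ by \eqref{eq:constant-dictionary-alpha}. So it is exactly the factor you dropped, and there is no room left to absorb anything.

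\textbf{Step 4.} With zero slack, Step 4 fails. For $R^2>N$ the statement still requires the decaying bracket $(R/N)^{\sigma_1}+R^{-2\sigma_1}$, not $1$.

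The shear-free bound \eqref{inter2}, with Bernstein to reach $L^{(p/2)'}_x$, only gives $R^{d(1/r_0+2/p-1)}$. It misses both the bracket and the factor $R^{-2\sigma_1}$, which in \eqref{eq:weighted-interp} comes from the $R^{-1/4}$ at the $L^2$ endpoint. You would end with $R^{\theta-\sigma+2\sigma_1}$. Summing over $N^{1/2}<R\le N/32$ in Corollary \ref{cor:random-KK-summed} would then give $N^{2\sigma_1-\sigma}$ instead of the required $N^{-\sigma}$. Your Step 4 only looks consistent because of the spurious $R^{-4\alpha}$.

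The missing idea, which the paper uses for $R>N^{1/2}$ (via the proof of \cite[Lem. 3.9]{KwakKwon}), is to return to the deterministic counting. Fix $k$ with $|k|\sim N/R$ and count $m$:
\[
\#\{m:|m|_\infty\sim R,\ |4Rk\cdot m-\tau|\le L\}\lesssim R^{d-1}(1+L/N).
\]
This yields, uniformly in $k$,
\[
\|J_{2Rk}A_R\|_{B^{-1/8}_{t,2,2}L^2_x}\lesssim R^{\frac d2-\frac14}\Big(\Big(\frac RN\Big)^{1/8}+R^{-1/4}\Big)\|A_R\|_{B^{1/2}_{t,1,1}L^1_x}.
\]
Interpolate this with the trivial bound, using only $\|X_k\|_{\ell^2_k}\lesssim T^\vartheta N^{-s_c+\delta+\vare}$ from \eqref{eq:block-global}. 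This produces $R^{\theta-\sigma}\big((R/N)^{\sigma_1}+R^{-2\sigma_1}\big)$, which is the claim when the minimum equals $1$. The random count (fix $m$, count $k$) is what supplies the extra factor $(R^2/N)^{4\sigma_1}$ in the complementary regime $R\le N^{1/2}$.
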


\begin{proof}
Arguing as in the proof of \cite[Lem. 3.9]{KwakKwon}, we arrive at 
\begin{align*}
\text{l.h.s. of \eqref{eq:random-KK}}\lesssim\sum_k
R^{4\alpha} X_kY_k
 \|J_{2Rk}A_R\|_{B^{-\sigma_1}_{q_0,\infty}L^{(p/2)'}},
\end{align*}
where 
$$Y_k= R^{-2\alpha}\|\psi P_{\leq8R}I_{2Rk}e^{it\Delta}v_M\|_{B^\alpha_{p,2}L^p}.$$ 
By the definition of $Z^0$-norm, it holds $\|R^{-\sigma}Y_k\|_{\ell_k^2}\sim\|v_M\|_{Z^0}$. 

In the case $R\leq N^{\frac12}$, Lemma \ref{lem:weighted-shear-interp} and Cauchy-Schwarz in $k$ immediately yield
\begin{align*}
&\lesssim T^{\vartheta}N^{-s_c+\delta+\varepsilon}\|v_M\|_{Z^0}
 \left[\left(\frac RN\right)^{\sigma_1}
 +R^{-2\sigma_1}\left(\frac{R^2}{N}\right)^{4\sigma_1}\right]\\
&\qquad\times R^{4\alpha+\sigma+d(1/r_0+2/p-1)-2\sigma_1}\|A_R\|_{B^{\frac{1}{r_0}-\frac{1}{q_0}}_{t,r_0,r_0}L_x^{r_0}},
\end{align*}
The claim follows by combining the scaling and constant identities
\[
 d\left(\frac1{r_0}+\frac2p-1\right)-2\sigma_1
 =\theta-2\beta\qquad\text{and}\qquad 4\alpha+\sigma+\theta-2\beta=\theta-\sigma.
\]
The case $R>N^{\frac12}$ can be similarly dealt, as long as we replace \eqref{eq:weighted-interp} by the bilinear estimate 
\[\text{l.h.s. of \eqref{eq:random-KK}}
\lesssim
\sum_k R^{4\alpha}X_kY_k
\left(
\langle N/R\rangle^{-\sigma_1}
+
R^{-2\sigma_1}
\right)
R^\theta
\|A_R\|_{B^{\frac1{r_0}-\frac1{q_0}}_{t,r_0,r_0}L_x^{r_0}}\]
given in the proof of \cite[Lem. 3.9]{KwakKwon}. This completes the proof.
\end{proof}

\begin{corollary}\label{cor:random-KK-summed}
Let the conditions in Lemma \ref{lem:weighted-shear} be retained. Then
\begin{equation}\label{eq:random-KK-summed-form}
 \left|\int\psi^2\chi_{I_T}e^{it\Delta}\phi_N^\omega \overline{v_M} P_{\leq N/32}A\,dxdt\right|
 \lesssim T^{\vartheta}N^{-s_c+\delta-\sigma+\varepsilon}\|v_M\|_{Z^0}\|A\|_{B^{\frac{1}{r_0}-\frac{1}{q_0}}_{t,r_0,r_0}
B^\theta_{x,r_0,\infty}}
\end{equation}
uniformly in $N,M$.
\end{corollary}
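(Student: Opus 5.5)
We decompose $P_{\le N/32}A=\sum_{R\le N/32}A_R$ with $A_R=P_RA$, apply Lemma \ref{thm:random-KK} to each dyadic piece on the common good event $\Omega_T$, and sum over $R$. For $R\ge 2$ the zero-mean hypothesis $P_0A_R=0$ holds automatically, since $P_R$ for $R\geq2$ removes frequencies with $|m|_\infty\le R/2$, in particular $m=0$. For $R=1$ we are under the standing assumption of Lemma \ref{lem:weighted-shear} that $P_0A=0$. The constraint $N\ge 32R$ required in Lemma \ref{lem:weighted-shear} is exactly $R\le N/32$. The event $\Omega_T$ in Lemma \ref{lem:block-event} was already built to be uniform in $N,R,k$. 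Hence a single event works for all pieces, and the implicit constants in \eqref{eq:random-KK} are uniform in $N,M,R$.

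Using $\|A_R\|_{B^{1/r_0-1/q_0}_{t,r_0,r_0}L^{r_0}_x}\lesssim R^{-\theta}\|A\|_{B^{1/r_0-1/q_0}_{t,r_0,r_0}B^\theta_{x,r_0,\infty}}$, the factor $R^\theta$ in \eqref{eq:random-KK} cancels. It then remains to show
\[
\sum_{R\le N/32}R^{-\sigma}\left(\Big(\frac RN\Big)^{\sigma_1}+R^{-2\sigma_1}\min\{1,(R^2/N)^{4\sigma_1}\}\right)\lesssim N^{-\sigma}.
\]
We split the sum according to the two terms in the bracket.

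\emph{First term.} It contributes
\[
N^{-\sigma_1}\sum_{R\le N}R^{\sigma_1-\sigma}\lesssim N^{-\sigma},
\]
because $\sigma_1>\sigma$, so the geometric sum is dominated by its top term $R\sim N$.

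\emph{Second term, $R\le N^{1/2}$.} Here it equals $R^{-\sigma+6\sigma_1}N^{-4\sigma_1}$. Since $6\sigma_1-\sigma>0$, the sum is dominated by $R\sim N^{1/2}$, giving
\[
N^{(6\sigma_1-\sigma)/2-4\sigma_1}=N^{-\sigma_1-\sigma/2}\le N^{-\sigma}.
\]

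\emph{Second term, $R>N^{1/2}$.} Here it equals $R^{-\sigma-2\sigma_1}$, which is summable and bounded by $N^{-(\sigma+2\sigma_1)/2}\le N^{-\sigma}$, using $\sigma\ll\sigma_1$.

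Collecting the three contributions gives $N^{-\sigma}$, and together with the prefactor $T^\vartheta N^{-s_c+\delta+\vare}\|v_M\|_{Z^0}$ this yields \eqref{eq:random-KK-summed-form}.

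The main point to check is the $R\approx N^{1/2}$ regime. There the second term only gives $N^{-\sigma_1-\sigma/2}$, and we need the ordering $\sigma\ll\sigma_1$ from Section \ref{dictionary} to conclude that this is $\le N^{-\sigma}$. The other potential concern is the $R=1$ term, which needs zero mean; it is supplied by the hypotheses retained from Lemma \ref{lem:weighted-shear}. If instead one only assumed $P_0A=0$ for $A$ itself, this is still enough, since $P_1A$ then has no zero mode.
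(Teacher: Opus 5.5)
Your proposal is correct and follows essentially the same route as the paper. Both arguments split $P_{\le N/32}A=\sum_{R\le N/32}A_R$, apply Lemma~\ref{thm:random-KK} to each dyadic piece, and absorb the factor $R^\theta$ via $R^\theta\|A_R\|_{B^{\frac1{r_0}-\frac1{q_0}}_{t,r_0,r_0}L^{r_0}_x}\lesssim\|A\|_{B^{\frac1{r_0}-\frac1{q_0}}_{t,r_0,r_0}B^\theta_{x,r_0,\infty}}$. The dyadic sum is then bounded by $N^{-\sigma}+N^{-\sigma_1-\sigma/2}\lesssim N^{-\sigma}$ using $\sigma\ll\sigma_1$. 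Your explicit split at $R\sim N^{1/2}$, and your remarks on the zero mode of $A_1$ and the uniform good event, spell out steps the paper leaves implicit.
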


\begin{proof}
This follows immediately from Lemma \ref{thm:random-KK}, combining also the triangular inequality applied for $P_{\leq N/32}A=\sum_{R:\,R\leq N/32}A_R$, the uniform in $R$ estimate 
$$R^\theta\|A_R\|_{B^{\frac{1}{r_0}-\frac{1}{q_0}}_{t,r_0,r_0}L_x^{r_0}}\lesssim 
\|A\|_{B^{\frac{1}{r_0}-\frac{1}{q_0}}_{t,r_0,r_0}
B^\theta_{x,r_0,\infty}}$$ 
and the upper bound for the dyadic sum 
$$\sum_{R:R\leq N/32} R^{-\sigma}\left(
 \left(\frac RN\right)^{\sigma_1}
 +R^{-2\sigma_1}\min\{1,(R^2/N)^{4\sigma_1}\}
 \right)\lesssim N^{-\sigma}+N^{-\sigma_1-\sigma/2}\lesssim N^{-\sigma},$$
where we also used the fact that $\sigma_1\gg \sigma$.
\end{proof}

\begin{lemma}\label{first bilinear}
Let $A$ satisfy $P_0A=0$. Then for any given $\vare>0$ there exist $0<\vartheta\ll 1$, $\tilde{\vartheta}\in(0,1)$ and $C,c>0$, such that for any $0<T\ll 1$ there exists some $\Omega_T\subset \Omega$ satisfying $\mathbb P(\Omega_T^c)<\exp(-cT^{-\tilde\vartheta})$, such that for any $\omega\in\Omega_T$ it holds
\begin{align*}
\left|
\int_{\mathbb{R}\times\mathbb{T}^d}
\psi^2 (\chi_{I_T}e^{it\Delta}\phi^\omega)\overline{v} A
\,dx\,dt
\right|
\lesssim
T^{\vartheta}\|\phi\|_{H_x^{\vare-\sigma}}\|v\|_{Z^0}
\|A\|_{B^{\frac{1}{r_0}-\frac{1}{q_0}}_{t,r_0,r_0}B^\theta_{x,r_0,\infty}}.
\end{align*}
\end{lemma}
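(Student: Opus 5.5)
We decompose both the random linear solution and $v$ into Littlewood--Paley pieces, $e^{it\Delta}\phi^\omega=\sum_N e^{it\Delta}\phi^\omega_N$ and $v=\sum_M v_M$, and split the trilinear integral according to the relative size of the frequency $N$ and the frequency support of $A$. By orthogonality in $x$, the integrand $\phi_N\overline{v_M}A$ has nonzero spatial mean only if the three frequencies can balance. We write $A=P_{\leq N/32}A+P_{>N/32}A$.

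\emph{Low-frequency coefficient.} For the piece $P_{\leq N/32}A$, only $M\sim N$ contributes. Corollary \ref{cor:random-KK-summed} applies, since $P_0A=0$ is exactly the hypothesis inherited from Lemma \ref{lem:weighted-shear}. It gives the bound
\[
T^{\vartheta}N^{-s_c+\delta-\sigma+\vare}\|v_M\|_{Z^0}\|A\|_{B^{\frac1{r_0}-\frac1{q_0}}_{t,r_0,r_0}B^\theta_{x,r_0,\infty}}
\]
on a good event $\Omega_T$ that is uniform in $N,M$. This uniformity comes from Lemma \ref{lem:block-event} together with the intersection of events explained in Remark \ref{vartheta explain}. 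Since $N^{-s_c+\delta}$ is comparable to the $\ell^2$ size of $b_n$ on the dyadic shell, we rewrite the factor $N^{-s_c+\delta-\sigma+\vare}$ as $\|\phi_N\|_{H^{\vare-\sigma}}$, up to the extra $N^\vare$ margin. Summing over $M\sim N$ by Cauchy--Schwarz, with $\sum_N\|v_N\|_{Z^0}^2\lesssim\|v\|_{Z^0}^2$ and $\sum_N \|\phi_N\|_{H^{\vare-\sigma}}^2=\|\phi\|_{H^{\vare-\sigma}}^2$, yields the claimed bound for this piece. If the factorization through $b_n$ is not exact, we keep a spare $N^{-\vare}$ by applying the corollary with $\vare/2$.

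\emph{High-frequency coefficient.} For the piece $P_{>N/32}A$, the Galilean gain is not needed; it suffices to use the spatial regularity $\theta>0$ of $A$ together with the probabilistic integrability of the random term. We split $A=\sum_{R\gtrsim N}A_R$, and frequency balance forces either $M\sim R$ or $M\lesssim R$ with $R\sim$ the largest frequency. We apply Hölder in space-time with exponents matched to the Kwak--Kwon scaling. We place $v_M$ in $L^qL^r$ with $\frac2q+\frac dr=\frac d2-\sigma$, controlled by $\|v_M\|_{Z^0}$ through \eqref{3.14}, and $A_R$ in $L^{q_0}L^{r_0}$ with the factor $R^{-\theta}$ coming from $B^\theta_{x,r_0,\infty}$. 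The random block goes in the remaining Lebesgue exponent, where Lemma \ref{lem:prob-Besov} together with Remark \ref{can lebesgue} gives $T^{\vartheta}N^{\vare}\|\phi_N\|_{L^2}$ with no derivative loss. Since $\phi_N$ has regularity in $H^{\vare-\sigma}$ and $R\gtrsim N$, the gain $R^{-\theta}$ (with $\theta\gg\sigma$, as $\theta\approx\sigma_2$) pays for the $N^{\sigma}$ and allows both the $R$ and $N$ sums to be summed geometrically. If the exponents do not close by Hölder alone, the fallback is Lemma \ref{kwak_lem_lhh}(2), using the paraproduct structure $\pi_\le$.

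\emph{Main obstacle.} The delicate step is the uniformity of the good event: the corollary's event has to be intersected over all $N$, all $R\le N$ and all blocks $k$, and the same $\Omega_T$ has to work for the high-frequency piece. We handle this exactly as in Lemma \ref{lem:block-event}. The $N^\vare$ loss gives tail probabilities $\exp(-cN^{2\vare}T^{-\tilde\vartheta})$, which can be summed over the polynomially many indices. A second, minor point is to check that $T^\vartheta$ survives and that the time localization $\psi^2\chi_{I_T}$ is absorbed through Lemma \ref{lem:time-cutoff}.
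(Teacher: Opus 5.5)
Your proposal is correct, but it handles the high-frequency-coefficient piece by a different route from the paper. Your low-frequency-coefficient piece is the paper's argument: its $\pi_{>}(\overline v,A)$ term is your $P_{\le N/32}A$ term up to dyadic constants, and both close it with Corollary~\ref{cor:random-KK-summed} and Cauchy--Schwarz over $M\sim N$.

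\textbf{The paper's route.} For the complementary term $\pi_{\le}(\overline v,A)$ the paper uses the Schur--Besov paraproduct bound \eqref{2.14}. It places both $e^{it\Delta}\phi^\omega$ and $v$ in $B^{\tilde\alpha}_{t,p,2}B^{-\tilde\beta}_{x,p,2}$. In effect it treats the random factor like a $Z^0$ function and uses only the probabilistic Besov bound \eqref{eq:prob-Besov}. That gives $T^\vartheta\|\phi\|_{H^{-\sigma}}$ with no $\vare$-loss, no frequency-by-frequency bookkeeping, and no margin needed on $\theta$.

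\textbf{Your route.} You use plain H\"older instead:
\begin{itemize}
\item $v_M$ goes in $L^pL^p$ via \eqref{3.14}.
\item $A_R$ goes in $L^{q_0}L^{r_0}$, using the time embedding $B^{1/r_0-1/q_0}_{r_0,r_0}\hookrightarrow L^{q_0}$ (valid since $r_0<q_0$) and $\|P_R\cdot\|_{L^{r_0}}\lesssim R^{-\theta}\|\cdot\|_{B^\theta_{r_0,\infty}}$.
\item The random block goes in the leftover pair $\frac1{q_z}=\frac{d/2+\sigma-\sigma_3}{d+2}$, $\frac1{r_z}=\frac{d/2+\sigma-\sigma_3-\sigma_2}{d+2}$. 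Both lie in $(0,\tfrac12]$, and the pair satisfies $\frac2{q_z}+\frac d{r_z}=\frac d2-(\theta-\sigma)$. This is far beyond Strichartz scaling, but the random linear flow reaches it at cost only $N^\vare$ by Remark~\ref{can lebesgue}.
\end{itemize}
This is more elementary: no vector-valued paraproduct lemma and no temporal regularity are needed. The price is the $N^\vare$ loss, which the $H^{\vare-\sigma}$ norm absorbs, and reliance on the parameter hierarchy to make $\theta$ beat the $\sigma$-losses.

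\textbf{Bookkeeping corrections.}
\begin{itemize}
\item \eqref{3.14} gives $\|\psi v_M\|_{L^pL^p}\lesssim M^{\sigma}\|v_M\|_{Z^0}$, not $\|v_M\|_{Z^0}$. Together with $N^\vare\|\phi_N\|_{L^2}\sim N^{\sigma}\|\phi_N\|_{H^{\vare-\sigma}}$, the total loss is $N^\sigma M^\sigma$, not just $N^\sigma$.
\item Frequency balance forces $R\gtrsim\max\{N,M\}$ in this piece. Since $\theta=\sigma_3+\frac{d}{d+2}\sigma_2>2\sigma$, the factor $R^{-\theta}$ still pays for both losses.
\item Both configurations $R\sim M\gtrsim N$ and $R\sim N\gtrsim M$ then reduce to $\sum_N N^{2\sigma-\theta}\|\phi_N\|_{H^{\vare-\sigma}}\|v\|_{Z^0}$, which converges.
\item $\theta\approx\sigma_3$, not $\sigma_2$; this does not affect the argument.
\end{itemize}
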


\begin{proof}
We decompose the integral to 
\begin{align*}
&\le
\left|
\int_{\mathbb{R}\times\mathbb{T}^d}
\psi^2 \chi_{I_T}e^{it\Delta}\phi^\omega\cdot\pi_{\le}\!\left(\overline{v},A\right)
\,dx\,dt
\right|\\
&\qquad+
\left|
\int_{\mathbb{R}\times\mathbb{T}^d}
\psi^2 \chi_{I_T}e^{it\Delta}\phi^\omega\cdot\pi_{>}\!\left(\overline{v},A\right)
\,dx\,dt
\right|,\end{align*}
where $\pi_{\leq}$ and $\pi_{>}$ are the paraproduct operators defined in \eqref{paraproducts}. For the first part, we use \eqref{2.14},  \eqref{eq:prob-pathwise-time} and \eqref{3.15} to obtain
\begin{align*}
&\left|\int_{\mathbb{R}\times\mathbb{T}^d}
\psi^2 \chi_{I_T}e^{it\Delta}\phi^\omega\cdot\pi_{\le}\!\left(\overline{v},A\right)
\,dx\,dt\right|\\
&\quad\lesssim \|\psi\chi_{I_T}e^{it\Delta}\phi^\omega\|_{B^{\widetilde{\alpha}}_{t,p,2}B^{-\widetilde{\beta}}_{x,p,2}}
\|v\|_{B^{\widetilde{\alpha}}_{t,p,2}B^{-\widetilde{\beta}}_{x,p,2}}
\|A\|_{B^{\frac{1}{r_0}-\frac{1}{q_0}}_{t,r_0,r_0}B^\theta_{x,r_0,\infty}}\\
&\quad\lesssim T^{\vartheta}\|\phi\|_{H_x^{-\sigma}}\|v\|_{Z^0}
\|A\|_{B^{\frac{1}{r_0}-\frac{1}{q_0}}_{t,r_0,r_0}B^\theta_{x,r_0,\infty}},
\end{align*}
where $\tilde\alpha,\tilde\beta$ are the constants defined in \eqref{eq:constant-dictionary-alpha}. For the second part, we use \eqref{eq:random-KK-summed-form} to deduce
\begin{align*}
&\left|\int_{\mathbb{R}\times\mathbb{T}^d}
\psi^2 \chi_{I_T}e^{it\Delta}\phi^\omega\cdot\pi_{>}\!\left(\overline{v},A\right)
\,dx\,dt\right|
\lesssim\sum_{N\sim M}\left|\int\psi^2\chi_{I_T}e^{it\Delta}\phi_N^\omega \overline{v_M} P_{\leq N/32}A\,dxdt\right|\\
&\quad \lesssim T^{\vartheta}\left(\sum_N N^{-2(s_c+\sigma-\delta-\varepsilon)}\right)^{\frac12}\|v\|_{Z^0}\|A\|_{B^{\frac{1}{r_0}-\frac{1}{q_0}}_{t,r_0,r_0}
B^\theta_{x,r_0,\infty}} \\
&\quad\lesssim
 T^{\vartheta}\|\phi\|_{H_x^{\vare-\sigma}}\|v\|_{Z^0}\|A\|_{B^{\frac{1}{r_0}-\frac{1}{q_0}}_{t,r_0,r_0}
B^\theta_{x,r_0,\infty}},
\end{align*} 
from which the desired claim follows.
\end{proof}

\subsection{Second randomized bilinear estimate}\label{sec:opposite}
When the integral is involved with $\overline{u}$ instead of $u$, the block decomposition should be written to 
\begin{align*}
 \int_{\mathbb R\times\mathbb T^d}\psi^2\chi_{I_T}\overline u\,\overline v\,B\,dxdt
 =\sum_{k\in \Z^d}\int_{\mathbb R\times\mathbb T^d}
 \psi^2\chi_{I_T}\overline{P_{C_k} u}\,
 \overline{P_{Q_k} v}\, B\,dxdt,
\end{align*}
where 
\begin{align*}
C_k:=2Rk+[-8R,8R]^d,\qquad Q_k:=-2Rk+(-R,R]^d.
\end{align*}
Motivated by such a frequency decomposition, for
$\xi\in\mathbb Z^d$ we define the renormalized Galilean transform and the
opposite-phase shear by
\begin{align}
 I_\xi^{\mathrm{op}} u(t,x)
 &:=e^{4it|\xi|^2}I_\xi u(t,x),
 \label{eq:opposite-I}\\
 J_\xi^{\mathrm{op}} B(t,x)
 &:=e^{2ix\cdot\xi+2it|\xi|^2}B(t,x-2t\xi). \notag
\end{align}
A direct change of variables and spatial frequency decomposition gives
\begin{equation}\label{eq:opposite-Galilean-identity}
 \int_{\mathbb R\times\mathbb T^d}\psi^2 \chi_{I_T}\overline u\,\overline v\,B\,dxdt
 =\sum_{k\in\Z^d}\int_{\mathbb R\times\mathbb T^d}
 \overline{I_{2Rk}^{\mathrm{op}}\psi \chi_{I_T} P_{C_k} u}\,
 \overline{I_{2Rk} \psi \chi_{I_T} P_{Q_k} v}\,J_{2Rk}^{\mathrm{op}} B\,dxdt.
\end{equation}
The renormalization in \eqref{eq:opposite-I} is chosen so
that
\begin{equation}\label{eq:opposite-renormalized-phase}
 I_\xi^{\mathrm{op}}
 \big(e^{in\cdot x-it|n|^2}\big)
 =e^{i(n+\xi)\cdot x-it\left(|n+\xi|^2-4|\xi|^2\right)}.
\end{equation}
In particular, $\left||n+\xi|^2-4|\xi|^2\right|\lesssim NR$ holds if $|\xi|\sim N$ and $|n-\xi|\lesssim R$. Finally, define
\begin{align*}
X_{N,R,k}^{\mathrm{op}}:=R^{-3\sigma_1}\|I^{\mathrm{op}}_{2Rk}\psi\chi_{I_T}P_{C_k}e^{it\Delta}\phi_N^\omega
 \|_{B^{\frac{3\sigma_1}{2}}_{t,p,2}L_x^p}.
\end{align*}

\begin{lemma}\label{lem:opposite-shear-endpoint}
Let $0<T<\frac12$, $N\geq32R$, $X_k:=X^{\mathrm{op}}_{N,R,k}$ and $B_R:=P_RB$.  Then for any given $\vare>0$ there exist $0<\vartheta\ll 1$, $\tilde{\vartheta}\in(0,1)$ and $C,c>0$, such that for any $0<T\ll 1$ there exists some $\Omega_T\subset \Omega$ satisfying $\mathbb P(\Omega_T^c)<\exp(-cT^{-\tilde\vartheta})$, such that for any $\omega\in\Omega_T$ it holds
\begin{equation}\label{eq:opposite-shear-endpoint}
 \|X_k J_{2Rk}^{\mathrm{op}} B_R\|_{\ell_k^2 B^{-1/8}_{t,2,2}L_x^2}
 \leq CT^{\vartheta}N^{-s_c+\delta+\vare} 
R^{\frac{d}{2}-\frac14}(N/R)^{-\frac14+\frac{3\sigma_1}{2}}
 \|B_R\|_{B^{1/2}_{t,1,1}L_x^1}
\end{equation}
uniformly in $N,R$.
\end{lemma}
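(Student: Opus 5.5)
The estimate will be proved by the same scheme as Lemma \ref{lem:weighted-shear}, with three changes: the linear shear $J_{2Rk}$ is replaced by the opposite-phase shear $J^{\mathrm{op}}_{2Rk}$, the slab count of Lemma \ref{lem:k-slab-count} is replaced by a count adapted to a quadratic resonance, and the block event of Lemma \ref{lem:block-event} is replaced by its analogue for $X^{\mathrm{op}}_{N,R,k}$.

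\emph{Step 1: the pathwise bound for $X^{\mathrm{op}}_{N,R,k}$.} By \eqref{eq:opposite-renormalized-phase}, the function $I^{\mathrm{op}}_{2Rk}P_{C_k}e^{it\Delta}\phi_N^\omega$ is a Gaussian sum of modes with temporal frequencies $|n+2Rk|^2-16R^2|k|^2$.
\begin{itemize}
\item These frequencies are of size $\lesssim NR$, rather than $\lesssim R^2$ as in Lemma \ref{lem:block-event}.
\item Lemma \ref{lem:time-cutoff} with $\alpha=3\sigma_1/2$, combined with the Gaussian argument of Lemma \ref{lem:prob-Besov}, therefore gives $\|X^{\mathrm{op}}_k\|_{L^{p_\omega}_\omega}\lesssim \sqrt{p_\omega}T^{1/p-3\sigma_1/2}R^{-3\sigma_1}(NR)^{3\sigma_1/2}a_{N,R,k}$.
\item This equals $\sqrt{p_\omega}\,T^{1/p-3\sigma_1/2}(N/R)^{3\sigma_1/2}a_{N,R,k}$, which is the source of the factor $(N/R)^{3\sigma_1/2}$ in \eqref{eq:opposite-shear-endpoint}.
\item Paying $N^\vare$ and taking a union bound over the $O(N^d\log N)$ admissible triples $(N,R,k)$, exactly as in Lemma \ref{lem:block-event}, gives, on a good event,
\[
X_k\le CT^\vartheta N^\vare (N/R)^{3\sigma_1/2}(R/N)^{d/2}N^{-s_c+\delta}.
\]
\end{itemize}

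\emph{Step 2: Fourier computation of the shear.} Decompose $B_R=\sum_M P^t_MB_R$. A direct computation gives
\[
\mathcal F_{t,x}(J^{\mathrm{op}}_{2Rk}B)(\eta,m)=\widetilde B(\eta+4Rk\cdot m'-8R^2|k|^2,\,m'),\qquad m'=m-4Rk.
\]
Here the spatial modulation only relabels the frequencies, and the dangerous phase is $\eta+4Rk\cdot m-8R^2|k|^2$. As in \cite[Lem.~3.7]{KwakKwon}, this leads to
\[
\|J^{\mathrm{op}}_{2Rk}P_M^tB_R\|^2_{B^{-1/8}_{t,2,2}L^2_x}\lesssim \|P^t_MB_R\|^2_{L^1_{t,x}}\sum_{|m|_\infty\sim R}\int_{|\eta|\lesssim M}\langle \eta+4Rk\cdot m-8R^2|k|^2\rangle^{-1/4}\,d\eta.
\]
Since the quadratic term supplies the gain, no mean-zero assumption on $B$ is needed: the case $m=0$ is harmless.

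\emph{Step 3: counting in $k$, the main obstacle.} Fix $m$ and $\eta$, and sum over $|k|\sim N/R$.
\begin{itemize}
\item Write $\Phi(k)=8R^2|k|^2-4Rk\cdot m=8R^2|k-m/(4R)|^2-|m|^2/2$. Since $|m|\lesssim R$, the centre $m/(4R)$ is $O(1)$, so $\Phi$ is essentially $8R^2|k|^2$.
\item For $|k|\sim K=N/R$, $\Phi$ has size $\sim N^2$ and its gradient has size $\sim R^2K=NR$. The condition $|\eta-\Phi(k)|\sim L$ therefore confines $k$ to a spherical shell of radius $\sim K$ and width $\sim L/(NR)$.
\item In the low-modulation region $L\le N^2$, the number of lattice points in this shell is $\lesssim K^{d-1}(1+L/(NR))$. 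I will prove this by fixing $d-1$ coordinates and solving the quadratic for the last one, choosing the coordinate in which $|k_j|\sim K$ so that the derivative is nondegenerate.
\item In the high-modulation region $L>N^2$, I will use the trivial bound $K^d$. Since $|\eta|\lesssim M$, this region is only nonempty when $M\gtrsim N^2$, and the $B^{1/2}_t$ regularity pays for it.
\end{itemize}
Summing $L^{-1/4}$ against these counts, the low-modulation region contributes
\[
K^d\Bigl[K^{-1}\sum_{L\le NR}L^{-1/4}+(NR)^{-1}K^{-1}\sum_{NR<L\le N^2}L^{3/4}\Bigr],
\]
and the high-modulation region contributes $K^d\sum_{L>N^2}L^{-1/4}$.

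\emph{Step 4: assembling the bound.} After multiplying by $\sum_kX_k^2$, by the $R^d$ choices of $m$, and by the length $M$ of the $\eta$-integral, I expect the bracket to be $\lesssim (N/R)^{-1/2}R^{-1/2}$ up to harmless factors. Together with the weight $M\|P^t_MB_R\|^2_{L^1}$ and the $\ell^2_M$ summation into $\|B_R\|_{B^{1/2}_{t,1,1}L^1_x}$, this should give the prefactor $R^{d/2-1/4}(N/R)^{-1/4}$. Combined with the factor $(N/R)^{3\sigma_1/2}$ from Step 1, this yields \eqref{eq:opposite-shear-endpoint}.

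The shell count is the delicate point. If the naive count turns out to be too weak in the intermediate range $NR<L\le N^2$, I will split that range once more and use $M\ge L$ there, absorbing the excess into the temporal Besov weight.
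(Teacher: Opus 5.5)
\textbf{Steps 1 and 2 are fine.} They agree with the paper. The paper records Step 1 as $\|X_k\|_{\ell^2_k}\lesssim T^\vartheta N^{-s_c+\delta+\vare}(N/R)^{3\sigma_1/2}$, using $\sum_k a_{N,R,k}^2\lesssim N^{-2s_c+2\delta}$.

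\textbf{The gap is in Steps 3--4: your own count does not give the bracket you need.}
\begin{itemize}
\item The region $L\le NR$ contributes $K^{-1}\sum_{L\le NR}L^{-1/4}\sim K^{-1}=R/N$.
\item The target is $(N/R)^{-1/2}R^{-1/2}=N^{-1/2}$.
\item Since $R/N\le N^{-1/2}$ only when $R\le N^{1/2}$, for $N^{1/2}<R\le N/32$ you lose a factor $R^{1/2}N^{-1/4}$ in the norm. This is as large as $N^{1/4}$ when $R\sim N$, so it is not a ``harmless factor''.
\item Your fallback (splitting $NR<L\le N^2$ and using $M\ge L$) targets the wrong range. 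That range already produces $N^{-1/2}$.
\end{itemize}
The loss comes from bounding $\sum_k\langle \eta-\Phi(k)\rangle^{-1/4}$ pointwise in $\eta$. The ``$1$'' in $K^{d-1}(1+L/(NR))$ is a worst-case count for shells thinner than the normal lattice spacing, attained only for exceptional $\eta$. Integrating it does not help either: even restricted to $|\eta|\sim N^2$, where small $L$ can occur, it gives $K^d NR$ rather than the required $K^dN^{3/2}\lesssim K^d M N^{-1/2}$.

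\textbf{The missing idea: no lattice counting in $k$ is needed at all.} This is how the paper argues. Since $|k|\sim N/R$ and $|m|\lesssim R$ (with $N\ge 32R$), the phase $8R^2|k|^2-4Rk\cdot m$ has size $\sim N^2$ for every admissible $k,m$. Hence, for each fixed $k$ and $m$:
\begin{itemize}
\item If $M\ll N^2$, then $\langle \eta+4Rk\cdot m-8R^2|k|^2\rangle\gtrsim N^2$ on $|\eta|\lesssim M$, so your low-modulation region is in fact empty. The $\eta$-integral is $\lesssim MN^{-1/2}$.
\item If $M\gtrsim N^2$, integrate in $\eta$ first to get $\lesssim 1+M^{3/4}\lesssim MN^{-1/2}$.
\end{itemize}
After $\ell^2_M$ summation this gives
\[
\|J^{\mathrm{op}}_{2Rk}B_R\|_{B^{-1/8}_{t,2,2}L^2_x}\lesssim R^{d/2-1/4}(N/R)^{-1/4}\|B_R\|_{B^{1/2}_{t,1,1}L^1_x}
\]
uniformly in $k$. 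The paper then places the $\ell^2_k$ norm entirely on $(X_k)_k$.

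Your scheme ($\sup_k X_k$ times a sum over $k$) can be rescued, but only by interchanging $\sum_k$ and $\int d\eta$ in exactly this way. The pointwise-in-$\eta$ shell count must be abandoned.
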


\begin{proof}
Using \eqref{eq:opposite-renormalized-phase} and the fact that $\left||n+\xi|^2-4|\xi|^2\right|\lesssim NR$ holds if $|\xi|\sim N$ and $|n-\xi|\lesssim R$, we may argue as in the proof of Lemma \ref{lem:block-event} to conclude that on a good event, 
\begin{align}\label{3.27}
\|X_k\|_{\ell_k^2}\leq C_\vare T^{\vartheta}N^{-s_c+\delta+\vare}(N/R)^{\frac{3\sigma_1}{2}}.
\end{align}
Next we prove that for fixed $N,R$, it holds
\begin{align}\label{3.28}
\|J_{2Rk}^{\mathrm{op}} B_R\|_{B^{-1/8}_{t,2,2}L_x^2}\lesssim R^{\frac{d}{2}-\frac14}(N/R)^{-\frac14}
 \|B_R\|_{B^{1/2}_{t,1,1}L_x^1}
 \end{align}
uniformly in $k$. First notice that for $\xi\in\Z^d$ we have
$$ \mathcal{F}_{t,x}(J_{2Rk}^{\mathrm{op}}P_M^t B_{R})(\eta,m+4Rk)
 =\mathcal{F}_{t,x}B_{R}
 \big(\eta+4Rk\cdot m-8R^2|k|^2,m\big). $$
Then arguing as in the proof of Lemma \ref{lem:weighted-shear}, we arrive at
\begin{equation*}
\begin{aligned}
\|J_{2Rk}^{\mathrm{op}}P_M^t B_{R}\|_{B^{-1/8}_{t,2,2}L_x^2}^2
 &\lesssim \|P_M^t B_R\|^2_{L_{t,x}^1}\sum_{|m|_\infty\sim R}\int_{|\eta|\lesssim M}
 \langle\eta+4Rk\cdot m-8R^2|k|^2\rangle^{-1/4}
d\eta.
\end{aligned}
\end{equation*}
Using $|k|\sim N/R$ we have 
$$\langle\eta+4Rk\cdot m-8R^2|k|^2\rangle\gtrsim N^2.$$ 
We hence discuss $M\ll N^2$ and $M\gtrsim N^2$ separately. In the former case we have the rough estimate
$$ \lesssim  \|P_M^t B_R\|^2_{L_{t,x}^1} R^{d}M N^{-\frac12}\sim 
\|P_M^t B_R\|^2_{L_{t,x}^1} R^{d-\frac12}M (N/R)^{-\frac12}.$$
In the latter case, we use instead the estimate
$$\int_{|\eta|\lesssim M}
 \langle\eta+4Rk\cdot m-8R^2|k|^2\rangle^{-1/4}
d\eta\lesssim 1+M^\frac34\lesssim M N^{-\frac12}.$$
\eqref{3.28} follows now from an $\ell_M^2$ summation. The final claim follows then from first taking $\ell_k^\infty$-norm to \eqref{3.28} and then $\ell_k^2$-norm to $(X_k)_k$.
\end{proof}

\begin{lemma}\label{lem:weighted-shear-interp2}
Let the conditions in Lemma \ref{lem:opposite-shear-endpoint} be retained. With $q_0,r_0,\sigma_1$ as in Section \ref{dictionary}, we have
\begin{equation*}
\begin{aligned}
 \|X_kJ_{2Rk}^{\rm op}B_R\|_{\ell_k^2 B^{-\sigma_1}_{t,q_0,\infty}L_x^{(p/2)'}}
 &\leq C T^{\vartheta}N^{-s_c+\delta+\varepsilon}
 R^{d(1/r_0+2/p-1)-2\sigma_1}
 \left(\frac RN\right)^{\frac{\sigma_1}{2}}
\|B_R\|_{B^{\frac{1}{r_0}-\frac{1}{q_0}}_{t,r_0,r_0}L_x^{r_0}}
\end{aligned}
\end{equation*}
uniformly in $N,R$.
\end{lemma}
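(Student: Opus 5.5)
The plan is to mirror the proof of Lemma \ref{lem:weighted-shear-interp}: complex interpolation between the endpoint bound \eqref{eq:opposite-shear-endpoint} of Lemma \ref{lem:opposite-shear-endpoint} and a trivial $L^q$-type bound for the sheared coefficient, weighted by the random block sizes $X_k=X^{\mathrm{op}}_{N,R,k}$.

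\textbf{Step 1: trivial endpoint.} The opposite-phase shear $J^{\mathrm{op}}_{2Rk}$ is a spatial translation composed with multiplication by a unimodular phase, so it preserves $L^q_{t,x}$. As in \eqref{interpolation}, using Bernstein for $P_R$ and the embedding of Lemma \ref{lemallprop}, we get
\[
\|J^{\mathrm{op}}_{2Rk}B_R\|_{B^0_{t,q,\infty}L^q_x}\lesssim\|B_R\|_{L^q_{t,x}}\lesssim R^{d(\frac1r-\frac1q)}\|B_R\|_{B^{\frac1r-\frac1q}_{t,r,r}L^r_x},
\]
uniformly in $k$. Multiplying by $X_k$ and taking $\ell^2_k$, the pathwise bound \eqref{3.27} on a good event then gives
\[
\|X_kJ^{\mathrm{op}}_{2Rk}B_R\|_{\ell^2_kB^0_{t,q,\infty}L^q_x}\lesssim T^{\vartheta}N^{-s_c+\delta+\vare}(N/R)^{\frac{3\sigma_1}{2}}R^{d(\frac1r-\frac1q)}\|B_R\|_{B^{\frac1r-\frac1q}_{t,r,r}L^r_x}.
\]

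\textbf{Step 2: interpolation.} The linear map $B_R\mapsto(X_kJ^{\mathrm{op}}_{2Rk}B_R)_k$ is fixed once $\omega$ is fixed in the intersection of the good events. We interpolate between:
\begin{itemize}
\item the $(B^{1/2}_{t,1,1}L^1_x)\to\ell^2_kB^{-1/8}_{t,2,2}L^2_x$ bound \eqref{eq:opposite-shear-endpoint}, which carries the gain factor $(N/R)^{-1/4+3\sigma_1/2}$;
\item the Step 1 bound, which carries the loss factor $(N/R)^{3\sigma_1/2}$.
\end{itemize}
We use complex interpolation of vector-valued Besov spaces (Lemma \ref{lemallprop}(vii)) together with $\ell^2_k$-valued spaces. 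We choose the parameter $\theta_0\approx 8\sigma_1$ and the pair $(q,r)$ exactly as in Lemma \ref{lem:weighted-shear-interp}, so that the target spaces become $\ell^2_kB^{-\sigma_1}_{t,q_0,\infty}L^{(p/2)'}_x$ and $B^{\frac1{r_0}-\frac1{q_0}}_{t,r_0,r_0}L^{r_0}_x$. With this choice the $R$-powers combine to $R^{d(1/r_0+2/p-1)-2\sigma_1}$, the same as in \eqref{eq:weighted-interp}, since the $R^{d/2-1/4}$ factor in the first endpoint matches that of Lemma \ref{lem:weighted-shear}.

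\textbf{Step 3: the $N/R$ exponent (main obstacle).} The $N/R$ exponent after interpolation is
\[
\theta_0\Bigl(-\tfrac14+\tfrac{3\sigma_1}{2}\Bigr)+(1-\theta_0)\tfrac{3\sigma_1}{2}=-\tfrac{\theta_0}{4}+\tfrac{3\sigma_1}{2}=-2\sigma_1+\tfrac{3\sigma_1}{2}=-\tfrac{\sigma_1}{2},
\]
which is exactly the claimed $(R/N)^{\sigma_1/2}$. The delicate point is to confirm that the interpolation parameter dictated by the dictionary constants in \eqref{eq:constant-dictionary-KK} really is $\theta_0=8\sigma_1$. If instead $\theta_0$ is slightly different, the exponent still stays negative because $\sigma_1\gg\sigma$, and we would absorb the discrepancy by adjusting the temporal regularity index $\frac{3\sigma_1}{2}$ in $X^{\mathrm{op}}$.

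A secondary check is that the probabilistic sets are the same on both endpoints. Both bounds rely only on \eqref{3.27}, so the same event $\Omega_T$ with $\mathbb P(\Omega_T^c)<\exp(-cT^{-\tilde\vartheta})$ serves both, and the bound is uniform in $N,R$.
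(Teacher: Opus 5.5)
Your proposal is correct and is the paper's argument: you interpolate between \eqref{eq:opposite-shear-endpoint} and the trivial bound \eqref{interpolation} weighted by \eqref{3.27}, and the $N/R$ exponent comes out as $8\sigma_1(-\frac14+\frac{3\sigma_1}{2})+(1-8\sigma_1)\frac{3\sigma_1}{2}=-\frac{\sigma_1}{2}$. The ``delicate point'' is not open, because $\theta_0=8\sigma_1$ is forced by matching the temporal index, $-\sigma_1=\theta_0\cdot(-\frac18)+(1-\theta_0)\cdot 0$; your fallback of readjusting the index $\frac{3\sigma_1}{2}$ in $X^{\mathrm{op}}$ is therefore unnecessary, and it would be undesirable anyway, since that index is reused when the Schur--Besov conditions are checked in Lemma \ref{second bilinear}.
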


\begin{proof}
By using the interpolation as written in the proof of Lemma \ref{lem:weighted-shear-interp}, the claim follows from \eqref{3.27}, \eqref{eq:opposite-shear-endpoint} and \eqref{interpolation}. Notice that the exponent $\sigma_1/2$ is calculated as 
$$-\frac{\sigma_1}{2}=8\sigma_1\cdot(-\frac14+\frac{3\sigma_1}{2})+(1-8\sigma_1)\cdot \frac{3\sigma_1}{2}.$$
\end{proof}

\begin{lemma}\label{second bilinear}
For any given $\vare>0$ there exist $0<\vartheta\ll 1$, $\tilde{\vartheta}\in(0,1)$ and $C,c>0$, such that for any $0<T\ll 1$ there exists some $\Omega_T\subset \Omega$ satisfying $\mathbb P(\Omega_T^c)<\exp(-cT^{-\tilde\vartheta})$, such that for any $\omega\in\Omega_T$ it holds
\begin{align*}
\left|
\int_{\mathbb{R}\times\mathbb{T}^d}
\psi^2 \overline{(\chi_{I_T}e^{it\Delta}\phi^\omega) v}B
\,dx\,dt
\right|
\lesssim
T^{\vartheta}\|\phi\|_{H_x^{\vare-\sigma}}\|v\|_{Z^0}
\|B\|_{B^{\frac{1}{r_0}-\frac{1}{q_0}}_{t,r_0,r_0}B^\theta_{x,r_0,\infty}}.
\end{align*}
\end{lemma}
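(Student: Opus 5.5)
The plan is to mirror the proof of Lemma \ref{first bilinear}, with the opposite-phase machinery of Section \ref{sec:opposite} replacing the Galilean shear.

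\textbf{Step 1: paraproduct split.} We decompose $\overline v B=\pi_{\le}(\overline v,B)+\pi_{>}(\overline v,B)$. This splits the integral into a low--high part and a high--low part:
\[
\int\psi^2\overline{\chi_{I_T}e^{it\Delta}\phi^\omega}\,\pi_{\le}(\overline v,B)\,dxdt
\quad\text{and}\quad
\sum_{N\sim M}\int\psi^2\overline{\chi_{I_T}e^{it\Delta}\phi^\omega_N}\,\overline{v_M}\,P_{\le N/32}B\,dxdt .
\]

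\textbf{Step 2: the $\pi_{\le}$ part.} The mean-zero hypothesis is not needed here, and the first term is treated exactly as before. We apply \eqref{2.14}, noting that complex conjugation does not affect any Besov norm, so the estimate is insensitive to the phase. With the exponents $\tilde\alpha,\tilde\beta$ from \eqref{eq:constant-dictionary-alpha}, the pathwise bound \eqref{eq:prob-pathwise-time} for $\psi\chi_{I_T}e^{it\Delta}\phi^\omega$ and \eqref{3.15} for $v$ give the bound $T^\vartheta\|\phi\|_{H^{-\sigma}}\|v\|_{Z^0}\|B\|_{B^{1/r_0-1/q_0}_{t,r_0,r_0}B^\theta_{x,r_0,\infty}}$.

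\textbf{Step 3: high--low blocks, via an opposite-phase analogue of Lemma \ref{thm:random-KK}.} For fixed $N\sim M$ and $R\le N/32$, we use the block identity \eqref{eq:opposite-Galilean-identity} with $u=e^{it\Delta}\phi_N^\omega$ and $B=B_R$. We then follow the Kwak--Kwon argument from the proof of Lemma \ref{thm:random-KK}, using time-Besov duality with $B^{3\sigma_1/2}_{t,p,2}$ on the random factor, $B^{\alpha_{\rm op}}$ on the $v$ factor, and $B^{-\sigma_1}_{t,q_0,\infty}$ on the sheared coefficient; the exponent relation \eqref{2.29} is what makes this duality close. This bounds the block integral by
\[
\sum_k R^{3\sigma_1+2\alpha_{\rm op}}X_k^{\rm op}Y_k\,\|J^{\rm op}_{2Rk}B_R\|_{B^{-\sigma_1}_{q_0,\infty}L^{(p/2)'}},
\]
where $Y_k$ is the $Z^0$ block quantity of $I_{2Rk}\psi\chi_{I_T}P_{Q_k}v_M$, satisfying $\|R^{-\sigma}Y_k\|_{\ell^2_k}\lesssim\|v_M\|_{Z^0}$ by Lemma \ref{lem 3.8} and the definition \eqref{def Zs}. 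By Cauchy--Schwarz in $k$ together with Lemma \ref{lem:weighted-shear-interp2}, the block is bounded by
\[
T^\vartheta N^{-s_c+\delta+\vare}\|v_M\|_{Z^0}\,R^{E}(R/N)^{\sigma_1/2}\|B_R\|_{B^{1/r_0-1/q_0}_{t,r_0,r_0}L^{r_0}_x}.
\]
The exponent identities in Section \ref{dictionary} should give $R^E=R^{\theta-\sigma}$, exactly as in the identity $4\alpha+\sigma+\theta-2\beta=\theta-\sigma$ used for Lemma \ref{thm:random-KK}.

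The main obstacle is this exponent bookkeeping. Unlike the first estimate, Lemma \ref{lem:weighted-shear-interp2} has no $R^{-2\sigma_1}$ term and no case split $R\lessgtr N^{1/2}$, because the quadratic modulation $8R^2|k|^2\sim N^2$ already supplies decay uniformly in $R$. I would first try to verify via \eqref{2.29} that $3\sigma_1+2\alpha_{\rm op}+\sigma+d(1/r_0+2/p-1)-2\sigma_1\le\theta-\sigma+C\sigma_1$ up to a margin absorbed by $(R/N)^{\sigma_1/2}$. If a small positive power $R^{c\sigma_1}$ remains, it is harmless since $R\le N$ and $\sigma\ll\sigma_1$: it can be traded against $(R/N)^{\sigma_1/2}$ at the cost of a factor $N^{c\sigma_1}$, which must then be dominated.

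\textbf{Step 4: summation.} We sum over dyadic $R\le N/32$, using $R^\theta\|B_R\|\lesssim\|B\|_{B^{1/r_0-1/q_0}_{t,r_0,r_0}B^\theta_{x,r_0,\infty}}$ and $\sum_R R^{-\sigma}(R/N)^{\sigma_1/2}\lesssim N^{-\sigma}$. This gives the per-$N$ factor $T^\vartheta N^{-s_c+\delta-\sigma+\vare}$. Summing over $N\sim M$ by Cauchy--Schwarz in $N$ gives $T^\vartheta\|\phi\|_{H^{\vare-\sigma}}\|v\|_{Z^0}\|B\|$. All good events are intersected as in Remark \ref{vartheta explain}, so the final exceptional set still has probability at most $\exp(-cT^{-\tilde\vartheta})$.
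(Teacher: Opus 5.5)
Your proposal is correct and follows essentially the paper's route: mirror Lemma~\ref{first bilinear}, keep the $\pi_{\le}$ part as is, and in the high--low blocks combine \eqref{eq:opposite-Galilean-identity} and Lemma~\ref{lem:weighted-shear-interp2} with the Schur--Besov duality at the new exponents $s_1=\frac{3\sigma_1}{2}$, $s_2=\alpha_{\mathrm{op}}$, $s_3=-\sigma_1$ dictated by \eqref{2.29}. The bookkeeping you flag closes exactly: subtracting the first relation in \eqref{eq:constant-dictionary-alpha} from \eqref{2.29} gives $\frac{3\sigma_1}{2}+\alpha_{\mathrm{op}}=2\alpha$, so $R^{3\sigma_1+2\alpha_{\mathrm{op}}}=R^{4\alpha}$ and $4\alpha+\sigma+\theta-2\beta=\theta-\sigma$ applies verbatim. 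This matters, because your fallback would not have worked: at $R\sim N$ a leftover $R^{c\sigma_1}$ becomes $N^{c\sigma_1}$, which the gain $N^{-\sigma}$ cannot absorb when $\sigma\ll\sigma_1$.
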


\begin{proof}
The proof is almost identical to the one of Lemma \ref{first bilinear}, by correspondingly applying Lemma \ref{lem:weighted-shear-interp2} and \eqref{eq:opposite-Galilean-identity} therein. The only obstacle is that in the opposite-phase case, Lemma \ref{kwak_lem_lhh} is no longer valid if we insist on the constant exponents applied in the proof of Lemma \ref{first bilinear}. Indeed, we still need to check that Lemma \ref{kwak_lem_lhh} will continue to hold by choosing the new exponents. We then define $\alpha_{\mathrm op}$ as in \eqref{2.29}. Direct calculation shows that
$$\alpha_{\mathrm op}=
\frac{\sigma_3-2\sigma}{d+2}
-\frac12\sigma_1.$$ 
Setting $s_1=\frac{3\sigma_1}{2}$, $s_2=\alpha_{\mathrm op}$ and $s_3=-\sigma_1$ we see that the conditions of Lemma \ref{kwak_lem_lhh} are satisfied, as desired.
\end{proof}

\section{Nonlinear estimates}\label{nonlin_sec}
We now insert the two randomized bilinear estimates into an exact Bony
linearization of the gauged nonlinearity.  The decomposition below separates
the nonlinear increment into a mean-free same-phase term, an opposite-phase
term, and a scalar remainder.  The first two are controlled by the estimates
of Section \ref{sec 333}, whereas the scalar remainder requires a separate shellwise
argument.

\subsection{Bony decomposition adapted to the gauged nonlinearity}
Let
\[
 U=y_{\leq N/2},
 \qquad h=y_N,
 \qquad W_\vartheta=U+\vartheta h.
\]
The Wirtinger derivatives are
\begin{equation*}
 \partial_zF(z)=\left(1+\frac{a}{2}\right)|z|^a,
 \qquad
 \partial_{\bar z}F(z)=\frac a2|z|^{a-2}z^2,
\end{equation*}
where the second expression is defined to be zero at $z=0$.  Hence
\begin{equation}\label{eq:F-bony}
 F(U+h)-F(U)=hA(U,h)+\overline hB(U,h),
\end{equation}
with
\begin{align*}
 A(U,h)&=c_a\int_0^1|W_\vartheta|^a\,d\vartheta,
 \\
 B(U,h)&=\frac a2\int_0^1|W_\vartheta|^{a-2}W_\vartheta^2\,d\vartheta.
\end{align*}
This is the exact Bony decomposition formula applied in 
\cite{KwakKwon}. In the following we shall establish a modified Bony decomposition adapted to the gauged nonlinearity.

\begin{lemma}\label{lem:gauged-increment}
We have the following Bony decomposition adapted to the gauged nonlinearity $\mathcal G$ defined in \eqref{eq:ca-def}:
\begin{equation}\label{eq:G-increment}
 G(U+h)- G(U)
 =hA_0(U,h)+\overline hB(U,h)+R(U,h),
\end{equation}
where
\begin{equation*}
 A_0(U,h)=A(U,h)-\fint_{\mathbb T^d}A(U,h)\,dx
\end{equation*}
has zero spatial mean and
\begin{align}
 R(U,h)
 &=r_0(U,h)h-c_ar_1(U,h)U, \notag
 \\
 r_0(U,h)
 &=\fint A(U,h)\,dx-c_a\mu(U+h), \notag
 \\
 r_1(U,h)&=\mu(U+h)-\mu(U),
 \label{eq:r1-def}
\end{align}
where $c_a$ and $\mu(y)$ are defined by \eqref{eq:ca-def}. With $\alpha_a=\min\{1,a\}$, it also holds
\begin{equation}\label{eq:r-bounds}
 |r_0(U,h)|+|r_1(U,h)|
 \lesssim
 \fint |h|^{\alpha_a}(|U|+|h|)^{a-\alpha_a}\,dx.
\end{equation}
\end{lemma}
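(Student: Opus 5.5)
The identity \eqref{eq:G-increment} is pure algebra, so I will first verify it directly from \eqref{eq:F-bony}. Since $G(y)=F(y)-c_a\mu(y)y$, we have
\begin{align*}
G(U+h)-G(U)&=F(U+h)-F(U)-c_a\mu(U+h)(U+h)+c_a\mu(U)U\\
&=hA+\overline hB-c_a\mu(U+h)h-c_a\bigl(\mu(U+h)-\mu(U)\bigr)U.
\end{align*}
Next I write $A=A_0+\fint A\,dx$. The $h$-terms then collect into $hA_0+\bigl(\fint A\,dx-c_a\mu(U+h)\bigr)h=hA_0+r_0h$. The last term is exactly $-c_ar_1U$. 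This gives \eqref{eq:G-increment}. The mean-zero property of $A_0$ is immediate from its definition.

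For \eqref{eq:r-bounds}, I treat $r_1$ first. It suffices to have the pointwise bound
\[
\bigl||z+w|^a-|z|^a\bigr|\lesssim |w|^{\alpha_a}(|z|+|w|)^{a-\alpha_a},
\]
and then average over $\T^d$.
\begin{itemize}
\item If $a\ge1$, the mean value theorem gives $\lesssim |w|(|z|+|w|)^{a-1}$.
\item If $0<a<1$, the map $t\mapsto t^a$ is $a$-Hölder, so $\bigl||z+w|^a-|z|^a\bigr|\le |w|^a$.
\end{itemize}
In both cases this is the claimed bound with $\alpha_a=\min\{1,a\}$.

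For $r_0$, I use $c_a\mu(U+h)=\fint c_a|W_1|^a\,dx$ with $W_1=U+h$. This gives
\[
r_0=c_a\int_0^1\fint\bigl(|U+\vartheta h|^a-|U+h|^a\bigr)\,dx\,d\vartheta.
\]
I apply the same pointwise bound with $z=U+h$ and $w=(\vartheta-1)h$. Since $|w|\le|h|$ and $|z|+|w|\lesssim|U|+|h|$, the integrand is controlled by $|h|^{\alpha_a}(|U|+|h|)^{a-\alpha_a}$ uniformly in $\vartheta$. When $a<1$ the exponent $a-\alpha_a$ is zero, so no monotonicity issue arises. Integrating in $\vartheta$ gives \eqref{eq:r-bounds}.

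No step is a real obstacle. The only care needed is the case split $a<1$ versus $a\ge1$ in the pointwise Hölder/Lipschitz estimate for $|\cdot|^a$, together with checking that the convention $\partial_{\bar z}F(0)=0$ does not affect \eqref{eq:F-bony}. That check holds because $F$ is $C^1$ with the stated Wirtinger derivatives, which are continuous for $a>0$.
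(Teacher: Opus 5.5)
Your proposal is correct and follows the paper's proof essentially step by step. It uses the same rearrangement of \eqref{eq:F-bony} after adding and subtracting $h\fint A\,dx$, the same elementary increment bound for $|\cdot|^a$, and the same representation $r_0=c_a\int_0^1\bigl(\mu(U+\vartheta h)-\mu(U+h)\bigr)\,d\vartheta$; you justify the increment bound by the case split $a\ge1$ versus $0<a<1$, where the paper simply calls it elementary, and you expand around $U+h$ with increment $(\vartheta-1)h$ instead of around $U+\vartheta h$ with increment $(1-\vartheta)h$, which gives the same bound.
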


\begin{proof}
By \eqref{eq:F-bony},
\begin{align*}
 \mathcal G(U+h)-\mathcal G(U)
 &=hA+\overline hB-c_a\mu(U+h)h
   -c_a\big(\mu(U+h)-\mu(U)\big)U.
\end{align*}
Add and subtract $h\fint A\,dx$ to obtain \eqref{eq:G-increment}.

For $x,y\in\mathbb C$ and $a>0$ we have the elementary inequality
\begin{equation}\label{eq:power-increment}
 \big||x+y|^a-|x|^a\big|
 \lesssim |y|^{\alpha_a}(|x|+|y|)^{a-\alpha_a}.
\end{equation}
Formula
\eqref{eq:r1-def} and \eqref{eq:power-increment} give the bound for $r_1$.
Since
\[
 r_0=c_a\int_0^1\big(\mu(U+\vartheta h)-\mu(U+h)\big)d\vartheta,
\]
applying \eqref{eq:power-increment} with
$x=U+\vartheta h$ and $y=(1-\vartheta)h$, then integrating in $\vartheta$,
gives the bound for $r_0$.
\end{proof}

Grouping the terms, set
\begin{align*}
G_1^N:=hA_0(U,h)+\overline hB(U,h),\quad
G_2^N:=R(U,h).
\end{align*}
We shall estimate $\sum_N G_2^N$ directly and establish estimates for $G_{1,K}^N:=P_KG_1^N$ according to the size of the dyadic number $K$. More precisely, we establish frequency localized estimates in  
\begin{itemize}
\item The low-high region: $K\leq 4N$, and
\item The high-low region: $K\geq 4N$.
\end{itemize} 

Before proceeding, we first record a useful mixed random-deterministic estimate for the nonlinear potential $|u|^{k-m} u^m$ with $k\in\{a/2,a\}$ and $m\in\Z$. 

\begin{lemma}\label{random zs norm_lem}
For given $m\in\Z$ there exist $0<\vartheta\ll 1$, $\tilde{\vartheta}\in(0,1)$ and $C,c>0$, such that for any $0<T\ll 1$ there exists some $\Omega_T\subset \Omega$ satisfying $\mathbb P(\Omega_T^c)<\exp(-cT^{-\tilde\vartheta})$, such that for any $\omega\in\Omega_T$ it holds
\begin{align*}
\begin{aligned}
 \left\|\psi\chi_{I_T}\left(|e^{it\Delta}\phi^\omega+u|^{a-m}(e^{it\Delta}\phi^\omega+u)^m\right)\right\|_{
 B^{\frac{1}{r_0}-\frac{1}{q_0}}_{t,r_0,r_0}
 B^{\theta}_{x,r_0,r_0}}
  \lesssim(T^{\vartheta}\|\phi\|_{H_x^{2s_0+s_1}}+\|u\|_{Z^{s_c}})^a,\\
 \left\|\psi\chi_{I_T}\left(|e^{it\Delta}\phi^\omega+u|^{\frac{a}{2}-m}(e^{it\Delta}\phi^\omega+u)^m\right)\right\|_{
 B^{\zeta}_{t,\widehat r,\widehat r}
 B^{\eta}_{x,\widehat r,\widehat r}}
  \lesssim(T^{\vartheta}\|\phi\|_{H_x^{2s_0+s_1}}+\|u\|_{Z^{s_c}})^{\frac{a}{2}}.
\end{aligned}
\end{align*}
\end{lemma}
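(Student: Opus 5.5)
The plan is to reduce both estimates to the first inequality in Lemma \ref{lem:mixed-coeff}, applied to the function $y=e^{it\Delta}\phi^\omega+u$. What that lemma really proves is the intermediate bound
\[
\|\psi|y|^{k-m}y^m\|_{B^{\cdot}_{t}B^{\cdot}_{x}}\lesssim \|\psi y\|_{L_t^rB^{2s_0+s_1}_{x,r,2}\cap B^{s_0+s_1/2}_{t,r,2}L_x^r}^{k},\qquad k\in\{a/2,a\},
\]
and this part is deterministic. It rests only on the H\"older--Besov inequalities, with the coefficient factorization $J$, $a_*=a/(2J)$, and on the exponent relations in Section \ref{dictionary} that fix $s_0,s_1,r$. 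So we apply that chain verbatim to $\psi\chi_{I_T}|y|^{k-m}y^m$. The only point to check is the sharp cutoff: we write $\chi_{I_T}|y|^{k-m}y^m=|\chi_{I_T}y|^{k-m}(\chi_{I_T}y)^m$, which holds since $k>0$, so the problem becomes bounding
\[
\|\psi\chi_{I_T}y\|_{L_t^rB^{2s_0+s_1}_{x,r,2}\cap B^{s_0+s_1/2}_{t,r,2}L_x^r}.
\]
If the cutoff has to sit inside one factor with the remaining $\psi$ powers absorbed, we replace $\psi$ by a slightly larger bump equal to $1$ on $\operatorname{supp}\psi$.

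The triangle inequality splits this norm into a random part and a deterministic part.

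For the random part $\psi\chi_{I_T}e^{it\Delta}\phi^\omega$, we invoke \eqref{eq:prob-pathwise-time} twice.
\begin{itemize}
\item For $L^r_tB^{2s_0+s_1}_{x,r,2}$, take $\alpha_t$ tiny and positive; in the limit this is the Lebesgue version allowed by Remark \ref{can lebesgue}. With $\alpha_x=2s_0+s_1$, the data regularity needed is $H^{2s_0+s_1+2\alpha_t}$; taking $\alpha_t=0$ via the Lebesgue variant gives exactly $H^{2s_0+s_1}$.
\item For $B^{s_0+s_1/2}_{t,r,2}L^r_x$, take $\alpha_t=s_0+s_1/2$ and $\alpha_x=0$. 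This needs $H^{2s_0+s_1}$ again, and the hypothesis $\alpha_t<1/r$ must hold.
\end{itemize}
The condition $\alpha_t<1/r$ is the main check. From the definition of $r$ we have $\frac a2(\frac1r-s_0)=\frac1{\hat r}-\zeta=\frac1{2q_0}>0$, so $\frac1r>s_0$. Also $s_0,s_1\ll1$, while $1/r$ is comparable to $1/(2q_0)+s_0$ up to factors. If the margin $1/r-s_0-s_1/2$ is not automatically positive, we shrink $s_1$ relative to $s_0$, since $s_1$ is only constrained by $s_1a_*>\eta$ with $\eta$ small. This exponent bookkeeping is where I expect trouble.

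Remark \ref{vartheta explain} then supplies the factor $T^\vartheta$ on an event $\Omega_T$ with $\mathbb P(\Omega_T^c)<\exp(-cT^{-\tilde\vartheta})$; we intersect the finitely many events.

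For the deterministic part $\psi\chi_{I_T}u$, the time cutoff is handled by Lemma \ref{lem:3.6}, which gives $\|u\chi_I\|_{Z^{s_c}}\lesssim\|u\|_{Z^{s_c}}$. The second inequality of Lemma \ref{lem:mixed-coeff}, i.e. the embedding of $Z^{s_c}$ into the intersection space proved in \cite{KwakKwon} via \eqref{3.14}--\eqref{3.15}, then bounds it by $\|u\|_{Z^{s_c}}$.

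Combining the two parts and raising to the powers $a$ and $a/2$ gives both claims. The only delicate ingredient is compatibility of the temporal regularity exponent with the randomized estimate, namely $s_0+s_1/2<1/r$; everything else is a direct transfer of the deterministic argument.
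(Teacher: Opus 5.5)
Your proposal is correct and is essentially the paper's proof, which is the one-line combination of Lemma \ref{lem:mixed-coeff} with the pathwise bounds of Lemma \ref{lem:prob-Besov}: the deterministic chain is applied to $\chi_{I_T}(e^{it\Delta}\phi^\omega+u)$, the random linear part is handled by Lemma \ref{lem:prob-Besov}, and the $u$-part is bounded by the second inequality of Lemma \ref{lem:mixed-coeff}. The exponent condition you flag is automatic: the two relations fixing $r$ and $s_1$ give $\frac1r=s_0+\frac{1}{aq_0}$ and $s_1=ds_0+\frac{\sigma_3}{a}$, hence $\frac1r-s_0-\frac{s_1}{2}=\frac{2+\sigma_3}{a(d+2)}-\frac{ds_0}{2}-\frac{\sigma_3}{2a}>0$ for $s_0,\sigma_3$ small; so $s_1$ is not a free parameter and your fallback of shrinking it independently is unavailable, but it is also unnecessary.
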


\begin{proof}
This follows immediately from Lemma \ref{lem:mixed-coeff} and \ref{lem:prob-Besov}.
\end{proof}

Finally, we also recall that $y$ is composed of 
$$ y=e^{it\Delta}\phi^\omega+w$$
with $w\in Z^{s^*}$ {\it{a priori}}.

\subsection{Low-high estimation for $G_{1,K}^N$}
\begin{lemma}\label{final1}
Fix $\vare>0$ as in \eqref{eq:delta-epsilon-varepsilon-margins}. There exist $0<\vartheta\ll 1$, $\tilde{\vartheta}\in(0,1)$ and $C,c>0$, such that for any $0<T\ll 1$ there exists some $\Omega_T\subset \Omega$ satisfying $\mathbb P(\Omega_T^c)<\exp(-cT^{-\tilde\vartheta})$, such that for any $\omega\in\Omega_T$ it holds
\begin{equation*}
\begin{aligned}
&\|K^+(\psi_1\chi_{I_T}G_{1,K}^N)\|_{Y^{s^*}}\lesssim \|\psi_1\chi_{I_T}G_{1,K}^N\|_{(Z^{-s^*})'}\\
&\quad\lesssim
(T^{\vartheta}\|\phi\|_{H_x^{2s_0+s_1}}+\|w\|_{Z^{s^*}})^a
\left(\frac{K}{N}\right)^{s^*}
\left(T^{\vartheta}N^{-\sigma_\delta}+\|w_N\|_{Z^{s^*}}\right).
\end{aligned}
\end{equation*}
\end{lemma}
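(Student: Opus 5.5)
The first inequality is the duality chain \eqref{3.11} of Lemma \ref{lem:3.6}, $(Z^{-s^*})'\hookrightarrow(Y^{-s^*})'\xrightarrow{K^+}Y^{s^*}$. It therefore suffices to bound the pairing
\[
\int\psi_1\chi_{I_T}G^N_{1,K}\,\overline{v}\,dxdt,\qquad \|v\|_{Z^{-s^*}}\le1,
\]
and since $P_K$ is self-adjoint we may replace $v$ by $v_{\sim K}=P_Kv$, whose $Z^0$ norm is $\lesssim K^{s^*}$. Write $h=y_N=e^{it\Delta}\phi^\omega_N+w_N$ and split $G_1^N=hA_0+\overline hB$ into four pieces: random or deterministic $h$, each paired with the same-phase term $hA_0$ or the opposite-phase term $\overline hB$.

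For the deterministic pieces $w_NA_0$ and $\overline{w_N}B$ I would follow the deterministic low-high bilinear estimate of \cite[Prop. 3.11]{KwakKwon}. It gives the bound
\[
\|w_N\|_{Z^{s^*}}N^{-s^*}K^{s^*}\|A\|_{B^{\frac1{r_0}-\frac1{q_0}}_{t,r_0,r_0}B^\theta_{x,r_0,r_0}},
\]
since removing the mean from $A$ only lowers its Besov norm. The coefficient norm is controlled by $(T^\vartheta\|\phi\|_{H^{2s_0+s_1}}+\|w\|_{Z^{s_c}})^a$ via Lemma \ref{random zs norm_lem} applied to $W_\vartheta=U+\vartheta h$, uniformly in $\vartheta$, after Minkowski in $\vartheta$. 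Here we use $\|w\|_{Z^{s_c}}\lesssim\|w\|_{Z^{s^*}}$. This produces the term $(K/N)^{s^*}\|w_N\|_{Z^{s^*}}$.

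For the random pieces we pair $\psi\chi_{I_T}e^{it\Delta}\phi_N^\omega\,\overline{v_{\sim K}}\,A_0$ and apply Lemma \ref{first bilinear}, which is valid because $P_0A_0=0$. The pairing $\overline{\psi\chi_{I_T}e^{it\Delta}\phi^\omega_N}\,\overline{v}\,B$ is handled by Lemma \ref{second bilinear}. Restricting to the $N$-block, the random data norm becomes $\|\phi_N\|_{H^{\vare-\sigma}}\lesssim N^{-s_c+\delta+\vare-\sigma}$. Multiplying by $\|v_{\sim K}\|_{Z^0}\lesssim K^{s^*}$ gives
\[
N^{-s_c-\sigma+\delta+\vare}K^{s^*}=(K/N)^{s^*}N^{\vare_0-\sigma+\delta+\vare}\le (K/N)^{s^*}N^{-\sigma_\delta},
\]
by the definition \eqref{eq:star-parameters} of $\sigma_\delta$. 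The $\psi^2$ versus $\psi_1$ cutoffs are matched by choosing $\psi_1=\psi^2$, or by inserting a bump that equals one on the support. The coefficients are again bounded by Lemma \ref{random zs norm_lem}. Finally, intersect the good events of Lemmas \ref{first bilinear}, \ref{second bilinear} and \ref{random zs norm_lem}, as in Remark \ref{vartheta explain}.

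The main obstacle is that the coefficients $A_0$ and $B$ depend on $h$ itself, through $W_\vartheta=U+\vartheta h$, and hence on the same Gaussians $g_n$ with $|n|\sim N$ that appear in the random factor. The estimates of Section \ref{sec 333} are pathwise, holding on a good event uniformly over all deterministic $A$ and $v$. I would therefore stress that they only require a Besov bound on $A$, not independence, so that plugging in the $\omega$-dependent $A_0(U,h)$ is legitimate. The remaining care is in the low-high frequency bookkeeping: $K\le4N$ must be consistent with the paraproduct split inside Lemma \ref{first bilinear}, so that the factor $K^{s^*}N^{-s^*}$ rather than a loss appears.
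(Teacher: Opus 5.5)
Your proposal is correct and is essentially the paper's proof. Both run the low-high argument of \cite[Lem. 4.1]{KwakKwon} with Lemmas \ref{first bilinear} and \ref{second bilinear} for the random block (legitimate for the $\omega$-dependent coefficients, since the good events depend only on $\phi$), the deterministic estimate for $w_N$, Lemma \ref{random zs norm_lem} for $A_0,B$, \eqref{3.11} for duality, and the same bookkeeping yielding $(K/N)^{s^*}N^{-\sigma_\delta}$; the only correction is that removing the spatial mean does not literally lower the Besov norm of $A$, but $P_0$ is bounded on these spaces, so it costs only a constant.
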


\begin{proof}
This follows from the proof of \cite[Lem. 4.1]{KwakKwon}, combined with
Lemmas \ref{random zs norm_lem}, \ref{first bilinear},
\ref{second bilinear}, and \eqref{3.11}.  The only additional
bookkeeping is
\[
 K^{\vare_0}\left(\frac KN\right)^{s_c}
 N^{-\sigma+\delta+2\vare}
 =\left(\frac KN\right)^{s^*}N^{-\sigma_\delta},
 \qquad
 K^{\vare_0}\left(\frac KN\right)^{s_c}N^{-\vare_0}
 =\left(\frac KN\right)^{s^*}.
\]
\end{proof}

\subsection{High-low estimation for $G_{1,K}^N$}
\begin{lemma}\label{final2}
Let $\nu^*,\mu^*$ be defined by \eqref{eq:star-parameters}, and fix $\vare>0$ as in \eqref{eq:delta-epsilon-varepsilon-margins}. There exist $0<\vartheta\ll 1$, $\tilde{\vartheta}\in(0,1)$ and $C,c>0$, such that for any $0<T\ll 1$ there exists some $\Omega_T\subset \Omega$ satisfying $\mathbb P(\Omega_T^c)<\exp(-cT^{-\tilde\vartheta})$, such that for any $\omega\in\Omega_T$ and $K\geq 4N$ it holds
\begin{equation}\label{final2+}
\begin{aligned}
&\|K^+(\psi_1\chi_{I_T}G_{1,K}^N)\|_{Y^{s^*}}\lesssim \|\psi_1\chi_{I_T}G_{1,K}^N\|_{(Z^{-s^*})'}\\
&\quad\lesssim
(T^{\vartheta}\|\phi\|_{L_x^2}+\|w\|_{Z^{s^*}})^a
\left(\frac{N}{K}\right)^{\mu^*}
\left(\sum_{L:L\leq N}\left(\frac{L}{N}\right)^{2\nu^*}
\left(T^{2\vartheta} L^{-2\sigma_\delta}+\|w_L\|^2_{Z^{s^*}}\right)\right)^{\frac12} .
\end{aligned}
\end{equation}
\end{lemma}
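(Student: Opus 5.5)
The plan follows the high-low part of the proof of \cite[Lem. 4.2]{KwakKwon}, with the random linear pieces handled by the probabilistic estimates of Section \ref{sec 333}. The first inequality in \eqref{final2+} is just \eqref{3.11}. For the second, I dualize against $v\in Z^{-s^*}$ with $\|v\|_{Z^{-s^*}}\le1$. Since $K\ge4N$ and $h=y_N$ has frequency $\sim N$, only $v_{K'}$ with $K'\sim K$ contributes. The output frequency $K$ must then be carried by the coefficients $A_0(U,h)$ and $B(U,h)$. So the task is to bound
\[
\int\psi_1^2\chi_{I_T}\,h\,\overline{v_{K'}}\,P_{\sim K}A_0\,dxdt
\]
and the analogous integral with $\overline h$ and $B$. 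There is no mean-zero issue here, because $P_{\sim K}$ with $K\ge4N\ge4$ already removes the zero mode.

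\emph{Step 1: reduce the high-frequency coefficient to lower frequencies of $y$.} In $A$ and $B$ the coefficients are functions of $W_\vartheta=U+\vartheta h$, whose spatial frequencies are all $\lesssim N$. I will split $U=\sum_{L\le N/2}y_L$, so $W_\vartheta$ is a sum of pieces of frequency $L\le N$. The frequency-$K$ part of $|W_\vartheta|^a$ (or of $|W_\vartheta|^{a-2}W_\vartheta^2$) is then estimated through the smoothness of the nonlinearity. Concretely, I use the space-time Besov bound of Lemma \ref{random zs norm_lem} at the higher regularity $\nu+\sigma$ allowed by $C^{1+a}$, i.e. below $1+a-s_c$. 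This converts the frequency-$K$ projection into a factor $(N/K)^{\nu}$, and into a factor $(L/N)^{\nu}$ after expanding along the dyadic pieces $y_L$. The deterministic argument interpolates between this smooth bound and the crude $L^\infty$-type bound, and this interpolation is what produces the exponent $\mu$ with weight $2\sigma/\sigma_1$. I will use the same interpolation, so the gain is $(N/K)^{\mu}$. Converting $s_c$ to $s^*=s_c+\vare_0$ costs $K^{\vare_0}$ on the dual side, which is why the exponents become $\mu^*=\mu-\vare_0$ and $\nu^*=\nu-\vare_0$.

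\emph{Step 2: insert the random pieces.} Write $y_L=e^{it\Delta}\phi^\omega_L+w_L$. For the deterministic parts I get $\|w_L\|_{Z^{s^*}}$, exactly as in \cite{KwakKwon}. For the random parts, the role of $\|y_L\|_{Z^{s_c}}$ is played by a pathwise bound of the type \eqref{eq:prob-pathwise-time}, together with the $N^{\vare}$-uniform remark following Remark \ref{vartheta explain}. Since $b_n$ has size $L^{-d/2-s_c+\delta}$, these give $T^{\vartheta}L^{\delta+\vare}$ times the relevant norm, and the loss against the $Z^{s^*}$ normalization is $L^{-\sigma+\delta+\vare_0+2\vare}=L^{-\sigma_\delta}$. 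The bilinear estimates, Lemmas \ref{first bilinear} and \ref{second bilinear}, are needed whenever the factor $h$ itself is random. Because the random factor sits at the lower frequency $N\ll K$, the same-phase and opposite-phase structures are covered by those lemmas with the roles of $v$ and $A$ as stated there. The remaining $a$ powers of the coefficient are bounded by Lemma \ref{random zs norm_lem}, giving $(T^\vartheta\|\phi\|_{L^2}+\|w\|_{Z^{s^*}})^a$, where the $L^2$ norm suffices after absorbing the small Sobolev losses into $\delta_0$.

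\emph{Step 3: sum.} Cauchy--Schwarz in $L$ gives the square sum over $L\le N$ weighted by $(L/N)^{2\nu^*}$. The hypotheses $\vare_0<\min\{\mu,\nu\}$ ensure that $\mu^*,\nu^*>0$.

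The main obstacle is Step 1. Transferring the deterministic coefficient regularity argument of \cite{KwakKwon} to pieces $y_L$ that are only pathwise controlled in Besov spaces needs the Hölder-type chain rules, Lemmas \ref{lem3.9-} and \ref{lem3.9}, to accept mixed random and deterministic inputs. I would first try to run it entirely at the level of $L_t^rB_x$ and $B_tL_x^r$ norms, as in Lemma \ref{lem:mixed-coeff}.
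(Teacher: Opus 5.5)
\textbf{There is a genuine gap in Step 1.} You keep the splitting $G_{1,K}^N=P_K(hA_0+\overline hB)$ and try to obtain the decay $(N/K)^{\mu}$ from $P_{\sim K}A_0$ and $P_{\sim K}B$ alone, ``through the smoothness of the nonlinearity''. That smoothness is not available to the coefficients.

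\begin{itemize}
\item $A(U,h)$ and $B(U,h)$ scale like $|(U,h)|^a$ (up to a phase for $B$). The $\vartheta$-average does not help at common zeros of $U$ and $h$. So for non-algebraic $a$ they are only $C^{0,a}$-type functions of the data, and chain rules such as Lemmas \ref{lem3.9-} and \ref{lem3.9} give them regularity strictly below $a$. They do not reach $\nu+\sigma$, let alone something near $1+a$.
\item After the duality loss $K^{s^*+\sigma}$, the frequency-$K$ piece must decay faster than $K^{-(s^*+\sigma)}$ relative to the critical normalization. That requires regularity above $s_c$.
\item Hence estimating $h$ (or $\overline h$) and the coefficient separately cannot close whenever $a\le s_c$. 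This happens, for example, in the energy-critical case $d\ge7$, where $a=4/(d-2)<1=s_c$; the theorem is meant to cover this case.
\item Lemma \ref{random zs norm_lem} does not rescue the argument. It controls the coefficients only at the small regularities $\theta,\eta$.
\end{itemize}

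The missing observation is that for $K\ge4N$ one has $P_Kh=0$. The mean term $h\fint A$ is therefore annihilated, and
\[
 G_{1,K}^N=P_K\bigl(F(y_{\le N})-F(y_{\le N/2})\bigr).
\]
The $C^{1+a}$ regularity must be applied to $F$ itself, via Lemma \ref{lem3.9} with $\alpha=1+a$ followed by interpolation, which is what \cite[(4.6)]{KwakKwon} does.

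With this observation, the argument is much shorter than your plan. \cite[(4.6)]{KwakKwon} gives
\[
 \|\psi_1G_{1,K}^N\|_{L^{p'}_{t,x}}\lesssim \Bigl(\frac NK\Bigr)^{\mu} K^{-s_c-\sigma}N^{-\nu}\|y_{\le N}\|_{L_t^{q_h}H_x^{s_c-\sigma+\nu,r_h}}\|y\|^a_{L_t^{q_h}H_x^{s_c-\sigma,r_h}}.
\]
Duality with \eqref{3.14} gives $\|\cdot\|_{(Z^{-s^*})'}\lesssim K^{s^*+\sigma}\|\cdot\|_{L^{p'}_{t,x}}$. A Littlewood--Paley square function then produces the $\ell^2_L$ sum with weights $(L/N)^{\nu^*}$.

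Randomness enters only through linear Strichartz-type norms of $y$:
\begin{itemize}
\item Lemma \ref{lem:prob-Besov} with Remark \ref{can lebesgue} gives $L^{s_c-\sigma+\nu}\|e^{it\Delta}\phi_L^\omega\|_{L_t^{q_h}L_x^{r_h}}\lesssim T^\vartheta L^{\nu-\sigma+\delta+\vare}$.
\item The deterministic piece $w_L$ contributes $L^{\nu-\vare_0}\|w_L\|_{Z^{s^*}}$.
\end{itemize}

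Two consequences for your plan follow. First, no bilinear estimate (Lemmas \ref{first bilinear}, \ref{second bilinear}) is needed in the high--low region; those are built for a random factor at high frequency against a low-frequency coefficient, which is the opposite configuration. Second, your concern about chain rules accepting ``mixed random and deterministic inputs'' disappears. The chain rule is applied pathwise to the single function $y$, and only the resulting linear norms are split into $e^{it\Delta}\phi^\omega+w$.
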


\begin{proof}
We recall the following formula proved in \cite[(4.6)]{KwakKwon}:
\[
 \|\psi_1G_{1,K}^N\|_{L_{t,x}^{p'}}
 \lesssim
 (N/K)^\mu K^{-s_c-\sigma}N^{-\nu}
 \|y_{\le N}\|_{L_t^{q_h}H_x^{s_c-\sigma+\nu,r_h}}
 \|y\|_{L_t^{q_h}H_x^{s_c-\sigma,r_h}}^a.
\]
Here $q_h,r_h$ are defined by \eqref{eq:qh-rh}.  Using
\eqref{eq:prob-Besov}, \eqref{3.14}, and
$Z^{s^*}\hookrightarrow Z^{s_c}$, we obtain
\[
 \|y\|_{L_t^{q_h}H_x^{s_c-\sigma,r_h}}^a
 \lesssim
 (T^\vartheta\|\phi\|_{L_x^2}+\|w\|_{Z^{s^*}})^a.
\]
$\|y_{\le N}\|_{L_t^{q_h}H_x^{s_c-\sigma+\nu,r_h}}$ can be estimated using Littlewood–Paley square-function estimate and Minkowski as follows:
\[
 \|y_{\le N}\|_{L_t^{q_h}H_x^{s_c-\sigma+\nu,r_h}}
 \lesssim
 \left(\sum_{L\le N}L^{2(s_c-\sigma+\nu)}
 \|y_L\|_{L_t^{q_h}L_x^{r_h}}^2\right)^{1/2}.
\]
Now \eqref{final2+} follows by combining
\[
 L^{s_c-\sigma+\nu}\|y_L\|_{L_t^{q_h}L_x^{r_h}}
 \lesssim
 T^\vartheta L^{\nu-\sigma+\delta+\vare}
 +L^{\nu-\vare_0}\|w_L\|_{Z^{s^*}},
\]
with
\[
 K^{s^*+\sigma}\left(\frac NK\right)^\mu
 K^{-s_c-\sigma}N^{-\nu}
 =\left(\frac NK\right)^{\mu^*}N^{-\nu^*},
\]
and the identities
\[
 N^{-\nu^*}L^{\nu-\sigma+\delta+\vare}
 =\left(\frac LN\right)^{\nu^*}
 L^{-(\sigma-\delta-\vare_0-\vare)}
 \leq
 \left(\frac LN\right)^{\nu^*}L^{-\sigma_\delta},
 \qquad
 N^{-\nu^*}L^{\nu-\vare_0}
 =\left(\frac LN\right)^{\nu^*}.
\]
Here we also used
$\|w_L\|_{L_t^{q_h}H_x^{s_c-\sigma+\nu,r_h}}
\lesssim L^{\nu-\vare_0}\|w_L\|_{Z^{s^*}}$ and
\[
 \|\psi_1G_{1,K}^N\|_{(Z^{-s^*})'}
 \lesssim K^{s^*+\sigma}
 \|\psi_1G_{1,K}^N\|_{L_{t,x}^{p'}},
\]
which follows from \eqref{3.14} and duality.
\end{proof}

\subsection{Estimation for $G_{2}^N$}
\begin{lemma}\label{lem:gauge-remainder}
For $s^*=s_c+\vare_0$, there exist $0<\vartheta\ll 1$, $\tilde{\vartheta}\in(0,1)$ and $C,c,c_1,c_2>0$, such that for any $0<T\ll 1$ there exists some $\Omega_T\subset \Omega$ satisfying $\mathbb P(\Omega_T^c)<\exp(-cT^{-\tilde\vartheta})$, such that for any $\omega\in\Omega_T$ it holds
\begin{align}\label{eq:gauge-rem-est}
\begin{aligned}
 \|K^+(\psi\chi_{I_T}\sum_N G_{2}^N)\|_{Y^{s^*}}&\lesssim \left\|\psi\chi_{I_T}\sum_N G_{2}^N\right\|_{L_t^1H_x^{s^*}}\\
&\lesssim
T^{\vartheta}\left(T^{\vartheta}\|\phi\|_{L_x^2}
 +\|w\|_{Z^{s^*}}\right)^{a-\alpha_a}
 \big(1+\|w\|_{Y^{s^*}(I)}\big)^{1+\alpha_a}.
\end{aligned}
\end{align}
\end{lemma}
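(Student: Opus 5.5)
The first inequality in \eqref{eq:gauge-rem-est} is the dual estimate of Lemma \ref{basis duhamel} combined with $X^{s^*}\hookrightarrow Y^{s^*}$. The remaining work is the $L_t^1H_x^{s^*}$ bound. Here $U=y_{\le N/2}$ and $h=y_N$, so
\[
G_2^N=r_0(U,h)\,y_N-c_a\,r_1(U,h)\,y_{\le N/2}.
\]
Both $r_0$ and $r_1$ depend only on $t$. I will reorganize the triangular sum: write $y_{\le N/2}=\sum_{L\le N/2}y_L$ and exchange the order of summation. This gives
\[
\sum_N G_2^N=\sum_L \ell_L(t)\,y_L,\qquad
\ell_L(t)=r_0(y_{\le L/2},y_L)-c_a\sum_{N\ge 2L}r_1(y_{\le N/2},y_N).
\]
The $r_1$-tail telescopes to $\mu(y)-\mu(y_{\le L})$, so
\[
\ell_L=r_0(y_{\le L/2},y_L)-c_a\bigl(\mu(y)-\mu(y_{\le L})\bigr).
\]
Because $\ell_L$ is scalar, $\|\ell_Ly_L\|_{H_x^{s^*}}\sim |\ell_L(t)|\,L^{s^*}\|y_L\|_{L_x^2}$. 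The whole problem is therefore to produce decay of $\ell_L$ in $L$ that beats the growth of $L^{s^*}\|y_L\|_{L^2}$.

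The key pointwise-in-time bound comes from \eqref{eq:r-bounds} and \eqref{eq:power-increment}, after applying Hölder in $x$ with the exponent $r_{\rm rem}$ of \eqref{p>2}:
\[
|\ell_L(t)|\lesssim \|y_{\ge L}\|_{L_x^{r_{\rm rem}}}^{\alpha_a}\,\|y\|_{L_x^{r_{\rm rem}}}^{a-\alpha_a}.
\]
This follows directly for the $r_0$-part. For the $\mu$-difference it follows with $y_{>L}$ as the increment. Then:
\begin{itemize}
\item For the random part $e^{it\Delta}\phi^\omega_{\ge L}$, I apply the first-chaos large deviation bound of Lemma \ref{lem:prob-Besov} together with Remark \ref{can lebesgue}. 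This gives $L^{-(s_c-\delta-\vare)}$ decay in $L_t^{q_{\rm rem}}L_x^{r_{\rm rem}}$ on a good event; the $N^\vare$ union-bound loss over dyadic $L$ is absorbed into $\vare$.
\item For the deterministic part $w_{\ge L}$, I use Sobolev together with \eqref{3.14}. This gives decay $L^{-(s^*-\text{(Sobolev loss)})}$ times $\|w\|_{Z^{s^*}}$. The conditions $\frac d{r_{\rm rem}}+\frac2{q_{\rm rem}}>\frac2a$ in \eqref{eq:qg-choice} should be exactly what makes this loss smaller than $s_c$.
\end{itemize}
The factor $\|y\|^{a-\alpha_a}$ is bounded in the same way without needing decay. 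This produces $(T^\vartheta\|\phi\|_{L^2}+\|w\|_{Z^{s^*}})^{a-\alpha_a}$.

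For the time integration, Hölder in $t$ uses $p_{\rm rem}=q_{\rm rem}/a$ for the coefficient, since $\ell_L\in L_t^{q_{\rm rem}/a}$. The factor $L^{s^*}\|y_L\|_{L_x^2}$ is taken in $L_t^\infty$, and the leftover time integrability yields $T^{\gamma_{\rm rem}}$. For the $y_L$ factor:
\begin{itemize}
\item If $y_L=w_L$, then $L^{s^*}\|w_L\|_{L^\infty L^2}\lesssim\|w\|_{Y^{s^*}}$, using the embedding of $Y^{s^*}$ into $L^\infty H^{s^*}$.
\item If $y_L$ is random, then $L^{s^*}\|e^{it\Delta}\phi^\omega_L\|_{L^2}\lesssim L^{\vare_0+\delta+\vare}$ on the good event.
\end{itemize}
Multiplying by the decay $L^{-\alpha_a(s_c-\delta-\vare)}$ of $\ell_L$ leaves the exponent
\[
(1+\alpha_a)(\delta+\vare)+\vare_0-\alpha_a s_c<0,
\]
which is precisely \eqref{eq:delta-epsilon-varepsilon-margins}, so the dyadic sum in $L$ converges.

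In the mixed term (deterministic $w_{\ge L}$ in the coefficient, random $y_L$), I use the $\alpha_a$ power of the $w$-decay $L^{-\alpha_a(s^*-\text{loss})}$. After $\ell^2$–Cauchy–Schwarz, the factor $\|w\|_{Y^{s^*}}^{\alpha_a}$ combines with the $\|w\|_{Y^{s^*}}$ from the $y_L$ slot to give the power $(1+\|w\|_{Y^{s^*}})^{1+\alpha_a}$.

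The main obstacle is the all-deterministic high-high piece, where both $w_{\ge L}$ and $w_L$ appear. There the powers of $L$ nearly cancel, and summability must come from the strict inequality $1-\frac a{q_{\rm rem}}>\frac1p-\sigma$ in \eqref{eq:qg-choice}. I would first try to extract the $\ell^2_L$ square-summability of $\|w_L\|$ and the tail $\|w_{\ge L}\|$ simultaneously. If that fails, I would use the fact that $w_{\ge L}$ is controlled in $\ell^2$ over shells, bounding $\sum_L\|w_{\ge L}\|^{\alpha_a}\|w_L\|$ by a discrete Hardy inequality. I would check the exponent bookkeeping ($s_0+s_1/2$, $\gamma_{\rm rem}$) at this step last.
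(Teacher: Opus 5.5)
Your skeleton matches the paper's proof. You regroup $\sum_N G_2^N=\sum_L\ell_L y_L$, bound the scalar $\ell_L$ via \eqref{eq:r-bounds} and H\"older, split into random and deterministic pieces, and sum dyadically using \eqref{eq:delta-epsilon-varepsilon-margins}. Your telescoping $\sum_{N\ge 2L}r_{1,N}=\mu(y)-\mu(y_{\le L})$ is a harmless variant of the paper's termwise tail $\sum_{L\ge 2N}a_L^{\alpha_a}$. As written, however, the argument does not close, and the delicate term is not the one you single out.

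\textbf{The mixed term is where the proof breaks.} Take the deterministic tail $w_{\ge L}$ inside $\ell_L$ and the random block $e^{it\Delta}\phi^\omega_L$ in the other slot. That slot contains nothing square-summable: it grows like $L^{\vare_0+\delta+\vare}$. So Cauchy--Schwarz is useless here; your remark about ``$\|w\|_{Y^{s^*}}$ from the $y_L$ slot'' mixes this up with the all-deterministic term. You need $\alpha_a\kappa_0>\vare_0+\delta+\vare$, where $L^{-\kappa_0}$ is the decay of the deterministic tail. Because you measure the increment in $L_x^{r_{\rm rem}}$ and pass through \eqref{3.14} plus Sobolev, you only get $\kappa_0>\vare_0$. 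Precisely, $\kappa_0=\vare_0+\frac2{q_{\rm rem}}+\frac d{r_{\rm rem}}-\frac2a$ when Sobolev is needed, and $\kappa_0=s^*-\sigma$ otherwise. Neither value is shown to give $\alpha_a\kappa_0>\vare_0+\delta+\vare$ from the constraints of Section \ref{dictionary}. For $\alpha_a=1$ you would need $\frac2{q_{\rm rem}}+\frac d{r_{\rm rem}}-\frac2a>\delta+\vare$, which is nowhere imposed.

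\textbf{The fix, as in the paper.} Put the increment in $L^2_x$ when applying H\"older. This is exactly what $r_{\rm rem}=2\max\{1,a-1\}$ is built for, since $\frac{\alpha_a}{2}+\frac{a-\alpha_a}{r_{\rm rem}}\le 1$. Then use $\|\chi_I w_L\|_{L_t^\infty L_x^2}=L^{-s^*}b_L$ with $b_L:=L^{s^*}\|\chi_I w_L\|_{L_t^\infty L_x^2}$ and $\|b\|_{\ell^2}\lesssim\|w\|_{Y^{s^*}}$. The deterministic tail then decays like $L^{-\alpha_a s^*}\|b\|_{\ell^2}^{\alpha_a}$ with no derivative loss. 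The mixed exponent becomes $-\alpha_a s_c+\delta+\vare+(1-\alpha_a)\vare_0<0$, which is exactly \eqref{eq:delta-epsilon-varepsilon-margins}.

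\textbf{The all-deterministic term is routine.} Once the tail decays like $L^{-\alpha_a s^*}\|b\|_{\ell^2}^{\alpha_a}$ (any positive power would do), this term contributes $\sum_L L^{-\alpha_a s^*}\|b\|_{\ell^2}^{\alpha_a}b_L\lesssim\|b\|_{\ell^2}^{1+\alpha_a}\lesssim\|w\|_{Y^{s^*}}^{1+\alpha_a}$ by Cauchy--Schwarz. There is no near-cancellation of powers and no need for a Hardy inequality. The condition $1-\frac a{q_{\rm rem}}>\frac1p-\sigma$ that you invoke plays no role in this lemma. It is what makes time-H\"older phases bounded multipliers on $Z^s$ in Lemma \ref{lem:time-phase-multiplier}.

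So the proposal leaves open, and misattributes, the step you regard as decisive. Meanwhile the step that actually needs care is asserted without a valid exponent count.
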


\begin{proof}
Recall that $\alpha_a=\min\{1,a\}$. Let $r_{\text{rem}},q_{\text{rem}}$ be defined through \eqref{p>2} and \eqref{eq:qg-choice}. For every dyadic $N$, define
\begin{equation}\label{eq:scalar-bN-aN}
 {b_N=N^{s^*}\|\chi_{I_T}w_N\|_{L_t^\infty L_x^2},
 \qquad
 a_N=N^{-s_c+\delta+\vare}+N^{-s^*}b_N.}
\end{equation}
Also set
\begin{equation*}
 A_I=
T^{\vartheta}\|\phi\|_{L_x^2}
 +\|w\|_{Z^{s^*}}.
\end{equation*}
By definition, $ \|b_N\|_{\ell_N^2}\lesssim\|w\|_{Y^{s^*}(I)}$. Moreover, using Lemma \ref{lem:prob-Besov}, the condition $ \frac d{r_{\text{rem}}}+\frac2{q_{\text{rem}}}>\frac2a$ from \eqref{eq:qg-choice} and \eqref{3.14}, and also \eqref{3.14} and Sobolev embedding, it follows
\begin{align} \label{eq:scalar-yN-qg}
 \|\chi_Iy_N\|_{L_t^{q_{\text{rem}}}L_x^2}
\lesssim T^{\vartheta}a_N,\qquad
 \|\chi_Iy_{\leq N}\|_{L_t^{q_{\text{rem}}}L_x^{r_{\text{rem}}}}
 \lesssim A_I
\end{align}
uniformly in $N$. Next, let
\begin{equation*}
 \ell_N(t)=r_{0,N}(t)-c_a\sum_{L\geq2N}r_{1,L}(t).
\end{equation*}
Using $U_N=\sum_{M\leq N/2}y_M$ and exchanging the triangular sums gives
\begin{align*}
 \sum_NG_2^N
 &=\sum_Nr_{0,N}y_N-c_a\sum_Nr_{1,N}\sum_{M\leq N/2}y_M\\
 &=\sum_My_M\left(r_{0,M}-c_a\sum_{N\geq2M}r_{1,N}\right)=\sum_N\ell_N(t)y_N.
\end{align*}
Now we estimate $\ell_N$. By \eqref{eq:r-bounds}, \eqref{p>2}, and spatial
H\"older,
\begin{equation*}
 |r_{0,N}(t)|+|r_{1,N}(t)|
 \lesssim
 \|y_N(t)\|_{L_x^2}^{\alpha_a}
 \|y_{\leq N}(t)\|_{L_x^{r_{\text{rem}}}}^{a-\alpha_a}.
\end{equation*}
Since $q_{\text{rem}}>a+1>a$, H\"older in time and
\eqref{eq:scalar-yN-qg} imply
\begin{equation*}
 \|\chi_Ir_{j,N}\|_{L_t^{q_{\text{rem}}/a}}
 \lesssim
 T^{\alpha_a\vartheta}a_N^{\alpha_a} A_I^{a-\alpha_a},
 \qquad j=0,1.
\end{equation*}
\eqref{eq:r-bounds}, H\"older and \eqref{eq:scalar-yN-qg} now yield 
\begin{align}
 \|\chi_I\ell_N\|_{L_t^{q_{\text{rem}}/a}}
 \lesssim{}&
 T^{\alpha_a\vartheta}A_I^{a-\alpha_a}
 \left(a_N^{\alpha_a}+\sum_{L\geq2N}a_L^{\alpha_a}\right)
 \label{eq:ellN-qga}
\end{align}
with the bound
\begin{equation}
\begin{aligned}
 a_N^{\alpha_a}+\sum_{L\geq2N}a_L^{\alpha_a}
 \lesssim&
{N^{-\alpha_a(s_c-\delta-\vare)}
 +N^{-\alpha_a s^*}(b_N^{\alpha_a}+\|b_L\|_{\ell_L^2}^{\alpha_a})}\\
\lesssim&
{N^{-\alpha_a(s_c-\delta-\vare)}
 +N^{-\alpha_a s^*}\|b_L\|_{\ell_L^2}^{\alpha_a}}
 \label{eq:scalar-tail-envelope}
\end{aligned}
\end{equation}
deducing from \eqref{eq:scalar-bN-aN}, where we have also used $b_N\lesssim \|b\|_{\ell^\infty}\leq \|b\|_{\ell^2}$. Now apply H\"older, \eqref{eq:scalar-yN-qg}, \eqref{eq:ellN-qga} and \eqref {eq:scalar-tail-envelope} to obtain
\begin{equation*}
\begin{aligned}
 {\|K^+(\chi_I\ell_Ny_N)\|_{Y^{s^*}}\lesssim \|\chi_I\ell_Ny_N\|_{L_t^1 H_x^{s^*}}}
 \lesssim T^{1-\frac{1+a}{q_{\text{rem}}}}\|\chi_I\ell_N\|_{L_t^{q_{\text{rem}}/a}}\|\chi_Iy_N\|_{L_t^{q_{\text{rem}}}H_x^{s^*}}\lesssim s_N,
\end{aligned}
\end{equation*}
where
\begin{align*}
s_N&=T^{1-\frac {a+1}{q_{\text{rem}}}+(1+\alpha_a)\vartheta}A_I^{a-\alpha_a}
 \bigg[
N^{-\alpha_a s_c+(1+\alpha_a)(\delta+\vare)+\vare_0}
 +N^{-\alpha_a s_c+\delta+\vare+(1-\alpha_a)\vare_0}\|b_L\|_{\ell_L^2}^{\alpha_a}\\
 &\qquad\qquad+N^{-\alpha_a(s_c-\delta-\vare)}b_N
 +N^{-\alpha_a s^*}\|b_L\|_{\ell_L^2}^{\alpha_a}b_N
 \bigg].
\end{align*}
By \eqref{eq:delta-epsilon-varepsilon-margins}, the first dyadic exponent is negative and the second is strictly smaller, while the last two terms are summable by Cauchy--Schwarz. Moreover, since $q_{\mathrm{rem}}>a+1$,
\[
 1-\frac{a+1}{q_{\mathrm{rem}}}+(1+\alpha_a)\vartheta>\vartheta.
\]
Summing in $N$ therefore gives \eqref{eq:gauge-rem-est}. This completes the proof.
\end{proof}

\subsection{Conclusion}
\begin{lemma}\label{final4}
There exist $0<\vartheta\ll 1$, $\tilde{\vartheta}\in(0,1)$ and $C,c>0$, such that for any $0<T\ll 1$ there exists some $\Omega_T\subset \Omega$ satisfying $\mathbb P(\Omega_T^c)<\exp(-cT^{-\tilde\vartheta})$, such that for any $\omega\in\Omega_T$ it holds
\begin{equation*}
\begin{aligned}
&\|K^+(\psi\chi_{I_T}G(e^{it\Delta}\phi^\omega+w))\|_{Y^{s^*}}\\
&\quad\lesssim (T^{\vartheta}\|\phi\|_{H_x^{2s_0+s_1}}+\|w\|_{Z^{s^*}})^a(T^\vartheta+\|w\|_{Y^{s^*}})\\
&\qquad +T^{\vartheta}\left(T^{\vartheta}\|\phi\|_{L_x^2}
 +\|w\|_{Z^{s^*}}\right)^{a-\alpha_a}
 \big(1+\|w\|_{Y^{s^*}(I)}\big)^{1+\alpha_a}.
\end{aligned}
\end{equation*}
\end{lemma}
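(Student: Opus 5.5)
We write $G(y)$ as a telescoping sum over the dyadic Littlewood--Paley decomposition of $y=e^{it\Delta}\phi^\omega+w$ and then collect the estimates of Lemmas \ref{final1}, \ref{final2} and \ref{lem:gauge-remainder}. Since $y_{\le N}\to y$ as $N\to\infty$ (in the appropriate space-time norms, on the good event), we have
\[
 G(y)=G(y_{\le 1})+\sum_{N\ge2}\big(G(y_{\le N})-G(y_{\le N/2})\big)
 =G(y_{\le1})+\sum_{N}\big(G_1^N+G_2^N\big),
\]
where Lemma \ref{lem:gauged-increment} is applied with $U=y_{\le N/2}$ and $h=y_N$. The low-frequency term $G(y_{\le1})$ is handled like $G_1^1$: all its factors are bounded functions of frequency $\lesssim1$. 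Its $L^1_tH^{s^*}_x$ norm on $I_T$ is therefore at most $T^{\vartheta}$ times a power of $T^\vartheta\|\phi\|_{L^2}+\|w\|_{Z^{s^*}}$. This is absorbed into the first line of the claimed bound, using the $T^\vartheta$ factor inside $(T^\vartheta+\|w\|_{Y^{s^*}})$. For the scalar part we invoke Lemma \ref{lem:gauge-remainder} directly; it produces exactly the second line of the claim.

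For $\sum_N G_1^N$ we split $G_1^N=\sum_K G_{1,K}^N$ into the regions $K\le4N$ and $K\ge4N$. By duality (Lemma \ref{basis duhamel} and \eqref{3.11}) and the $\ell^2_K$ orthogonality of the $Y^{s^*}$ norm, it suffices to bound
\[
 \Big(\sum_K\Big\|\sum_N \psi\chi_{I_T}G_{1,K}^N\Big\|_{(Z^{-s^*})'}^2\Big)^{1/2}.
\]
In the low-high region, Lemma \ref{final1} gives the kernel $(K/N)^{s^*}\mathbf 1_{K\le4N}$ acting on the sequence $c_N:=T^\vartheta N^{-\sigma_\delta}+\|w_N\|_{Z^{s^*}}$. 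Since $s^*>0$, Schur's test shows this kernel is bounded on $\ell^2$. We also have $\|c_N\|_{\ell^2_N}\lesssim T^\vartheta+\|w\|_{Z^{s^*}}\lesssim T^\vartheta+\|w\|_{Y^{s^*}}$, using $\sigma_\delta>0$ and $Y^{s^*}\hookrightarrow Z^{s^*}$.

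In the high-low region, Lemma \ref{final2} gives the kernel $(N/K)^{\mu^*}\mathbf 1_{K\ge4N}$ acting on
\[
 d_N=\Big(\sum_{L\le N}(L/N)^{2\nu^*}c_L^2\Big)^{1/2}.
\]
We use $\mu^*,\nu^*>0$, which holds because $\vare_0<\min\{\mu,\nu\}$. Young's inequality on the dyadic group then gives $\|d_N\|_{\ell^2}\lesssim\|c\|_{\ell^2}$, and Schur's test bounds the $K$-kernel. In both regions the prefactor is at most $(T^\vartheta\|\phi\|_{H^{2s_0+s_1}}+\|w\|_{Z^{s^*}})^a$, since $\|\phi\|_{L^2}\le\|\phi\|_{H^{2s_0+s_1}}$.

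All the lemmas are invoked on one common event, the intersection of the finitely many good events, which still satisfies $\mathbb P(\Omega_T^c)<\exp(-cT^{-\tilde\vartheta})$ after shrinking $c$ and $\tilde\vartheta$ as in Remark \ref{vartheta explain}. One minor point is the mismatch of cutoffs: Lemmas \ref{final1} and \ref{final2} use $\psi_1$, while the conclusion uses $\psi$. This is handled by choosing $\psi_1$ equal to $1$ on the support of $\psi$ and using the multiplier bound of Lemma \ref{lem:3.6}.

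I expect the main obstacle to be justifying the telescoping identity and the interchange of $K^+$ with the infinite sums. The nonlinearity is non-algebraic, so convergence of $G(y_{\le N})\to G(y)$ has to be checked in a space where $K^+$ is continuous into a weaker norm. I would argue this in $L^{p'}_{t,x}$ using the pointwise bound $|G(y)|\lesssim|y|^{a+1}$ together with the $L^{q_h}$-type integrability \eqref{3.14} and \eqref{eq:prob-Besov}. The uniform $Y^{s^*}$ bounds then pass to the limit by weak-$*$ lower semicontinuity.
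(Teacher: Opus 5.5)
Your proposal is correct and is essentially the paper's proof. The paper feeds the two dyadic bounds of Lemmas \ref{final1} and \ref{final2} into \cite[Lem.~4.4]{KwakKwon}, which is exactly the Schur-test and dyadic Young summation you carry out by hand, and then adds Lemma \ref{lem:gauge-remainder} for $\sum_N G_2^N$. Your separate treatment of $G(y_{\le1})$ and of the convergence of the telescoping series only makes explicit what the paper leaves implicit, since there the lowest frequencies are part of the $N=1$ increment. For $\|c_N\|_{\ell^2_N}$ it is cleanest to argue directly through $\|w_N\|_{Z^{s^*}}\lesssim\|w_N\|_{Y^{s^*}}$ and the exact $\ell^2$-orthogonality of the $Y^{s^*}$ norm, rather than through $\|w\|_{Z^{s^*}}$.
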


\begin{proof}
For dyadic $K,N$, set
\begin{align*}
\beta_K^N
&:=
\|\psi\chi_{I_T}G_{1,K}^N\|_{(Z^{-s^*})'}^2,
\notag\\
\alpha_M
&:=
T^{2\vartheta}M^{-2\sigma_\delta}+\|w_M\|_{Z^{s^*}}^2,
\notag\\
C_I
&:=
T^{\vartheta}\|\phi\|_{H_x^{2s_0+s_1}}+\|w\|_{Z^{s^*}}.
\end{align*}
Lemma \ref{final1} and \ref{final2} imply that
\begin{equation*}
\begin{aligned}
\beta_K^N
&\lesssim
C_I^{2a}
\left(\frac{K}{N}\right)^{2s^*}
\cdot
\alpha_N,
\\[2mm]
\text{and}\qquad
\beta_K^N
&\lesssim
C_I^{2a}
\left(\frac{N}{K}\right)^{2\mu^*}
\cdot
\sum_{L\le N}
\left(\frac{L}{N}\right)^{2\nu^*}
\alpha_L,
\qquad
4N\le K.
\end{aligned}
\end{equation*}
Since $s^*,\mu^*,\nu^*,\sigma_\delta>0$, this is exactly the condition in \cite[Lem. 4.4]{KwakKwon}. Thus by \cite[(4.9)]{KwakKwon} it holds
\begin{align*}
\|K^+(\psi_1\chi_{I_T}G_1(e^{it\Delta}\phi^\omega+w))\|_{Y^{s^*}}\lesssim 
(T^{\vartheta}\|\phi\|_{H_x^{2s_0+s_1}}+\|w\|_{Z^{s^*}})^a(T^\vartheta+\|w\|_{Y^{s^*}}).
\end{align*}
The desired claim follows by combining the estimate for $G_2$ given by \eqref{eq:gauge-rem-est}. 
\end{proof}

\section{Proof of Theorem \ref{main_thm}}
Throughout this section, we fix a sample $\omega$ in the intersection of
the good events constructed above, so that all preceding pathwise estimates
hold simultaneously.  We abbreviate the random linear solution by
$z=z^\omega=e^{it\Delta}\phi^\omega$.

\subsection{The $V^2$-space and its properties}
We shall prove Theorem \ref{main_thm} based on contraction arguments. The space $V^2$ and its properties will be useful by dealing with the scalar term $\mu(y)$. We first recall the definition of the scalar $V^2$ space. For a bounded interval $J=[t_-,t_+]$ and a scalar function $m:J\to\mathbb C$, define
\begin{equation*}
 [m]_{V^2(J)}
 :=\sup_{t_-=t_0<\cdots<t_K=t_+}
 \left(\sum_{k=1}^K|m(t_k)-m(t_{k-1})|^2\right)^{1/2}
\end{equation*}
and
\[
 \|m\|_{V^2(J)}:=\|m\|_{L^\infty(J)}+[m]_{V^2(J)}.
\]
The following lemma shows that Sobolev spaces and space of absolutely continuous functions are embedded to $V^2$.
\begin{lemma}
\label{lem:AC-V2}
Let $J\subset\mathbb R$ be bounded and let $m:J\to\mathbb C$ be absolutely
continuous.  Then
\begin{equation}\label{eq:AC-to-V2}
 [m]_{V^2(J)}\leq \|m'\|_{L^1(J)}.
\end{equation}
Moreover,
\begin{equation}\label{eq:AC-V2-interpolation}
 [m]_{V^2(J)}^2
 \leq 2\|m\|_{L^\infty(J)}\|m'\|_{L^1(J)}.
\end{equation}
Consequently, if $1<p<\infty$, then
\begin{equation}\label{eq:W1p-holder-V2}
 W^{1,p}(J)
 \hookrightarrow C^{0,1-1/p}(J)\cap V^2(J),
\end{equation}
with
\begin{align}
 [m]_{C^{0,1-1/p}(J)}
 &\leq \|m'\|_{L^p(J)},
 \label{eq:Morrey-scalar}\\
 [m]_{V^2(J)}
 &\leq |J|^{1-1/p}\|m'\|_{L^p(J)}.
 \label{eq:W1p-V2-bound}
\end{align}
\end{lemma}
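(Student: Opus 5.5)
The plan is to prove \eqref{eq:AC-to-V2} directly, then deduce \eqref{eq:AC-V2-interpolation} from it, and finally obtain the Sobolev statements via the fundamental theorem of calculus and H\"older.

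\emph{Step 1: the bound \eqref{eq:AC-to-V2}.} Fix a partition $t_-=t_0<\cdots<t_K=t_+$. Since $m$ is absolutely continuous,
\[
 |m(t_k)-m(t_{k-1})|=\Bigl|\int_{t_{k-1}}^{t_k}m'\,ds\Bigr|\leq\int_{t_{k-1}}^{t_k}|m'|\,ds=:c_k .
\]
Using $\ell^1\hookrightarrow\ell^2$,
\[
 \Bigl(\sum_k|m(t_k)-m(t_{k-1})|^2\Bigr)^{1/2}\leq\Bigl(\sum_k c_k^2\Bigr)^{1/2}\leq\sum_kc_k=\|m'\|_{L^1(J)} .
\]
Taking the supremum over partitions gives \eqref{eq:AC-to-V2}.

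\emph{Step 2: the interpolation bound \eqref{eq:AC-V2-interpolation}.} Each increment satisfies the trivial bound $|m(t_k)-m(t_{k-1})|\leq 2\|m\|_{L^\infty(J)}$. Hence
\[
 \sum_k|m(t_k)-m(t_{k-1})|^2\leq 2\|m\|_{L^\infty}\sum_k|m(t_k)-m(t_{k-1})|\leq 2\|m\|_{L^\infty}\|m'\|_{L^1(J)} ,
\]
where the last inequality is Step 1 before squaring. Taking the supremum over partitions gives \eqref{eq:AC-V2-interpolation}.

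\emph{Step 3: the Sobolev embedding \eqref{eq:W1p-holder-V2}.} For $m\in W^{1,p}(J)$ with $J$ bounded, we have $m'\in L^p(J)\subset L^1(J)$, so $m$ has an absolutely continuous representative. For $s<t$, H\"older gives
\[
 |m(t)-m(s)|\leq\int_s^t|m'|\leq|t-s|^{1-1/p}\|m'\|_{L^p(J)} ,
\]
which is \eqref{eq:Morrey-scalar}. Applying H\"older on all of $J$ gives $\|m'\|_{L^1(J)}\leq|J|^{1-1/p}\|m'\|_{L^p(J)}$, and combining this with Step 1 yields \eqref{eq:W1p-V2-bound}.

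For the $L^\infty$ part of the $V^2$ norm, pick any $s\in J$. Then $|m(t)|\leq|m(s)|+|J|^{1-1/p}\|m'\|_{L^p}$ for every $t\in J$. Averaging over $s$ bounds $\|m\|_{L^\infty}$ by $|J|^{-1/p}\|m\|_{L^p}+|J|^{1-1/p}\|m'\|_{L^p}$, which gives the continuity of the embedding.

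There is no real obstacle here. The only point needing care is using the absolutely continuous representative, so that the fundamental theorem of calculus applies at every partition point.
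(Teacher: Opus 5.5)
Your proof is correct and follows the paper's argument essentially line by line. It uses the fundamental theorem of calculus on each partition interval together with $\|c\|_{\ell^2}\le\|c\|_{\ell^1}$; you state this embedding in the right direction, $\ell^1\hookrightarrow\ell^2$, whereas the paper's text has it reversed. It then uses $\sum_k a_k^2\le(\max_k a_k)\sum_k a_k$ for \eqref{eq:AC-V2-interpolation} and H\"older for the Sobolev consequences, and your averaging bound on the $L^\infty$ part of the norms makes explicit the continuity of the embedding, which the paper leaves implicit.
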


\begin{proof}
For every partition $\{t_k\}_{k=0}^K$ of $J$, absolute continuity gives
\[
 |m(t_k)-m(t_{k-1})|
 \leq\int_{t_{k-1}}^{t_k}|m'(t)|\,dt.
\]
Using $\ell^2\hookrightarrow\ell^1$ and summing over the disjoint partition
intervals proves \eqref{eq:AC-to-V2}.  Also,
\begin{align*}
 \sum_{k=1}^K|m(t_k)-m(t_{k-1})|^2
 &\leq
 \max_k|m(t_k)-m(t_{k-1})|
 \sum_{k=1}^K|m(t_k)-m(t_{k-1})|\\
 &\leq 2\|m\|_{L^\infty(J)}\|m'\|_{L^1(J)},
\end{align*}
which proves \eqref{eq:AC-V2-interpolation}.  Finally, Holder's inequality
implies
\[
 |m(t)-m(s)|
 \leq |t-s|^{1-1/p}\|m'\|_{L^p(J)}
\]
and
\[
 \|m'\|_{L^1(J)}
 \leq |J|^{1-1/p}\|m'\|_{L^p(J)}.
\]
This proves \eqref{eq:W1p-holder-V2}--\eqref{eq:W1p-V2-bound}.
\end{proof}

Next, set
\begin{equation*}
 p_{\rm rem}:=\frac{q_{\mathrm{rem}}}{a}>1,
 \qquad
 \gamma_{\mathrm{rem}}
 :=1-\frac1{p_{\rm rem}}
 =1-\frac{a}{q_{\mathrm{rem}}}.
\end{equation*}
In particular, by \eqref{eq:qg-choice} it holds $\gamma_{\mathrm{rem}}>
 \max\left\{s_0+\frac{s_1}{2},\frac1p-\sigma\right\}$.

We shall apply Lemma \ref{lem:AC-V2} to the function $e^{-i\Gamma(t)}$. The exact properties we will be using are stated in the following lemma.

\begin{lemma}\label{lem:canonical-phase-regularity}
Let $J\subset I_T$ and suppose
$y\in L_t^{q_{\mathrm{rem}}}L_x^{r_{\mathrm{rem}}}(J\times\mathbb T^d)$.
Define
\begin{equation*}
 \Gamma(t)=\lambda c_a\int_0^t\mu(y)(s)\,ds,
 \qquad
 \rho(t)=e^{-i\Gamma(t)}.
\end{equation*}
Then $\rho\in W^{1,p_{\rm rem}}\cap
C^{\gamma_{\mathrm{rem}}}\cap V^2(J)$ and
\begin{align}
 \|\rho\|_{C^{\gamma_{\mathrm{rem}}}(J)}\lesssim  \|\rho\|_{W^{1,p_{\mathrm{rem}}}(J)}
 &\lesssim
 1+\|y\|_{L_t^{q_{\mathrm{rem}}}L_x^{r_{\mathrm{rem}}}(J)}^a,
 \label{eq:phase-Cgamma-bound}\\
 \|\rho\|_{V^2(J)}
 &\lesssim
 1+|J|^{\gamma_{\mathrm{rem}}}
 \|y\|_{L_t^{q_{\mathrm{rem}}}L_x^{r_{\mathrm{rem}}}(J)}^a.
 \label{eq:phase-V2-bound}
\end{align}
\end{lemma}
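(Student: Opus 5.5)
The plan is to reduce everything to an $L^{p_{\rm rem}}$ bound on $\mu(y)$ and then apply Lemma \ref{lem:AC-V2}. Since $|\rho|\equiv1$, we have $\|\rho\|_{L^\infty(J)}=1$ and $\|\rho\|_{L^{p_{\rm rem}}(J)}\le|J|^{1/p_{\rm rem}}\lesssim1$, because $J\subset I_T$ with $T<\frac12$. Moreover, $\mu(y)$ is real, so $\Gamma$ is real-valued and absolutely continuous whenever $\mu(y)\in L^1(J)$. In that case $\rho$ is absolutely continuous with
\[
\rho'(t)=-i\lambda c_a\,\mu(y)(t)\,\rho(t),\qquad |\rho'(t)|=c_a|\mu(y)(t)|.
\]
Thus the whole problem comes down to estimating $\|\mu(y)\|_{L^{p_{\rm rem}}(J)}$.

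For this I use spatial H\"older on the torus. Since $r_{\rm rem}=2\max\{1,a-1\}\ge a$ once $a\le 2\max\{1,a-1\}$, which always holds (for $a\le2$ use $r_{\rm rem}\ge2\ge a$; for $a>2$ use $2(a-1)\ge a$), the normalized average satisfies
\[
|\mu(y)(t)|=\fint_{\mathbb T^d}|y(t,x)|^a\,dx\le\|y(t)\|_{L_x^{r_{\rm rem}}}^a .
\]
Taking the $L_t^{p_{\rm rem}}$ norm with $p_{\rm rem}=q_{\rm rem}/a$ gives
\[
\|\mu(y)\|_{L^{p_{\rm rem}}(J)}\le\|y\|_{L_t^{q_{\rm rem}}L_x^{r_{\rm rem}}(J)}^a .
\]
This also shows $\mu(y)\in L^1(J)$, because $p_{\rm rem}>1$ and $J$ is bounded. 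Hence $\rho\in W^{1,p_{\rm rem}}(J)$ with
\[
\|\rho\|_{W^{1,p_{\rm rem}}(J)}\lesssim1+\|y\|^a_{L_t^{q_{\rm rem}}L_x^{r_{\rm rem}}(J)} .
\]

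The two remaining bounds follow from Lemma \ref{lem:AC-V2}. The Morrey estimate \eqref{eq:Morrey-scalar}, with $1-1/p_{\rm rem}=\gamma_{\rm rem}$, together with $\|\rho\|_{L^\infty}=1$, gives $\|\rho\|_{C^{\gamma_{\rm rem}}(J)}\le1+\|\rho'\|_{L^{p_{\rm rem}}}\lesssim\|\rho\|_{W^{1,p_{\rm rem}}(J)}$. This is \eqref{eq:phase-Cgamma-bound}. Applying \eqref{eq:W1p-V2-bound} with $p=p_{\rm rem}$ gives
\[
[\rho]_{V^2(J)}\le|J|^{\gamma_{\rm rem}}\|\rho'\|_{L^{p_{\rm rem}}(J)}\lesssim|J|^{\gamma_{\rm rem}}\|y\|^a_{L_t^{q_{\rm rem}}L_x^{r_{\rm rem}}(J)} .
\]
Adding $\|\rho\|_{L^\infty}=1$ yields \eqref{eq:phase-V2-bound}.

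There is no real obstacle here. The only points that need checking are the exponent compatibility $r_{\rm rem}\ge a$, so that the spatial H\"older step is legitimate (with $\fint$ normalization the implied constant is $1$), and the requirement $p_{\rm rem}>1$, which is guaranteed by $q_{\rm rem}>a+1$ in \eqref{eq:qg-choice}.
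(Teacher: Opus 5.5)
Your proof is correct and follows the paper's argument essentially verbatim. Spatial H\"older (using $r_{\mathrm{rem}}\ge a$) gives $\|\rho'\|_{L^{p_{\mathrm{rem}}}(J)}=c_a\|\mu(y)\|_{L^{p_{\mathrm{rem}}}(J)}\lesssim\|y\|^a_{L_t^{q_{\mathrm{rem}}}L_x^{r_{\mathrm{rem}}}(J)}$, and then $|\rho|\equiv1$ together with \eqref{eq:Morrey-scalar} and \eqref{eq:W1p-V2-bound} from Lemma \ref{lem:AC-V2} yields both estimates.

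One small caveat, which is shared with the paper's statement, concerns your step $1+\|\rho'\|_{L^{p_{\mathrm{rem}}}}\lesssim\|\rho\|_{W^{1,p_{\mathrm{rem}}}(J)}$. Since $\|\rho\|_{L^{p_{\mathrm{rem}}}(J)}=|J|^{1/p_{\mathrm{rem}}}$, this only holds with a constant of size $|J|^{-1/p_{\mathrm{rem}}}$. The outer bound $\|\rho\|_{C^{\gamma_{\mathrm{rem}}}(J)}\le 1+\|\rho'\|_{L^{p_{\mathrm{rem}}}(J)}\lesssim1+\|y\|^a_{L_t^{q_{\mathrm{rem}}}L_x^{r_{\mathrm{rem}}}(J)}$, however, is uniform in $J$, and that is what is used later on short intervals.
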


\begin{proof}
Notice first that $\rho'(t)=-ie^{-i\Gamma(t)}\Gamma'(t)$. Since $r_{\mathrm{rem}}\geq a$ by
\eqref{p>2}, spatial Holder on the finite-volume torus
gives
\begin{equation*}
 |\Gamma'(t)|
 =|\lambda|c_a|\mu(y)(t)|
 \lesssim \|y(t)\|_{L_x^{r_{\mathrm{rem}}}}^a,
\end{equation*}
implying also that
\begin{equation}\label{eq:phase-derivative-Lp}
  \|\Gamma'\|_{L_t^{p_{\rm rem}}(J)}
 \lesssim
 \|y\|_{L_t^{q_{\mathrm{rem}}}L_x^{r_{\mathrm{rem}}}(J)}^a.
\end{equation}
Since $|\rho|=1$ and $|\rho'|=|\Gamma'|$, \eqref{eq:Morrey-scalar} and \eqref{eq:phase-derivative-Lp} yield \eqref{eq:phase-Cgamma-bound}.  Likewise,
\eqref{eq:W1p-V2-bound} gives
\[
 [\rho]_{V^2(J)}
 \leq |J|^{\gamma_{\mathrm{rem}}}
 \|\rho'\|_{L_t^{p_{\rm rem}}(J)},
\]
which, together with $\|\rho\|_{L^\infty}=1$, proves
\eqref{eq:phase-V2-bound}.
\end{proof}

We next record the multiplier estimates needed in the contraction proof.

\begin{lemma}\label{lem:time-phase-multiplier}
Let $J\subset \R$ be a bounded interval and $m:J\to\mathbb C$.  If
$m\in C^{\gamma_{\mathrm{rem}}}\cap V^2(J)$, then multiplication by $m$ is both bounded on
$Z^s(J)$ and $Y^s(J)$ for every $s\in\mathbb R$.  More precisely, 
\begin{equation}\label{eq:Y-phase-multiplier}
\begin{aligned}
 \|mu\|_{Z^s(J)}
 &\lesssim
 \|m\|_{C^{\gamma_{\mathrm{rem}}}(J)}\|u\|_{Z^s(J)},\\
  \|mu\|_{Y^s(J)}
 &\lesssim
 \|m\|_{V^2(J)}\|u\|_{Y^s(J)}.
\end{aligned}
\end{equation}
\end{lemma}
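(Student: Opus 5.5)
I would prove the two bounds in \eqref{eq:Y-phase-multiplier} separately, since they rest on different structures: the $Y^s$ bound is a statement about $V^2$-variation along the linear flow, while the $Z^s$ bound is about the two types of norms in \eqref{def Zs}.

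\emph{The $Y^s$ bound.} Multiplication by a scalar function of time commutes with the spatial projections $P_{C_z}$ and with $e^{\mp it\Delta}$. By \eqref{def XsYs} it therefore suffices to show
\[
\|mv\|_{V^2}\lesssim \|m\|_{V^2(J)}\|v\|_{V^2 L^2_x}
\]
for $v=e^{-it\Delta}P_{C_z}u$, and then sum in $z$ with the weights $\la z\ra^{2s}$. For this I would use the product rule for differences,
\[
m(t_k)v(t_k)-m(t_{k-1})v(t_{k-1})=m(t_k)\big(v(t_k)-v(t_{k-1})\big)+\big(m(t_k)-m(t_{k-1})\big)v(t_{k-1}).
\]
The first sum is bounded by $\|m\|_{L^\infty}[v]_{V^2}$ and the second by $[m]_{V^2}\|v\|_{L^\infty L^2}$. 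I will also use $\|v\|_{L^\infty L^2}\lesssim\|v\|_{V^2}$ and handle the restriction to $J$ through the extension definition of $Y^s(J)$. The extension needs care: outside $J$ I extend $m$ by constants, which does not increase its variation, and multiply by $\chi_J$. Since $\chi_J$ has bounded $V^2$ norm, this is harmless.

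\emph{The $Z^s$ bound, first norm.} The Strichartz-type part of \eqref{def Zs} is immediate. Because $m$ depends only on time, it commutes with $P_N$, and
\[
\|\psi' m u_N\|_{L^qL^r}\le \|m\|_{L^\infty}\|\psi' u_N\|_{L^qL^r}.
\]

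\emph{The $Z^s$ bound, second norm.} The Galilean part $\|\psi P_{\le 8R}I_{Rk}(mu_N)\|_{B^\alpha_{p,1}L^p}$ is where I expect the main difficulty. Since $I_{Rk}$ acts on $x$ and multiplies by a time-dependent phase only, we have $I_{Rk}(mu_N)=m\,I_{Rk}u_N$. The problem thus reduces to a product estimate in the vector-valued temporal Besov space $B^\alpha_{p,1}(\R;L^p_x)$: for $\alpha<\gamma_{\mathrm{rem}}$,
\[
\|mf\|_{B^\alpha_{p,1}L^p}\lesssim \|m\|_{C^{\gamma_{\mathrm{rem}}}}\|f\|_{B^\alpha_{p,1}L^p}.
\]
I would prove this with the difference characterization of Lemma \ref{lem_triebel}, writing
\[
\Delta_h(mf)=m(\cdot+h)\Delta_hf+(\Delta_hm)f.
\]
The first term gives $\|m\|_{L^\infty}$ times the Besov seminorm of $f$. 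The second is bounded by $h^{\gamma_{\mathrm{rem}}}\|m\|_{C^{\gamma_{\mathrm{rem}}}}\|f\|_{L^p}$, and $\int_0^1 h^{\gamma_{\mathrm{rem}}-\alpha}\,dh/h<\infty$ because $\alpha\le \frac1p-\sigma<\gamma_{\mathrm{rem}}$, which is exactly the condition in \eqref{eq:qg-choice}. Lemma \ref{lem_triebel} is stated for $q$ general, so $B_{p,1}$ is covered.

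There is one technical point: $m$ is only defined on $J$. I would first extend $m$ to $\R$ keeping its $C^{\gamma_{\mathrm{rem}}}$ norm, for instance by constants outside $J$, and work with the extension-restriction definition of $Z^s(J)$. The cutoff $\psi$ and the projection $P_{\le8R}$ commute with $m$, so they cause no trouble. Taking $\ell^2_k$, $\max_R$, $\max_\alpha$ and $\ell^{2,s}_N$ then preserves the constant. The only other issue is making the constant uniform in $\alpha\in[\sigma,\frac1p-\sigma]$, which holds because the $h$-integral is bounded uniformly by the margin $\gamma_{\mathrm{rem}}-\frac1p+\sigma>0$.
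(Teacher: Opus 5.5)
Your proposal is correct and follows the paper's proof essentially step for step. Both extend $m$ by constants outside $J$, use the trivial $L^\infty$ bound on the Strichartz component, and use the commutation $I_{Rk}(mu_N)=m\,I_{Rk}u_N$ together with the difference characterization of Lemma \ref{lem_triebel} and $\gamma_{\mathrm{rem}}>\frac1p-\sigma$ for the Galilean Besov component; for the $Y^s$ bound both apply the discrete product rule for $V^2$ to each interaction-representation Fourier coefficient. Your observation that the constant is uniform in $\alpha\in[\sigma,\frac1p-\sigma]$, thanks to the fixed margin $\gamma_{\mathrm{rem}}-\frac1p+\sigma>0$, is a slightly more careful version of the paper's ``$\gamma_{\mathrm{rem}}>1/p-\sigma\gg\alpha$''.
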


\begin{proof}
Extend $m$ constantly outside $J$.  The $L_t^qL_x^r$ component of $Z^s$ given in the definition \eqref{def Zs} is
immediate.  Since $m$ depends only on time, we have
\(
 I_{Rk}(mu_N)=mI_{Rk}u_N.
\)
For $0<\alpha<\gamma_{\mathrm{rem}}$, the difference characterization of Besov spaces given by Lemma \ref{lem_triebel} gives
\begin{align*}
 \|mf\|_{B^\alpha_{t,p,1}E}
 &\lesssim
 \|m\|_{L^\infty}\|f\|_{B^\alpha_{t,p,1}E}
 +[m]_{C^{\gamma_{\mathrm{rem}}}}
 \|f\|_{L_t^pE}
 \int_0^1h^{\gamma_{\mathrm{rem}}-\alpha}\,\frac{dh}{h}\\
 &\lesssim
 \|m\|_{C^{\gamma_{\mathrm{rem}}}}
 \|f\|_{B^\alpha_{t,p,1}E},
\end{align*}
where we also used the fact $\gamma_{\rm rem}>1/p-\sigma\gg \alpha$. This completes the $Z^s$ claim in \eqref{eq:Y-phase-multiplier}.

For the $Y^s$ claim, let $\{t_k\}$ be a partition.  Then
\begin{align*}
 m(t_k)u(t_k)-m(t_{k-1})u(t_{k-1})
 &=m(t_k)\big(u(t_k)-u(t_{k-1})\big)\\
 &\quad+\big(m(t_k)-m(t_{k-1})\big)u(t_{k-1}).
\end{align*}
Taking the $\ell^2_k$ norm and using
$\|u\|_{L_t^\infty L_x^2}\lesssim\|u\|_{V^2 L_x^2}$ gives
\begin{align}\label{5.15}
 \|mu\|_{V^2}
 \lesssim \|m\|_{V^2}\|u\|_{V^2 L_x^2}.
\end{align}
The claim follows by applying \eqref{5.15} to each interaction-representation Fourier coefficient in \eqref{def XsYs} and then taking multiplication
by $\langle n\rangle^s$ and summing in $\ell_n^2$.
\end{proof}

The following identity allows us to insert a time-dependent scalar in front
of a nonlinear forcing term while using the already established estimate of
Lemma \ref{final4}.

\begin{lemma}
\label{lem:phase-weighted-duhamel}
Let $J\subset I_T$, $m\in W^{1,1}(J)\cap V^2(J)$ and
$$V:=K^+(\psi\chi_J f).$$  
Then for $t\in J$ it holds
\begin{equation}\label{eq:phase-Duhamel-identity}
 K^+(\psi\chi_Jmf)
 =mV-K^+(\psi\chi_Jm'V).
\end{equation}
Consequently, for every $s\in\mathbb R$, we have
\begin{equation}\label{eq:phase-Duhamel-bound}
 \|K^+(\psi\chi_Jmf)\|_{Y^s(J)}
 \lesssim
 \big(\|m\|_{V^2(J)}+\|m'\|_{L^1(J)}\big)
 \|K^+(\psi\chi_Jf)\|_{Y^s(J)}.
\end{equation}
\end{lemma}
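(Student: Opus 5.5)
The identity is a product rule for the Duhamel operator, so I will verify it by differentiating both sides in time. Set $W:=mV-K^+(\psi\chi_Jm'V)$. Since $V=K^+(\psi\chi_Jf)$, it satisfies
\[
(i\partial_t+\Delta)V=i\psi\chi_Jf, \qquad V(0)=0,
\]
up to the sign convention implicit in $K^+$. I will check this convention directly from the formula $K^+f(t)=\int_0^te^{i(t-s)\Delta}f(s)\,ds$: it gives $\partial_tK^+f=f+i\Delta K^+f$, hence $(i\partial_t+\Delta)K^+f=if$.

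Because $m$ depends only on time and lies in $W^{1,1}(J)$, the product rule holds in the distributional sense on $J$:
\[
(i\partial_t+\Delta)(mV)=m\,(i\partial_t+\Delta)V+im'V=i\psi\chi_Jmf+im'V .
\]
Subtracting $(i\partial_t+\Delta)K^+(\psi\chi_Jm'V)=i\psi\chi_Jm'V$, and using $\psi\equiv1$ where needed (or keeping the $\psi$ factor consistently inside the operator), we get $(i\partial_t+\Delta)W=i\psi\chi_Jmf$. Both $W$ and $K^+(\psi\chi_Jmf)$ vanish at $t=0$, so uniqueness for the linear inhomogeneous Schr\"odinger equation, i.e. Duhamel's formula, gives \eqref{eq:phase-Duhamel-identity}.

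To avoid distributional subtleties, I would instead prove the identity directly. Write $e^{-it\Delta}K^+g(t)=\int_0^te^{-is\Delta}g(s)\,ds$ and integrate by parts in $s$ in $\int_0^te^{-is\Delta}m(s)g(s)\,ds$, which uses absolute continuity of $m$. The one point I expect to need care is the cutoff $\psi$: the identity should read with $\psi\chi_Jm'V$ exactly as stated, which requires $\psi=1$ on $J\subset I_T$. The paper presumably takes $\psi\equiv1$ on $I_T$, and I would note this.

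The bound then follows by the triangle inequality. For the first term, Lemma \ref{lem:time-phase-multiplier} gives $\|mV\|_{Y^s(J)}\lesssim\|m\|_{V^2(J)}\|V\|_{Y^s(J)}$. For the second term, the duality lemma (Lemma \ref{basis duhamel}) gives
\[
\|K^+(\psi\chi_Jm'V)\|_{Y^s}\lesssim\|m'V\|_{L^1_tH^s_x(J)}\le\|m'\|_{L^1(J)}\|V\|_{L^\infty_tH^s_x}\lesssim\|m'\|_{L^1(J)}\|V\|_{Y^s(J)},
\]
where the last step uses the embedding $Y^s\hookrightarrow L^\infty_tH^s_x$. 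Combining the two estimates yields \eqref{eq:phase-Duhamel-bound}.
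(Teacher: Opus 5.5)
Your proposal is correct and follows the paper's proof essentially verbatim. You check $(i\partial_t+\Delta)(mV)=i\psi\chi_J(mf+m'V)$ with $(mV)(0)=0$ to get the identity. You then bound $mV$ by the $V^2$ multiplier estimate and $K^+(\psi\chi_Jm'V)$ via $K^+:L^1_tH^s_x\to Y^s$ together with $Y^s\hookrightarrow L^\infty_tH^s_x$.

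The one cosmetic difference is that the paper first extends $m$ constantly outside $J$, so the product rule and the initial condition at $t=0$ hold on all of $\mathbb R$. That extension also covers the case $0\notin J$, which your ``vanish at $t=0$'' step glosses over; in that case use the endpoint of $J$ nearest $0$, or your integration-by-parts variant. You also rightly note that $\psi\equiv1$ on $J\subset I_T$ is needed; the paper uses this only implicitly.
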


\begin{proof}
Extend $m$ constantly outside $J$.  By the definition of the Schr\"odinger operator $K^+$, it holds
\[
 (i\partial_t+\Delta)V=i\psi\chi_Jf,
 \qquad V(0)=0.
\]
Hence using also product rule we obtain
\[
 (i\partial_t+\Delta)(mV)
 =i\psi\chi_J(mf+m'V),
 \qquad (mV)(0)=0,
\]
which proves \eqref{eq:phase-Duhamel-identity}.  Lemma
\ref{lem:time-phase-multiplier}, the estimate
$K^+:L_t^1H_x^s\to Y^s$, and $Y^s\hookrightarrow L_t^\infty H_x^s$ give
\begin{align*}
 \|mV\|_{Y^s(J)}
 &\lesssim \|m\|_{V^2(J)}\|V\|_{Y^s(J)},\\
 \|K^+(\psi\chi_Jm'V)\|_{Y^s(J)}
 &\lesssim \|m'V\|_{L_t^1H_x^s(J)}
 \lesssim \|m'\|_{L^1(J)}\|V\|_{Y^s(J)},
\end{align*}
thus proving \eqref{eq:phase-Duhamel-bound}.
\end{proof}

\subsection{Heuristics for the contraction decomposition}\label{sec 5.3}
This subsection is devoted to give an explanation for the different terms appearing in the contraction mapping which we will be considering later. Let $u$ be a solution of the original NLS \eqref{eq:nls} and let $y=z+w$ be the solution of the corresponding gauged NLS. Then
$$ u=\beta y=\beta(z+w)=:\beta z+v$$
with $v=\beta w$ and $\beta=\beta_u=\exp(-i\lambda c_a\int_0^t\mu(y)(s)\,ds)$. Hence, if $\Phi$ denotes the contraction mapping for the original NLS, then heuristically it holds
\[
\Phi(u)
=
z
-i\lambda K^+\bigl(\psi\chi_{I_T}F(u)\bigr).
\]
We now want to express $F(u)=F(\beta z+v)$ using the gauged transformed nonlinearity $G$, so that we are able to utilize the nonlinear estimates established in Section \ref{nonlin_sec}. Indeed, using $F=G+c_a\mu(\,\cdot\,)\,\cdot$ and $G(\beta f)=m_\beta G(f)$ it holds
\[
F(u)
=
m_\beta G\bigl(z+\beta^{-1}v\bigr)
+c_a\mu(u)\beta z
+c_a\mu(u)v,
\]
where $m_\beta:=|\beta|^a\beta$. Since
\(
e^{i(t-s)\Delta}z(s)=z(t)
\)
and $\psi\equiv1$ on $I_T$, the second summand's Duhamel contribution satisfies
\[
K^+\bigl(\psi\chi_{I_T}\mu(u)\beta z\bigr)(t)
=
z(t)\int_0^t\mu(u)(s)\beta(s)\,ds.
\]
It is therefore natural to combine this contribution with the random linear solution $z$ and define the new coefficient
\begin{equation*}
 \beta_+(t)
 :=1-i\lambda c_a\int_0^t\mu(u)(s)\beta(s)\,ds
\end{equation*}
The remaining terms are collected in the smoother component
\begin{equation*}
 v_+
 :=-i\lambda K^+\left(\psi\chi_{I_T}
 \left[m_\beta G(z+\beta^{-1}v)+c_a\mu(u)v\right]\right).
\end{equation*}
By construction, it then holds
\begin{align}\label{intertwining}
\Phi(u)=\beta_+z^\omega+v_+.
\end{align}
Before reaching a fixed point, $\beta$ is only a trial coefficient.
At a fixed point, uniqueness of the decomposition (see Lemma \ref{lem:phase-metric-complete} below) into a multiple of
$z^\omega$ and a $Y^{s^*}$ remainder gives
\[
\beta_+=\beta,
\qquad
v_+=v.
\]
Consequently,
\[
\beta'(t)
=
-i\lambda c_a\mu(u)(t)\beta(t),
\qquad
\beta(0)=1,
\]
and hence
\[
\beta(t)
=
\exp\left(
-i\lambda c_a\int_0^t\mu(u)(s)\,ds
\right).
\]
Therefore the trial coefficient becomes precisely the canonical
inverse-gauge phase at the fixed point.

\subsection{Construction of the metric space}
We now give the precise construction of the underlying metric space where we shall apply the Banach fixed point theorem. Let ${B}\geq1$ be fixed.  Define
\begin{equation*}
 \mathcal P_T:=
 \left\{\beta\in W^{1,p_{\mathrm{rem}}}(I_T):
 \begin{array}{l}
 \beta(0)=1,\\[1mm]
 \|\beta-1\|_{L_t^\infty(I_T)}\leq\frac12,\\[1mm]
 \|\beta'\|_{L_t^{p_{\mathrm{rem}}}(I_T)}\leq {B}
 \end{array}
 \right\}.
\end{equation*}
By Lemma \ref{lem:AC-V2}, every $\beta\in\mathcal P_T$ satisfies
\begin{equation}\label{eq:trial-phase-uniform-multipliers}
 \|\beta\|_{C^{\gamma_{\mathrm{rem}}}(I_T)}
 +\|\beta\|_{V^2(I_T)}
 +\|\beta^{-1}\|_{C^{\gamma_{\mathrm{rem}}}(I_T)}
 +\|\beta^{-1}\|_{V^2(I_T)}
 \lesssim_{B}1.
\end{equation}
Indeed, $1/2\leq|\beta|\leq3/2$, and composition with the smooth map
$z\mapsto z^{-1}$ on this annulus preserves all the displayed bounds.  The
same observation applies to $ m_\beta:=|\beta|^a\beta$, thus
\begin{equation}\label{eq:m-beta-uniform}
 \|m_\beta\|_{C^{\gamma_{\mathrm{rem}}}}
 +\|m_\beta\|_{V^2}
 +\|m_\beta'\|_{L^{p_{\mathrm{rem}}}}
 \lesssim_{B}1.
\end{equation}

For $R>0$, set
\begin{equation*}
 \mathfrak X_{T,R}:=
 \left\{(\beta,v):
 \beta\in\mathcal P_T,\quad
 v\in Y^{s^*}(I_T),\quad
 v(0)=0,\quad
 \|v\|_{Y^{s^*}(I_T)}\leq R
 \right\}
\end{equation*}
and define the reconstruction map
\begin{equation*}
 \mathcal J(\beta,v):=\beta z+v
\end{equation*}
and the weak pullback distance
\begin{equation}\label{eq:phase-adapted-metric}
 d\big((\beta_1,v_1),(\beta_2,v_2)\big)
 :=\|\mathcal J(\beta_1,v_1)-\mathcal J(\beta_2,v_2)\|_{Y^0(I_T)}.
\end{equation}
We verify that \eqref{eq:phase-adapted-metric} is a complete metric.

\begin{lemma}
\label{lem:phase-metric-complete}
The space $\mathfrak X_{T,R}$ with the underlying metric $d$ is complete.
\end{lemma}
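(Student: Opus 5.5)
The plan is to prove, in order, (i) that $d$ is a genuine metric on $\mathfrak X_{T,R}$, i.e.\ that $\mathcal J$ is injective there, and (ii) that every $d$-Cauchy sequence converges in $d$ to a point of $\mathfrak X_{T,R}$. Symmetry and the triangle inequality are inherited from the $Y^0$ norm, so everything rests on the following \emph{uniqueness of decomposition}. If $\beta z+v=0$ with $\beta\in W^{1,p_{\rm rem}}(I_T)$, $\beta(0)=0$, and $v\in Y^{s^*}(I_T)$, then $\beta\equiv0$ and $v\equiv0$. This applies to differences $\beta=\beta_1-\beta_2$, $v=v_1-v_2$.

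\textbf{Step 1: uniqueness of decomposition.} Suppose $\beta(t_0)\neq0$ for some $t_0\in I_T$. Then $z(t_0)=-\beta(t_0)^{-1}v(t_0)\in H^{s^*}_x$, since $Y^{s^*}\hookrightarrow L^\infty_tH^{s^*}_x$ and $Y^{s^*}$-functions are defined pointwise in time via the $V^2$ structure. On the other hand, $z(t_0)=e^{it_0\Delta}\phi^\omega$ and $e^{it_0\Delta}$ is unitary on every $H^s$. Lemma~\ref{lem:endpoint-roughness} gives $\phi^\omega\notin H^{s_c}\supset H^{s^*}$, so this is a contradiction. Hence $\beta\equiv0$, and then $v=-\beta z=0$. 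The almost sure event of Lemma~\ref{lem:endpoint-roughness} is intersected with the good event fixed at the start of the section.

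\textbf{Step 2: completeness.} Take a $d$-Cauchy sequence $(\beta_n,v_n)$. The constraints $\|\beta_n'\|_{L^{p_{\rm rem}}}\le B$ and $|\beta_n-1|\le\frac12$ make $(\beta_n)$ bounded in $W^{1,p_{\rm rem}}$, hence equicontinuous by \eqref{eq:Morrey-scalar}. Arzel\`a--Ascoli therefore gives a uniformly convergent subsequence $\beta_n\to\beta$. Weak-$*$ compactness of $L^{p_{\rm rem}}$ (with $p_{\rm rem}>1$) together with lower semicontinuity of norms shows $\beta\in\mathcal P_T$.

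Next, $(v_n)$ is bounded in $Y^{s^*}$ by $R$. Passing to a further subsequence, $v_n\to v$ in a weak sense: componentwise on each $P_{C_z}$ piece, pointwise in time. Lower semicontinuity of the $V^2$ variation under pointwise convergence gives $\|v\|_{Y^{s^*}}\le R$, and $v(0)=\lim v_n(0)=0$.

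Meanwhile $\mathcal J(\beta_n,v_n)$ is Cauchy in $Y^0$, so it converges in $Y^0$ to some $u$. Since $\beta_nz\to\beta z$ and $v_n\to v$ in the weak/pointwise sense, identifying limits gives $u=\beta z+v=\mathcal J(\beta,v)$. Hence $d((\beta_n,v_n),(\beta,v))\to0$ along the subsequence. A Cauchy sequence with a convergent subsequence converges, so the whole sequence converges.

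\textbf{Main obstacle.} I expect the hard part to be the identification of the limit in Step 2. Without a rate, $\beta_nz$ need not converge to $\beta z$ in $Y^0$, because $z$ is only in $H^{-\delta+s_c-}$ and multiplication by a uniformly convergent scalar is not obviously continuous in $V^2$. I will avoid this by never claiming norm convergence of the components separately. Instead, limits are identified in the weak topology $L^\infty_tH^{-M}_x$, with $M$ large, where $\beta_nz\to\beta z$ clearly holds by uniform convergence of $\beta_n$. Step 1 is then not needed for this identification, only for separating points.
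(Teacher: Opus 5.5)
Your proof is correct. Step 1 and the compactness of $(\beta_n)$ coincide with the paper's argument. The difference is in how you pass to the limit in the $v$-component.

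\textbf{Paper's route.} The paper does exactly what you flag as the ``main obstacle.'' Applying \eqref{eq:AC-V2-interpolation} to $\beta_n-\beta$ gives $[\beta_n-\beta]_{V^2(I_T)}^2\le 2\|\beta_n-\beta\|_{L^\infty}\|\beta_n'-\beta'\|_{L^1}\to0$. The second factor is uniformly bounded by the constraint $\|\beta_n'\|_{L^{p_{\rm rem}}}\le B$ built into $\mathcal P_T$. Lemma \ref{lem:time-phase-multiplier} then yields $(\beta_n-\beta)z\to0$ in $Y^0(I_T)$. Hence $v_n=u_n-\beta_nz\to v:=u-\beta z$ strongly in $Y^0(I_T)$, and Fatou gives $\|v\|_{Y^{s^*}}\le R$. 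Uniform convergence alone would indeed not suffice, but combined with the uniform derivative bound it does give $V^2$-convergence. So your stated obstacle is not a real one.

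\textbf{Your route.} You sidestep this entirely. Because $d$ only sees the reconstruction $\mathcal J$, it suffices to identify $u=\beta z+v$ through pointwise-in-time (or $L^\infty_tH^{-M}_x$) limits and to invoke lower semicontinuity of the $V^2$ variation. After that, $d((\beta_n,v_n),(\beta,v))=\|u_n-u\|_{Y^0}\to0$ along the whole sequence, so the final subsequence remark is redundant.

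\textbf{Comparison.} Your argument is softer: it needs neither the multiplier lemma nor \eqref{eq:AC-V2-interpolation}. The paper's route gives extra information that completeness does not require, namely that the components themselves converge in norm ($\beta_n\to\beta$ in $V^2$, $v_n\to v$ in $Y^0$).

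You also do not need the Helly-type extraction to produce $v$. Once $\beta_n\to\beta$ uniformly, $v_n=u_n-\beta_nz$ already converges pointwise in time to $u-\beta z$, so you can simply set $v:=u-\beta z$.
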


\begin{proof}
To show that $(\mathfrak X_{T,R}, d)$ is metric space, it suffices to show that $d((\beta_1,v_1),(\beta_2,v_2))=0$ implies $(\beta_1,v_1)=(\beta_2,v_2)$. Indeed, the condition $d((\beta_1,v_1),(\beta_2,v_2))=0$ and the definition of the mapping $\mathcal{J}$ imply that
\begin{equation*}
 (\beta_1(t)-\beta_2(t))z(t)=v_2(t)-v_1(t)
\end{equation*}
for every $t\in I_T$.  The right-hand side belongs to $H^{s^*}\hookrightarrow H^{s_c}$, whereas
$z(t)\notin H^{s_c}$ by Lemma \ref{lem:endpoint-roughness}.  Thus $\beta_1(t)=\beta_2(t)$ for every $t$, and then
$v_1=v_2$.

It remains to show the completeness of $(\mathfrak X_{T,R}, d)$. Let $X_n=(\beta_n,v_n)$ be a $d$-Cauchy sequence and set
$u_n=\mathcal J(X_n)$.  Since $Y^0(I_T)$ is Banach, there exists
$u\in Y^0(I_T)$ such that
\begin{equation}\label{eq:un-Y0-limit}
 u_n\longrightarrow u
 \qquad\text{in }Y^0(I_T).
\end{equation}
The sequence $\{\beta_n\}$ is bounded in $W^{1,p_{\mathrm{rem}}}(I_T)$.  By 
Morrey's compact embedding 
$$W^{1,p_{\mathrm{rem}}}(I_T)\hookrightarrow\hookrightarrow L^\infty(I_T)$$ 
and weakly lower semi-continuity of a norm, after passing to a subsequence, we have
\begin{equation*}
 \beta_n\longrightarrow\beta
 \quad\text{uniformly on }I_T,
 \qquad
 \beta_n\rightharpoonup\beta
 \quad\text{in }W^{1,p_{\mathrm{rem}}}(I_T)
\end{equation*}
and $\|\beta'\|_{L_t^{p_{\mathrm{rem}}}(I_T)}\leq {B}$ for some $\beta\in W^{1,p_{\mathrm{rem}}}(I_T)$, hence $\beta\in\mathcal P_T$.  Applying
\eqref{eq:AC-V2-interpolation} to $\beta_n-\beta$ gives
\begin{equation*}
 [\beta_n-\beta]_{V^2(I_T)}^2
 \leq
 2\|\beta_n-\beta\|_{L^\infty(I_T)}
 \|\beta_n'-\beta'\|_{L^1(I_T)}
 \longrightarrow0,
\end{equation*}
because the second factor is uniformly bounded by $C{B}$.  Since
$z\in Y^0(I_T)$, Lemma \ref{lem:time-phase-multiplier} yields
\begin{equation*}
 (\beta_n-\beta)z\longrightarrow0
 \qquad\text{in }Y^0(I_T).
\end{equation*}
It follows from \eqref{eq:un-Y0-limit} that
\begin{equation*}
 v_n=u_n-\beta_nz
 \longrightarrow v:=u-\beta z
 \qquad\text{in }Y^0(I_T).
\end{equation*}
For each spatial frequency $k$, this implies convergence of the corresponding
interaction-representation coefficient in $V^2$.  Fatou's lemma therefore
gives
\begin{align*}
 \|v\|_{Y^{s^*}(I_T)}^2
 &\leq\liminf_{n\to\infty}
 \|v_n\|_{Y^{s^*}(I_T)}^2
 \leq R^2.
\end{align*}
Moreover, the embedding $Y^0(I_T)\hookrightarrow L_t^\infty L_x^2(I_T)$ implies
$v_n(0)\to v(0)$ in $L_x^2$.  Since $v_n(0)=0$, we have $v(0)=0$.  Hence
$(\beta,v)\in\mathfrak X_{T,R}$.  Finally,
\[
 d(X_n,(\beta,v))
 =\|u_n-u\|_{Y^0(I_T)}\longrightarrow0,
\]
implying that every $d$-Cauchy sequence converges in $\mathfrak X_{T,R}$. This completes the proof.
\end{proof}

\subsection{Construction of the contraction mapping}
For $X=(\beta,v)\in\mathfrak X_{T,R}$, put
\begin{equation}\label{eq:state-u-y-w}
 u=\mathcal J(X)=\beta z+v,
 \qquad
 w=\beta^{-1}v,
 \qquad
 y=z+w,
\end{equation}
so that $u=\beta y$.  By \eqref{eq:trial-phase-uniform-multipliers},
\begin{equation*}
 \|w\|_{Y^{s^*}(I_T)}+\|w\|_{Z^{s^*}(I_T)}
 \lesssim R.
\end{equation*}
As explained in the preceding subsection, define 
\begin{equation}\label{eq:beta-plus-def}
 \beta_+(t)
 :=1-i\lambda c_a\int_0^t\mu(u)(s)\beta(s)\,ds
\end{equation}
and
\begin{equation*}
 v_+
 :=-i\lambda K^+\left(\psi\chi_{I_T}
 \left[m_\beta G(z+\beta^{-1}v)+c_a\mu(u)v\right]\right).
\end{equation*}
We then set our contraction mapping by
\begin{equation*}
 \mathcal T(\beta,v):=(\beta_+,v_+).
\end{equation*}

In the following we first show that $\mathcal T$ is a self-map on $\mathfrak X_{T,R}$. Fix a sufficiently large structural constant $C_0=C_0(B)\geq1$ and define
\begin{equation*}
 \delta_T
 :=C_0T^\vartheta
 (1+\|\phi\|_{H_x^{2s_0+s_1}}).
\end{equation*}
On a good event, the probabilistic estimates given previously imply
\begin{equation}\label{eq:random-small-coeff-global}
 \mathcal R_J:=
 \|\psi\chi_Jz\|_{L_t^rB_{x,r,2}^{2s_0+s_1}}
 +\|\psi\chi_Jz\|_{B_{t,r,2}^{s_0+s_1/2}L_x^r}\lesssim\delta_T.
\end{equation}

\begin{lemma}
\label{lem:phase-self-map}
There exists $0<T\ll 1$ such that with $R=2\delta_T$ we have $\mathcal T(\mathfrak X_{T,R})\subset\mathfrak X_{T,R}$.
\end{lemma}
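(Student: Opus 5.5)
We check separately the two components of $\mathcal T(\beta,v)=(\beta_+,v_+)$. Throughout we use $u=\beta y$ and $w=\beta^{-1}v$, and we write $y=z+w$ with $\|w\|_{Y^{s^*}}+\|w\|_{Z^{s^*}}\lesssim_B R=2\delta_T$.

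\emph{The phase component.} By definition $\beta_+(0)=1$ and $\beta_+'=-i\lambda c_a\mu(u)\beta$. Since $|\beta|\le 3/2$, Hölder on the torus gives $|\mu(u)(t)|\lesssim\|u(t)\|_{L_x^{r_{\rm rem}}}^a$, exactly as in the proof of Lemma \ref{lem:canonical-phase-regularity}. Hence
\[
\|\beta_+'\|_{L^{p_{\rm rem}}(I_T)}\lesssim \|u\|_{L_t^{q_{\rm rem}}L_x^{r_{\rm rem}}(I_T)}^a\lesssim \big(\|z\|_{L_t^{q_{\rm rem}}L_x^{r_{\rm rem}}}+\|w\|_{L_t^{q_{\rm rem}}L_x^{r_{\rm rem}}}\big)^a .
\]
We bound $z$ by Lemma \ref{lem:prob-Besov} together with Remark \ref{can lebesgue}, which gives $\lesssim T^\vartheta\|\phi\|_{H^{2s_0+s_1}}$. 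We bound $w$ by \eqref{3.14} and Sobolev embedding, exactly as in \eqref{eq:scalar-yN-qg}, which gives $\lesssim R$. Therefore $\|\beta_+'\|_{L^{p_{\rm rem}}}\lesssim \delta_T^a\le B$ once $T$ is small. Moreover
\[
\|\beta_+-1\|_{L^\infty}\le |I_T|^{\gamma_{\rm rem}}\|\beta_+'\|_{L^{p_{\rm rem}}}\lesssim T^{\gamma_{\rm rem}}\delta_T^a\le \tfrac12 .
\]
Thus $\beta_+\in\mathcal P_T$.

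\emph{The smooth component.} Clearly $v_+(0)=0$, so it remains to bound $\|v_+\|_{Y^{s^*}}$. We split $v_+$ into two terms.

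The first term is $K^+(\psi\chi_{I_T}m_\beta G(y))$. We apply Lemma \ref{lem:phase-weighted-duhamel} with $m=m_\beta$. By \eqref{eq:m-beta-uniform}, $\|m_\beta\|_{V^2}+\|m_\beta'\|_{L^1}\lesssim_B1$, so it suffices to bound $\|K^+(\psi\chi_{I_T}G(z+w))\|_{Y^{s^*}}$. Lemma \ref{final4} bounds this by
\[
(\delta_T+R)^a(T^\vartheta+R)+T^\vartheta(\delta_T+R)^{a-\alpha_a}(1+R)^{1+\alpha_a}\lesssim \delta_T^{a}\,\delta_T+T^\vartheta\delta_T^{a-\alpha_a}.
\]

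The second term is $c_aK^+(\psi\chi_{I_T}\mu(u)v)$. We bound it by the energy estimate $L^1_tH^{s^*}_x\to Y^{s^*}$:
\[
\|\mu(u)\|_{L^1(I_T)}\|v\|_{L^\infty H^{s^*}}\lesssim T^{1-1/p_{\rm rem}}\delta_T^aR .
\]

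\emph{Main obstacle.} The delicate point is that the scalar remainder contribution $T^\vartheta\delta_T^{a-\alpha_a}$ in Lemma \ref{final4} is not visibly bounded by $\delta_T$, since the factor $(1+\|w\|)^{1+\alpha_a}$ carries no smallness. I would handle this by choosing $C_0$ large, so that $\delta_T=C_0T^\vartheta(1+\|\phi\|)$ absorbs the bare $T^\vartheta$ factor. Writing $\lesssim$ with constants depending only on $B$ and on the structural constants, the remainder term is then $\le C T^\vartheta(\delta_T+R)^{a-\alpha_a}(1+R)^{1+\alpha_a}\le \tfrac12 C_0T^\vartheta\le \delta_T$ when $a\ge\alpha_a$ and $T$ is small. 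The $G_1$ and $\mu(u)v$ terms are superlinear in $\delta_T$: they contribute $\delta_T^{1+a}$ and $T^{\gamma_{\rm rem}}\delta_T^{1+a}$, which are $\ll \delta_T$ because $\delta_T\to0$ as $T\to0$. Altogether this gives $\|v_+\|_{Y^{s^*}}\le 2\delta_T=R$.

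The remaining bookkeeping is to track how $C_0$ depends on $B$. One must also make sure that the good event is the intersection of the finitely many events used above, which preserves the probability bound.
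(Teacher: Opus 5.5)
Your proposal is correct and follows essentially the paper's own argument. It uses the same three steps: the phase bound $\|\beta_+'\|_{L^{p_{\rm rem}}}\lesssim\|u\|_{L_t^{q_{\rm rem}}L_x^{r_{\rm rem}}}^a\lesssim\delta_T^a$; the splitting of $v_+$ into the $m_\beta G(y)$ piece, handled by Lemma~\ref{lem:phase-weighted-duhamel} and Lemma~\ref{final4}, and the $\mu(u)v$ piece, handled by the energy estimate with bound $T^{\gamma_{\rm rem}}\delta_T^{a+1}$; and the choice of $C_0=C_0(B)$ large so that the non-superlinear term $T^\vartheta(\delta_T+R)^{a-\alpha_a}(1+R)^{1+\alpha_a}$ is absorbed into $\delta_T$. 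The only cosmetic difference is that you treat $a\le 1$ and $a>1$ together via $(\delta_T+R)^{a-\alpha_a}\le 1$, whereas the paper handles the two cases separately.
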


\begin{proof}
Fix $X=(\beta,v)\in\mathfrak X_{T,R}$ and use the notation
\eqref{eq:state-u-y-w}.  The estimates used in
\eqref{eq:scalar-yN-qg}, together with
Lemma \ref{lem:time-phase-multiplier}, give
\begin{equation}\label{eq:u-qrem-bound}
 \|u\|_{L_t^{q_{\mathrm{rem}}}L_x^{r_{\mathrm{rem}}}(I_T)}
 \lesssim
(T^{\vartheta}\|\phi\|_{L_x^2}+R)\lesssim \delta_T.
\end{equation}
From \eqref{eq:beta-plus-def} we have
\(
 \beta_+'=-i\lambda c_a\mu(u)\beta.
\)
Since $r_{\mathrm{rem}}\geq a$, spatial H\"older first gives
\[|\mu(u)(t)|\sim \|u(t)\|_{L_x^a}^a\lesssim \|u(t)\|_{L_x^{r_{\rm rem}}}^a. \]
Combining $\|\beta\|_{L_t^\infty}\leq 3/2$ it follows $|\beta_+'(t)|\lesssim \|u(t)\|_{L_x^{r_{\rm rem}}}^a$. Consequently,
\eqref{eq:u-qrem-bound} yields
\begin{align*} 
 \|\beta_+'\|_{L_t^{p_{\mathrm{rem}}}(I_T)}\lesssim \|u\|_{L_t^{q_{\mathrm{rem}}}L_x^{r_{\rm rem}}(I_T)}^a\lesssim \delta_T^a.
\end{align*}
By the fundamental theorem of calculus, $\beta_+(t)-1=
\int_0^t
\beta_+'(s)\,ds$. Hence
\begin{align*}
\begin{aligned}
\|\beta_+-1\|_{L_t^\infty(I_T)}
\lesssim
\int_{J_t}
|\beta_+'(s)|\,ds
\lesssim
T^{\gamma_{\mathrm{rem}}}
\|\beta_+'\|_{L_t^{p_{\mathrm{rem}}}(I_T)}\lesssim T^{\gamma_{\mathrm{rem}}}\delta_T^a.
\end{aligned}
\end{align*}
After decreasing $T$, these estimates imply
$\beta_+\in\mathcal P_T$.

We now estimate $v_+$.  By \eqref{eq:m-beta-uniform}, Lemma
\ref{lem:phase-weighted-duhamel}, and Lemma \ref{final4},
\begin{align}
\begin{aligned}
 &{\left\|K^+\left(\psi\chi_{I_T}
 m_\beta G(z+\beta^{-1}v)\right)\right\|_{Y^{s^*}(I_T)}}
 \\
 &\quad\lesssim_B
 \left(\delta_T+CR\right)^a\left(\delta_T+CR\right)
 \\
 &\qquad
 +T^\vartheta\left(\delta_T+CR\right)^{a-\alpha_a}
 \left(1+CR\right)^{1+\alpha_a}.
\end{aligned}
\label{eq:weighted-G-self-map}
\end{align}
Let $C_B$ denote the implicit constant in \eqref{eq:weighted-G-self-map}. The first term on its right-hand side is $O_B(\delta_T^{a+1})$. If $0<a\leq1$, then $\alpha_a=a$, and
\[
 C_BT^\vartheta(1+CR)^{1+a}
 =\frac{C_B}{C_0}
 \frac{(1+CR)^{1+a}}{1+\|\phi\|_{H_x^{2s_0+s_1}}}
 \delta_T.
\]
Choose $C_0$ sufficiently large and then $T$ sufficiently small so that this term is at most $3\delta_T/2$; the first term is then at most $\delta_T/4$. If $a>1$, then $\alpha_a=1$, and the second term is $O_B(\delta_T^a)$.  For the remaining scalar term, Lemma \ref{basis duhamel} gives
\begin{equation}
\begin{aligned}
 {\|K^+(\psi\chi_{I_T}\mu(u)v)\|_{Y^{s^*}(I_T)}}
 &\lesssim
 {\|\mu(u)v\|_{L_t^1H_x^{s^*}(I_T)}}
\lesssim
 \|\mu(u)\|_{L_t^1(I_T)}
 {\|v\|_{L_t^\infty H_x^{s^*}(I_T)}}
 \\
 &\lesssim
 T^{\gamma_{\mathrm{rem}}}\delta_T^aR\lesssim \delta_T^{a+1}.
\end{aligned}
\label{eq:mu-v-self-map}
\end{equation}
The right-hand side of \eqref{eq:mu-v-self-map} is $O(\delta_T^{a+1})$. Hence choosing $T\ll 1$ yields ${\|v_+\|_{Y^{s^*}}}\leq 2\delta_T=R$. Since also $v_+(0)=0$, the proof is complete.
\end{proof}

The next lemma gives the smallness needed for the mapping $\mathcal T$ to be a contraction.

\begin{lemma}
\label{lem:phased-difference-estimate}
Let $J\subset I_T$ and
\[
 U_j=\rho_j(t)z+v_j,
 \qquad
 \rho_j\in C^{\gamma_{\mathrm{rem}}}(J),
 \qquad
 v_j\in Z^{s_c}(J),
 \qquad j=1,2.
\]
Set
\begin{equation*}
 \mathcal A_J
 :=\mathcal R_J(\omega)
 \max_{j=1,2}\|\rho_j\|_{C^{\gamma_{\mathrm{rem}}}(J)}
 +\sum_{j=1}^2\|v_j\|_{Z^{s_c}(J)}.
\end{equation*}
If $U_1-U_2\in Z^0(J)$, then
\begin{equation}\label{eq:phased-difference-estimate}
 \|\psi\chi_J(F(U_1)-F(U_2))\|_{(Z^0)'}
 \lesssim
 \mathcal A_J^a\|U_1-U_2\|_{Z^0(J)}.
\end{equation}
\end{lemma}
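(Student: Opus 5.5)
The plan is to reduce \eqref{eq:phased-difference-estimate} to a zero-order (difference) version of the deterministic Kwak--Kwon bilinear estimate, with the random factor handled only through coefficient norms. Write $D=U_1-U_2$ and use the exact first-order expansion of $F$ along the segment:
\begin{equation*}
 F(U_1)-F(U_2)=D\,\mathcal A(U_1,U_2)+\overline D\,\mathcal B(U_1,U_2),
\end{equation*}
where
\begin{equation*}
 \mathcal A=c_a\int_0^1|U_\vartheta|^a\,d\vartheta,\qquad
 \mathcal B=\frac a2\int_0^1|U_\vartheta|^{a-2}U_\vartheta^2\,d\vartheta,\qquad
 U_\vartheta=U_2+\vartheta D .
\end{equation*}
The pairing of $\psi\chi_J(D\mathcal A+\overline D\mathcal B)$ with a test function $g\in Z^0$ is a trilinear integral $\int \psi^2\chi_J D\,\bar g\,\mathcal A$ (respectively with $\overline D$). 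I will estimate it exactly as Kwak and Kwon do at regularity $0$: a paraproduct split $\pi_\le/\pi_>$ between $D$ and $g$, Lemma \ref{kwak_lem_lhh} for the comparable or low part, and the Galilean bilinear estimate (\cite[Lem. 3.9]{KwakKwon}, deterministic form) for the high--high interaction against $P_{\le N/32}\mathcal A$. This produces
\begin{equation*}
 |\langle \psi\chi_J(F(U_1)-F(U_2)),g\rangle|\lesssim \|D\|_{Z^0}\|g\|_{Z^0}
 \Bigl(\|\mathcal A\|_{B^{\frac1{r_0}-\frac1{q_0}}_{t,r_0,r_0}B^\theta_{x,r_0,\infty}}+\|\mathcal B\|_{B^{\frac1{r_0}-\frac1{q_0}}_{t,r_0,r_0}B^\theta_{x,r_0,\infty}}\Bigr).
\end{equation*}
No frequency gain is needed here, since we work at zero regularity and both $D$ and $g$ lie in $Z^0$. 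The random bilinear estimates, the mean-zero condition, and the opposite-phase shear are therefore not required; the $\overline D$ term is handled by the same deterministic estimate after conjugation, as in \cite{KwakKwon}.

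The second step is to bound the coefficient norms by $\mathcal A_J^a$. By Minkowski in $\vartheta$, it suffices to bound $\|\psi\chi_J|U_\vartheta|^{a-m}U_\vartheta^m\|$ in $B^{\frac1{r_0}-\frac1{q_0}}_{t,r_0,r_0}B^\theta_{x,r_0,r_0}$ uniformly in $\vartheta$. The first inequality of Lemma \ref{lem:mixed-coeff} reduces this to the $a$-th power of
\begin{equation*}
 \|\psi\chi_J U_\vartheta\|_{L_t^rB^{2s_0+s_1}_{x,r,2}\cap B^{s_0+s_1/2}_{t,r,2}L^r_x}.
\end{equation*}
Here $U_\vartheta=\rho_\vartheta z+v_\vartheta$ with $\rho_\vartheta=(1-\vartheta)\rho_2+\vartheta\rho_1$ and $v_\vartheta$ the analogous combination. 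The $v_\vartheta$ part is bounded by $\sum_j\|v_j\|_{Z^{s_c}}$ via the second inequality of Lemma \ref{lem:mixed-coeff}. For the $\rho_\vartheta z$ part, the spatial Besov norm only sees $\|\rho_\vartheta\|_{L^\infty}$. The temporal norm $B^{s_0+s_1/2}_{t,r,2}$ is a multiplier estimate: by the difference characterization (Lemma \ref{lem_triebel}), exactly as in the proof of Lemma \ref{lem:time-phase-multiplier}, multiplication by a $C^{\gamma_{\rm rem}}$ function is bounded there because $\gamma_{\rm rem}>s_0+s_1/2$ by \eqref{eq:qg-choice}. This yields $\lesssim\|\rho_\vartheta\|_{C^{\gamma_{\rm rem}}}\mathcal R_J$ and hence the bound $\mathcal A_J$.

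The third step handles the passage from $\chi_J$-truncated objects to $Z^0(J)$ norms, using extensions and the uniform cutoff bound in Lemma \ref{lem:3.6}. The main obstacle I expect is the first step when $a<1$. There $\mathcal B$ involves $|U_\vartheta|^{a-2}U_\vartheta^2$, which is only Hölder continuous, and $U_\vartheta$ may vanish. One must check that Lemma \ref{lem:mixed-coeff} still applies to each $U_\vartheta$, not only to $U_1$ and $U_2$. The difference structure is carried entirely by the factor $D$, so no Hölder increment of the coefficient is taken. This is precisely the point where keeping the Bony coefficient whole, as in \cite{KwakKwon}, replaces a Lipschitz argument, and I would follow their Proposition~3.11 argument verbatim with $u$ replaced by $U_\vartheta$.
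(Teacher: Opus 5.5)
\textbf{There is a genuine gap in your first step, in the $\overline D$ term.}

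Your same-phase part is a legitimate alternative route. For $\int\psi^2\chi_J D\,\overline g\,\mathcal A$, you can combine three ingredients:
\begin{itemize}
\item the deterministic zero-regularity Kwak--Kwon estimate, i.e.\ the paraproduct split with Lemma \ref{kwak_lem_lhh} for $\pi_\le$ and the Galilean estimate for $\pi_>$ (there $k$ is fixed and $m$ is counted, so no mean-zero condition is needed);
\item the $a$-power display of Lemma \ref{lem:mixed-coeff};
\item your base-norm bound.
\end{itemize}
Together these do give $\lesssim\mathcal A_J^a\|D\|_{Z^0}\|g\|_{Z^0}$.

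The $\overline D$ term is where the argument fails. Conjugating $\int\psi^2\chi_J\overline D\,\overline g\,\mathcal B$ produces $\int\psi^2\chi_J D\,g\,\overline{\mathcal B}$. Neither $Z^0$ factor is conjugated, so this is not of the form $u\overline v A$ to which the Galilean estimate applies. In the high--high--low region ($\pi_>$) one has $n'\approx -n$. The time phases $e^{-it|n|^2}e^{-it|n'|^2}$ then add up to frequency $\sim N^2$ instead of cancelling. Consequently, applying $I_{2Rk}$ to both factors does not reduce the pairing to a shear $J_{2Rk}$ of the coefficient. This is exactly the opposite-phase interaction for which the paper says the first shear fails, and for which it introduces $I^{\mathrm{op}}_\xi$ and $J^{\mathrm{op}}_\xi$ in Section \ref{sec:opposite}. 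Neither the paper nor your argument supplies a deterministic $Z^0\times Z^0$ analogue of that estimate. So the claim that the $\overline D$ term follows from ``the same estimate after conjugation'' is not justified.

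\textbf{How the paper avoids this.} The paper never uses the trilinear form here. It factors $|U_\tau|^a=(|U_\tau|^{a/2})^2$ and $|U_\tau|^{a-2}U_\tau^2=(|U_\tau|^{a/2-1}U_\tau)^2$ and applies Cauchy--Schwarz. Each factor is then bounded by the bilinear $L^2$ estimate \eqref{eq:deterministic-Z0-bilinear}, with the half-coefficients placed in $B^\zeta_{t,\widehat r,\widehat r}B^\eta_{x,\widehat r,\widehat r}$ and controlled by the $a/2$-power display of Lemma \ref{lem:mixed-coeff}. That $L^2$ estimate does not see conjugation, since $|\psi A\overline u|=|\psi\overline A u|$ and $\|\overline A\|=\|A\|$. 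Indeed, $\|\psi Au\|_{L^2}^2=\int\psi^2u\overline u|A|^2$ is itself a same-phase pairing.

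Using this factorization at least for the $\overline D$ term repairs your proof. Your second step carries over unchanged and matches the paper's \eqref{eq:phased-base-norm}: Minkowski in the interpolation parameter, and the base-norm bound for $\rho_\tau z+v_\tau$ via the time-multiplier estimate with $\gamma_{\mathrm{rem}}>s_0+s_1/2$.
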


\begin{proof}
Let $h=U_1-U_2$ and $U_\tau=U_2+\tau h$.  Then
\[
 U_\tau=\rho_\tau z+v_\tau,
 \qquad
 \rho_\tau=(1-\tau)\rho_2+\tau\rho_1,
 \qquad
 v_\tau=(1-\tau)v_2+\tau v_1.
\]
Lemma \ref{lem:time-phase-multiplier}, \eqref{eq:random-small-coeff-global},
and the deterministic part of the
$Z^{s_c}$ embedding give, uniformly in $\tau\in[0,1]$,
\begin{align}\label{eq:phased-base-norm}
 &\|\psi\chi_JU_\tau\|_{L_t^rB_{x,r,2}^{2s_0+s_1}}
 +\|\psi\chi_JU_\tau\|_{B_{t,r,2}^{s_0+s_1/2}L_x^r}
 \lesssim\mathcal A_J.
\end{align}
Lemma
\ref{lem:mixed-coeff}, with \eqref{eq:phased-base-norm} as input, yields
\begin{equation}\label{eq:phased-half-coeff}
\begin{aligned}
 &\|\psi\chi_J|U_\tau|^{a/2}\|_{
 B^\zeta_{t,\widehat r,\widehat r}
 B^\eta_{x,\widehat r,\widehat r}}\\
 &\quad+
 \|\psi\chi_J|U_\tau|^{a/2-1}U_\tau\|_{
 B^\zeta_{t,\widehat r,\widehat r}
 B^\eta_{x,\widehat r,\widehat r}}
 \lesssim\mathcal A_J^{a/2}.
\end{aligned}
\end{equation}
We also use the deterministic consequence of
\cite[Lem. 3.10]{KwakKwon}
\begin{equation}\label{eq:deterministic-Z0-bilinear}
 \|\psi Au^*\|_{L_{t,x}^2}
 \lesssim
 \|u\|_{Z^0}
 \|A\|_{B^\zeta_{t,\widehat r,\widehat r}
 B^\eta_{x,\widehat r,\widehat r}},
 \qquad u^*\in\{u,\overline u\}.
\end{equation}
Let $g\in Z^0(J)$ with $\|g\|_{Z^0(J)}=1$.  The Wirtinger formula gives
\[
 F(U_1)-F(U_2)
 =h\int_0^1c_a|U_\tau|^a\,d\tau
 +\overline h\int_0^1\frac a2
 |U_\tau|^{a-2}U_\tau^2\,d\tau.
\]
Using \eqref{eq:deterministic-Z0-bilinear} and
\eqref{eq:phased-half-coeff},
\begin{align*}
 \left|\int\psi^2\chi_Jh|U_\tau|^a\overline g\,dxdt\right|
 &\leq
 \|\psi\chi_Jh|U_\tau|^{a/2}\|_{L^2}
 \|\psi\chi_Jg|U_\tau|^{a/2}\|_{L^2}\\
 &\lesssim\mathcal A_J^a\|h\|_{Z^0(J)},\\
 \left|\int\psi^2\chi_J\overline h
 |U_\tau|^{a-2}U_\tau^2\overline g\,dxdt\right|
 &\leq
 \|\psi\chi_J\overline h
 |U_\tau|^{a/2-1}U_\tau\|_{L^2}\\
 &\quad\times
 \|\psi\chi_J\overline g
 |U_\tau|^{a/2-1}U_\tau\|_{L^2}\\
 &\lesssim\mathcal A_J^a\|h\|_{Z^0(J)}.
\end{align*}
Integrating in $\tau$ and taking the supremum over $g$ proves
\eqref{eq:phased-difference-estimate}.
\end{proof}

\subsection{Conclusion}
We are now ready to give the desired proof of Theorem \ref{main_thm}

\begin{proof}[Proof of Theorem \ref{main_thm}]
Write $X_j=(\beta_j,v_j)$ and $u_j=\mathcal JX_j$.  By
\eqref{eq:trial-phase-uniform-multipliers},
$\|\beta_j\|_{C^{\gamma_{\mathrm{rem}}}}\lesssim_B1$, while
$Y^{s^*}\hookrightarrow Z^{s^*}\hookrightarrow Z^{s_c}$ gives $\|v_j\|_{Z^{s_c}}\lesssim R$.
Hence \eqref{eq:random-small-coeff-global} implies that the coefficient in
Lemma \ref{lem:phased-difference-estimate} satisfies
\begin{equation*}
 \mathcal A_{I_T}
 \lesssim_B\delta_T.
\end{equation*}
Using \eqref{intertwining}, Lemma
\ref{lem:3.6}, Lemma \ref{lem:phased-difference-estimate} and the embedding
$Y^0\hookrightarrow Z^0$, we obtain
\begin{align*}
 d(\mathcal TX_1,\mathcal TX_2)
 &=\|\Phi(u_1)-\Phi(u_2)\|_{Y^0(I_T)}\\
 &\lesssim
 \|\psi\chi_{I_T}(F(u_1)-F(u_2))\|_{(Z^0)'}\\
 &\lesssim
 \mathcal A_{I_T}^a\|u_1-u_2\|_{Z^0(I_T)}\\
 &\lesssim
 C_B\delta_T^a
 d(X_1,X_2).
\end{align*}
Hence decreasing $T$ if necessary, the mapping
$\mathcal T$ is a strict contraction on
$(\mathfrak X_{T,R},d)$. By Lemma \ref{lem:phase-metric-complete} and Lemma \ref{lem:phase-self-map}, Banach's fixed point theorem gives a unique
\begin{equation*}
 (\beta,v)\in\mathfrak X_{T,R}
 \qquad\text{such that}\qquad
 \mathcal T(\beta,v)=(\beta,v).
\end{equation*}
Let $u=\mathcal J(\beta,v)=\beta z+v$. The intertwining identity shows that
\begin{equation}\label{eq:u-ungauged-mild}
 u=z-i\lambda K^+(\psi\chi_{I_T}F(u))
 \qquad\text{on }I_T,
\end{equation}
hence $u$ solves the ungauged equation \eqref{eq:nls} on $I_T$.
Furthermore, the fixed point identity for $\beta$ gives
\begin{equation}\label{eq:fixed-phase-ODE}
 \beta'=-i\lambda c_a\mu(u)\beta,
 \qquad
 \beta(0)=1.
\end{equation}
Since $\mu(u)$ is real-valued,
\[
 \frac{d}{dt}|\beta|^2
 =2\operatorname{Re}(\overline\beta\beta')=0.
\]
This yields
\begin{equation*}
 |\beta(t)|=1,
 \qquad
 \beta(t)=\exp\left(-i\lambda c_a
 \int_0^t\mu(u)(s)\,ds\right).
\end{equation*}
Now define
\begin{equation*}
 y=\beta^{-1}u=z+w,
 \qquad
 w=\beta^{-1}v.
\end{equation*}
By Lemma \ref{lem:time-phase-multiplier}, $w\in Y^{s^*}(I_T)\hookrightarrow Z^{s^*}(I_T)$ and $\|w\|_{Y^{s^*}(I_T)}\lesssim R$. Since $|u|=|y|$, we have $\mu(u)=\mu(y)$.  Combining
\eqref{eq:u-ungauged-mild} and \eqref{eq:fixed-phase-ODE}, or equivalently
reversing the calculation in \eqref{eq:inverse-gauge}, shows that
\[
 (i\partial_t+\Delta)y
 =\lambda\big(F(y)-c_a\mu(y)y\big)
 =\lambda G(y),
 \qquad y(0)=\phi^\omega.
\]
This proves the existence of a solution $y$ of the gauged NLS in the class asserted in Theorem \ref{main_thm}.

Banach's theorem gives uniqueness inside the small phase-adapted ball.  We now
prove uniqueness in the full class
$z+Y^{s^*}(I_T)$ claimed in Theorem \ref{main_thm} by a local zero-order
absorption argument. First note that, for every fixed $t_0\in I_T$, we have 
\begin{align}\label{eq:local-random-coeff-vanishing}
\mathcal R_J\longrightarrow0 \qquad\text{whenever } J\downarrow\{t_0\}.
\end{align} 
Indeed, the first component of $\mathcal R_J$ has this property by absolute continuity
of the $L_t^r$ norm, and the second follows from Lemma \ref{lem:time-cutoff}.  Now let
\[
 y_j=z+w_j,
 \qquad w_j\in Y^{s^*}(I_T),
 \qquad j=1,2,
\]
be two solutions of \eqref{eq:gauged} with the same initial value.  Define
\begin{equation*}
 \Gamma_j(t)=\lambda c_a\int_0^t\mu(y_j)(s)\,ds,
 \qquad
 \rho_j=e^{-i\Gamma_j},
 \qquad
 u_j=\rho_jy_j.
\end{equation*}
Then each $u_j$ solves
\begin{equation*}
 (i\partial_t+\Delta)u_j=\lambda F(u_j),
 \qquad u_j(0)=\phi^\omega.
\end{equation*}
The estimates used in \eqref{eq:scalar-yN-qg}, together with
$Y^{s^*}\hookrightarrow Z^{s^*}\hookrightarrow Z^{s_c}$, imply
\(
 y_j\in L_t^{q_{\mathrm{rem}}}L_x^{r_{\mathrm{rem}}}(I_T).
\)
Lemma \ref{lem:canonical-phase-regularity} therefore gives, for every
$J\subset I_T$,
\begin{equation}\label{eq:individual-phase-bounds}
 \|\rho_j\|_{C^{\gamma_{\mathrm{rem}}}(J)}
 \lesssim
 1+\|y_j\|_{L_t^{q_{\mathrm{rem}}}L_x^{r_{\mathrm{rem}}}(J)}^a,
 \qquad
 \|\rho_j\|_{V^2(J)}
 \lesssim
 1+|J|^{\gamma_{\mathrm{rem}}}
 \|y_j\|_{L_t^{q_{\mathrm{rem}}}L_x^{r_{\mathrm{rem}}}(J)}^a.
\end{equation}
In particular, Lemma \ref{lem:time-phase-multiplier} gives
\begin{equation*}
 u_j=\rho_jz+v_j,
 \qquad
 v_j:=\rho_jw_j\in
 Y^{s^*}(I_T)\cap Z^{s^*}(I_T).
\end{equation*}
Also $u_j\in Y^0(I_T)$ because $z\in Y^0(I_T)$ and $s_c>0$.
The mild formulation then implies
$u_j\in C(I_T;L_x^2)$.

Let $t_0\in I_T$ be such that $u_1(t_0)=u_2(t_0)$, and let $J\subset I_T$
be a short interval containing $t_0$. Set $h=u_1-u_2$ and
\[
 K_{t_0}^+f(t):=\int_{t_0}^t e^{i(t-s)\Delta}f(s)\,ds.
\]
Since $h(t_0)=0$, the difference equation gives
\[
 h=-i\lambda K_{t_0}^+\bigl(\chi_J(F(u_1)-F(u_2))\bigr)
 \qquad\text{on }J.
\]
The time-translated form of Lemma \ref{basis duhamel}, followed by Lemma
\ref{lem:phased-difference-estimate}, yields
\begin{align}
 \|h\|_{Y^0(J)}
 &\lesssim
 \|\psi\chi_J(F(u_1)-F(u_2))\|_{(Z^0)'}
 \notag\\
 &\lesssim
 \mathcal A_J^a\|h\|_{Z^0(J)}
 \lesssim
 \mathcal A_J^a\|h\|_{Y^0(J)},
 \label{eq:local-ungauged-uniqueness}
\end{align}
where
\begin{equation*}
 \mathcal A_J
 =\mathcal R_J(\omega)
 \max_{j=1,2}\|\rho_j\|_{C^{\gamma_{\mathrm{rem}}}(J)}
 +\sum_{j=1}^2\|\rho_jw_j\|_{Z^{s_c}(J)}.
\end{equation*}
By \eqref{eq:local-random-coeff-vanishing}, the time-translated form of
\eqref{3.13}, Lemma \ref{lem:time-phase-multiplier}, and
\eqref{eq:individual-phase-bounds}, we know that
 $\mathcal A_J\longrightarrow0$
as $J\downarrow\{t_0\}$.
Choose $J$ so that the implicit constant in
\eqref{eq:local-ungauged-uniqueness} times $\mathcal A_J^a$ is less than
$1/2$.  Then $h=0$ on $J$. Starting at $t_0=0$ and propagating this local equality successively to the
right and to the left covers the compact interval $I_T$; hence
$u_1=u_2$ on $I_T$.  Finally, $|y_j|=|u_j|$, and therefore
\[
 \Gamma_j'(t)=\lambda c_a\mu(u_j)(t),
 \qquad \Gamma_j(0)=0.
\]
Thus $u_1=u_2$ implies $\Gamma_1=\Gamma_2$, and then $y_1=y_2$.
This proves uniqueness in $z+Y^{s^*}(I_T)$.  Finally, applying the
preceding high-probability construction to $T_n=2^{-n}$ and using
Borel--Cantelli yields a probability-one event on which an admissible
positive lifespan exists.  This completes the proof of Theorem
\ref{main_thm}.
\end{proof}


\subsubsection*{Acknowledgements}
The author was supported by the NSF grant of Guangdong (No. 2024A1515010497), the QB-Program of Guangdong (No. 2024QN11X141) and the NSF grant of China (No. 12301301).

\subsubsection*{Data availability}
Data sharing is not applicable to this article as no datasets were generated or analysed during the current study.

\subsubsection*{Conflict of interest}

The author declares that he has no conflict of interest.


\begin{thebibliography}{10}

\bibitem{amannbesov}
{\sc Amann, H.}
\newblock Operator-valued {Fourier} multipliers, vector-valued {Besov} spaces,
  and applications.
\newblock {\em Math. Nachr. 186\/} (1997), 5--56.

\bibitem{AmannEmbedding}
{\sc Amann, H.}
\newblock Compact embeddings of vector-valued {S}obolev and {B}esov spaces.
\newblock {\em Glas. Mat. Ser. III 35(55)}, 1 (2000), 161--177.
\newblock Dedicated to the memory of Branko Najman.

\bibitem{Bourgain1}
{\sc Bourgain, J.}
\newblock Fourier transform restriction phenomena for certain lattice subsets
  and applications to nonlinear evolution equations. {I}. {S}chr\"{o}dinger
  equations.
\newblock {\em Geom. Funct. Anal. 3}, 2 (1993), 107--156.

\bibitem{Bourgain2}
{\sc Bourgain, J.}
\newblock Fourier transform restriction phenomena for certain lattice subsets
  and applications to nonlinear evolution equations. {II}. {T}he
  {K}d{V}-equation.
\newblock {\em Geom. Funct. Anal. 3}, 3 (1993), 209--262.

\bibitem{BourgainProb1}
{\sc Bourgain, J.}
\newblock Periodic nonlinear {Schr{\"o}dinger} equation and invariant measures.
\newblock {\em Commun. Math. Phys. 166}, 1 (1994), 1--26.

\bibitem{BourgainProb2}
{\sc Bourgain, J.}
\newblock Invariant measures for the 2d-defocusing nonlinear {Schr{\"o}dinger}
  equation.
\newblock {\em Commun. Math. Phys. 176}, 2 (1996), 421--445.

\bibitem{BDNY24}
{\sc Bringmann, B., Deng, Y., Nahmod, A.~R., and Yue, H.}
\newblock Invariant {Gibbs} measures for the three dimensional cubic nonlinear
  wave equation.
\newblock {\em Invent. Math. 236}, 3 (2024), 1133--1411.

\bibitem{Burq1}
{\sc Burq, N., G\'{e}rard, P., and Tzvetkov, N.}
\newblock Strichartz inequalities and the nonlinear {S}chr\"{o}dinger equation
  on compact manifolds.
\newblock {\em Amer. J. Math. 126}, 3 (2004), 569--605.

\bibitem{Burq2}
{\sc Burq, N., G\'{e}rard, P., and Tzvetkov, N.}
\newblock Bilinear eigenfunction estimates and the nonlinear {S}chr\"{o}dinger
  equation on surfaces.
\newblock {\em Invent. Math. 159}, 1 (2005), 187--223.

\bibitem{Burq3}
{\sc Burq, N., G\'{e}rard, P., and Tzvetkov, N.}
\newblock Multilinear eigenfunction estimates and global existence for the
  three dimensional nonlinear {S}chr\"{o}dinger equations.
\newblock {\em Ann. Sci. \'{E}cole Norm. Sup. (4) 38}, 2 (2005), 255--301.

\bibitem{BurqTzvetkov1}
{\sc Burq, N., and Tzvetkov, N.}
\newblock Random data {Cauchy} theory for supercritical wave equations {I}:
  {Local} theory.
\newblock {\em Invent. Math. 173}, 3 (2008), 449--475.

\bibitem{BurqTzvetkov2}
{\sc Burq, N., and Tzvetkov, N.}
\newblock Random data {Cauchy} theory for supercritical wave equations. {II}.
  {A} global existence result.
\newblock {\em Invent. Math. 173}, 3 (2008), 477--496.

\bibitem{Cazenave2003}
{\sc Cazenave, T.}
\newblock {\em Semilinear {S}chr\"odinger equations}, vol.~10 of {\em Courant
  Lecture Notes in Mathematics}.
\newblock New York University, Courant Institute of Mathematical Sciences, New
  York; American Mathematical Society, Providence, RI, 2003.

\bibitem{ill_posed}
{\sc Christ, M., Colliander, J., and Tao, T.}
\newblock Ill-posedness for nonlinear {S}chrodinger and wave equations, 2003.

\bibitem{Oh_Co_Prob}
{\sc Colliander, J., and Oh, T.}
\newblock Almost sure well-posedness of the cubic nonlinear {Schr{\"o}dinger}
  equation below {{\(L^{2}(\mathbb{T})\)}}.
\newblock {\em Duke Math. J. 161}, 3 (2012), 367--414.

\bibitem{DNY2}
{\sc Deng, Y., Nahmod, A.~R., and Yue, H.}
\newblock Random tensors, propagation of randomness, and nonlinear dispersive
  equations.
\newblock {\em Invent. Math. 228}, 2 (2022), 539--686.

\bibitem{DNY1}
{\sc Deng, Y., Nahmod, A.~R., and Yue, H.}
\newblock Invariant {Gibbs} measures and global strong solutions for nonlinear
  {Schr{\"o}dinger} equations in dimension two.
\newblock {\em Ann. Math. (2) 200}, 2 (2024), 399--486.

\bibitem{Fan24}
{\sc Fan, C., and Mendelson, D.}
\newblock Construction of {{\(L^2\)}} log-log blowup solutions for the mass
  critical nonlinear {Schr{\"o}dinger} equation.
\newblock {\em J. Eur. Math. Soc. (JEMS) 26}, 5 (2024), 1795--1849.

\bibitem{GKO24}
{\sc Gubinelli, M., Koch, H., and Oh, T.}
\newblock Paracontrolled approach to the three-dimensional stochastic nonlinear
  wave equation with quadratic nonlinearity.
\newblock {\em J. Eur. Math. Soc. (JEMS) 26}, 3 (2024), 817--874.

\bibitem{HadacHerrKoch2009}
{\sc Hadac, M., Herr, S., and Koch, H.}
\newblock Well-posedness and scattering for the {KP}-{II} equation in a
  critical space.
\newblock {\em Ann. Inst. H. Poincar\'{e} Anal. Non Lin\'{e}aire 26}, 3 (2009),
  917--941.

\bibitem{HaniPausader}
{\sc Hani, Z., and Pausader, B.}
\newblock On scattering for the quintic defocusing nonlinear {S}chr\"{o}dinger
  equation on {$\Bbb R\times\Bbb T^2$}.
\newblock {\em Comm. Pure Appl. Math. 67}, 9 (2014), 1466--1542.

\bibitem{HerrTataruTz1}
{\sc Herr, S., Tataru, D., and Tzvetkov, N.}
\newblock Global well-posedness of the energy-critical nonlinear
  {S}chr\"{o}dinger equation with small initial data in {$H^1(\Bbb T^3)$}.
\newblock {\em Duke Math. J. 159}, 2 (2011), 329--349.

\bibitem{HerrTataruTz2}
{\sc Herr, S., Tataru, D., and Tzvetkov, N.}
\newblock Strichartz estimates for partially periodic solutions to
  {S}chr\"{o}dinger equations in {$4d$} and applications.
\newblock {\em J. Reine Angew. Math. 690\/} (2014), 65--78.

\bibitem{KevrekidisEtAl2015}
{\sc Kevrekidis, P.~G., Frantzeskakis, D.~J., and Carretero-Gonz\'alez, R.}
\newblock {\em The defocusing nonlinear {S}chr\"odinger equation}.
\newblock Society for Industrial and Applied Mathematics, Philadelphia, PA,
  2015.
\newblock From dark solitons to vortices and vortex rings.

\bibitem{KwakKwon}
{\sc Kwak, B., and Kwon, S.}
\newblock Critical local well-posedness of the nonlinear {S}chr\"odinger
  equation on the torus.
\newblock {\em Ann. Inst. H. Poincar\'{e} C Anal. Non Lin\'{e}aire 43}, 1
  (2026), 155--201.

\bibitem{LeeNonalg19}
{\sc Lee, G.~E.}
\newblock Local wellposedness for the critical nonlinear {Schr{\"o}dinger}
  equation on {{\( \mathbb{T}^3 \)}}.
\newblock {\em Discrete Contin. Dyn. Syst. 39}, 5 (2019), 2763--2783.

\bibitem{chara_besov}
{\sc Liu, C., Pr{\"o}mel, D.~J., and Teichmann, J.}
\newblock Characterization of nonlinear {Besov} spaces.
\newblock {\em Trans. Am. Math. Soc. 373}, 1 (2020), 529--550.

\bibitem{LuoCriticalScattering}
{\sc Luo, Y.}
\newblock Critical scattering for the nonlinear {{S}chr{\"o}dinger} equation on
  waveguide manifolds.
\newblock Preprint, {arXiv}:2506.00442 [math.{AP}] (2025), 2025.

\bibitem{NahmodStaffilani15}
{\sc Nahmod, A.~R., and Staffilani, G.}
\newblock Almost sure well-posedness for the periodic {3D} quintic nonlinear
  {Schr{\"o}dinger} equation below the energy space.
\newblock {\em J. Eur. Math. Soc. (JEMS) 17}, 7 (2015), 1687--1759.

\bibitem{NakamuraWada}
{\sc Nakamura, M., and Wada, T.}
\newblock Modified {S}trichartz estimates with an application to the critical
  nonlinear {S}chr\"odinger equation.
\newblock {\em Nonlinear Anal. 130\/} (2016), 138--156.

\bibitem{OhNonalg19}
{\sc Oh, T., Okamoto, M., and Pocovnicu, O.}
\newblock On the probabilistic well-posedness of the nonlinear
  {Schr{\"o}dinger} equations with non-algebraic nonlinearities.
\newblock {\em Discrete Contin. Dyn. Syst. 39}, 6 (2019), 3479--3520.

\bibitem{Oh_Inv}
{\sc Oh, T., Sosoe, P., and Tolomeo, L.}
\newblock Optimal integrability threshold for {Gibbs} measures associated with
  focusing {NLS} on the torus.
\newblock {\em Invent. Math. 227}, 3 (2022), 1323--1429.

\bibitem{Tzvetkov10}
{\sc Tzvetkov, N.}
\newblock Construction of a {Gibbs} measure associated to the periodic
  {Benjamin}-{Ono} equation.
\newblock {\em Probab. Theory Relat. Fields 146}, 3-4 (2010), 481--514.

\bibitem{Yue21}
{\sc Yue, H.}
\newblock Almost sure well-posedness for the cubic nonlinear {Schr{\"o}dinger}
  equation in the super-critical regime on {{\(\mathbb{T}^d\)}}, {{\(d\geq
  3\)}}.
\newblock {\em Stoch. Partial Differ. Equ., Anal. Comput. 9}, 1 (2021),
  243--294.

\bibitem{RmT1}
{\sc Zhao, Z.}
\newblock On scattering for the defocusing nonlinear {S}chr\"{o}dinger equation
  on waveguide {$\Bbb R^m\times \Bbb T$} (when {$m = 2,3$}).
\newblock {\em J. Differential Equations 275\/} (2021), 598--637.

\end{thebibliography}
\end{document}